\newtheorem{proposition}{Proposition}[section]
\newtheorem{theorem}[proposition]{Theorem}
\newtheorem{thm}{Theorem}[section]
\newtheorem{lem}[thm]{Lemma}
\newtheorem{cor}[thm]{Corollary}
\newtheorem{prop}[thm]{Proposition}
\theoremstyle{definition}
\newtheorem{defn}[thm]{Definition}
\newtheorem{obs}[thm]{Observation}
\newcommand{\comment}[1]{}
\begin{document}

\author[Rodney G. Downey]{Rodney G. Downey}

\address{\tt School of Mathematical and Computing Sciences, Victoria
  University, P.O. Box 600, Wellington, New Zealand} \email{\tt
  Rod.Downey@vuw.ac.nz}

\author[Carl G. Jockusch, Jr.]{Carl G. Jockusch, Jr.}

\address{\tt Department of Mathematics, University of Illinois at
  Urbana-Champaign, 1409 West Green Street, Urbana, IL 61801, USA
  \newline http://www.math.uiuc.edu/\~{}jockusch/} \email{\tt
  jockusch@math.uiuc.edu}

\author[Paul E. Schupp]{Paul E. Schupp}

\address{\tt Department of Mathematics, University of Illinois at
  Urbana-Champaign, 1409 West Green Street, Urbana, IL 61801, USA}
  \email{\tt schupp@math.uiuc.edu}

\thanks{Downey was supported by the Marsden Fund of New Zealand, and 
by the University of Chicago, and the Isaac Newton Institute 
at Cambridge for portions of this paper.}

\title[Asymptotic density and computably enumerable sets]{Asymptotic
  density and computably enumerable sets}

\begin{abstract}
  We study connections between classical asymptotic density,
  computability and computable enumerability.  In an earlier paper,
  the second two authors proved that there is a computably enumerable
  set $A$ of density $1$ with no computable subset of density $1$.  In
  the current paper, we extend this result in three different ways:
  (i) The degrees of such sets $A$ are precisely the nonlow c.e.\
  degrees.  (ii) There is a c.e.\ set $A$ of density $1$ with no
  computable subset of nonzero density.  (iii) There is a c.e.\ set
  $A$ of density $1$ such that every subset of $A$ of density $1$ is
  of high degree.  We also study the extent to which c.e.\ sets $A$
  can be approximated by their computable subsets $B$ in the sense
  that $A \setminus B$ has small density.  There is a very close
  connection between the computational complexity of a set and the
  arithmetical complexity of its density and we characterize the lower
  densities, upper densities and densities of both computable and
  computably enumerable sets.  We also study the notion of
  ``computable at density $r$'' where $r$ is a real in the unit
  interval.  Finally, we study connections between density and
  classical smallness notions such as immunity, hyperimmunity, and
  cohesiveness.
\end{abstract}
\subjclass[2010]{Primary 03D25, Secondary 03D28}

\keywords{asymptotic density, computably enumerable sets, Turing
  degrees, low degrees}
\maketitle

%\tableofcontents

\section{Introduction}\label{intro}
Perhaps the first \emph{explicit} realization of the basic importance
of computational questions in studying mathematical structures was the
1911 paper of Max Dehn ~\cite{De} which defined the word, conjugacy
and isomorphism problems for finitely generated groups.  Although
mathematicians had always been concerned with algorithmic procedures,
it was the introduction of non-computable methods such as the proof of
the Hilbert Basis Theorem which brought effective procedures into
focus, for example in the early work of Grete Hermann
\cite{Herm1926}. The basic work of Turing, Kleene and Church and
others in the 1930's gave methods of demonstrating the
\emph{non-}computability of problems.  We have since seen famous
examples of non-computable aspects of mathematics such as the
unsolvability of Hilbert's 10th problem, the Novikov-Boone proof of
the undecidability of the word and conjugacy problems for finitely
presented groups, and other similar questions in topology, Julia sets,
ergodic theory etc.  With the advent of actual computers, the late
20th century saw the development of computational complexity theory,
clarifying the notion of \emph{feasible} computations using measures
such as polynomial time.

The methods mentioned  in the paragraph above
are all  \emph{worst case} in that they  focus on  the difficulty
of the hardest instances of a given problem.
\emph{However}, there has been a growing realization that the 
worst case may have  very little to do with the behavior
of  several computational procedures widely used in  practice.
The most famous   example is Dantzig's Simplex Algorithm which 
always runs quickly in practice.   There are examples
of Klee and Minty \cite{KM} which force the Simplex Algorithm
to take exponential time but such examples never occur in practice.

   Gurevich \cite{Gurevich} and Levin \cite{Levin} introduced
\emph{average-case} complexity.  The general
idea is that one has a probability measure on the set of possible instances
of a problem and one integrates the time required over all the instances.
Blass and Gurevich \cite{BG} showed that the Bounded Product Problem
for the modular group, an {\sc NP}-complete problem,  has polynomial
average-case complexity.  There has recently been much interest
in \emph{smoothed analysis} introduced by Spielman and Tang \cite{ST}.
This is a very  sophisticated approach which continuously interpolates
between worst-case and average-case and measures the performance
of an algorithm under small Gaussian  perturbations of arbitrary inputs.
They show that the Simplex Algorithm with a certain pivot rule 
has polynomial time smoothed complexity.

Average-case analysis is highly dependent on the the probability
distribution and, of course, one must still consider the behavior of
the hardest instances.  The idea of \emph{generic case complexity} was
introduced by Kapovich, Myasnikov, Schupp and Shpilrain \cite{KMSS}.
Here one considers \emph{partial algorithms} which give no incorrect
answers and where the collection of inputs where the algorithm fails
to converge is ``negligible'' in the sense that it has asymptotic
density $0$.  (A formal definition is given below.)  This complexity
measure is widely applicable and is much easier to apply.  In
particular, one can consider all partial algorithms, the natural
setting for computability theory, and one does not need to know the
worst-case complexity.  Indeed, undecidable problems can have very low
generic-case complexity.  This idea has been very effectively applied
to a number of problems in combinatorial group theory. To cite two
examples \cite{KS1,KSS}, it is a classic result of Magnus, proved in
the 1930's, that the word problem for one-relator groups is solvable.
We do not have any idea of possible worst-case complexities over the
whole class of one-relator groups, but for any one-relator group with
at least three generators, the word problem is strongly generically
linear time.  Also, we do not know whether or not the isomorphism
problem for one-relator presentations is solvable, but it is strongly
generically single exponential time.  To take an undecidable problem,
it is easy to show that the Post Correspondence Problem is generically
linear time.

The phenomenon that problems are generically easy is very widespread.
For example, we know that {\sc Sat}-solvers work extremely well in
practice in spite of the fact that {\sc SAT} is {\sc NP}-complete.
This certainly reflects the problem being generically easy but we need
a more detailed understanding.  Gaspers and Szeider \cite{GaSz}
suggest that the \emph{parameterized complexity} of Downey and
Fellows \cite{DF} might provide an explanation.

 The paper of Jockusch and Schupp  \cite{JS} was the  first step  
towards developing a general theory of generic computability and
the present paper is a significant 
extension of their work. As we shall  see, there are deep and unexpected 
connections between  ideas from generic computation 
and concepts  from classical computability.

Here are the fundamental 
definitions.

\begin{defn} Let $S \subseteq \omega$, where $\omega = \{0, 1,
  \dots\}$ is the set of all natural numbers.  For every $n > 0$ let
  $S\upharpoonright n$ denote the set of all $s \in S$ with $s < n$.  For
$n > 0$, let

     \[ \rho_n(S) := \frac{|S \upharpoonright n|}{n}\]

The \emph{upper density} $\overline{\rho}(S)$ of $S$ is

\[
\ \overline{\rho}(S) := \limsup_{n \to \infty} \rho_n(S)
\]

and  the \emph{lower density} $\underline{\rho}(S)$ of $S$ is

\[
 \underline{\rho}(S) := \liminf_{n \to \infty} \rho_n(S)
\]

If the actual limit  $\rho(S) =lim_{n \to \infty} \rho_n(S)$   exists, then $ \rho (S)$  is the \emph{(asymptotic) density} of $S$.

\end{defn}

Of course, density is finitely additive but not countably additive.
A set $A$ is called \emph{generically computable}
if there is a partial computable function $\varphi$ such that for all
$n$, if $\varphi(n) \downarrow$ then 
$\varphi(n) = A(n)$, and the domain of $\varphi$ has density $1$.

Jockusch and Schupp \cite{JS} observe that
every nonzero Turing degree contains both a set that is generically
computable  and one that is not.  They also
introduce the related notion of being \emph{coarsely computable}.
A set  $A$ is \emph{coarsely computable} if there is a  \emph{total}
computable function $f$ such that $\{n : f(n) = A(n)\}$ has density
$1$.  They show that  there are c.e.\ sets that are coarsely computable 
but not generically computable  and that there are 
c.e.\ sets which  are generically computable  but not coarsely computable
and give a number of basic properties of these ideas.

  Every finitely generated group has a coarsely computable word
problem (\cite{JS}, Observation 2.14), but it is a
difficult open question as to whether or not there is a finitely presented group
which does not have a generically decidable word problem.  There is a
finitely presented semigroup which has a generically undecidable word
problem by a theorem of Myasnikov and Rybalov \cite{MR}, and 
Myasnikov and Osin \cite{MO}  have shown that there is
a finitely generated, recursively presented group with a generically
undecidable word problem.  (Here the notion of density is defined for
sets of words on a finite alphabet in the natural way.)

Our starting point here is the basic observation that
the domain of a generic decision algorithm is a c.e.\ set of density 1.
This observation leads us to concentrate upon 
the relationship between \emph{density}, \emph{computability},
and \emph{computable enumerability}. 
    One basic question which we address is to what what extent a c.e.\ set $A$ can be approximated by a \emph{computable}
subset $B$ so that the difference $A \setminus  B$  has ``small'' density in various senses.

For example, it is natural to ask whether every c.e.\ set of density 1
has a computable subset of density 1. 
Jockusch and Schupp \cite{JS}   established that the answer is ``no'':
There is a c.e.\ set of density $1$ with no computable subset of 
density $1$.
In this paper we extend this result in several ways, 
revealing a deep connection between notions from classical 
computability theory and generic computation.

The natural question to ask is ``what kinds of c.e.\ sets \emph{do} have a 
computable subset of density 1?''
The answer lies in the the complexity of the sets as measured by their
information content. The reader should recall
that the natural operation on Turing degrees is the 
\emph{jump} operator, the relativization of the halting problem, where
the jump $A'$ of a set $A$ is given by
$A'=\{n : \Phi^A_n(n)\downarrow\}$.   The jump operation on sets
naturally induces the jump operation on degrees. 
This  operator is not injective, and we call sets 
$A$ with $A'\equiv_T \emptyset'$ \emph{low}, and the degrees of low sets
are also called \emph{low}.  Low sets resemble computable sets modulo
the jump operator and share some of the properties of computable sets.   They occupy a central role 
in classical computability. On the other hand, there are almost no 
known natural properties of the c.e.\ sets which occur in exactly the low c.e.\ degrees.

We introduce a new nonuniform technique to prove the following.

\begin{thm} \label{nonlow1} A c.e.\ degree  ${\bf a}$ is not low
  if and only if  it contains a c.e.\  set $A$ of density $1$ with no computable
  subset of density $1$.
\end{thm}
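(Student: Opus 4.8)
The plan is to prove both directions of the equivalence, treating the "if" direction (which amounts to: every low c.e.\ degree fails to contain such a set) and the "only if" direction (every nonlow c.e.\ degree does contain such a set) separately, since they require genuinely different techniques.

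For the "if" direction I would argue the contrapositive: if $\mathbf{a}$ is a low c.e.\ degree and $A \in \mathbf{a}$ is c.e.\ of density $1$, then $A$ has a computable subset of density $1$. The idea is that lowness gives us a $\Delta^0_2$ approximation to $\emptyset'$, hence to the "settling" behaviour of the enumeration of $A$, that is good enough to be made genuinely computable on a set of density $1$. Concretely, fix a computable enumeration $\{A_s\}$ of $A$. For each threshold $k$, the set $\{n : n \in A \text{ but } n \text{ enters } A \text{ only after stage } f(n)\}$ for a suitable recovery function $f$ is the obstruction; lowness lets us compute (via $\emptyset'$-oracle questions that are answered correctly on a cofinite set by a limit approximation, using the Robinson low set trick / the fact that for a low set $A'$ has a computable "modulus"-style approximation) a computable function $g$ and a computable set $B \subseteq A$ such that $\rho_n(A \setminus B)$ tends to $0$ because the density-$1$ hypothesis forces the "late-entering" elements to be sparse. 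I expect the clean way to package this is: lowness implies $A$ is what one might call "computably approximable from within" at density $1$; this should be extractable from a nonlow permitting-style argument run in reverse, and I would look for it essentially in the structure theory of low c.e.\ sets.

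For the "only if" direction — the substantive half — let $\mathbf{a}$ be a nonlow c.e.\ degree. I would build a c.e.\ set $A \in \mathbf{a}$ (coded together with a fixed c.e.\ set $C \in \mathbf{a}$, say by making $A$ compute $C$ via a simple coding region of density $0$, or by a direct construction that keeps $A \equiv_T C$) satisfying, for every $e$, the requirement
\[
R_e:\quad W_e \subseteq A \ \Longrightarrow\ \underline{\rho}(W_e) < 1 \ \text{or}\ W_e \text{ is not computable,}
\]
but in fact we only need: if $W_e \subseteq A$ is \emph{computable} then $\overline{\rho}(A \setminus W_e)$ is bounded below by some positive constant (so $W_e$ does not have density $1$). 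We also need $\rho(A) = 1$. The construction puts "almost all" of $\omega$ into $A$, reserving, for the sake of $R_e$, a sparse-but-not-density-zero family of "test intervals" $I^e_0, I^e_1, \dots$ on which we will withhold one element from $A$ unless forced. Here is where nonlowness enters: the announced "new nonuniform technique" suggests we use the failure of $\mathbf{a}'$ to be low, i.e.\ a fixed function $h \leq_T \emptyset'$ dominating (infinitely often, or on the relevant arguments) something that no function computable from $A$ can dominate — equivalently, by Martin's characterization the degree $\mathbf{a}$ is nonlow iff it computes a function not dominated by any computable function, and more usefully $\mathbf{a}$ nonlow iff there is $g \leq_T A$ such that for every computable $f$, $g(n) \geq f(n)$ for infinitely many $n$. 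We use such a $g \leq_T A$ to decide, on each test interval $I^e_k$, whether to "act": the nonlow function lets $A$ escape the would-be computable $W_e$ infinitely often on the test intervals, while the density-$0$ sparseness of the whole test region keeps $\rho(A) = 1$.

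The main obstacle — and the reason a new technique is needed — is the tension between three demands: (1) $A$ must have density exactly $1$, so the total "damage region" where we keep elements out of $A$ must have density $0$; (2) for each computable $W_e \subseteq A$ we must nonetheless force $\overline{\rho}(A \setminus W_e) > 0$, which seems to need a \emph{positive}-density region of disagreement; and (3) $A$ must stay in the prescribed nonlow degree $\mathbf{a}$, so we cannot code freely. The resolution I anticipate is the nonuniformity: we do not fix in advance which intervals serve which requirement; instead, using the $A$-computable escaping function $g$ we let the \emph{single} sparse test region do double duty — for a genuinely computable $W_e$, the escapes of $g$ past every computable bound translate into infinitely many test intervals on which $A \setminus W_e$ is nonempty at a scale that, summed appropriately, gives upper density bounded away from $0$ even though the region is globally density $0$ (this uses that "density $0$" permits clumping: a set can have density $0$ yet have $\rho_{n_k} = 1$ along a sequence $n_k$). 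Making the bookkeeping of interval lengths and locations compatible simultaneously with $\rho(A)=1$, with the positive-upper-density disagreement for each computable subset, and with $A \equiv_T \mathbf{a}$ is exactly the delicate part, and I would spend the bulk of the proof on choosing the interval lengths $|I^e_k|$ growing fast enough (to kill density contributions) yet positioned (guided by $g$) densely enough along some subsequence to defeat each computable $W_e$.
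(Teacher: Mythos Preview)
Your ``only if'' direction rests on a mischaracterization of nonlowness. The property ``$\mathbf{a}$ computes a function not dominated by any computable function'' (equivalently, your ``there is $g \leq_T A$ with $g(n) \geq f(n)$ infinitely often for every computable $f$'') is the definition of a \emph{hyperimmune} degree, not a nonlow one; by the theorem of Miller and Martin every nonzero degree below $\mathbf{0}'$ is hyperimmune, so in particular every nonzero \emph{low} c.e.\ degree already computes such a $g$. Any construction that uses only this escaping property of $g$ would therefore succeed in every nonzero c.e.\ degree, contradicting the ``if'' direction of the theorem. This is a genuine gap: the lever you have chosen does not separate low from nonlow.

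The paper uses nonlowness in its raw form, $C' \not\leq_T \emptyset'$: no computable function $g(i,s)$ can satisfy $\lim_s g(i,s) = C'(i)$ for all $i$. Each global requirement $N_e$ (that no co-c.e.\ subset $\overline{W_e}$ of $A$ has density $1$) is split into infinitely many subrequirements $N_{e,i}$ of the form ``$N_e$ holds, or else $\lim_s g(e,i,s) = C'(i)$'', where $g$ is a computable function defined during the construction. If $N_e$ failed, every second disjunct would hold and $g$ would be a $\Delta^0_2$ approximation to $C'$, contradicting nonlowness. The strategy for $N_{e,i}$ lives entirely inside $R_{\langle e,i\rangle}$ and combines ordinary permitting by $C$ with the restrain-and-dump idea from the unrelativized construction: restrain a ``large'' interval $I \subseteq R_{\langle e,i\rangle}$ from $A$; set $g(e,i,s)=1$ once $W_e$ covers $I$; when $C$ changes below the recorded use, dump $I$ into $A$, reset $g(e,i,s)=0$, and start over. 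The case analysis of how this cycle terminates (or fails to) is exactly what forces either $N_e$ or the limit statement about $C'(i)$.

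For the ``if'' direction your instinct is right but the packaging is cleaner than you suggest. The paper isolates the \emph{minimal witness function} $w_A(k) = (\mu b)(\forall n \geq b)[\rho_n(A) \geq 1 - 2^{-k}]$, which is always $\leq_T A'$; when $A$ is low this gives $w_A \leq_T \emptyset'$, and a look-ahead argument (deciding $k \in B$ by waiting for enough of $A$ to appear on $[0,k^2)$, using a computable approximation to $w_A$ to know how much is enough) then yields a computable $B \subseteq A$ of density $1$. Finally, your stated ``tension'' between $\rho(A)=1$ and $\overline{\rho}(A\setminus W_e)>0$ is not a real obstacle, since $A\setminus W_e \subseteq A$; the genuine constraint is only that the region currently \emph{restrained out of} $A$ has small density, and working inside a single $R_{\langle e,i\rangle}$ with one active interval at a time handles that automatically.
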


The technique we introduce for handling non-lowness is quite flexible, and 
we illustrate this fact with some easy applications. 
For example,
recall that if  $A$ is a c.e.\ set,  its complement $\overline{A}$ is
called \emph{semilow} if $\{e:W_e\cap \overline{A}\ne \emptyset\}\le_T
\emptyset'$, and is called \emph{semilow$_{1.5}$} if $\{e:|W_e \cap
\overline{A}|=\infty\} \le_m \{e : |W_e| = \infty\}$.  The
implications low implies semilow implies semilow$_{1.5}$ hold, and it
can be shown that they cannot be reversed.
These notions were introduced by Soare 
\cite{soare77}, and Maass \cite{maass} in connection 
with both computational complexity and the lattice of 
computably enumerable sets. 

We also prove the following characterization of non-lowness.

\begin{theorem} If  $\bf a$ is  a c.e.\ degree    then  {\bf a} is not low
if and only if  there is a c.e.\ set $A$ of degree $\bf a$ such that $\overline{A}$
is not semilow$_{1.5}$.
\end{theorem}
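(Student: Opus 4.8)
The plan is to prove the two implications separately. The implication from right to left is immediate: if $\mathbf a$ is low and $A$ is any c.e.\ set of degree $\mathbf a$, then $A$ is low, so the set $\{e : W_e \cap \overline A \ne \emptyset\}$, which is c.e.\ in $A$ and hence $\le_T A' \equiv_T \emptyset'$, witnesses that $\overline A$ is semilow, and therefore semilow$_{1.5}$ by the implications recalled above. Hence no c.e.\ set of degree $\mathbf a$ can have a complement that fails to be semilow$_{1.5}$.

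For the forward implication, fix a non-low c.e.\ degree $\mathbf a$, a c.e.\ set $B$ of degree $\mathbf a$, and write $D_A := \{e : |W_e \cap \overline A| = \infty\}$. Since $\{e : |W_e| = \infty\}$ is $\Pi^0_2$-complete, $\overline A$ is semilow$_{1.5}$ if and only if $D_A$ is $\Pi^0_2$, so it is enough to construct a c.e.\ set $A \equiv_T B$ with $A' \le_m D_A$: were $D_A$ in $\Pi^0_2$, then $A'$ would be $\Pi^0_2$, hence $\Delta^0_2$ (as $A'$, being c.e.\ in the c.e.\ set $A$, is always $\Sigma^0_2$), contradicting the non-lowness of $\mathbf a$. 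I would build $A$ by coding $B$ into a fixed computable column of $A$, so that $B \le_T A$, while enumerating every other element of $A$ only under a $B$-permission, so that $A \le_T B$; thus $A$ has degree $\mathbf a$. Fixing a computable $g$ for which $W_{g(e)}$ is the $e$-th auxiliary c.e.\ set built during the construction, the requirement $R_e$ aims to arrange $\Phi_e^A(e)\downarrow$ if and only if $g(e) \in D_A$, which gives $A' \le_m D_A$ via $g$.

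The strategy for $R_e$ is reactive rather than protective. While the current computation $\Phi_{e,s}^{A_s}(e)\downarrow$ persists, $R_e$ puts one fresh large follower into $W_{g(e)}$ at every stage and keeps it out of $A$; when that computation is destroyed, the followers attached to it are marked, and a marked follower is enumerated into $A$ as soon as $B$ next permits (the coding of $B$ having top priority throughout). If $\Phi_e^A(e)\downarrow$, then its true computation eventually persists forever, infinitely many followers attached to it stay out of $A$, and $g(e) \in D_A$; if $\Phi_e^A(e)\uparrow$ through an approximation that settles, only finitely many followers are ever created and $g(e) \notin D_A$. The remaining case is the heart of the matter: $\Phi_e^A(e)\uparrow$ while the approximation oscillates forever, so that infinitely many followers are created, every one of them must be killed to keep $W_{g(e)} \cap \overline A$ finite, and each killing needs a $B$-permission. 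This is exactly the point at which non-lowness enters, via the nonuniform technique behind Theorem~\ref{nonlow1}: a non-low c.e.\ degree, unlike a low one, makes permissions available densely enough that all but finitely many marked followers are eventually put into $A$. Granting this, every $R_e$ is satisfied, so $A' \le_m D_A$ with $A \equiv_T B$, and $A$ is a c.e.\ set of degree $\mathbf a$ whose complement is not semilow$_{1.5}$. I expect this last case, forcing the oscillation to be tamed by enough permissions, to be where essentially all the work lies.
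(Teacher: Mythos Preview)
Your reformulation is sound and genuinely different from the paper's argument. The paper diagonalizes directly against each candidate $m$-reduction $\varphi_e$ that might witness semilow$_{1.5}$: for each pair $(e,i)$ it builds a witness set $V_{e,i}$ and uses the ``threaten to compute $C'(i)$'' device introduced earlier in the paper, so that either $\varphi_e$ is defeated on $V_{e,i}$ or the construction correctly guesses $C'(i)$ in the limit. Your route via $A' \le_m D_A$ avoids ever naming a candidate reduction.

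However, your analysis of the oscillating case misidentifies what makes it work. You invoke non-lowness of $B$ and the nonuniform technique of the earlier theorem to supply the permissions---but that technique requires an extra index $i$ on each requirement so that failure to obtain permission can be converted into a correct limit guess at $B'(i)$, and your framework (a single witness $W_{g(e)}$ per $e$, assembled into one $m$-reduction) leaves no room for such a splitting. In fact no threatening is needed: the permissions are automatic. Every number that enters $A$---whether from the coding of $B$ or from dumping a marked follower---does so at a stage at which $B$ simultaneously changes below it; hence any injury to $\Phi_e^{A}(e)$ is accompanied, at that same stage, by a $B$-change below all current $R_e$-followers, and they can be dumped immediately (processing the resulting cascade of injuries within the stage). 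With this observation your construction succeeds uniformly for every c.e.\ set $B$, and non-lowness enters only in the final line, exactly where you already placed it. Tracking $\Phi_e^{B}(e)$ rather than $\Phi_e^{A}(e)$ makes this transparent---then the injury \emph{is} the permission---and also eliminates the cascade. So the case you flagged as ``where essentially all the work lies'' is in fact easy, and your argument, once the reason is corrected, is simpler than the paper's, which genuinely does rely on the threatening machinery.
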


We remark in passing that our technique has also found applications in 
effective algebra. Downey and Melnikov 
\cite{DoMe} use the methodology to 
characterize the $\Delta_2^0$-categorical 
homogeneous completely decomposable torsion-free abelian groups
in terms of the semilowness of the type sequence.

Another direction  is to ask what kinds of densities are
guaranteed for computable or c.e.\ subsets.  We prove the following.

\begin{theorem} There is a c.e.\ set of density 1 with 
no computable subset of \emph{nonzero} density. Such sets exist in each 
non-low c.e.\ degree.
\end{theorem}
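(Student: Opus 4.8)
The plan is to adapt the proof of Theorem~\ref{nonlow1}. Fix a non-low c.e.\ degree $\mathbf a$, a c.e.\ set $D\in\mathbf a$ with a computable enumeration $\{D_s\}$, and build a c.e.\ set $A$ with $A\le_T D$, while coding $D$ into $A$ at a sparse computable set of coding locations so that $D\le_T A$ and hence $A\equiv_T D$ has degree $\mathbf a$. Partition $\omega$ into consecutive intervals $I_0,I_1,\dots$ with $|I_k|$ increasing slowly enough that $|I_k|=o(|I_0\cup\dots\cup I_k|)$ (e.g.\ $|I_k|=k$); then any set that meets each $I_k$ in all but boundedly many points, apart from the coding locations, automatically has density $1$, so the generic strategy for the density requirement is just to put essentially all of each $I_k$ into $A$.

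The remaining requirements are, for $e,i\in\omega$, $R_{e,i}$: if $\Phi_e$ is total and $\{0,1\}$-valued, if $B_e:=\Phi_e^{-1}(1)\subseteq A$, and if $\overline\rho(B_e)\ge 2^{-i}$, then some element of $B_e$ is kept out of $A$ (which is impossible, so in fact this configuration never occurs). If every $R_{e,i}$ is met and $B\subseteq A$ is computable, we may take $B=B_e$ for a total $\{0,1\}$-valued $\Phi_e$; then $\overline\rho(B)<2^{-i}$ for all $i$, so $\overline\rho(B)=0$ and $B$ has no nonzero (upper, hence ordinary) density. The arithmetic point that makes this workable is that $\overline\rho(B_e)\ge 2^{-i}$ forces $|B_e\cap I_k|\ge 2^{-i-1}|I_k|$ for infinitely many $k$: along a subsequence $n_j\to\infty$ with $\rho_{n_j}(B_e)$ near $2^{-i}$, the weighted average of the fractions $|B_e\cap I_k|/|I_k|$ over the intervals filling $[0,n_j)$ stays near $2^{-i}$, so these fractions cannot all eventually be below $2^{-i-1}$.

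The action of $R_{e,i}$ is the natural diagonalization: choosing intervals $I_k$ from an infinite computable reservoir assigned to it, $R_{e,i}$ holds $I_k$ out of $A$ while waiting for $\Phi_e$ to converge on all of $I_k$; if this happens and $|\Phi_e^{-1}(1)\cap I_k|\ge 2^{-i-1}|I_k|$, it permanently keeps the least such element out of $A$, enumerates the rest of $I_k$, and is declared satisfied; if $\Phi_e$ converges on $I_k$ with too small a fraction, it releases all of $I_k$ into $A$ and moves to its next interval. Each $R_{e,i}$ acts at most once, so it injures lower-priority requirements only finitely often, and it omits from $A$ at most one point of one interval; together with the coding locations these form a density-$0$ set, so $\rho(A)=1$. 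If $\Phi_e$ is total and $\overline\rho(B_e)\ge 2^{-i}$, the arithmetic point above—combined with choosing the reservoirs so that $B_e$ cannot avoid $R_{e,i}$'s intervals while keeping $\overline\rho(B_e)\ge 2^{-i}$—furnishes infinitely many intervals on which $R_{e,i}$ is poised to act, so it does act and keeps an element of $B_e$ out of $A$.

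The main obstacle, and the point at which non-lowness is essential, is the timing of the waiting step: $A$ must stay c.e.\ and $\le_T D$ and carry the coding of $D$, so no interval can be held out of $A$ indefinitely, yet a total computable $\Phi_e$ can converge more slowly than any prescribed computable schedule, so a computably bounded ``patience'' will let slow positive-density computable subsets slip through (as indeed it must for low degrees, by the forward direction of Theorem~\ref{nonlow1}). This is resolved exactly as in Theorem~\ref{nonlow1}: non-lowness of $\mathbf a$ supplies, through the nonuniform approximation of that proof, a $D$-controlled schedule of waiting times that is correct infinitely often, so that on cofinitely many of $R_{e,i}$'s eligible intervals we release promptly (protecting density, the coding, and the reduction $A\le_T D$) while on infinitely many of them we wait long enough to witness the convergence of a total $\Phi_e$. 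Verifying the construction then amounts to checking that the non-lowness module of Theorem~\ref{nonlow1} cooperates with the finer thresholds $2^{-i}$ and the one-point-per-interval bookkeeping, which it does because each $R_{e,i}$ needs only one successful diagonalization. Finally, such a set cannot lie in a low c.e.\ degree: by the forward direction of Theorem~\ref{nonlow1}, a c.e.\ set of density $1$ in a low degree has a computable subset of density $1$, which in particular has nonzero density.
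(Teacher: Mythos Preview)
There is a real gap at the point you yourself flag with a hand-wave: the reservoir claim. You write that the reservoirs can be ``chosen so that $B_e$ cannot avoid $R_{e,i}$'s intervals while keeping $\overline\rho(B_e)\ge 2^{-i}$,'' but with the intended picture of a fixed computable reservoir assigned to each $R_{e,i}$ this cannot be arranged. With infinitely many pairwise disjoint reservoirs partitioning the intervals, the union of the reservoirs for $R_{e,i}$ over \emph{all} $i$ (for a fixed $e$) is still only a density-$0$ subset of $\omega$ in general, since the remaining reservoirs (for all $e'\ne e$) must cover the rest. A total $\Phi_e$ whose $B_e$ is supported entirely on intervals assigned to requirements with first index $e'\ne e$ then satisfies $|B_e\cap I_k|=0$ for every $I_k$ in every $R_{e,i}$-reservoir, so no $R_{e,i}$ ever fires, yet $\overline\rho(B_e)$ can be as close to $1$ as you like. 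Your arithmetic observation (high upper density forces infinitely many locally dense intervals) is correct, but nothing ties those intervals to any reservoir indexed by the relevant $e$.

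The paper's construction avoids this by replacing the local-density search with a global density-drop trap. For each $e$ it chooses intervals $I_{e,j}$ dynamically (interleaved across all $e$ so that every natural number lies in exactly one interval), each with a long initial segment $J_{e,j}$ dumped into $A$ at once and a short tail $I_{e,j}\setminus J_{e,j}$ withheld; the ratio $b_j/c_j$ (with $b_j=\max J_{e,j}$, $c_j=\max I_{e,j}$) is fixed at $1-2^{-(e+1)}$. If $S_e$ ever hits a tail, that point stays out of $A$ and $S_e\not\subseteq A$. If $S_e$ misses every tail then $S_e\cap(b_j,c_j]=\emptyset$ for all $j$, so $\rho_{c_j}(S_e)=(b_j/c_j)\,\rho_{b_j}(S_e)$, and under the hypothesis that $\rho(S_e)$ exists the limit forces $\rho(S_e)=0$. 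This is a statement about the global density of $S_e$ at the markers $b_j,c_j$; it does not require $S_e$ to be dense on any particular $I_{e,j}$, and it works even though $\bigcup_j I_{e,j}$ may itself have density $0$. (Note that this yields only $\rho(S_e)=0$ when the density exists, matching the theorem as stated; your stronger target $\overline\rho(B_e)=0$ would need additional work even with this mechanism.) The non-low-degree clause is then obtained by grafting the permitting and $C'$-threatening module from the proof of Theorem~\ref{nonlow1} onto this trap strategy rather than onto a local search.
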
 

This result stands in contrast to the low case, where we show that
all possible densities for computable subsets are achieved.

\begin{theorem} If $A$ is c.e.\ and low and has density $r$,
then for any $\Delta^0_2$ real $\hat{r}$ with $0 \leq \hat{r} \leq r$,
$A$ has a computable subset of density $\hat{r}.$
\end{theorem}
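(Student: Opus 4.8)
The plan is to build the desired computable subset $B$ of $A$ by a direct construction relative to $\emptyset'$, then use lowness to replace the $\emptyset'$-oracle with a true computable procedure. First I would fix a computable enumeration $\{A_s\}$ of $A$ and a $\Delta^0_2$ approximation $\{\hat r_s\}$ to $\hat r$; since $A$ is low, we have $A' \equiv_T \emptyset'$, and the key leverage is that questions of the form ``does $\Phi^A_e$ converge'' and, more importantly, ``is $x$ enumerated into $A$ at some stage'' are answerable by an oracle that, thanks to lowness, a computable function can \emph{guess} correctly for all but finitely many inputs (a standard ``true stage'' or ``$\Sigma^0_1$-permitting via a low oracle'' phenomenon). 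The combinatorial goal is: produce a computable $B \subseteq A$ with $\rho(B) = \hat r$, knowing $\rho(A) = r \ge \hat r$.

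The construction itself is a thinning procedure. Since $A$ has density $r$, the initial segments $A \upharpoonright n$ have size $\approx rn$; I want to select a computable sub-collection so that the selected initial segments have size $\approx \hat r n$. The natural approach is to partition $\omega$ into consecutive blocks $I_0, I_1, \dots$ of rapidly growing length, and on block $I_k$ decide computably (using the low approximation) which elements of $A \cap I_k$ to put into $B$, aiming to keep the running density of $B$ tracking $\hat r$. Because $A$ is c.e.\ and not computable, we cannot know $A \cap I_k$ exactly at any finite stage; this is where lowness enters. Using the fact that $A$ is low, there is a computable function that, on block $I_k$, outputs a finite set $\tilde A_k \subseteq I_k$ which equals $A \cap I_k$ for all sufficiently large $k$ — more precisely, one gets a computable ``guess'' whose total error $\sum_k |(\tilde A_k)\,\triangle\,(A\cap I_k)|$ restricted to the first $n$ integers is $o(n)$, which suffices for density purposes. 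We then define $B \cap I_k$ by a computable rule that selects from $\tilde A_k$ enough elements to correct the cumulative count toward $\hat r_s \cdot n$; as long as we only ever select elements we are confident lie in $A$ (using the low guess, and noting that an element wrongly omitted only costs density, not membership), we maintain $B \subseteq A$. Because the blocks grow fast, the finitely many early blocks on which the guess is wrong contribute density $0$, and on the cofinitely many correct blocks the selection rule forces $\rho_n(B) \to \hat r$.

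The main obstacle is the simultaneous management of two sources of approximation error: the $\Delta^0_2$ oscillation of $\hat r_s$ toward $\hat r$, and the low-oracle guessing error about $A \cap I_k$. Neither is under our control at a given finite stage, so the block lengths must be chosen to grow quickly enough that (i) each block is long enough to absorb the correction needed to bring the running density of $B$ back in line with the current approximation $\hat r_s$, yet (ii) no single block is so long relative to the preceding ones that a guessing error on it spoils the limit. I expect this to be handled by a bookkeeping argument: let $|I_k|$ grow like $2^k$ (or faster), observe that the settling time of the $\hat r$-approximation and of the low guesses is finite, and verify that past the settling point the density contribution of each new block is $\hat r \cdot |I_k|/n + o(1)$. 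The subsidiary point to check carefully is that we never need to \emph{remove} an element from $B$ — the selection on $I_k$ is made once, from the (eventually correct) guess $\tilde A_k$, so $B$ is genuinely computable and genuinely a subset of $A$. Finally, the ``each non-low c.e.\ degree'' style sharpening is not claimed here; this theorem is purely the positive, low-case statement, so no injury argument or priority construction is needed — just the thinning scheme plus the lowness-based guessing lemma.
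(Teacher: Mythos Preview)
Your proposal has a genuine gap at the ``lowness-based guessing lemma.''  The claim that there is a computable function producing $\tilde A_k \subseteq I_k$ with $\tilde A_k = A \cap I_k$ for all sufficiently large $k$ is false for any noncomputable $A$: if the blocks $I_k$ partition $\omega$ and cofinitely many are guessed exactly, then $A$ differs from a computable set on a finite set and is therefore computable.  Lowness does not give block-wise eventual correctness, nor does any ``true stage'' argument; true stages exist but are not computably locatable.  Your fallback formulation---that the cumulative error is $o(n)$---is in fact true here, but it is not a standard consequence of lowness; it is essentially the statement that $A$ has a computable subset of the same density, which is itself the nontrivial part of the theorem and requires a real construction.

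The paper proceeds in two clean steps.  First, using lowness one shows that the minimal witness function $w_A(k)=(\mu y)(\forall n\ge y)[\rho_n(A)\ge 1-2^{-k}]$ (suitably adapted to density $r$) is $\Delta^0_2$; a computable approximation to it drives a \emph{look-ahead} construction: to decide whether $k\in B$, one waits for $A$ to fill in sufficiently on the much longer interval $[0,k^2)$ before committing, and this yields a computable $B\subseteq A$ with $\rho(B)=r$.  Second, once one has a \emph{computable} subset of density $r$, thinning it to any $\Delta^0_2$ density $\hat r\le r$ is routine (work inside $B$ and realize the relative density $\hat r/r$).  Your thinning instinct is correct for the second step; what is missing is the look-ahead mechanism that makes the first step go through.
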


Finally with Eric Astor we prove the following.

\begin{theorem}[with Astor] There is a c.e.\ set $A$ of density $1$ 
such that the degrees of subsets of $A$ of density $1$ 
are exactly the high degrees.
\end{theorem}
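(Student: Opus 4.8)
The goal is to build a c.e. set $A$ of density $1$ such that the collection of degrees computing some density-$1$ subset of $A$ is exactly the class of high degrees. This splits into two halves: first, *every* high degree must compute a density-$1$ subset of $A$ (a "positive" requirement, achieved by making $A$ cooperative enough), and second, *no non-high* degree may compute such a subset (a "negative" requirement, achieved by diagonalization). The key classical fact to exploit is the characterization of highness: a degree $\mathbf{d}$ is high iff $\mathbf{d}' \geq_T \emptyset''$ iff $\mathbf{d}$ computes a function dominating every computable function (the Martin domination theorem). So highness is really about fast-growing functions, and the natural strategy is to make the "rate at which $A$ fills up" encode exactly a dominating-function problem.

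I would construct $A$ as follows. Partition $\omega$ into consecutive intervals $I_0, I_1, I_2, \dots$ with $|I_n|$ growing fast (say $|I_n| = 2^n$ or faster, so that omitting one element of $I_n$ costs density $O(2^{-n})$ and hence the total omitted density can be driven to $0$; this is what forces density $1$). Inside each interval $I_n$ we enumerate into $A$ all but at most one "hole"; the position of the hole is a moving target controlled by a c.e. approximation. For the positive side, we want: a set $B \subseteq A$ of density $1$ is computable from an oracle $\mathbf{d}$ precisely when $\mathbf{d}$ can "catch" the holes, i.e. for almost every $n$, decide in advance a point of $I_n$ that will never be a hole. If the final hole location $h(n)$ is given by a $\emptyset'$-computable function with a c.e. approximation whose number of mind-changes on $I_n$ is bounded by some computable function, then highness (via domination) should suffice to track it; conversely, a non-high oracle will infinitely often fail to outrun the approximation, leaving it unable to certify membership in $A$ on a positive-density set of intervals. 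So the architecture is: $A$'s holes move according to a $\Delta^0_2$ process rigged so that predicting them is exactly as hard as dominating all computable functions.

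For the negative direction I would run a priority construction with requirements $N_e$: "$\Phi_e$ is not (the characteristic function of) a density-$1$ subset of $A$, OR the oracle $\Phi_e$ reduces from is high." Since we cannot literally test highness, the standard device is to diagonalize against $\Phi_e$ unless $\Phi_e$'s behavior *forces* its oracle to be high. Concretely, $N_e$ watches the oracle computation $\Phi_e^{X}$ purporting to compute a density-$1$ subset $B_e$ of $A$; whenever $\Phi_e^{X}$ commits (with small use) to putting some $x \in I_n$ into $B_e$, the strategy is permitted to later make $x$ the permanent hole in $I_n$ — unless $X$ changes below the use in time, which is exactly the event "$X$ outran our threat." Arranging the threats so that *surviving all of them cofinitely often* is equivalent to computing a dominating function is the heart of the matter: one threads a computable sequence of "deadlines" through the intervals so that an oracle avoiding the diagonalization must, on almost every interval, change below the use before the deadline, and reading off these change-stages yields a function dominating a fixed computable list. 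This is a finite-injury construction in the end (each $N_e$ acts on finitely many intervals before being satisfied or permanently threatening), but the interplay with the positive side requires care: the global density of $A$ must stay $1$ regardless of which threats get realized, which is why we budget at most one hole per interval and take $|I_n|$ growing.

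**The main obstacle.** The delicate point is the *simultaneous* satisfaction of both directions: the holes must move enough that a non-high oracle provably loses the catch-up game (negative side), yet move little and predictably enough that *every* high oracle wins it (positive side), and the $A$ we build at the end must in fact have density $1$ on the nose. Getting the combinatorics of interval lengths, deadline sequences, and mind-change bounds to line up so that "catching the holes cofinitely" coincides exactly with "computing a dominating function" — with no gap on either side — is where the real work lies; I expect this to require a two-sided pinning lemma relating the hole-approximation to a universal computable sequence, proved once and then invoked for both the highness-suffices argument and the non-highness-fails argument. I would also need to verify that the finite-injury priority interactions do not disturb infinitely many intervals of any fixed "level," so that the density-$1$ guarantee is not merely approached but attained.
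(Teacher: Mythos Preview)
Your high-level instinct --- tie density-$1$ subsets of $A$ to a domination problem characterizing highness --- is sound, but the execution has a genuine gap, and the paper's route is both different and simpler.

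\textbf{The gap.} Your negative requirements $N_e$ are framed as diagonalizing against computations $\Phi_e^X$ for an unspecified oracle $X$, with phrases like ``unless $X$ changes below the use in time.'' This does not make sense: $X$ is an arbitrary set, not a c.e.\ set under enumeration, so there is no approximation to $X$ that can ``change.'' You cannot run a priority argument with one requirement per Turing functional and hope it handles all uncountably many oracles $X$ by watching $X$-changes. The requirement must be reformulated so that it refers only to properties of $A$ (or of arbitrary subsets $B$ of $A$) and not to any hypothetical oracle computing $B$.

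\textbf{How the paper does it.} The paper never diagonalizes against oracles. Instead it works with the \emph{minimal witness function}
\[
w_A(k) = (\mu b)(\forall n \geq b)[\rho_n(A) \geq 1 - 2^{-k}],
\]
which satisfies $w_A \leq_T A'$ and, crucially, $w_B \geq w_A$ for every $B \subseteq A$ of density $1$. The negative requirements simply force $w_A$ to grow fast: for each $n$, if $W_n$ is finite then $w_A(n+2) \geq \max(W_n \cup \{0\})$. This is achieved by restraining an ``$n$-large'' chunk of $R_n$ from $A$ until $W_n$ enumerates beyond it, exactly as in the earlier density-$1$-with-no-computable-density-$1$-subset construction. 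From $w_B$ and $0'$ one can then decide $\mathrm{Inf} = \{n : |W_n| = \infty\}$, so $B' \geq_T 0''$. No mention of the oracle computing $B$ is needed; the argument is purely about $B$ itself.

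\textbf{The positive direction is much easier than you anticipate.} You sketch a delicate two-sided ``pinning lemma'' matching hole-tracking to domination. The paper (via Astor's observation) avoids this entirely: since $w_A \leq_T A' \leq_T 0'' \leq_T B'$ for any high $B$, the witness function $w_A$ is $\Delta^0_2$ relative to $B$, and one simply relativizes the earlier Theorem~\ref{delta2w} (a c.e.\ set with a $\Delta^0_2$ witness function has a computable density-$1$ subset) to $B$. This gives a $B$-computable density-$1$ subset $C_0 \subseteq A$; a routine coding on a computable density-$0$ subset of $A$ then upgrades $C_0 \leq_T B$ to $C \equiv_T B$. So the positive half is a one-line relativization, not a new construction.

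In short: replace ``diagonalize against $\Phi_e^X$'' with ``force the witness function $w_A$ to dominate the $\max W_n$'s,'' and replace your hole-per-interval machinery with the $R_n$-based restraint already used earlier in the paper. The positive direction then falls out of prior results for free.
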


On the other hand,  we obtain a number of positive results on approximating
c.e.\ sets by computable subsets.   

\begin{theorem} If $A$ is a c.e.\ set, then for every real number
  $\epsilon > 0$ there is a computable set $B \subseteq A$ such that
  $\underline{\rho}(B) > \underline{\rho}(A) - \epsilon$.
\end{theorem}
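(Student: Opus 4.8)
The plan is to build the computable subset $B$ as a monotone union $B = \bigcup_{i}\bigl(A_{s_{i+1}} \cap [\ell_i,\ell_{i+1})\bigr)$ of finite pieces, where $N = \ell_0 < \ell_1 < \cdots$ are ``checkpoints'' and $s_0 = 0 < s_1 < \cdots$ are stages of a fixed computable enumeration $(A_s)_s$ of $A$. With this shape, $B \subseteq A$ and $B$ is computable for free, and the entire difficulty is the density estimate. For the reductions: put $d := \underline{\rho}(A)$; if $\epsilon > d$ take $B = \emptyset$, and since a witness for a small $\epsilon$ is a witness for every larger one we may assume $0 < \epsilon \le d$ with $\epsilon$ as small as convenient. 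As a \emph{noneffective} parameter---the (harmless) source of nonuniformity---fix $N$ with $|A \upharpoonright n| \ge (d-\epsilon/4)\,n$ for all $n \ge N$, which exists by definition of $\liminf$; below $N$ we put nothing of $B$.

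The construction proceeds in rounds $i = 0,1,\dots$. At the start of round $i$ we have $\ell_i$ and $s_i$; I would search, dovetailing over stages $s > s_i$ and over $\ell' > \ell_i$, for the first $s$ for which some $\ell' > \ell_i$ satisfies
\[
|A_s \upharpoonright n| \ \ge\ (d-\epsilon/2)\,n \qquad\text{for every } n \text{ with } \ell_i < n \le \ell' ,
\]
then take $\ell_{i+1}$ to be the largest such $\ell'$ (it is finite, as $|A_s\upharpoonright n|/n \to 0$), put $s_{i+1} := s$, and commit $B \cap [\ell_i,\ell_{i+1}) := A_{s_{i+1}} \cap [\ell_i,\ell_{i+1})$. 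The search always terminates: for the true set $A$ the displayed inequality holds at every $n > \ell_i\ (\ge N)$, since $|A\upharpoonright n| \ge (d-\epsilon/4)n > (d-\epsilon/2)n$; in particular $\ell' = 2\ell_i$ is eventually certified at stage $s$ (only finitely many $n$ are involved), so the checkpoints grow at least geometrically and $B$ is defined everywhere.

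Now the density estimate. Let $\Delta_i$ denote the number of $x < \ell_i$ that enter $A$ only \emph{after} the stage at which the block containing $x$ was committed, so $\Delta_i = |A\upharpoonright \ell_i| - |B\upharpoonright \ell_i|$ is the ``loss'' accumulated by checkpoint $\ell_i$. A short computation shows that for every $n$ with $\ell_i < n \le \ell_{i+1}$ one has $|B\upharpoonright n| = |A_{s_{i+1}}\upharpoonright n| - \Delta_i'$ for some $\Delta_i' \le \Delta_i$, whence, by the commitment condition, $\rho_n(B) \ge (d-\epsilon/2) - \Delta_i/\ell_i$ \emph{uniformly} over the whole block (so there is no mid-block sag to worry about). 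Consequently $\underline{\rho}(B) \ge d - \epsilon$---and, rerunning everything with $\epsilon/2$ in place of $\epsilon$, strict inequality---\emph{provided} $\Delta_i \le (\epsilon/2)\,\ell_i$ for all large $i$.

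Bounding the accumulated loss $\Delta_i$ is the step I expect to be the main obstacle: \emph{a priori} a fixed positive fraction of $A\upharpoonright \ell_i$ can be enumerated after its block was committed, which would be fatal. The idea to exploit is that loss is self-limiting: the commitment at stage $s_{j+1}$ certifies $|A_{s_{j+1}}\upharpoonright \ell_j| \ge (d-\epsilon/2)\ell_j$, so if a large batch of elements below $\ell_j$ still enters $A$ after $s_{j+1}$, then $|A\upharpoonright \ell_j|$---hence $\rho_{\ell_j}(A)$, and $\rho_n(A)$ for $n$ somewhat past $\ell_j$---must run well above $d-\epsilon/4$, which in turn hands $B$ a surplus at those $n$ that it carries forward against the later loss. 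Turning this heuristic into a genuine inequality---calibrating the gap between the $(\epsilon/2)$ used in the commitment and the $(\epsilon/4)$ guaranteed by $N$, and tuning the search so that early commitments are not over-eager (for instance by committing a block only at a stage at which its current trace has been stable for a long, growing number of steps, or by requiring the certified jump to be neither too small nor too large)---is the technical heart of the argument.
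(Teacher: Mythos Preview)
Your construction, as stated, does not work, and the step you flag as the main obstacle---bounding the accumulated loss $\Delta_i$---is a genuine gap, not just a technicality to be filled in. Concretely, take $A$ to be the even numbers with the straightforward enumeration $A_s=\{0,2,\dots,2(s-1)\}$, so $d=1/2$, and let $q=d-\epsilon/2$. Your search commits at the \emph{first} stage $s$ at which $|A_s\upharpoonright(\ell_i+1)|\ge q(\ell_i+1)$, which happens at $s_{i+1}\approx q\,\ell_i$; at that stage the maximal element of $A_{s_{i+1}}$ is $2(s_{i+1}-1)\approx 2q\ell_i<\ell_i$, so $A_{s_{i+1}}$ contains no element $\ge\ell_i$, the certified interval is only $(\ell_i,\ell_i+1]$, and you commit the empty set on the one-point block $\{\ell_i\}$. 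Inductively $B\cap[N,\infty)=\emptyset$, so in particular your claim that ``the checkpoints grow at least geometrically'' is false. Your self-limiting heuristic breaks down because the density $|A_{s_{i+1}}\upharpoonright n|\ge qn$ that triggers commitment is supplied entirely by elements below $\ell_i$ that already missed their own earlier deadlines; they count for $A_{s_{i+1}}$ but not for $B$, so no surplus is ever handed forward. (Separately, the search condition uses the possibly noncomputable real $d$; this is easy to repair with a rational, but it should be said.)

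The paper's proof sidesteps the accumulated-loss problem with a single device, a quadratic \emph{look-ahead}, and needs no telescoping estimate at all. Fix a rational $q$ with $d-\epsilon<q<d$ and (noneffectively) $n_0$ with $\rho_n(A)\ge q$ for all $n\ge n_0$. For each $n\ge n_0$ let $s(n)$ be least with $\rho_n(A_{s(n)})\ge q$; set $t(k)=\max\{s(n):n_0\le n\le k^2\}$ and declare $k\in B$ iff $k\in A_{t(k)}$. Then for every $n\ge n_0$ and every $k\ge\sqrt n$ one has $s(n)\le t(k)$, hence $A_{s(n)}\subseteq A_{t(k)}$, so every element of $A_{s(n)}\upharpoonright n$ above $\sqrt n$ lies in $B$; thus $|B\upharpoonright n|\ge qn-\sqrt n$ and $\rho_n(B)\ge q-1/\sqrt n\to q>d-\epsilon$. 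The point is that by waiting, when committing $k$, for the density condition to hold at \emph{every} $n\le k^2$, the only elements below a given $n$ that can be missed are the $O(\sqrt n)$ many $k<\sqrt n$---a loss that is negligible by construction, rather than one that has to be tracked and bounded inductively.
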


It turns out that there is a very close correlation between the
complexity of a set and the complexity, as real numbers, of its
densities.  We measure the complexity of a real number by classifying
its upper or lower cut in the rationals in the arithmetical hierarchy.
 
In \cite{JS}, Theorem 2.21, it was shown that the densities of
computable sets are exactly the $\Delta^0_2$ reals in the interval
$[0,1]$.  In this article we characterize the densities of the c.e.\
sets and the upper and lower densities of both computable and c.e.\
sets.  We assume that we have fixed a computable bijection between the
natural numbers and the rational numbers.  We thus say that a set of
rational numbers is $\Sigma_n$ if the corresponding set of natural
numbers is $\Sigma_n$, and similarly for other classes in the
arithmetic hierarchy.  The following definition is fundamental and
standard:

\begin{defn} \label{left} A real number $r$ is \emph{left}-$\Sigma^0_n$ if its
  corresponding lower cut in the rationals, $\{q \in \mathbb{Q} : q <
  r\}$, is $\Sigma^0_n$.   We define ``left-$\Pi^0_n$'' analogously.
\end{defn}
  
The following result, together with Theorem 2.21 of \cite{JS},
characterizes the densities and the upper and lower densities of the
computable and c.e.\ sets.  It can be easily extended by
relativization and dualization to characterize the densities and upper
and lower densities of the $\Sigma^0_n$, $\Pi^0_n$ and $\Delta^0_n$
sets for all $n \geq 0$.

\begin{thm} \label{i3}
  Let $r$ be a real number in the interval $[0,1]$.  Then the
  following hold:
\begin{itemize}
     \item[(i)] $r$ is the lower density of some computable set  if and only if   $r$ is left-$\Sigma^0_2$.
    \item[(ii)] $r$ is the upper density of some computable set  if and only if   $r$ is left-$\Pi^0_2$.
    \item[(iii)] $r$ is the lower density of some c.e.\ set  if and only if   $r$ is left-$\Sigma^0_3$.
    \item[(iv)] $r$ is the upper density of some c.e.\ set  if and only if   $r$ is left-$\Pi^0_2$.
    \item[(v)]  $r$ is the  density of some c.e.\ set  if and only if   $r$ is left-$\Pi^0_2$.

\end{itemize}
\end{thm}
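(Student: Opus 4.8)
The plan is to treat the five items as a matched set of upper and lower bounds, where the ``only if'' directions are routine complexity computations and the ``if'' directions are the real content: given a real $r$ of the prescribed complexity, we must build a computable or c.e.\ set realizing $r$ as its (lower, upper, or exact) density. For the ``only if'' directions, I would argue as follows. If $A$ is computable, then $\rho_n(A)$ is a computable sequence of rationals, so $\underline{\rho}(A) = \liminf_n \rho_n(A)$ and $\overline{\rho}(A) = \limsup_n \rho_n(A)$ have the standard complexity of liminf/limsup of a computable sequence. Concretely, $q < \liminf_n \rho_n(A)$ iff $(\exists m)(\forall n \geq m)\, q < \rho_n(A)$, which is $\Sigma^0_2$, giving (i); dually $q < \limsup_n \rho_n(A)$ iff $(\forall m)(\exists n \geq m)\, q < \rho_n(A)$, which is $\Pi^0_2$, giving (ii). For a c.e.\ set $A$, the sequence $\rho_n(A)$ is no longer computable but it is the limit of a computable double sequence $\rho_n(A_s)$ that is nondecreasing in $s$; the relation ``$q < \rho_n(A)$'' is then $\Sigma^0_1$, so for the lower density ``$q < \underline{\rho}(A)$'' becomes $(\exists m)(\forall n \geq m)(\exists s)\, q < \rho_n(A_s)$, which is $\Sigma^0_3$, giving the forward direction of (iii). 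The striking point is (iv) and (v): even though $A$ is only c.e., $\overline{\rho}(A)$ is still left-$\Pi^0_2$, because ``$q < \overline{\rho}(A)$'' is ``$(\forall m)(\exists n \geq m)(\exists s)\, q < \rho_n(A_s)$'', and the innermost $\exists s$ can be absorbed into the $\exists n$ (enlarging $n$ and using monotonicity), collapsing two existential quantifiers into one; so this is $\Pi^0_2$, not $\Pi^0_3$. For (v), since a c.e.\ set whose density exists has $\underline{\rho}(A) = \overline{\rho}(A) = \rho(A)$, the density is automatically left-$\Pi^0_2$ as well.

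For the ``if'' directions I would build the witnessing sets by a direct construction that drives $\rho_n$ to oscillate (or converge) in a controlled way. The basic gadget is: given a target value, we can computably toggle a set between ``putting in a long block of elements'' (pushing $\rho_n$ up toward $1$) and ``omitting a long block'' (pushing $\rho_n$ down toward $0$); by choosing block lengths appropriately we can make $\rho_n$ approach any prescribed sequence of rational guesses. For (ii), suppose $r$ is left-$\Pi^0_2$, so $\{q : q < r\}$ is $\Pi^0_2$; using a computable approximation, we have rationals $q_k \to r$ from below in a $\Delta^0_2$-like fashion, and more usefully we can get, for the $\limsup$, a computable sequence of ``attempts'' $r_s$ with $\limsup_s r_s = r$ (this uses left-$\Pi^0_2$ precisely: $r = \inf_m \sup_{k \geq m}$ of a computable double sequence of rationals in $[0,1]$). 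We then build a computable set $B$ so that along a sparse sequence of lengths $n$ we force $\rho_n(B)$ to be within $2^{-s}$ of $r_s$, while keeping $\rho_n(B)$ below $r + 2^{-s}$ everywhere past stage $s$; this makes $\overline{\rho}(B) = r$. Item (i) is the dual: left-$\Sigma^0_2$ gives $r = \sup_m \inf_{k \geq m}$ of a computable double sequence, and we force $\rho_n(B)$ down toward the $\liminf$ value $r$ along a sparse sequence while keeping it above $r - 2^{-s}$ cofinitely. For (iii), the c.e.\ case, $r$ left-$\Sigma^0_3$ means the lower cut is $\Sigma^0_3 = \exists \Pi^0_2$; we enumerate candidate $\Pi^0_2$ lower bounds, and use the c.e.\ freedom (we may enumerate elements into $A$ over time, only ever raising $\rho_n$) to implement a ``we can always add more later'' strategy that realizes the supremum of the successful candidates as the liminf. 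For (iv) and (v), left-$\Pi^0_2$ gives a computable double sequence with $r = \inf_m \sup_{k\ge m}$; we build a c.e.\ set $A$ by enumerating, in stages, enough elements to push $\rho_n(A)$ up to the current $\sup$-approximation along a fast-growing sequence of $n$'s, which yields $\overline{\rho}(A) = r$; arranging in addition that between these pushes the density also returns near $r$ (by enumerating a matching ``filler'' that keeps $\rho_n$ in $[r - \epsilon_s, r + \epsilon_s]$) gives $\rho(A) = r$ exactly, proving (v) and hence (iv).

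The main obstacle is the ``if'' direction of (iii), the left-$\Sigma^0_3$ case for the lower density of a c.e.\ set, because here the construction genuinely interacts with a $\Sigma^0_3$ oracle: we must realize $r = \sup$ over a $\Sigma^0_1$-indexed family of $\Pi^0_2$ reals as the $\liminf$ of $\rho_n(A)$ for a c.e.\ set $A$, and the only moves available are \emph{increasing} $\rho_n$ over time. The delicate point is that raising $\rho_n(A)$ at small $n$ to chase a candidate lower bound that later turns out to be spurious (its $\Pi^0_2$ condition fails) cannot be undone, so we must organize the construction so that each candidate is only allowed to affect $\rho_n(A)$ for $n$ in its own reserved, far-out block, and so that the liminf is computed from the true value of the sup; a priority/movable-marker argument handles the conflict, but verifying that the liminf comes out exactly $r$ — neither too small (because some genuine lower bound was never allowed to act) nor too large (because a spurious bound permanently inflated the density on a sparse set) — is where the care is needed. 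The remaining items are comparatively routine once the oscillation gadget is set up; the only mild subtlety elsewhere is making sure, in (i) and (ii), that the computable set we build actually has the claimed $\liminf$ or $\limsup$ \emph{and} no better one on the other side, which is handled by the ``cofinitely bounded'' clauses above.
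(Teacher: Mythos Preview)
Your ``only if'' directions match the paper's (modulo the standard caveat that biconditionals like $q < \liminf_n \rho_n(A) \iff (\exists m)(\forall n \geq m)[q < \rho_n(A)]$ fail at $q = r$ when $r$ is rational, so one disposes of rational $r$ first). You also miss a shortcut: the ``if'' direction of (iv) is free from (ii), since computable sets are c.e.; no separate c.e.\ construction is needed there.

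The genuine divergence is (iii). The paper does \emph{not} run a direct priority construction against a $\Sigma^0_3$ presentation of the lower cut. Instead it factors the problem in two steps: first, relativize (i) to $0'$ to see that $r$ is left-$\Sigma^0_3$ iff $r = \underline\rho(B)$ for some $\Delta^0_2$ set $B$; second, prove a conversion lemma that every $\Delta^0_2$ set $B$ admits a c.e.\ set $A$ with $\underline\rho(A) = \underline\rho(B)$. The conversion builds a strictly increasing $\Delta^0_2$ function $t$ and enumerates $A$ so that $\rho_{t(n)}(A) = \rho_n(B)$ for every $n$, arranging also that $A \cap [t(n), t(n+1))$ is an initial segment of that interval, so $\rho_k(A)$ attains its minimum on $[t(n), t(n+1)]$ at an endpoint. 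This route sidesteps exactly the obstacle you flag: all the $\Sigma^0_3$ guessing is absorbed into the $\Delta^0_2$ approximation of $B$, and the c.e.\ construction of $A$ merely tracks the finite densities of $B$ along the movable scale $t$ --- no reserved far-out blocks, no competing $\Pi^0_2$ candidates, no worry about spurious bounds permanently inflating the density. Your direct approach is plausible but, as written, is a plan rather than a proof; the paper's decomposition is both shorter and easier to verify. For (v) the paper similarly works via a general device (partition $[n!,(n+1)!)$ into length-$n$ subintervals and enumerate a prescribed $\Delta^0_2$ fraction of each, then manufacture from the left-$\Pi^0_2$ real a suitable monotone-in-$s$ approximation $g(n,s)$ whose limit in $s$ converges in $n$ to $r$) rather than ad hoc ``filler,'' but that is closer in spirit to your sketch.
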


We also explore the relationship between coarse computability and 
generic computability. The proof that there is a generically computable c.e.\ 
set that is not coarsely computable strongly 
resembles the proof that there is a density 1 c.e.\ 
set without a computable subset of density 1. Thus we might 
expect a similar characterization of the low degrees
using coarse computability but  
here we find  a surprise.

\begin{theorem} Every nonzero c.e.\ degree contains a 
c.e.\ set that is generically computable but not coarsely computable.
\end{theorem}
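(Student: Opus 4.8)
The plan is to prove that every nonzero c.e.\ degree $\mathbf{a}$ contains a c.e.\ set $A$ that is generically computable but not coarsely computable. I would fix a noncomputable c.e.\ set $C$ of degree $\mathbf{a}$ and build $A$ (of degree $\mathbf{a}$, by coding $C$ into $A$ in a controlled way) together with a partial computable function $\varphi$ witnessing generic computability, while diagonalizing against all potential coarse witnesses. The positive requirement is that $\mathrm{dom}(\varphi)$ has density $1$ and $\varphi$ agrees with $A$ on its domain; the negative requirements are
\[
R_e:\quad \{n : \phi_e(n)\downarrow = A(n)\}\ \text{does not have density }1,
\]
for every $e$, where $\phi_e$ ranges over total computable $\{0,1\}$-valued functions (if $\phi_e$ is not total or not $0,1$-valued we need do nothing). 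The overall shape is exactly the Jockusch--Schupp construction of a generically computable c.e.\ set that is not coarsely computable, and the only new ingredient is making the whole thing happen below—and compute—an arbitrary nonzero c.e.\ degree.

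The construction proceeds in blocks: partition $\omega$ into consecutive intervals $I_0, I_1, \dots$ whose lengths grow fast enough that a single ``bad'' block of relative size controlled by $2^{-k}$ on $I_k$ does not disturb the density-$1$ conclusion for $\mathrm{dom}(\varphi)$. Most numbers $n$ are immediately put into $A$ and into $\mathrm{dom}(\varphi)$ with $\varphi(n)=1$; this keeps $\mathrm{dom}(\varphi)$ of density $1$ for free. To satisfy $R_e$ we reserve, inside a block $I_k$ assigned to requirement $R_e$, a sub-block $J$ of numbers on which we refuse to define $\varphi$ and on which we will diagonalize: we wait until $\phi_e$ converges on all of $J$, and then we guarantee that $A$ disagrees with $\phi_e$ on a fixed positive fraction of $J$ (e.g.\ if $\phi_e$ says $1$ somewhere on $J$ we keep that element out of $A$; since $A$ is c.e.\ we can only keep things out by never enumerating them, so we must decide $J$'s membership in $A$ before acting, which is fine because $J$ is a new region not yet used for coding). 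Since $|J|/|I_k|$ can be taken as large as we like relative to $1$, this forces $\{n : \phi_e(n)=A(n)\}$ to have upper density bounded away from $1$, killing coarse computability via $R_e$.

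For the degree requirement, I would code $C$ into $A$ by a movable-marker argument that does not interfere with the density bookkeeping: reserve in each block $I_k$ a tiny coding region $K_k$ of relative size $o(1)$ (say $|K_k|/|I_k|\to 0$) such that $n\in K_k$ enters $A$ iff the corresponding element enters $C$; the map from $C$ to $A$ is computable, and conversely $C\le_T A$ because from $A$ we can read off the coding regions. We also need $A\le_T C$ (indeed $A\le_m C$ up to the finite injury of the $R_e$-actions, or $A\equiv_T C$ suffices for the theorem): outside the coding and diagonalization regions $A$ is just ``everything,'' each diagonalization region is decided by a computable amount of waiting for $\phi_e$, and $C$ can run this. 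Because each requirement acts at most once and affects only one block, the construction is finite-injury/noninterfering; $\mathrm{dom}(\varphi)$ omits at most $\sum_k |J_k|/|I_k| \cdot \text{(block weight)}$ plus the coding regions, all of which sum to density $0$, so $\rho(\mathrm{dom}(\varphi))=1$, and $\varphi$ is correct on its domain by construction.

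The main obstacle is the tension between \emph{coding} (which forces us to enumerate certain elements into $A$ at unpredictable stages, since $C$ is only c.e.) and \emph{diagonalization against coarse witnesses} (which needs us to keep certain elements \emph{out} of $A$ with certainty). The resolution is spatial separation: the coding regions $K_k$ and the diagonalization regions $J_k$ are disjoint and the decision whether $J_k\subseteq A$ or $J_k\cap A=\emptyset$ (or whichever pattern defeats $\phi_e$) is made \emph{before} any coding element is placed near them, and once made it is permanent. One must check that giving away a density-$0$ amount of space to coding leaves enough room to still defeat every $\phi_e$ on a fixed positive fraction of its block—this is automatic since the diagonalization fraction is a constant while the coding fraction tends to $0$—and that the priority/injury structure genuinely terminates for each requirement, which it does because each $R_e$ needs only one successful block and higher-priority coding never invades a lower-priority $J$. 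A careful but routine verification then shows $A\equiv_T C$, $\rho(\mathrm{dom}\,\varphi)=1$ with $\varphi$ correct, and every $R_e$ met, completing the proof.
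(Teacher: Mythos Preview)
Your argument for $A \leq_T C$ does not work, and this is the crux of the whole problem. You claim that ``each diagonalization region is decided by a computable amount of waiting for $\phi_e$,'' but this is false: whether $\phi_e$ converges on all of the reserved sub-block $J$ is a $\Sigma^0_1$ question, not a decidable one. Concretely, if you keep $J$ out of $A$ by default and then enumerate into $A$ those $x \in J$ with $\phi_e(x)=0$ once $\phi_e$ converges on $J$, then to compute $A(x)$ for $x \in J$ you must determine whether $\phi_e$ ever converges on all of $J$. The oracle $C$ gives you no leverage here, since the convergence of $\phi_e$ has nothing to do with $C$. As written, your construction produces a c.e.\ set $A$ that codes $C$ (so $A \geq_T C$), is generically computable, and is not coarsely computable, but $A$ could sit anywhere in the c.e.\ degrees above $\mathbf{a}$; nothing forces $A \leq_T C$.

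The missing ingredient is \emph{permitting}, and this is exactly what the paper's proof supplies. In the paper, elements are allowed into $A_1$ at stage $s$ only if $C$ changes below them at stage $s$; this immediately gives $A_1 \leq_T C$. The cost is that the diagonalization against $\phi_e$ cannot simply act when $\phi_e$ converges on the reserved interval: it must wait for that interval to also be \emph{permitted} by a change in $C$. One therefore sets up an infinite sequence of candidate intervals for each $N_e$, and the key point (which your proposal nowhere addresses) is that if $\phi_e$ is total but only finitely many realized intervals ever get permitted, then one can compute $C$ from the stages at which the intervals are realized, contradicting the noncomputability of $C$. This is where the hypothesis ``$C$ noncomputable'' is actually used; in your sketch it plays no role beyond coding, which is a sign that something is missing. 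The spatial separation of coding and diagonalization regions that you emphasize is fine and also appears in the paper, but it is orthogonal to the real difficulty.
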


We also discuss the relationship 
of our concepts with classical smallness concepts 
such as immunity, hyperimmunity, and cohesiveness.   In particular we study the extent
to which various immunity properties imply that a set has
small upper or lower density in various senses.   We show that the results for many standard
immunity properties are different, thus again bringing out the connection between density
and computability theory.  

We also begin the study of sets computable at  density $r < 1$.

\begin{defn}   Let $A \subseteq \omega$ be a set and let $r$ be a
real number in the unit interval.    Then $A$ is \emph{computable at density} $r$ if there is
a partial computable function $\varphi$ such that $\varphi(n) = A(n)$ for
all $n$ in the domain of $\varphi$ and the domain of $\varphi$ has lower
density greater than or equal to  $r$. 
\end{defn}
 
  Note that $A$ is generically computable if and
only if $A$ is computable at density $1$.
We will make the following easy observation.
 
\begin{obs} Every nonzero Turing degree contains  a set $A$ which is computable at every  density 
 $r < 1$ but which is not generically computable.
\end{obs}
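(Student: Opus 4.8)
\emph{Proof proposal.}
The plan is to fix a noncomputable set $C$ of degree $\mathbf{d}$ (possible since $\mathbf{d}\neq\mathbf{0}$) and code it into $A$ redundantly, one bit of $C$ per subinterval, using a two--level block structure. The coarse level will force every partial computable function whose domain has density $1$ to recover all of $C$ if it agrees with $A$, so $A$ will not be generically computable. The fine level will be arranged so that, for each $m$, the finite string $C\upharpoonright m$ is enough to compute $A$ on a set of lower density at least $1-2^{-m}$, giving computability at every density $r<1$. These approximate computations cannot be amalgamated into a single density--$1$ computation precisely because the advice strings $C\upharpoonright m$ are not uniformly computable; this non--uniformity is the heart of the matter, and the two--level design is what produces it.

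For the construction, partition $\omega$ into blocks $I_n=[2^n,2^{n+1})$. Inside $I_n$ put consecutive subintervals $I_n^{(0)},\dots,I_n^{(n-1)}$, in increasing index order from left to right, with $|I_n^{(j)}|=2^{\,n-1-j}$; together these tile all of $I_n$ except a single point. Define $A(x)=C(j)$ for $x\in I_n^{(j)}$, and $A(x)=0$ on the leftover point of each block. Then $A\le_T C$ is immediate, while $C(j)=A(\min I_{j+1}^{(j)})$ shows $C\le_T A$; hence $A\equiv_T C$ and $A$ has degree $\mathbf{d}$.

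For the positive side, fix $m\ge 1$ and set $G_m=\bigcup_n\bigl(I_n\setminus\bigcup_{j\ge m}I_n^{(j)}\bigr)$, a computable set. Within each $I_n$ the deleted portion is an interval at the right end of $I_n$ of length less than $2^{\,n-m}=2^{-m}|I_n|$, so an elementary estimate gives $\underline{\rho}(G_m)\ge 1-2^{-m}$. Since $A\upharpoonright G_m$ is determined by $C(0),\dots,C(m-1)$, hard--coding these finitely many bits yields a partial computable function $\varphi$ with domain exactly $G_m$ that agrees with $A$ on $G_m$, so $A$ is computable at density $1-2^{-m}$. Letting $m\to\infty$ shows $A$ is computable at every density $r<1$.

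For the negative side, suppose $\varphi$ is partial computable, agrees with $A$ on its domain, and that domain has density $1$; we derive a contradiction. Fix $j$. For $n>j$ the subinterval $I_n^{(j)}$ has length $2^{\,n-1-j}$ and lies below $2^{n+1}$, hence occupies the fixed fraction $2^{-(j+2)}$ of $[0,2^{n+1})$; so if the domain of $\varphi$ missed $I_n^{(j)}$ for infinitely many $n$, its lower density would be at most $1-2^{-(j+2)}<1$. Thus the domain meets $I_n^{(j)}$ for all large $n$, and at any such point $x$ we have $\varphi(x)=A(x)=C(j)$. Searching for such an $x$ therefore computes $C(j)$ uniformly in $j$, contradicting the noncomputability of $C$; hence $A$ is not generically computable. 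Routine bookkeeping --- starting the block structure far enough out, and absorbing the leftover points and the first few small blocks --- takes care of integrality and leaves all densities and Turing--degree computations unchanged. The only step requiring care is the choice of subinterval sizes: each $I_n^{(j)}$ must keep a fixed positive fraction of its scale (for the negative side), while the tail union $\bigcup_{j\ge m}I_n^{(j)}$ must be geometrically thin inside each block (for the positive side), and the geometric weights $2^{\,n-1-j}$ achieve both simultaneously.
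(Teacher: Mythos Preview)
Your proof is correct and uses essentially the same idea as the paper: encode bit $j$ of a noncomputable $C$ into a region of density about $2^{-(j+1)}$, so that hard-coding finitely many bits computes $A$ at any density below $1$, while any density-$1$ partial computation must hit every such region and hence recovers all of $C$. The paper's version is shorter only because it invokes the ready-made set $\mathcal{R}(C)=\bigcup_{k\in C}R_k$ from \cite{JS} (with $R_j$ playing the role of your $\bigcup_n I_n^{(j)}$) and cites the known fact that $\mathcal{R}(C)$ is generically computable iff $C$ is computable; your block construction is a self-contained reinvention of the same encoding.
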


Earlier phases of our work included open questions which were subsequently resolved by Igusa \cite{I}
and by Bienvenu, Day, and H\"olzl \cite{BDH}.  In the final two sections we state their surprising
and beautiful results and mention some related work.

\section{Terminology and notation}

As usual, we let $\varphi_e$ be the $e$th partial computable function in a
fixed standard enumeration, and we let $W_e$ be the domain of $\varphi_e$.
We write $\Phi_e$ for the $e$th Turing functional.

As in \cite{JS}, Definition 2.5, define:
$$R_k = \{m : 2^k \mid m \ \& \ 2^{k+1} \nmid m\}$$
Note that the sets $R_k$ are pairwise disjoint, uniformly computable
sets of positive density, and the union of these sets is $\omega
\setminus \{0\}$.    These sets were used frequently in \cite{JS}
and we will also use them several times in this paper.

    \section{Approximating c.e.\ sets by computable subsets}

We consider the extent to which it is true that every c.e.\ set $A$ has
a computable subset $B$ which is almost as large as $A$.  More
precisely, we require that the difference $A \setminus B$ should have
small density.  Here, ``density'' may refer to either upper or lower
density, and ``small'' may mean $0$ or less than a given positive real
number.  For convenience in stating results in this area, we introduce
the following notation, which is not restricted to the case $B \subseteq A$.

\begin{defn}  Let $A, B \subseteq \omega$.
    \begin{itemize}
        \item[(i)] Let $d(A,B)$ be the lower density of the symmetric 
        difference
      of $A$ and $B$ (so $d(A,B) = \underline{\rho}(A \triangle B)$).
    \item[(ii)] Let $D(A,B)$ be the upper density of the symmetric difference of
      $A$ and $B$ (so $D(A,B) = \overline{\rho}(A \triangle B)$).
     \end{itemize}
\end{defn}

  Intuitively, $d(A,B)$ is small if there are infinitely
many initial segments of the natural numbers on which $A$ and $B$
disagree on only a small proportion of numbers, and $D(A,B)$ is small
if there are cofinitely many such initial segments.

The following easy proposition lists some basic properties of $d$ and $D$.

\begin{prop}
    Let $A$, $B$, and $C$ be subsets of $\omega$.
    \begin{itemize}
       \item[(i)] $0 \leq d(A,B) \leq D(A,B) \leq 1$
        \item[(ii)] (Triangle Inequality)  $D(A,C) \leq D(A,B) + D(B,C)$
      \end{itemize}
\end{prop}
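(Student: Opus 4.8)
The plan is to verify both items directly from the definitions of $d$, $D$, and the elementary properties of $\limsup$ and $\liminf$. For part (i), the chain $0 \leq d(A,B) \leq D(A,B) \leq 1$ follows because $\rho_n(A \triangle B) \in [0,1]$ for every $n$, so both the $\liminf$ and the $\limsup$ of this sequence lie in $[0,1]$, and the $\liminf$ of any sequence is at most its $\limsup$. No work is required beyond citing these facts.

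For part (ii), the triangle inequality, the key pointwise observation is the set-theoretic containment $A \triangle C \subseteq (A \triangle B) \cup (B \triangle C)$. Intersecting with $\{0,1,\dots,n-1\}$ and taking cardinalities gives
\[
|(A \triangle C) \upharpoonright n| \leq |(A \triangle B) \upharpoonright n| + |(B \triangle C) \upharpoonright n|,
\]
hence $\rho_n(A \triangle C) \leq \rho_n(A \triangle B) + \rho_n(B \triangle C)$ for all $n > 0$. I would then apply the subadditivity of $\limsup$, namely $\limsup_n (x_n + y_n) \leq \limsup_n x_n + \limsup_n y_n$, to the sequences $x_n = \rho_n(A \triangle B)$ and $y_n = \rho_n(B \triangle C)$, together with monotonicity of $\limsup$ under the pointwise inequality above, to conclude $\overline{\rho}(A \triangle C) \leq \overline{\rho}(A \triangle B) + \overline{\rho}(B \triangle C)$, which is exactly $D(A,C) \leq D(A,B) + D(B,C)$.

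There is essentially no obstacle here; the only point meriting a moment's care is that the analogous inequality fails for $d$ (i.e.\ for $\liminf$) since $\liminf$ is superadditive rather than subadditive, which is presumably why the proposition only asserts the triangle inequality for $D$. I would not belabor this, but it explains the asymmetry in the statement. The whole proof is a few lines.
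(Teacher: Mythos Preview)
Your proof is correct and is exactly the standard argument; the paper in fact omits the proof entirely, stating only that the proposition is ``easy'' and moving on. Your observation about why the triangle inequality fails for $d$ is also addressed in the paper immediately after the proposition, with the same counterexample idea (a set of lower density $0$ and upper density $1$).
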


Since $D(A,B) = D(B,A)$ and $D(A,A) = 0$ for all $A, B$, it follows
from the above proposition that $D$ is a pseudometric on Cantor space.
Recall (\cite{JS}, Definition 2.12) that two sets are
\emph{generically similar} if their symmetric difference has density
$0$.  Thus $D$ is a metric on the space of equivalence classes of sets
modulo generic similarity.  On the other hand, the triangle inequality
fails for $d$.  For example, if $A$ is any set with lower density $0$
and upper density $1$, we have $d(\emptyset, A) = 0, d(A, \omega) =
0$, and $d(\emptyset, \omega) = 1$.

The following elementary lemma gives upper bounds for $d(A,B)$ and
$D(A,B)$ in terms of the upper and lower densities of $A$ and $B$ in
the case where $B \subseteq A$.   

\begin{lem} \label{diff}
Let $A$ and $B$ be sets such that $B \subseteq A$.
     \begin{itemize}
         \item[(i)] $d(A,B) \leq \overline{\rho}(A) - \overline{\rho}(B)$
         \item[(ii)] $d(A,B) \leq \underline{\rho}(A) - \underline{\rho}(B)$
         \item[(iii)] $D(A,B) \leq \overline{\rho}(A) - \underline{\rho}(B)$
      \end{itemize}
\end{lem}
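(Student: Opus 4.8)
The plan is to reduce all three inequalities to a single pointwise identity. Since $B \subseteq A$, the symmetric difference $A \triangle B$ is just $A \setminus B$, and for each $n > 0$ we have $(A \triangle B) \upharpoonright n = (A \upharpoonright n) \setminus (B \upharpoonright n)$ with $B \upharpoonright n \subseteq A \upharpoonright n$. Counting elements gives $|(A \triangle B)\upharpoonright n| = |A \upharpoonright n| - |B \upharpoonright n|$, hence
\[
\rho_n(A \triangle B) = \rho_n(A) - \rho_n(B)
\]
for every $n > 0$. Consequently $d(A,B) = \liminf_n (\rho_n(A) - \rho_n(B))$ and $D(A,B) = \limsup_n(\rho_n(A) - \rho_n(B))$, and the three inequalities become standard facts about the $\liminf$ and $\limsup$ of the difference of two sequences $x_n := \rho_n(A)$ and $y_n := \rho_n(B)$, both taking values in $[0,1]$.

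For (iii) I would invoke the usual subadditivity of $\limsup$: $\limsup_n(x_n - y_n) \le \limsup_n x_n + \limsup_n(-y_n) = \limsup_n x_n - \liminf_n y_n$, which is exactly $D(A,B) \le \overline{\rho}(A) - \underline{\rho}(B)$. For (i) and (ii) the idea is to pass to a well-chosen subsequence. For (i), pick a subsequence $(n_k)$ along which $y_{n_k} \to \limsup_n y_n = \overline{\rho}(B)$; then $d(A,B) = \liminf_n(x_n - y_n) \le \liminf_k (x_{n_k} - y_{n_k}) = \liminf_k x_{n_k} - \overline{\rho}(B) \le \overline{\rho}(A) - \overline{\rho}(B)$, using that when one of the two sequences converges the $\liminf$ of the difference splits. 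For (ii), symmetrically pick $(n_k)$ along which $x_{n_k} \to \liminf_n x_n = \underline{\rho}(A)$; then $d(A,B) \le \liminf_k(x_{n_k} - y_{n_k}) = \underline{\rho}(A) - \limsup_k y_{n_k} \le \underline{\rho}(A) - \underline{\rho}(B)$, since $\limsup_k y_{n_k} \ge \liminf_k y_{n_k} \ge \underline{\rho}(B)$.

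There is no real obstacle here; the only point requiring care is to use the correct direction of the $\liminf$/$\limsup$ inequalities — in particular not to confuse $\liminf_n(x_n - y_n)$ with $\liminf_n x_n - \liminf_n y_n$, which can fail — and to choose in each of (i) and (ii) precisely the subsequence that drives the bound in the stated direction. Boundedness of $(x_n)$ and $(y_n)$ in $[0,1]$ guarantees that all the relevant $\liminf$s and $\limsup$s are finite, so all the manipulations above are legitimate.
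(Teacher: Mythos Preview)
Your proof is correct and follows essentially the same approach as the paper: both begin with the key identity $\rho_n(A \triangle B) = \rho_n(A) - \rho_n(B)$ and then reduce the three inequalities to standard facts about $\liminf$ and $\limsup$ of a difference of bounded sequences. The paper phrases the argument for (i) as an $\epsilon$-argument (the set of $n$ with $\rho_n(A) \le \overline{\rho}(A)+\epsilon/2$ is cofinite, the set with $\rho_n(B) \ge \overline{\rho}(B)-\epsilon/2$ is infinite, so their intersection is infinite) and leaves (ii) and (iii) to the reader, while you use a subsequence formulation; these are interchangeable ways of saying the same thing.
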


\begin{proof}
Since $B \subseteq A$, we have that $A \triangle B = A \setminus B$, and hence
$$\rho_n (A \triangle B) = \rho_n(A) - \rho_n(B)$$
for all $n$.  The lemma follows in a straightforward way from the
above equation and the definitions of upper and lower density.  For
example, to prove the first part, let a real number $\epsilon > 0$ be
given.  Let 
$$I = \{n : \rho_n(A) \leq \overline{\rho}(A) + \epsilon/2 \quad \& \quad
\rho_n(B) \geq \overline{\rho}(B) - \epsilon/2 \}$$ 
Then $I$ is infinite because the first inequality in its definition
holds for all sufficiently large $n$, and the second inequality in its
definition holds for infinitely many $n$.  By subtracting these inequalities,
we see that
$$\rho_n(A \setminus B) = \rho_n(A) - \rho_n(B) \leq \overline{\rho}(A) -
\overline{\rho}(B) + \epsilon$$ holds for infinitely many $n$.  Since
$\epsilon > 0$ was arbitrary, we conclude that
$$d(A,B) = \liminf_n \rho_n(A \setminus B) \leq \overline{\rho}(A) -
\overline{\rho}(B)$$
The other parts are proved similarly and are left to the reader.
\end{proof}

We begin with a result of Barzdin' from 1970 showing that every c.e.\ set
can be well approximated by a computable subset on infinitely many
intervals.   We thank Evgeny Gordon for bringing Barzdin's work to 
our attention.

\begin{thm} (Barzdin' \cite{B}) \label{Barzdin} For every c.e.\ set
  $A$ and real number $\epsilon > 0$, there is a computable set $B
  \subseteq A$ such that $\overline{\rho}(B) > \overline{\rho}(A) -
  \epsilon$, and hence (by Lemma \ref{diff}) $d(A,B) < \epsilon$.
\end{thm}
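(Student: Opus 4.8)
The plan is to build $B$ directly as an increasing union of finite sets, committing irrevocably to the membership of each number as soon as it is decided, so that $B$ is automatically computable, while arranging that on infinitely many initial segments $B$ captures almost all of $A$. First dispose of the trivial case: if $\overline{\rho}(A) \leq \epsilon$ then $B = \emptyset$ works, since $\overline{\rho}(\emptyset) = 0$. So assume $\overline{\rho}(A) > \epsilon$, fix a computable enumeration $A = \bigcup_s A_s$, and (using a harmless finite amount of information about $A$, non-uniformly) fix rationals $q', q$ with $\overline{\rho}(A) - \epsilon < q' < q < \overline{\rho}(A)$ and set $\delta = q - q' > 0$. Because $q < \overline{\rho}(A) = \limsup_n \rho_n(A)$, there are infinitely many $n$ with $\rho_n(A) \geq q$, and for each such $n$ the finite set $A \!\upharpoonright\! n$ is fully enumerated by some stage $s$, so that $\rho_n(A_s) \geq q$.

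The construction proceeds in rounds, maintaining a marker $a$ (initially $0$), with the invariant that membership in $B$ has been permanently decided for all $x < a$. In a round with current marker $a$, search (dovetailing over pairs $(n,s)$) for some $n > a/\delta$ and some stage $s$ with $\rho_n(A_s) \geq q$; the preceding paragraph guarantees the search halts. Then permanently declare $x \in B \iff x \in A_s$ for every $x$ with $a \leq x < n$, and set the new marker to $n$. Since $n > a$ in each round, the markers tend to infinity, so every number is eventually decided and $B$ is computable; and $B \subseteq A$ because every element ever placed into $B$ lies in some $A_s \subseteq A$.

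For the density estimate, consider a round producing new marker $n$ from a stage $s$ with $\rho_n(A_s) \geq q$. Then $B \!\upharpoonright\! n$ contains every element of $A_s$ in the interval $[a,n)$, so $|B \!\upharpoonright\! n| \geq |A_s \!\upharpoonright\! n| - a \geq qn - a$, whence $\rho_n(B) \geq q - a/n > q - \delta = q'$. As there are infinitely many rounds, hence infinitely many such $n$, we conclude $\overline{\rho}(B) \geq q' > \overline{\rho}(A) - \epsilon$. The final clause of the statement is then immediate from Lemma \ref{diff}(i): $d(A,B) \leq \overline{\rho}(A) - \overline{\rho}(B) < \epsilon$.

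The point requiring care is the tension between the two demands on $B$: computability forces the construction to commit to each number irrevocably, yet $B$ must still contain a fraction of numbers arbitrarily close to $\overline{\rho}(A)$ below infinitely many $n$, even though at any finite stage only the approximation $A_s$ is visible. The resolution is to commit to the block $[a,n)$ only after finding a stage $s$ witnessing $\rho_n(A_s) \geq q$ on a window long enough ($n > a/\delta$) that the numbers below the old marker $a$ — which may have been wasted — cost less than $\delta$ in density; and it is exactly the non-uniform choice of a threshold $q$ strictly below $\overline{\rho}(A)$ that makes the search for such a stage guaranteed to terminate.
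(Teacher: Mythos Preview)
Your argument is correct and follows the same strategy as the paper's proof: build $B$ in successive blocks $[a,n)$, freezing each block to agree with some $A_s$ once a stage $s$ witnessing sufficient density is found. The only difference is in the bookkeeping for the possible waste on $[0,a)$: the paper asks directly for $\rho_n(A_s \setminus [0,a)) \geq q$, so the new block alone already yields $\rho_n(B) \geq q$ and a single threshold $q$ suffices; you instead ask for $\rho_n(A_s) \geq q$ on the whole initial segment and absorb the loss of at most $a$ elements by requiring $n > a/\delta$, at the cost of introducing a second threshold $q' < q$. One trivial slip: taking $B = \emptyset$ when $\overline{\rho}(A) = \epsilon$ does not give $\overline{\rho}(B) > 0$, but this is harmless since your main construction already handles every case with $\epsilon > 0$ and the case split can simply be dropped.
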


\begin{proof}
  Given such $A$ and $\epsilon$, let $q$ be a rational number such
  that $\overline{\rho}(A) - \epsilon < q < \overline{\rho}(A)$, and
  let $\{A_s\}$ be a computable enumeration of $A$.  We now define two
  computable sequences $\{s_n\}_{n \in \omega}$, $\{t_n\}_{n \in
    \omega}$ simultaneously by recursion.  Let $s_0 = t_0 = 0$.  Given
  $s_n$ and $t_n$ let $(s_{n+1}, t_{n+1})$ be the first pair $(s,t)$
  such that $s > s_n$ and $\rho_s (A_t \setminus [0,s_n)) \geq q$.
  Such a pair exists because $q < \overline{\rho}(A) =
  \overline{\rho}(A \setminus [0,s_n))$, so there are infinitely many
  $s$ with $\rho_s(A \setminus [0,s_n) \geq q$.  Now, for each $x$,
  put $x$ into $B$ if and only if $x \in A_{t_{n+1}}$, where $n$ is
  the unique number such that $x$ belongs to the interval $[s_n,
  s_{n+1})$.  Note that
$$\rho_{s_{n+1}}(B) \geq \rho_{s_{n+1}}(A_{t_{n+1}} \setminus [0,s_n)) \geq q$$
for all $n$.   It follows that $\overline{\rho}(B) \geq q > 
\overline{\rho}(A) - \epsilon$, as needed to complete the proof.
\end{proof}

On the other hand, as pointed out by Barzdin', the above result fails for $D$.
We prove this in a strong form.

\begin{thm} There is a c.e.\ set $A$ such that $D(A,B) = 1$ for every
  co-c.e.\ set $B$.
\end{thm}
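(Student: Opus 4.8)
The plan is to build a c.e.\ set $A$ by diagonalizing against all co-c.e.\ sets simultaneously, arranging that for each co-c.e.\ set $B = \overline{W_e}$ there are arbitrarily large $n$ at which $A$ and $B$ disagree on a proportion of $[0,n)$ that is close to $1$. Since $D(A,B) = \overline{\rho}(A \triangle B)$ is a $\limsup$, it suffices to hit agreement-complements near $1$ infinitely often, so we never need to commit $A$ permanently; we only need a single large ``window'' for each requirement, and these windows can be chosen far apart and of rapidly increasing length. I would index the requirements by $e \in \omega$, with requirement $R_e$ asserting: $\overline{\rho}(A \triangle \overline{W_e}) = 1$, i.e.\ for every $k$ there is $n$ with $\rho_n(A \triangle \overline{W_e}) > 1 - 1/k$.

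The key idea is that on a long interval $J = [m, m')$ we can force $A \triangle \overline{W_e}$ to be nearly all of $[0, m')$ as follows. First pick $m'$ enormous compared to $m$, so that the behavior on $[0,m)$ is negligible — any disagreement confined to $[0,m)$ contributes at most $m/m' < \varepsilon$ to $\rho_{m'}$. Now, on $J$, note that $x \in A \triangle \overline{W_e}$ iff ($x \in A$ and $x \in W_e$) or ($x \notin A$ and $x \notin W_e$). We enumerate $W_e$ (the c.e.\ set) stage by stage and react: for $x \in J$, if and when $x$ enters $W_e$ we do nothing special, but the point is we want to put $x$ into $A$ exactly when $x$ enters $W_e$, and keep $x$ out of $A$ otherwise. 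Then at the end of the window, every $x \in J$ either satisfies $x \in A \cap W_e$ (if it ever entered $W_e$) or $x \in \overline{A} \cap \overline{W_e}$ (if it never did) — in both cases $x \in A \triangle \overline{W_e}$. Thus $J \subseteq A \triangle \overline{W_e}$, which forces $\rho_{m'}(A \triangle \overline{W_e}) \geq |J|/m' = (m' - m)/m' > 1 - \varepsilon$. The crucial point is that this strategy is \emph{purely reactive} and imposes no finite-injury conflict across different $e$: for requirement $R_e$ we simply reserve a fresh window $J_e$, far out on the number line, longer than all previous windows combined plus their starting points (so $|J_e|/\max J_e \to 1$), and outside these windows we keep $A$ empty (or do whatever other requirements need — but in fact nothing else is needed).

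So the construction is: fix a computable sequence of disjoint intervals $J_0 < J_1 < \cdots$ with $J_e = [m_e, m_{e+1})$ chosen so that $m_e / m_{e+1} \to 0$ (e.g.\ $m_{e+1} = m_e^2 + m_e + 1$, or just $m_e = 2^{2^e}$). Enumerate $A$ by: at stage $s$, for each $e \leq s$ and each $x \in J_e$, put $x$ into $A_s$ iff $x \in W_{e,s}$. Since the $J_e$ partition $\omega \setminus [0, m_0)$ and membership in $W_{e,s}$ is monotone in $s$, this is a legitimate c.e.\ enumeration of a set $A = \bigcup_e (J_e \cap W_e)$, which is clearly c.e. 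Then for each $e$: every $x \in J_e$ lies in $W_e$ and in $A$, or in neither, so $J_e \subseteq A \triangle \overline{W_e}$; hence $\rho_{m_{e+1}}(A \triangle \overline{W_e}) \geq (m_{e+1} - m_e)/m_{e+1} \to 1$ as we also could widen, but here even this single value $1 - m_e/m_{e+1}$ can be made as close to $1$ as we like by choosing the sequence $m_e$ to grow fast enough — wait, $e$ is fixed, so we get just one term. To get the $\limsup$ equal to $1$ we instead want, for the \emph{fixed} co-c.e.\ set $B = \overline{W_e}$, infinitely many good $n$; so I would assign to requirement $R_e$ not one window but infinitely many windows $J_{\langle e, k\rangle}$ via a pairing function, with $\max$ of the $k$-th such window over its right endpoint tending to $1$. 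Concretely: reindex so that requirement for $W_e$ owns windows $J_j$ for all $j$ with $j \equiv e$ in some fixed partition of $\omega$ into infinitely many infinite computable pieces, and on $J_j$ run the same reactive strategy for $W_e$; then $\rho_{m_{j+1}}(A \triangle \overline{W_e}) \to 1$ along that subsequence of $j$'s.

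The main obstacle — really the only thing to check carefully — is that the windows for \emph{different} requirements do not interfere: since the $J_j$ are pairwise disjoint and each $x$ belongs to exactly one $J_j$, which is owned by exactly one $W_e$, the definition ``$x \in A \iff x \in W_{e(j)}$ where $x \in J_j$'' is unambiguous and monotone, so $A$ is genuinely c.e.\ and the analysis of each window is independent of all others. A secondary point is verifying $D(A,B) = 1$ for \emph{every} co-c.e.\ $B$, not just those of the form $\overline{W_e}$ — but every co-c.e.\ set is $\overline{W_e}$ for some $e$ by definition, so this is immediate. Finally one should note $D(A,B) \leq 1$ always (trivially), so we get equality. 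I expect the write-up to be short: define the intervals, define $A$, observe it is c.e., and compute the densities.
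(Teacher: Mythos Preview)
Your proposal is correct and rests on the same core mechanism as the paper: on an interval $J$ assigned to index $e$, set $A \cap J = W_e \cap J$, which forces $J \subseteq A \triangle \overline{W_e}$ since every $x \in J$ lies either in both $A$ and $W_e$ or in neither.  The paper's execution is a bit slicker: instead of explicitly assigning infinitely many windows to each $e$ via a pairing function, it takes $I_n = [n!, (n+1)!)$, defines $A = \bigcup_n (W_n \cap I_n)$, and then observes that for any fixed co-c.e.\ set $B = \overline{W_n}$ there are infinitely many indices $e$ with $W_e = W_n$ (padding lemma), so $A \triangle B \supseteq \bigcup_{W_e = W_n} I_e$, which has upper density $1$ since $|I_e|/(e+1)! \to 1$.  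This sidesteps the pairing bookkeeping entirely, but your version is equally valid.
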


\begin{proof} Let $I_n$ denote the interval $[n!, (n+1)!)$.  Define $A
  = \cup_n (W_n \cap I_n)$.  Fix $n$, and let $S = \{e : W_e = W_n
  \}$.  Then $A \triangle \overline{W_n} \supseteq \cup_{e \in S}
  I_e$.  The latter set has upper density $1$ since $S$ is infinite,
  so $D(A, \overline{W_n}) = 1$.
\end{proof}

Also it is easy to see
that Barzdin's result does not hold for $\epsilon = 0$.

\begin{thm}(\cite{JS}) \label{oldresult} There is a c.e.\ set $A$ such
  that $d(A,B) > 0$ for every co-c.e.\ set $B$.
\end{thm}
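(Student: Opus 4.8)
The plan is to build a c.e.\ set $A$ by a finite-injury construction that diagonalizes against all potential co-c.e.\ subsets-up-to-small-difference. Since $d(A,B) = \underline{\rho}(A \triangle B)$, asking $d(A,B) > 0$ for every co-c.e.\ $B$ is the same as asking that for every c.e.\ set $W$ there is a rational $\delta > 0$ with $\rho_n(A \triangle \overline{W}) \geq \delta$ for all sufficiently large $n$; equivalently, $\limsup_n \rho_n\bigl((A \setminus \overline{W}) \cup (\overline{A} \cap W)\bigr)$ does not fall to $0$. The cleanest way to force this is to arrange, on each of the pairwise disjoint computable blocks $R_e$ of positive density (or the factorial intervals $I_n$), to make $A$ and $\overline{W_e}$ disagree on a fixed positive fraction of infinitely many initial segments. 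First I would set up requirements $N_e$: for the c.e.\ set $W_e$, ensure that $\overline{\rho}(A \triangle \overline{W_e}) \geq c$ for a uniform constant $c > 0$ tied to the density of the blocks assigned to $N_e$. Meeting all $N_e$ then gives, for each co-c.e.\ $B = \overline{W_e}$, that $d(A, B) \ge$ something positive — but note we need the \emph{lower} density of the symmetric difference to be positive, not the upper, so the requirements must actually guarantee disagreement on \emph{cofinitely} many initial segments, within the blocks controlled by $N_e$ and all lower-priority blocks combined.

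The key idea for a single requirement: reserve for $N_e$ a sparse but positive-density computable set $C_e$ (say $C_e = R_{\langle e, \cdot\rangle}$ suitably coded, or a tail of one $R_k$) and try to keep $C_e \subseteq A$ while keeping $C_e \cap W_e$ from being all of $C_e$. Concretely, wait for $W_e$ to enumerate elements; whenever $W_e$ threatens to cover a long interval of $C_e$, we have already placed those elements of $C_e$ into $A$, so on that interval $A = C_e$ agrees with $W_e$, i.e.\ $A \triangle \overline{W_e}$ is large there. If instead $W_e$ stays away from $C_e$ cofinitely, then on $C_e$ we have $A \supseteq C_e$ but $\overline{W_e} \supseteq C_e$ too, so the symmetric difference is \emph{small} there — so a single block does not suffice, and we must instead play $C_e$ \emph{against} $W_e$: put $x \in C_e$ into $A$ precisely when $x$ has not yet appeared in $W_e$ by a certain stage, guaranteeing $x \in A \triangle \overline{W_e}$ (either $x \in A$ and $x \in W_e$ later, contributing to $A \cap W_e$, or $x \in A$ and $x \notin W_e$, contributing to $A \setminus \overline{W_e}$ — wait, the latter is \emph{not} in the symmetric difference). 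The correct move is the standard one used for Theorem \ref{oldresult}'s analogue: on $C_e$, put $x$ into $A$ iff $x \in W_e$ at the stage $x$ is examined, so that $A \cap C_e = W_e \cap C_e$ up to finitely much, making $\overline{A} \cap C_e = \overline{W_e} \cap C_e$, hence $A \triangle \overline{W_e}$ contains $C_e$ cofinitely, which has positive lower density. But $A$ must then \emph{commit} not to enumerate other elements of $C_e$ later — this is where injury enters, since higher-priority requirements owning subsets of what $N_e$ sees can force re-enumeration.

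The main obstacle is the interaction between requirements: different $N_e$ want to control $A$'s behavior on overlapping regions, and a single global set $A$ cannot simultaneously mimic $W_e$ on $C_e$ and mimic $W_{e'}$ on $C_{e'}$ if the $C_e$ overlap, so the blocks must be genuinely disjoint (use the $R_k$'s or a disjoint refinement), and then each $N_e$ only needs its \emph{own} block $C_e$ to have positive density and be left alone — which the $R_k$ framework provides for free. With disjoint blocks the requirements don't injure each other at all, so the construction is in fact finite-injury-free: I would assign block $B_e$ (of density $2^{-(e+1)}$) to $N_e$, enumerate $x \in B_e$ into $A$ exactly when $x$ enters $W_e$, and put \emph{everything} outside $\bigcup_e B_e$ (i.e.\ $\{0\}$, finite) wherever convenient. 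Then for $B = \overline{W_e}$ any co-c.e.\ set, $A \cap B_e = W_e \cap B_e$ exactly, so $B_e \cap (A \triangle \overline{W_e}) = B_e \cap (W_e \triangle \overline{W_e}) = B_e$, whence $\rho_n(A \triangle \overline{W_e}) \geq \rho_n(B_e) \to 2^{-(e+1)} > 0$, giving $d(A, \overline{W_e}) \geq 2^{-(e+1)} > 0$. The only real care needed is verifying $A$ is c.e.\ (clear, it's a computable union of the c.e.\ sets $W_e \cap B_e$) and that the liminf, not just limsup, of $\rho_n(B_e)$ is positive, which holds because $B_e$ has a genuine density $2^{-(e+1)}$.
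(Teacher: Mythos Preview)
Your final construction is correct and clean: set $A = \bigcup_e (W_e \cap R_e)$; then for any co-c.e.\ set $B = \overline{W_e}$ we have $A \cap R_e = W_e \cap R_e$, so on $R_e$ the set $A$ agrees with $W_e$, whence $R_e \subseteq A \triangle \overline{W_e}$ and $d(A,\overline{W_e}) \geq \rho(R_e) = 2^{-(e+1)} > 0$. The long exploratory passage before you reach this (in particular the detour through ``mimic $W_e$ up to a stage'' and the worry about injury) is unnecessary: since the $R_e$ are pairwise disjoint, there is no interaction between requirements at all, and no finite-injury machinery is needed.

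The paper itself does not give a standalone proof here; it simply cites Theorem~2.16 of \cite{JS}. Your construction is essentially the $R_e$-analogue of the paper's own $A = \bigcup_n (W_n \cap I_n)$ from the preceding theorem (Theorem~3.5), with the factorial intervals $I_n$ replaced by the positive-density blocks $R_e$. That replacement is exactly what upgrades the conclusion from an \emph{upper}-density statement (requiring the padding trick with infinitely many indices for $W_n$) to a \emph{lower}-density statement handled by a single index. The paper's later Theorem~\ref{nonapprox} proves a strictly stronger result (adding that $\rho(A)$ exists) with a more delicate rule on $R_e$, but for the bare statement of Theorem~\ref{oldresult} your argument is the most direct one.
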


This follows at once from the proof of Theorem 2.16 of \cite{JS}.  We
will extend it below in Theorem \ref{nonapprox} by adding the
requirement that the density of $A$ exists.

In \cite{JS}, it was pointed out just after the proof of Theorem 2.21
that every c.e.\ set of upper density $1$ has a computable subset of
upper density $1$.   We now extend this result using the same method of 
proof as in Theorem \ref{Barzdin}.

\begin{thm} \label{d2approx} Let $A$ be a c.e.\ set such that
  $\overline{\rho}(A)$ is a $\Delta^0_2$
  real.  Then $A$ has a computable subset $B$ such that
  $\overline{\rho}(B) = \overline{\rho}(A)$, and hence, by Lemma \ref{diff},
  $d(A,B) = 0$.
\end{thm}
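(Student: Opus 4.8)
The plan is to adapt the Barzdin'-style construction from the proof of Theorem~\ref{Barzdin}, replacing the fixed rational threshold $q$ with a moving threshold driven by a $\Delta^0_2$ approximation to $\overline{\rho}(A)$. Let $r = \overline{\rho}(A)$, and fix a computable sequence of rationals $\{q_k\}$ with $q_k \to r$; since $r$ is $\Delta^0_2$ we may take this approximation to be computable, and by thinning we can arrange $q_k < r$ for all $k$ (for instance, we can pass to a computable subsequence of a $\Delta^0_2$ approximation that is guaranteed to approach $r$ from below, using that $r$ is the upper density and hence equals $\limsup_s \rho_s(A)$, which is itself a $\limsup$ of rationals). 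Also fix a computable enumeration $\{A_s\}$ of $A$.

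First I would define computable sequences $\{s_n\}$, $\{t_n\}$ by recursion, exactly as in Theorem~\ref{Barzdin}, but at stage $n+1$ I would search for the first pair $(s,t)$ with $s > s_n$ and $\rho_s(A_t \setminus [0,s_n)) \geq q_n$. Such a pair exists because $q_n < r = \overline{\rho}(A) = \overline{\rho}(A \setminus [0,s_n))$, so there are infinitely many $s$ with $\rho_s(A \setminus [0,s_n)) \geq q_n$ and, as each such witness is eventually enumerated, a suitable $t$ appears. Then I would define $B$ by putting $x$ into $B$ iff $x \in A_{t_{n+1}}$, where $n$ is the unique index with $x \in [s_n, s_{n+1})$. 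This $B$ is computable and $B \subseteq A$ just as before, and we get $\rho_{s_{n+1}}(B) \geq \rho_{s_{n+1}}(A_{t_{n+1}} \setminus [0,s_n)) \geq q_n$ for all $n$, so $\overline{\rho}(B) \geq \limsup_n q_n = r = \overline{\rho}(A)$. Combined with the trivial inequality $\overline{\rho}(B) \leq \overline{\rho}(A)$ from $B \subseteq A$, this gives $\overline{\rho}(B) = \overline{\rho}(A)$, and then $d(A,B) = 0$ follows from Lemma~\ref{diff}(i).

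The main obstacle I anticipate is purely in the setup: arranging a \emph{computable} sequence of rationals $q_k \to r$ with $q_k < r$. A $\Delta^0_2$ real need not be left-$\Sigma^0_2$, so one cannot simply enumerate its lower cut; however, we do not need the full lower cut, only \emph{some} computable sequence converging to $r$ from below, and this is available precisely because $r = \overline{\rho}(A)$ has the special form $\limsup_s \rho_s(A)$ with $\rho_s(A)$ a computable sequence of rationals. Indeed one can extract from a computable sequence of rationals whose $\limsup$ is $r$ a computable subsequence converging to $r$; if that subsequence does not approach from strictly below we simply subtract $2^{-k}$ at the $k$th term. (Alternatively, one can use the $\Delta^0_2$ hypothesis directly: a $\Delta^0_2$ real admits a computable approximation $\{r_k\}$ with $r_k \to r$, and we take $q_k = \min(r_0,\dots,r_k) - 2^{-k}$ if $r_k \geq r$ cannot be ruled out; the point is that the construction only needs $q_n < r$ to guarantee the search terminates, and $\limsup q_n = r$ to conclude $\overline{\rho}(B) \geq r$.) Once this is in place the rest is a routine repetition of the Barzdin' argument with a varying threshold, and no new combinatorial difficulty arises.
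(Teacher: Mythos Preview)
Your overall plan---adapt Barzdin' with a moving threshold coming from a $\Delta^0_2$ approximation to $r=\overline{\rho}(A)$---is exactly the paper's strategy, and the verification you give (that $\rho_{s_{n+1}}(B)\geq q_n$ forces $\overline{\rho}(B)\geq \limsup_n q_n$) is the right one.  The gap is in the setup: you require a computable sequence $q_k\to r$ with $q_k<r$ for every $k$, and this is not available in general.  If $r$ is irrational and such a sequence exists, then for rational $p$ one has $p<r\iff(\exists k)(p<q_k)$, so the lower cut of $r$ is $\Sigma^0_1$ and $r$ is left-c.e.\ --- a strictly stronger hypothesis than $\Delta^0_2$.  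Concretely, $r=1-\Omega$ is a $\Delta^0_2$, left-$\Pi^0_2$ real (hence the upper density of some c.e.\ set by the results of Section~5) which is not left-c.e., and for such an $A$ your search at some stage $n$ may fail to terminate.  Your suggested repairs do not work: the sequence $\rho_s(A)$ is a sequence of rationals but not a \emph{computable} one when $A$ is merely c.e., and the formula $q_k=\min(r_0,\dots,r_k)-2^{-k}$ need not have $\limsup$ equal to $r$ (any early $r_j<r$ pins the minimum below $r$ forever).

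The paper's fix is to fold the approximation index into the search itself.  Take any computable sequence $\{q_s\}$ with $q_s\to r$ (no sign condition), and at step $n+1$ search for the first pair $(s,t)$ with $s>s_n$, $t>n$, and
\[
\rho_s\bigl(A_t\setminus[0,s_n)\bigr)\ \geq\ q_t-2^{-n}.
\]
The search terminates because one can first pick $s$ with $\rho_s(A\setminus[0,s_n))\geq r-2^{-(n+1)}$ and then pick $t$ large enough that both $A_t\cap[0,s)=A\cap[0,s)$ and $q_t\leq r+2^{-(n+1)}$; the slack $2^{-n}$ absorbs the two-sided error in $q_t$.  One then gets $\rho_{s_{n+1}}(B)\geq q_{t_{n+1}}$, and since $t_{n+1}>n\to\infty$ this gives $\overline{\rho}(B)\geq\limsup_n q_{t_{n+1}}=r$.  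Once you make this change, the rest of your argument goes through verbatim.
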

\begin{proof} Let $\{q_s\}_{s \in \omega}$ be a computable sequence of
  rational numbers converging to $\overline{\rho}(A)$.  Define a sequence of
  pairs of natural numbers $(s_n, t_n)_{n \in \omega}$ recursively as
  follows.  Let $(s_0, t_0) = (0,0)$.  Given $(s_n, t_n)$, let
  $(s_{n+1}, t_{n+1})$ be the first pair $(s,t)$ such that:
$$s >  s_n \quad \& \quad t > n \quad \& \quad
\rho_s(A_t \setminus [0,s_n)) \geq q_t - 2^{-n}$$ 
We claim that such a
pair $(s,t)$ exists.  First, choose $s > s_n$ such that $\rho_s(A
\setminus [0,s_n)) \geq \overline{\rho}(A) - 2^{-(n+1)}$.  There are
infinitely many $s$ which satisfy this inequality since
$\overline{\rho}(A) = \overline{\rho}(A \setminus [0,s_t))$.  Now
choose $t > n$ such that $\rho_s(A \setminus [0,s_n)) = \rho_s(A_t \setminus [0,s_n))$ and $q_t \leq
\overline{\rho}(A) + 2^{-(n+1)}$.  Any sufficiently large $t$ meets
these conditions since $\lim_t q_t = \overline{\rho}(A)$.  Then
$$\rho_s(A_t \setminus [0,s_n)) = \rho_s(A \setminus [0,s_n)) \geq \overline{\rho}(A) - 2^{-(n+1)} \geq
q_t - 2^{-(n+1)} - 2^{-(n+1)} = q_t - 2^{-n}$$ Hence the chosen pair
$(s,t)$ meets the condition above to be chosen as $(s_{n+1},t_{n+1})$.
It is easy to see that the sequence $(s_n, t_n)$ is computable.  Let
$S_n$ be the interval $[s_n, s_{n+1})$, so that every natural number
belongs to $S_n$ for exactly one $n$.

We now define the desired computable $B \subseteq A$.  For $k \in
S_n$, put $k$ into $B$ if and only if $k \in A_{t_{n+1}}$.  Clearly,
$B$ is a computable subset of $A$.  Hence $\overline{\rho}(B) \leq
\overline{\rho}(A)$.  To get the opposite inequality, note that $B$
and $A_{t_{n+1}}$ agree on the interval $S_n$.  Further, by definition
of $(s_{n+1}, t_{n+1})$, we have $\rho_{s_{n+1}}(A_{t_{n+1}} \setminus
[0,s_n)) \geq q_{t_{n+1}}$.  It follows from the definition of $B$ that
$$\rho_{s_{n+1}}(B) \geq q_{t_{n+1}}$$
for all $n$.   Therefore:
$$\overline{\rho}(B) \geq \limsup_n \rho_{s_{n+1}}(B) \geq
\limsup_n q_{t_{n+1}} = \overline{\rho}(A)$$
as needed to complete the proof. 
\end{proof}

We now show that we cannot omit the hypothesis that $\overline{\rho}(A)$ is a
$\Delta^0_2$ real from the above theorem, even if we assume in addition that
$\rho(A)$ exists.

\begin{thm} \label{nonapprox} There is a c.e.\ set $A$ such that
  density of $A$ exists, yet for every $\Pi^0_1$ subset $B$ of $A$, we
have $\underline{\rho}(A \setminus B) > 0$ and hence, by Lemma \ref{diff},
$d(A,B) > 0$.
\end{thm}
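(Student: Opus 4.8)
The plan is to build $A$ as a c.e.\ set of density exactly $1$ by a priority construction in which we reserve, for each potential $\Pi^0_1$ subset index $e$, a private interval $J_e$ on which we ``fight'' the $e$th co-c.e.\ set $\overline{W_e}$. The key tension is: to make $\rho(A)$ exist and equal $1$ we need $A$ to be very full on every initial segment, but to defeat each candidate $B = \overline{W_e} \subseteq A$ we must keep a positive fraction of some interval out of $B$ infinitely often. The resolution is that ``$B \subseteq A$'' is itself a $\Pi^0_2$ condition we get to exploit: if $W_e$ ever enumerates an element of $A$, then $B = \overline{W_e}$ is not a subset of $A$ and requirement $R_e$ is trivially satisfied; so we only need to act against those $e$ for which $W_e \cap A = \emptyset$, and for those $e$ we are free to put whatever we like into $A$ on $J_e$ since it will never conflict with $B \subseteq A$.

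First I would fix a computable partition of $\omega \setminus \{0\}$ into intervals $J_e$ (for instance using blocks growing fast enough that $\sum_{e} |J_e| / (\text{left endpoint of } J_{e+1}) \to 0$, so that any single interval is asymptotically negligible and density-$1$ behavior is governed by the cumulative picture). On $\omega \setminus \bigcup_e J_e$ — or rather, outside a sparse ``control'' sub-interval of each $J_e$ — I would simply enumerate \emph{everything} into $A$, which already forces $\rho_n(A) \to 1$ along the endpoints of the $J_e$'s provided the control regions are chosen to have vanishing relative size within each block \emph{in the limit} but still \emph{positive} relative size within their own block. The requirement $R_e$ is: if $\overline{W_e} \subseteq A$ then $\underline{\rho}(A \setminus \overline{W_e}) > 0$, equivalently $\overline{\rho}_{\text{all large }n}(A \cap W_e) > 0$. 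To meet $R_e$, inside the control sub-interval $C_e \subseteq J_e$ I would enumerate into $A$ all elements of $C_e$ that are \emph{not yet} in $W_e$ at the current stage — i.e.\ I copy $\overline{W_{e,s}} \cap C_e$ into $A$ at stage $s$. Then at the end, $A \cap C_e = C_e$ (everything eventually goes in, since once $x \in W_e$ we have $x \in W_e \subseteq$ complement of $\overline{W_e}$, and once $x \notin W_e$ forever we put it in), so in fact $C_e \subseteq A$ outright; and $A \cap W_e \supseteq W_e \cap C_e$, which is all of $C_e$ except $\overline{W_e} \cap C_e$.

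The point is the case split. If $W_e \cap C_e$ is ``large'' (say at least half of $C_e$), then $A \cap W_e$ contains at least half of $C_e$, contributing a fixed positive amount to $\rho_n(A \setminus \overline{W_e})$ at the right endpoint of $C_e$; but one interval alone does not give a \emph{liminf} bound, so I must do this on infinitely many intervals simultaneously — hence I assign to each $e$ not one interval but an infinite computable sequence of intervals $C_e^{(0)}, C_e^{(1)}, \dots$, and the fraction of $[0,n)$ occupied by $\bigcup_{k} C_e^{(k)}$ is bounded below by a positive constant $\delta_e$ for all large $n$ (interleave the families so each has positive lower density). Then on each $C_e^{(k)}$ either $W_e$ eventually covers more than half (good: positive contribution to $A \setminus \overline{W_e}$ there) or $W_e$ covers at most half, in which case $\overline{W_e} \cap C_e^{(k)}$ has at least half the elements of $C_e^{(k)}$, \emph{all of which lie in $A$} (since $C_e^{(k)} \subseteq A$), so $\overline{W_e}$ is \emph{not} disjoint from... no: I need $\overline{W_e} \not\subseteq A$ to trivialize. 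That fails here. So instead: in the ``$W_e$ covers at most half'' case I \emph{withhold} the elements of $C_e^{(k)} \setminus W_{e,s}$ from $A$ at the stage I detect this, i.e.\ I run a finer stage-by-stage strategy — enumerate $x \in C_e^{(k)}$ into $A$ only \emph{after} $x$ enters $W_e$. Now $A \cap C_e^{(k)} = W_e \cap C_e^{(k)}$ exactly, so $A \setminus \overline{W_e} \supseteq A \cap W_e \supseteq W_e \cap C_e^{(k)}$, \emph{and} $A \cap C_e^{(k)} \subseteq W_e$ means $\overline{W_e} \cap C_e^{(k)} \cap A = \emptyset$; but we still might have $\overline{W_e} \subseteq A$ if $W_e \cap C_e^{(k)} = C_e^{(k)}$. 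In the subcase $W_e \cap C_e^{(k)} \neq C_e^{(k)}$ we get $\overline{W_e} \not\subseteq A$, done. In the subcase $W_e \cap C_e^{(k)} = C_e^{(k)}$ for all $k$, then $A \cap \bigcup_k C_e^{(k)} = \bigcup_k C_e^{(k)}$ and this set has positive lower density $\delta_e$ inside $A$, while $\overline{W_e} \cap \bigcup_k C_e^{(k)} = \emptyset$, so $A \setminus \overline{W_e}$ contains $\bigcup_k C_e^{(k)}$, giving $\underline{\rho}(A \setminus \overline{W_e}) \geq \delta_e > 0$. Either way $R_e$ holds.

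\medskip

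\textbf{The main obstacle} is reconciling these strategies with $\rho(A) = 1$: the ``withhold $x$ until $x \in W_e$'' action means that for indices $e$ with $W_e$ small, a positive fraction $\delta_e$ of the relevant control intervals stays \emph{out} of $A$ at some stages and possibly forever. To keep $\rho(A) = 1$ I must ensure $\sum_e \delta_e$-worth of density loss is actually zero in aggregate, which forces the $\delta_e$ and the interval placements to be chosen so that the control intervals for \emph{large} $e$ are pushed out to very sparse locations — formally, the union $\bigcup_{e \geq m} C_e^{(\cdot)}$ should have upper density $\to 0$ as $m \to \infty$, and each individual $C_e^{(\cdot)}$-family has density $0$ as well (but positive \emph{lower} density is impossible if the density is $0$!). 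This is the real tension: I cannot have $\bigcup_k C_e^{(k)}$ with positive lower density and also negligible for the $\rho(A)=1$ count. The fix — and I expect this is the heart of the construction — is that the withholding is only \emph{temporary} along the ``$W_e$ covers more than half eventually'' branch, where elements do enter $A$, so permanent loss only occurs on the branch where $\overline{W_e} \subseteq A$ genuinely holds — but there is exactly \emph{one} such $e$ per ``real'' co-c.e.\ subset and we can afford, for each $n$, to have suffered permanent loss from only finitely many $e$'s whose intervals are concentrated below a slowly-growing bound; combined with the fact that outside all control intervals $A$ is everything, and the control intervals occupy relative measure $\to 0$ in the limit (while still, within the sub-sub-sequence of stages/positions relevant to a \emph{fixed} $e$, occupying positive relative measure of $[0,n)$ — this is consistent because ``positive for a fixed $e$ at the good $n$'s'' and ``$\to 0$ over all $e$'' are compatible when the good $n$'s for distinct $e$ are spread out). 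I would make this precise by a careful choice of a doubly-indexed interval system $C_e^{(k)} = [a_{e,k}, b_{e,k})$ with $b_{e,k}/a_{e,k} \to 1$ and $\sum_{(e,k)} (b_{e,k} - a_{e,k}) / a_{e,k} < \infty$, verify $\rho_n(A) \to 1$ by a Borel–Cantelli-style estimate on the tails, and verify each $R_e$ by the case analysis above restricted to the subsequence $\{b_{e,k}\}_k$, along which $\rho_{b_{e,k}}(A \setminus \overline{W_e})$ stays bounded below. (In the actual paper this is presumably streamlined by using the sets $R_k$ of Section 2 or a similar device; I would adapt whichever bookkeeping makes the density computation cleanest.)
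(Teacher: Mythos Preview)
Your proposal has the right skeleton---assign each index $e$ a control region, enumerate elements there based on membership in $W_e$, and dichotomize on whether $W_e$ covers the region---but the construction as written does not close. You correctly identify the central tension and then do not resolve it: for the ``$W_e$ covers everything'' branch you need $\bigcup_k C_e^{(k)}$ to have positive lower density $\delta_e$, so that $A \setminus \overline{W_e} \supseteq \bigcup_k C_e^{(k)}$ yields $\underline{\rho}(A \setminus \overline{W_e}) \geq \delta_e > 0$; but your proposed fix (intervals with $b_{e,k}/a_{e,k} \to 1$ and summable relative lengths) forces each $\bigcup_k C_e^{(k)}$ to have density $0$, destroying exactly that argument. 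Checking $\rho_{b_{e,k}}(A \setminus \overline{W_e})$ only along the subsequence $\{b_{e,k}\}$ would at best bound the \emph{upper} density, not the required lower density---and here even that fails, since $|C_e^{(k)}|/b_{e,k} \to 0$. Note also that your claim ``permanent loss only occurs on the branch where $\overline{W_e} \subseteq A$'' is backwards: with your rule $A \cap C_e^{(k)} = W_e \cap C_e^{(k)}$, permanent loss on the $e$-region occurs precisely when some $C_e^{(k)} \not\subseteq W_e$, which is exactly the case $\overline{W_e} \not\subseteq A$. Since infinitely many $e$ have $W_e = \emptyset$, permanent loss of positive density is unavoidable.

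The paper sidesteps the whole tension by dropping the goal $\rho(A) = 1$: the theorem only asks that $\rho(A)$ \emph{exist}. It takes $R_e$ itself (density $2^{-(e+1)}$) as the control region for $e$, and uses a \emph{monotone} rule: put $x \in R_e$ into $A$ iff every $y \leq x$ with $y \in R_e$ lies in $W_e$. This forces $A \cap R_e$ to be either all of $R_e$ (when $R_e \subseteq W_e$) or finite (otherwise), so $\rho(A \cap R_e)$ exists trivially for every $e$, and $\rho(A) = \sum\{2^{-(e+1)} : R_e \subseteq W_e\}$ by restricted countable additivity. The dichotomy is then one line: if $R_e \subseteq W_e$ then $R_e \subseteq A \setminus \overline{W_e}$, giving $\underline{\rho}(A \setminus \overline{W_e}) \geq 2^{-(e+1)}$; otherwise the least $x \in R_e \setminus W_e$ witnesses $\overline{W_e} \not\subseteq A$. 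Your per-element rule would give the same dichotomy but would not force $\rho(A \cap R_e)$ to exist; the all-or-finite monotone rule is the missing ingredient.
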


\begin{proof}
  Recall that $R_e = \{x : 2^e \mid x \ \& \ 2^{e+1} \nmid x\}$.  For
  $x \in R_e$, put $x$ into $A$ if and only if every $y \leq x$ with
  $y \in R_e$ is in $W_e$.  We first show that, for each $e$, if
  $\overline{W_e} \subseteq A$, then $\overline{\rho}(\overline{W_e})
  < \overline{\rho}(A)$, and then show $A$ has a density.  Let $e$ be given.

  {\bf Case 1}.  $R_e \subseteq W_e$ .  Then $R_e \subseteq A$ by
  definition of $A$.  So $R_e \subseteq (A \setminus \overline{W_e})$, and
  hence $A \setminus \overline{W_e}$ has positive lower density.

{\bf Case 2}.  Otherwise.  Take $x \in R_e \setminus W_e$.  Then $x
\notin A$ by definition of $A$ , so $x \notin A \cup W_e$.  Hence
$\overline{W_e}$ is not a subset of $A$.  Note further in this case
that $R_e \cap A$ is finite.

To see that $A$ has a density, note by the above that, for all e,
either $R_e \subseteq A$ or $R_e \cap A$ is finite, so that $R_e \cap
A$ has a density for all $e$.  It follows by restricted countable
additivity (Lemma 2.6 of \cite{JS}) that $A$ has a density, namely
$$\rho(A) = \sum_e \rho(A \cap R_e) = \sum_e \{2^{-(e+1)} : R_e \subseteq
W_e\}$$
\end{proof}

We now look at analogues of some of the above results where we
study $D(A,B)$ instead of $d(A,B)$, for $B$ a computable subset
of a given c.e.\ set $A$.   It was shown in Theorem \ref{Barzdin}
that for every c.e.\ set $A$ and real number $\epsilon > 0$ there is
a computable set $B \subseteq A$ with $d(A,B) < \epsilon$.   We pointed out
in Theorem \ref{oldresult} that the corresponding result fails for $D$ in place
of $d$, but we now show that this corresponding result does hold if we assume
$A$ has a density.

\begin{thm} \label{approx} Let $A$ be a c.e.\ set and $\epsilon$ a positive real
  number.  Then $A$ has a computable subset $B$ such that
  $\underline{\rho}(B) > \underline{\rho}(A) - \epsilon$.
 \end{thm}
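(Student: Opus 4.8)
The plan is to adapt the interval construction used for Theorem~\ref{Barzdin}, but now to control the density of $B$ from \emph{below} on \emph{all} long initial segments rather than merely on infinitely many of them. We may assume $\underline{\rho}(A) > 0$, since otherwise $B = \emptyset$ already satisfies $\underline{\rho}(B) = 0 > \underline{\rho}(A) - \epsilon$. Fix rational numbers $q < p$ with $\max(0,\,\underline{\rho}(A) - \epsilon) < q < p < \underline{\rho}(A)$. Since $\liminf_n \rho_n(A) = \underline{\rho}(A) > p$, there is an $N$ such that $\rho_n(A) > p$ for all $n \geq N$ and moreover $\lvert A\cap[0,n)\rvert - p\cdot n \to \infty$; we build such an $N$ into the algorithm for $B$ (the one non-uniform ingredient). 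Put $A \cap [0,N)$ into $B$; positions below $N$ then do not matter for density.

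We define $B$ on $[N,\infty)$ along a computable increasing sequence of checkpoints $N = c_0 < c_1 < \cdots$ together with enumeration stages $t_1 \leq t_2 \leq \cdots$, where $B{\upharpoonright}[c_n, c_{n+1})$ is a computable subset of $A \cap [c_n, c_{n+1})$ obtained by absorbing the elements of $A$ already enumerated by stage $t_{n+1}$ (and, as explained below, perhaps later). Given $c_n$ and the part of $B$ already determined below $c_n$, we search for the least pair $(c,t)$, subject to $c_n < c \leq (1+2^{-n})c_n$, such that absorbing $A_t \cap [c_n, c)$ into $B$ would bring the running count $\lvert B \cap [0,c)\rvert$ up to at least $p\cdot c$; this $(c,t)$ becomes $(c_{n+1}, t_{n+1})$. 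Two things must be checked. \emph{(i) The search halts}, so that $(c_n)$, $(t_n)$, and hence $B$, are computable ($B \subseteq A$ being immediate): the search condition, with $t \to \infty$, asks for $c$ in the prescribed range with $\lvert A \cap [0,c)\rvert - \mathrm{def}(c_n) \geq p\cdot c$, where $\mathrm{def}(c_n) := \lvert A\cap[0,c_n)\rvert - \lvert B\cap[0,c_n)\rvert$ is a fixed finite deficit, and since $\lvert A\cap[0,c)\rvert - p\cdot c \to \infty$ such a $c$ exists. \emph{(ii) The density bound}: by construction $\rho_{c_{n+1}}(B) \geq p$ at every checkpoint, and for $c_n \leq m < c_{n+1}$ we have $\rho_m(B) \geq \lvert B\cap[0,c_n)\rvert / m \geq p\cdot c_n/c_{n+1} \geq p/(1+2^{-n}) \to p$, so $\rho_m(B) > q$ for all large $m$, whence $\underline{\rho}(B) \geq q > \underline{\rho}(A) - \epsilon$.

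The main obstacle is that (i) and (ii) pull against each other: restricting $c_{n+1}$ to lie just above $c_n$ (needed for (ii)) makes it far from clear that the search in (i) succeeds, since a short interval $[c_n,c)$ may contain very few elements of $A$ — essentially none if $\mathrm{def}(c_n)$ is a substantial fraction of $c_n$, which happens when $A$ has been enumerated into $[0,c_n)$ much faster than $B$ has been able to follow. Overcoming this is the crux: one chooses the checkpoints $c_n$ to sit near initial segments on which $\rho(A)$ is close to $\underline{\rho}(A)$ — such segments occur cofinally by the choice of $N$ and the definition of $\liminf$ — so that $\lvert A\cap[0,c_n)\rvert$ exceeds $\underline{\rho}(A)\cdot c_n$ only slightly; and, crucially, one does not finalize $B{\upharpoonright}[c_n,c_{n+1})$ at the stage $t_{n+1}$ where the checkpoint was found, but re-examines it at a later stage (located by a further bounded search that waits until the enumerated count of $A$ below $c_{n+1}$ reaches a level guaranteed by the density inequality $\rho_{c_{n+1}}(A) > p$), absorbing into $B$ the elements of $A$ that surface only then; this keeps $\mathrm{def}(c_n) = o(c_n)$, so the short-interval search really does terminate. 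Once $\rho_m(A\setminus B)$ is thereby held below $p - q$ for all large $m$, Lemma~\ref{diff} together with $\rho_m(A) > p$ yields the desired bound on $\underline{\rho}(B)$.
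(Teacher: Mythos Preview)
Your construction has a fatal arithmetic slip: the constraint $c_{n+1}\le(1+2^{-n})c_n$ forces the checkpoint sequence to be \emph{bounded}, since $\prod_{n\ge 0}(1+2^{-n})$ converges (its logarithm is majorized by $\sum_{n\ge 0}2^{-n}=2$), so $c_n<e^2 N$ for all $n$ and $B$ is never defined beyond a fixed finite initial segment. Even if you repair this with, say, $c_{n+1}\le(1+1/(n{+}1))c_n$ so that $c_n\to\infty$ while $c_{n+1}/c_n\to 1$, the justification of (i) still has a gap: the appeal to ``$|A\cap[0,c)|-pc\to\infty$'' is irrelevant when $c$ is confined to the short window $(c_n,(1+\delta_n)c_n]$. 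On that window $|A\cap[0,c)|-pc$ is of order $(\rho_c(A)-p)c_n$, which near a place where $\rho_c(A)$ is close to $\underline\rho(A)$ can be much smaller than the deficit $\mathrm{def}(c_n)\approx(\rho_{c_n}(A)-p)c_n$ accumulated at a place where $\rho_{c_n}(A)$ happens to be large. You acknowledge this tension and sketch a two-pass repair (place $c_n$ near local minima of $n\mapsto\rho_n(A)$; revisit intervals later to shrink the deficit), but neither move is turned into an explicit algorithm: those local minima are not effectively locatable, and the claim that the revisiting step yields $\mathrm{def}(c_n)=o(c_n)$ is asserted rather than proved.

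The paper sidesteps the checkpoint-versus-deficit tension entirely with a single \emph{look-ahead}. Fix a rational $q$ with $\underline{\rho}(A)-\epsilon<q<\underline{\rho}(A)$ and (non-uniformly) an $n_0$ with $\rho_n(A)\ge q$ for all $n\ge n_0$. For each such $n$ let $s(n)$ be the least stage with $\rho_n(A_{s(n)})\ge q$; set $t(k)=\max\{s(n):n_0\le n\le k^2\}$ and $B=\{k:k\in A_{t(k)}\}$. Then for any $n\ge n_0$, every $k\in A_{s(n)}$ with $k\ge\sqrt n$ satisfies $n\le k^2$, hence $t(k)\ge s(n)$, hence $k\in B$; so $\rho_n(B)\ge\rho_n(A_{s(n)})-1/\sqrt n\ge q-1/\sqrt n$ and $\underline{\rho}(B)\ge q>\underline{\rho}(A)-\epsilon$. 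The quadratic look-ahead window $[0,k^2)$ is precisely what makes the $1/\sqrt n$ loss negligible while keeping $B$ computable; no iterated searches, revisiting, or deficit bookkeeping are needed.
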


 We give a corollary before proving this result.  Roughly speaking,
 this corollary asserts that the computable sets are topologically
 dense among the c.e.\ sets which have an asymptotic density
 (pretending that the pseudometric $D$ is a metric). 

 \begin{cor} \label{eps} Let $A$ be a c.e.\ set which has a density and
   $\epsilon$ a positive real number.  Then $A$ has a computable
   subset $B$ such that $D(A,B) < \epsilon$.
\end{cor}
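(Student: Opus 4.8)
The plan is to derive Corollary \ref{eps} from Theorem \ref{approx} together with Lemma \ref{diff}. Since $A$ has a density, we have $\overline{\rho}(A) = \underline{\rho}(A) = \rho(A)$. By Theorem \ref{approx}, there is a computable set $B \subseteq A$ with $\underline{\rho}(B) > \underline{\rho}(A) - \epsilon$, hence also $\underline{\rho}(B) > \rho(A) - \epsilon$. The idea is that once the lower density of $B$ is close to $\rho(A)$ from below, the symmetric difference $A \triangle B = A \setminus B$ must have small upper density, because on any initial segment where $B$ is dense, $A \setminus B$ is correspondingly sparse, and elsewhere $A$ itself is not too dense since $\overline{\rho}(A) = \rho(A)$.

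More precisely, I would apply part (iii) of Lemma \ref{diff}, which gives $D(A,B) \leq \overline{\rho}(A) - \underline{\rho}(B)$. Using $\overline{\rho}(A) = \rho(A)$ and $\underline{\rho}(B) > \rho(A) - \epsilon$, we obtain
\[
D(A,B) \leq \overline{\rho}(A) - \underline{\rho}(B) = \rho(A) - \underline{\rho}(B) < \rho(A) - (\rho(A) - \epsilon) = \epsilon,
\]
which is exactly what is claimed. So the corollary follows immediately by chaining the two cited results, with the equality $\overline{\rho}(A) = \underline{\rho}(A)$ being the only place the hypothesis that $A$ has a density gets used.

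There is essentially no obstacle here: the entire content is packaged into Theorem \ref{approx} (whose proof presumably adapts the interval-construction technique of Theorem \ref{Barzdin} and Theorem \ref{d2approx}, now controlling \emph{every} sufficiently large initial segment rather than infinitely many) and into the elementary Lemma \ref{diff}. The one small thing worth double-checking is the direction of the inequality in Lemma \ref{diff}(iii) and that it applies verbatim since $B \subseteq A$; but this is routine. If one wanted to avoid invoking part (iii) directly, one could instead note $\rho_n(A \setminus B) = \rho_n(A) - \rho_n(B)$, take $\limsup$, and use that $\limsup_n (\rho_n(A) - \rho_n(B)) \leq \limsup_n \rho_n(A) + \limsup_n(-\rho_n(B)) = \overline{\rho}(A) - \underline{\rho}(B)$, arriving at the same bound. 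Either way, the proof is a two-line deduction.
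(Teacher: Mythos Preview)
Your proof is correct and essentially identical to the paper's: both apply Theorem \ref{approx} to obtain a computable $B \subseteq A$ with $\underline{\rho}(B) > \underline{\rho}(A) - \epsilon$, then invoke Lemma \ref{diff}(iii) together with $\overline{\rho}(A) = \underline{\rho}(A)$ to conclude $D(A,B) \leq \overline{\rho}(A) - \underline{\rho}(B) < \epsilon$.
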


\begin{proof} (of corollary).  By the theorem, let $B$ be a computable
  subset of $A$ such that $\underline{\rho}(B) > \underline{\rho}(A) -
  \epsilon$.   By Lemma \ref{diff}, 
$$D(A,B) \leq \overline{\rho}(A) - \underline{\rho}(B) = \underline{\rho}(A) 
- \underline{\rho}(B) < \epsilon$$ 
\end{proof}

\begin{proof} (of theorem) Let $A$ be a c.e.\ set and let $\epsilon$ be
  a positive real number.  We must construct a computable set $B
  \subseteq A$ such that $\underline{\rho}(B) > \underline{\rho}(A) -
  \epsilon$.  Let $q$ be a rational number such that
  $\underline{\rho}(A) - \epsilon < q < \underline{\rho}(A)$.  Since
  $q < \underline{\rho}(A)$ there is a number $n_0$ such that
  $\rho_n(A) \geq q$ for all $n \geq n_0$.  Given $n \geq n_0$, let
  $s(n)$ be the least number $s$ such that $\rho_n (A_s) \geq q$,
  where such an $s$ exists because $\rho_n(A) \geq q$.  Then,
  for each $k \geq  \sqrt{n_0}$, define
$$t(k) = \max \{s(n) :  n_0 \leq n \leq k^2\}$$
Finally, define
$$B = \{k :  k \in A_{t(k)}\}$$
The set $B$ is computable because the functions $s$ and $t$ are
computable.  Note that in deciding whether to put $k$ into $B$, we are
waiting for sufficient elements to be enumerated in $A$ on the
interval $[0, k^2)$, which for large $k$ is much bigger than the
interval $[0,k)$.  Such a ``look-ahead'' is crucial to our argument.

Suppose now that $k \geq \sqrt{n}$, and $n \geq n_0$.  Then $n \leq
k^2$, so $s(n) \leq t(k)$, and hence $A_{s(n)} \subseteq A_{t(k)}$.
Thus, for $n \geq n_0$, every number $k \in A_{s(n)}$ with $k \geq
\sqrt{n}$ is in $B$, by the definition of $B$.  It follows that
$$|B \cap [0,n)| \ \geq \  |A_{s(n)}| - \sqrt{n}$$
Since $\rho_n (A_{s(n)}) \geq q$, division by $n$ yields that
$$ \rho_n(B) \geq q - 1/\sqrt{n}$$
for $n \geq n_0$.  As $n$ approaches infinity, $1/\sqrt{n}$ tends to $0$, and
hence $\underline{\rho}(B) \geq q > \underline{\rho}(A) - \epsilon$.
\end{proof}

If $A$ is a c.e.\ set of density $1$, it would be tempting to try to
show that $A$ has a computable subset $B$ of density $1$ by using the
method of the previous theorem applied to values of $q$ closer and
closer to $1$.  However, this breaks down because $n_0$ need not
depend effectively on $q$, so we do not have an effective way to
handle the finitely many ``bad'' $n < n_0$ as $q$ varies.  Indeed,
this breakdown is essential, as it is shown in \cite{JS}, Theorem
2.22, there is a c.e.\ set of density $1$ with no computable subset of
density $1$.  On the other hand, if we \emph{assume} that $A$ is such
that $n_0$ depends effectively on $q$, this plan goes through.  We
make this explicit in the following definition and theorem.

\begin{defn} Let $A$ be a set of density $1$.
    \begin{itemize}

    \item[(i)] A function $w$ \emph{witnesses} that $A$ has density $1$ if
      $(\forall k)(\forall n \geq w(k))[\rho_n(A) \geq 1 - 2^{-k}]$.

      \item[(ii)]  The set $A$ has density $1$ \emph{effectively} if there is
       a computable function $w$ which witnesses that $A$ has density $1$.
    \end{itemize}
\end{defn}

\begin{thm} \label{eff} If $A$ is c.e.\ and has density $1$
  effectively, then $A$ has a computable subset $B$ which has density
  $1$ effectively.
\end{thm}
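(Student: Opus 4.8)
The plan is to adapt the construction from Theorem \ref{approx}, but to exploit the computable witness function $w$ to handle the ``finitely many bad $n$'' problem uniformly across all thresholds $q = 1 - 2^{-k}$. Since $A$ has density $1$ effectively via the computable function $w$, we know that $\rho_n(A) \geq 1 - 2^{-k}$ for all $n \geq w(k)$, and we may assume without loss of generality that $w$ is nondecreasing. The idea is to define, for each $k$, a ``look-ahead'' analogous to the map $n \mapsto k^2$ in Theorem \ref{approx}, but now letting the look-ahead grow fast enough that the error term $1/\sqrt{n}$ is dominated by $2^{-k}$ on the relevant range.

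First I would, for each $n \geq w(0)$, let $k(n)$ be the largest $k$ such that $w(k) \leq n$; this is a computable, nondecreasing, unbounded function of $n$. For $n$ in the range where $k(n) = k$, we want to approximate $A$ well enough that $\rho_n(B) \geq 1 - 2^{-(k-1)}$ or so. Concretely, for each such $n$ let $s(n)$ be the least stage $s$ with $\rho_n(A_s) \geq 1 - 2^{-k(n)}$ (which exists by the defining property of $w$), and then define a look-ahead $t(j) = \max\{s(n) : w(0) \leq n \leq g(j)\}$ where $g$ is a computable function growing quickly enough — say $g(j) = j^2$ suffices, exactly as before, but one must check the interaction with $k(n)$. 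Then set $B = \{j : j \in A_{t(j)}\}$, which is computable. The analysis of Theorem \ref{approx} then gives, for $n \geq w(0)$ and $j \geq \sqrt{n}$ with appropriate bookkeeping, that every $j \in A_{s(n)}$ with $j \geq \sqrt{n}$ lies in $B$, whence $|B \cap [0,n)| \geq |A_{s(n)}| - \sqrt{n}$ and so $\rho_n(B) \geq (1 - 2^{-k(n)}) - 1/\sqrt{n}$.

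The main obstacle, and the place where care is needed, is ensuring that the two sources of error, namely $2^{-k(n)}$ (from how well we have enumerated $A$) and $1/\sqrt{n}$ (from the look-ahead slack), can be simultaneously controlled by a \emph{computable} witness for $B$. The first error shrinks as $n$ grows because $k(n) \to \infty$, but its rate of decrease is governed by $w$, which we know only to be computable, not to grow at any prescribed rate; the second error shrinks at the fixed rate $1/\sqrt{n}$. The resolution is that we do not need the errors to decrease at a uniform rate across all $A$ — we only need \emph{some} computable witness for $B$. Given the target $1 - 2^{-m}$ for $B$, I would exhibit a computable bound $w_B(m)$ beyond which both error terms are below $2^{-(m+1)}$: take $w_B(m)$ large enough that $k(n) \geq m+1$ (which holds once $n \geq w(m+1)$, a computable bound) and also $1/\sqrt{n} \leq 2^{-(m+1)}$ (which holds once $n \geq 4^{m+1}$). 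So $w_B(m) = \max(w(m+1), 4^{m+1})$ works, and this is computable since $w$ is. This shows $B$ has density $1$ effectively, which completes the proof. One should also double-check the edge cases: the finitely many $n < w(0)$ contribute nothing problematic since density is a limiting notion, and the monotonicity assumptions on $w$ and hence on $s(n)$, $t(j)$ need to be arranged so that $s(n) \leq t(j)$ whenever $j$ is large relative to $n$, exactly as in Theorem \ref{approx}.
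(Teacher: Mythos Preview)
Your proposal is correct and follows essentially the same approach as the paper. Your function $k(n)$ is the paper's $h(n)$ (the paper writes it as the greatest $z \leq n$ with $w(z) \leq n$, which avoids the minor issue that your ``largest $k$ with $w(k) \leq n$'' is undefined when $w$ is bounded---you should cap by $n$ or assume $w$ strictly increasing); the definitions of $s(n)$, $t(j)$, and $B$ coincide with the paper's, and your density bound $\rho_n(B) \geq 1 - 2^{-k(n)} - 1/\sqrt{n}$ is exactly what the paper derives. The only cosmetic difference is in verifying effectiveness: the paper observes that the lower bound $b(n) = 1 - 2^{-h(n)} - 1/\sqrt{n}$ is computable, nondecreasing, and tends to $1$, whereas you give the explicit witness $w_B(m) = \max(w(m+1), 4^{m+1})$; these are equivalent ways of saying the same thing.
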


\begin{proof} Let $w$ be a computable function which witnesses that
  $A$ has density $1$ and let $\{A_s\}$ be a computable enumeration of
  $A$. For $n \geq 0$ let $s(n)$ be the least $s$ such that
  $\rho_n(A_s) \geq 1 - 2^{-z}$ for all $z \leq n$ such that $w(z)
  \leq n$.  The function $s$ is total because $w$ witnesses that $A$
  has density $1$.  We now define the
  function $t$ and the set $B$ exactly as in the previous theorem,
  namely
$$t(k) = \max \{s(n) : n \leq k^2\}$$
$$B = \{k : k \in A_{t(k)}\}$$
As before, $B$ is computable because the functions $s$ and $t$ are
computable, and clearly $B \subseteq A$.  Further, we can argue
exactly as in the previous theorem that if $w(z) \leq n$ and $z \leq
n$, then
$$\rho_n (B) \geq 1 - 2^{-z} - 1/\sqrt{n}$$
Let $h(n)$ be the greatest number $z \leq n$ with $w(z) \leq n$.  (We
may assume without loss of generality that $w(0) = 0$, so such a $z$
always exists.)  By the inequality above, we have, for $n > 0$,
$$\rho_n(B) \geq 1 - 2^{-h(n)} - 1/\sqrt{n}$$
Since $h(n)$ tends to infinity as $n$ tends to infinity, it follows that $B$
has density $1$.  Let $b(n) = 1 - 2^{-h(n)} - 1/\sqrt{n}$ be the lower
bound for $\rho_n(B)$ obtained above.  Since the function $h$ is
nondecreasing and computable, the function $b$ is also nondecreasing, and
$b(n)$ is a computable real, uniformly in $n$.  Also $\lim_n b(n) =
1$.  It follows that $B$ has density $1$ effectively.
\end{proof}

Note that if $A$ has density $1$, then there is a function $w_A \leq_T
A'$ which witnesses that $A$ has density $1$, namely
$$w_A(k) = (\mu y) (\forall n \geq y) [\rho_n(A) \geq 1 - 2^{-k}]$$
We call $w_A$ the \emph{minimal witness function} for $A$.
In particular, if $A$ is a c.e.\ set of density $1$, then there is a function $w
\leq_T 0''$ which witnesses that $A$ has density $1$.  The next result
shows that if there is such a $w \leq_T 0'$, then $A$ has a computable
subset of $B$ of density $1$.

\begin{thm} \label{delta2w}
Let $A$ be a c.e.\ set of density $1$.  Then the following are equivalent:
\begin{itemize}
      \item[(i)] $A$ has a computable subset $B$ of density $1$.
       \item[(ii)]  There is a function $w \leq_T 0'$ which witnesses that $A$ has
density $1$.
\end{itemize}
\end{thm}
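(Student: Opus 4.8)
The plan is to prove the two implications separately. The implication $(i) \Rightarrow (ii)$ is immediate: if $B \subseteq A$ is computable and has density $1$, I would take $w$ to be the minimal witness function of $B$, namely $w(k) = (\mu y)(\forall n \ge y)[\rho_n(B) \ge 1 - 2^{-k}]$. Since $B$ is computable, the predicate displayed inside is $\Pi^0_1$ in $y$ and $k$, so $w \le_T \emptyset'$; and since $B \subseteq A$ gives $\rho_n(B) \le \rho_n(A)$ for every $n$, this same $w$ witnesses that $A$ has density $1$. So the content of the theorem is the implication $(ii) \Rightarrow (i)$, which I would prove as a $\Delta^0_2$ refinement of the argument for Theorem~\ref{eff}.

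For $(ii) \Rightarrow (i)$, fix $w \le_T \emptyset'$ witnessing that $A$ has density $1$, a computable approximation $\{w_s\}$ with $\lim_s w_s(k) = w(k)$ for every $k$, and a computable enumeration $\{A_s\}$ of $A$. Imitating Theorem~\ref{eff}, I would let $s(n)$ be the least $s \ge n$ such that for every $z \le n$ either $\rho_n(A_s) \ge 1 - 2^{-z}$ or $w_s(z) > n$, and then set $t(k) = \max\{s(n) : n \le k^2\}$ and $B = \{k : k \in A_{t(k)}\}$. As in Theorem~\ref{eff}, $B$ is a computable subset of $A$ once $s$ (and hence $t$) is seen to be total, and the ``look-ahead'' through $k^2$ is what drives the density estimate.

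The two facts to check are: (a) $s$ is total, and (b) for each $k$, $\rho_n(A_{s(n)}) \ge 1 - 2^{-k}$ for all sufficiently large $n$. For (a): for each $z \le n$ with $w(z) \le n$, since $n \ge w(z)$ and $w$ witnesses density $1$ we have $\rho_n(A) \ge 1 - 2^{-z}$, so, using that $\rho_n(A_s)$ is nondecreasing in $s$, the first disjunct holds from some stage on and stays; and for each $z \le n$ with $w(z) > n$, the second disjunct holds and stays from the stage at which $w_s(z)$ has settled to its true value; the maximum of these finitely many stages witnesses $s(n)$. For (b): fix $k$ and take $n$ so large that $n \ge \max(k, w(k))$ and that $s(n)$, which tends to infinity because $s(n) \ge n$, exceeds the stage past which $w_s(k) = w(k)$; then at stage $s(n)$ the disjunct $w_{s(n)}(k) > n$ is false, so the first disjunct with $z = k$ must hold. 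Granting (a) and (b), the estimate from Theorem~\ref{eff} applies verbatim: for large $n$ and $k' \ge \sqrt n$ we have $n \le k'^2$, so $k' \in A_{s(n)}$ forces $k' \in A_{t(k')} = B$, whence $|B \cap [0,n)| \ge |A_{s(n)} \cap [0,n)| - \sqrt n$ and so $\rho_n(B) \ge 1 - 2^{-k} - 1/\sqrt n$ for all large $n$. Thus $\underline{\rho}(B) \ge 1 - 2^{-k}$ for every $k$, so $\rho(B) = 1$.

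The main obstacle is establishing (a) and (b) with a merely $\Delta^0_2$ witness $w$: a temporarily incorrect approximation value $w_s(z) > n$ lets the construction bypass the requirement $\rho_n(A_s) \ge 1 - 2^{-z}$, and one must check that for each fixed $k$ this bypassing affects only finitely many $n$ — precisely those with $s(n)$ below the finite settling stage of $w_s(k)$. Since a density-$1$ conclusion tolerates finitely many bad initial segments at each precision level $2^{-k}$, this is exactly enough. Everything else is a routine transcription of the proof of Theorem~\ref{eff}; in particular, no effectiveness of the density of $B$ is claimed or needed.
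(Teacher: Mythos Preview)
Your proof is correct and takes essentially the same approach as the paper. The direction $(i)\Rightarrow(ii)$ via the minimal witness function of $B$ is identical, and for $(ii)\Rightarrow(i)$ your definition of $s(n)$ (with the disjunction ``$\rho_n(A_s) \ge 1 - 2^{-z}$ or $w_s(z) > n$'') is logically equivalent to the paper's implication $g(k,s) \le n \rightarrow \rho_n(A_s) \ge 1 - 2^{-k}$, and the ensuing totality check, the key observation that $s(n)\ge n$ forces $w_{s(n)}(k)$ to have settled for large $n$, and the $1/\sqrt{n}$ density estimate all match the paper's argument.
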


\begin{proof} First, assume that (i) holds.  Then by the remark just
  above the statement of the theorem, there is a function $w_B \leq_T
  0'$ which witnesses that $B$ has density $1$.  Since $A \supseteq
  B$, we have that $\rho_n(A) \geq \rho_n(B)$ for all $n$, and so $w_B$
  also witnesses that $A$ has density $1$.

  Assume now that (ii) holds.  We will now prove (i) using the method
  of Theorem \ref{eff}, but using a computable approximation to $w$ in
  place of $w$.  The basic trick in proving Theorem \ref{eff} was to enumerate
  elements in $A$ until sufficient elements appeared to show that the
  density of $A$ on a given interval is at least as big as the lower
  bound given by $w$.  This would seem to carry the danger now that if
  our approximation to $w$ is incorrect, $A$ may not have sufficient
  elements in the interval to make its density at least as big as
  predicted by the approximation, and we would wait forever, causing
  the construction to bog down.  The solution to this is both simple
  and familiar.  As we wait for the elements to appear in $A$ we
  recompute the approximation.  Since the approximation converges to
  $w$, eventually sufficient elements must appear in $A$ for some
  sufficiently late approximation.

   We now implement the above strategy.  Let $g(.,.)$ be a computable function
such that $(\forall k)[w(k) = \lim_s g(k,s)]$.  Define
$$s(n) = (\mu s \geq n)(\forall k \leq n)[ g(k,s) \leq n \rightarrow
\rho_n (A_s) \geq 1 - 2^{-k}]$$ 
Note that the variable $s$ occurs both
as an argument of $g$ and as a stage of enumeration of $A$, in
accordance with our informal description of the strategy.  The
function $s$ is total because all sufficiently large numbers $s$
satisfy the defining property for $s(n)$, since $w$ witnesses that $A$
has density $1$.  We now define the computable function $t$ and the
computable set $B \subseteq A$ exactly as in Theorems \ref{approx} and
\ref{eff}.  This yields that $B$ is computable, $B \subseteq A$, and
for each $n \geq 0$, $\{k \geq \sqrt{n} : k \in A_{s(n)}\} \subseteq
B$.  These are proved just as in the proof of Theorem \ref{eff}.

We now show that $B$ has density $1$.  Let $b$ be given.  Suppose $n$
is sufficiently large that $n > b$, $n \geq w(b)$, and $(\forall s
\geq n) [g(b,s) = w(b)]$.  Then by definition of $s(n)$ (with $k = b)$,
$\rho_n (A_{s(n)}) \geq 1 - 2^{-b}$.  We then have, as in the proof of
Theorem \ref{delta2w}, for all sufficiently large $n$,
$$\rho_n(B) \geq \rho_n(A_{s(n)}) - 1/\sqrt{n} \geq 1 - 2^{-b} - 1/\sqrt{n}
\geq 1 - 2^{-b} - 1/\sqrt{b}$$
Since $\lim_b (1 - 2^{-b} - 1/\sqrt{b}) = 1$,
it follows that $\rho(B) = \lim_n \rho_n (B) = 1$.
\end{proof}

\begin{cor} \label{low} Suppose that $A$ is a low c.e.\ set of density
  $1$.  Then $A$ has a computable subset of density $1$.
\end{cor}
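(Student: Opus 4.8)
The plan is to obtain Corollary~\ref{low} as an immediate consequence of Theorem~\ref{delta2w} together with the remark recorded just before that theorem. By that remark, since $A$ has density $1$, its minimal witness function
$$w_A(k) = (\mu y)(\forall n \geq y)[\rho_n(A) \geq 1 - 2^{-k}]$$
witnesses that $A$ has density $1$ and satisfies $w_A \leq_T A'$.

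Next I would invoke the hypothesis that $A$ is low, which says precisely that $A' \equiv_T 0'$; hence $w_A \leq_T 0'$. Thus condition (ii) of Theorem~\ref{delta2w} is satisfied with $w = w_A$, and that theorem yields condition (i), namely that $A$ has a computable subset of density $1$.

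There is no real obstacle here: all the substance is already contained in Theorem~\ref{delta2w}, and the corollary merely substitutes the definition of ``low'' into its second hypothesis. If one preferred to avoid citing the earlier remark, one could instead check directly that, using $A$ itself as an oracle, the predicate ``$(\forall n \geq y)[\rho_n(A) \geq 1 - 2^{-k}]$'' is $\Pi^0_1$ relative to $A$ (since $\rho_n(A)$ is $A$-computable and the tail is upward closed in $y$), so that $w_A$ is computable from $A'$ and hence, by lowness, from $0'$; but this is just an unpacking of the cited remark.
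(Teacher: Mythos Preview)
Your proof is correct and matches the paper's own argument essentially word for word: invoke the remark preceding Theorem~\ref{delta2w} to get $w_A \leq_T A'$, use lowness to conclude $w_A \leq_T 0'$, and apply Theorem~\ref{delta2w}.
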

\begin{proof} As remarked just before the statement of Theorem
  \ref{delta2w}, there is a function $w \leq_T A'$ which witnesses
  that $A$ has density $1$.  Since $A$ is low, we have $w \leq_T 0'$,
  and hence $A$ has a computable subset of density $1$ by Theorem
  \ref{delta2w}.
\end{proof}

In the next section we will see that, conversely, every nonlow c.e.
degree contains a c.e.\ set of density $1$ with no computable subset of
density $1$.

We now use similar ideas to extend Corollary \ref{low} from sets of
  density $1$ to sets whose lower density is a $\Delta^0_2$ real.

  \begin{thm} \label{delta2r} Let $A$ be a low c.e.\ set such that
    $\underline{\rho}(A)$ is a $\Delta^0_2$ real.  Then $A$ has a
    computable subset $B$ such that $\underline{\rho}(B) =
    \underline{\rho}(A)$ and hence, by Lemma \ref{diff}, $d(A,B) = 0$.
\end{thm}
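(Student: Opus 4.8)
The plan is to combine the ``look-ahead'' construction of Theorems \ref{approx} and \ref{delta2w} with a $\Delta^0_2$ approximation to $\underline{\rho}(A)$. First I would fix a computable sequence $\{q_s\}$ of rationals with $\lim_s q_s = \underline{\rho}(A)$; since $A$ is low, I would also fix (using a $0'$-oracle and the Limit Lemma) a computable function $g(k,s)$ with $\lim_s g(k,s) = w(k)$, where $w \leq_T A' \equiv_T 0'$ is a ``lower-density witness'' analogous to the density-$1$ witness functions, e.g. $w(k) = (\mu y)(\forall n \geq y)[\rho_n(A) \geq \underline{\rho}(A) - 2^{-k}]$. The point of $w$ is that it tells us, effectively-in-the-limit, how far out we must go before $\rho_n(A)$ is permanently within $2^{-k}$ of $\underline{\rho}(A)$, so that on such $n$ we may safely demand that the enumeration of $A$ reveal roughly a $q_s - 2^{-k}$ fraction of $[0,n)$ before committing membership in $B$.

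Concretely, I would define $s(n) = (\mu s \geq n)(\forall k \leq n)[\, g(k,s) \leq n \rightarrow \rho_n(A_s) \geq q_s - 2^{-k}\,]$, which is total because $w$ genuinely witnesses the lower-density bound and $q_s \to \underline{\rho}(A)$; then set $t(k) = \max\{s(n) : n \leq k^2\}$ and $B = \{k : k \in A_{t(k)}\}$, exactly as before. The same bookkeeping as in Theorem \ref{delta2w} gives that $B$ is computable, $B \subseteq A$, and $\{k \geq \sqrt{n} : k \in A_{s(n)}\} \subseteq B$, so $|B \cap [0,n)| \geq |A_{s(n)}| - \sqrt{n}$ and hence $\rho_n(B) \geq \rho_n(A_{s(n)}) - 1/\sqrt{n}$. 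The upper bound $\underline{\rho}(B) \leq \underline{\rho}(A)$ is immediate from $B \subseteq A$, and then Lemma \ref{diff}(ii) gives $d(A,B) \leq \underline{\rho}(A) - \underline{\rho}(B) = 0$ once we have the matching lower bound.

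For the lower bound $\underline{\rho}(B) \geq \underline{\rho}(A)$, fix $\epsilon > 0$ and choose $b$ with $2^{-b} < \epsilon/3$; for all sufficiently large $n$ we have simultaneously $n > b$, $n \geq w(b)$, $(\forall s \geq n)[g(b,s) = w(b)]$, $q_n > \underline{\rho}(A) - \epsilon/3$, and $\rho_n(A) \geq \underline{\rho}(A) - \epsilon/3$. Unwinding the definition of $s(n)$ with $k = b$ — here $g(b,s(n)) = w(b) \leq n$ since $s(n) \geq n$ — yields $\rho_n(A_{s(n)}) \geq q_{s(n)} - 2^{-b}$, and since $s(n) \geq n$ and the $q_s$ converge, $q_{s(n)}$ is also eventually within $\epsilon/3$ of $\underline{\rho}(A)$; combining with $\rho_n(B) \geq \rho_n(A_{s(n)}) - 1/\sqrt{n}$ gives $\rho_n(B) \geq \underline{\rho}(A) - \epsilon$ for all large $n$, hence $\underline{\rho}(B) \geq \underline{\rho}(A) - \epsilon$ and, $\epsilon$ being arbitrary, $\underline{\rho}(B) \geq \underline{\rho}(A)$.

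The main obstacle I anticipate is handling the interaction between the moving target $q_s$ (from the $\Delta^0_2$-ness of $\underline{\rho}(A)$) and the moving approximation $g(k,s)$ to $w$: we must be sure that $s(n)$ is total, i.e. that waiting both for enough elements of $A$ to appear \emph{and} for $g(k,\cdot)$ to settle cannot loop forever. The resolution is the familiar one flagged in the proof of Theorem \ref{delta2w}: because $g(k,s) \to w(k)$ and $w$ is a correct witness, once the approximation to $w(k)$ stabilizes (for each of the finitely many $k \leq n$) the required density threshold is genuinely met by the true set $A$, so some finite stage $s$ works; the only subtlety is that we must phrase the threshold using $q_s$ rather than a fixed rational, so that the ``true'' inequality $\rho_n(A) \geq \underline{\rho}(A) - 2^{-k} \geq q_s - 2^{-k}$ holds for all large $s$ once $q_s$ has converged from above-or-below appropriately — absorbing the slack into the $2^{-k}$ exactly as in the proof of Theorem \ref{d2approx}.
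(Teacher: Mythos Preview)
Your overall plan matches the paper's, but there is a real gap in the complexity analysis of your witness function.  You set $w(k) = (\mu y)(\forall n \geq y)[\rho_n(A) \geq \underline{\rho}(A) - 2^{-k}]$ and assert $w \leq_T A' \equiv_T 0'$.  This does not follow: the inner predicate ``$\rho_n(A) + 2^{-k} \geq \underline{\rho}(A)$'' asks whether a given rational lies in the left cut of $\underline{\rho}(A)$, and that cut is only $\Delta^0_2$ (i.e.\ $0'$-computable), not $A$-computable.  Hence the matrix $(\forall n \geq y)[\ldots]$ is $\Pi^0_1(0')$, and locating the least witnessing $y$ requires a $0''$-oracle in general.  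Lowness of $A$ does not help here, since the obstruction is the complexity of $\underline{\rho}(A)$, not of $A$.  The paper's fix is to fold the computable approximation $\{q_n\}$ into $w$ itself: define
\[
w(k) = (\mu y)(\forall n \geq y)[\rho_n(A) \geq q_n - 2^{-k}].
\]
Now the inner predicate is $A$-computable (both $\rho_n(A)$ and $q_n$ are), the matrix is $\Pi^0_1(A)$, and $w \leq_T A' \equiv_T 0'$ as required, so a computable $g$ with $\lim_s g(k,s) = w(k)$ exists.

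There is a second, related problem in your definition of $s(n)$: you use $q_s$ (indexed by the search variable) rather than $q_n$.  Your totality argument invokes ``$\rho_n(A) \geq \underline{\rho}(A) - 2^{-k} \geq q_s - 2^{-k}$ for all large $s$'', but the second inequality says $\underline{\rho}(A) \geq q_s$, which is simply false whenever $\{q_s\}$ approaches $\underline{\rho}(A)$ strictly from above.  In that situation, if in addition $\rho_n(A) = \underline{\rho}(A) - 2^{-k}$ exactly for some $k$ with $w(k) \leq n$ (which is possible when $\underline{\rho}(A)$ is rational), then no $s$ satisfies your defining condition and $s(n)$ is undefined.  The phrase ``absorbing the slack into the $2^{-k}$'' does not work here because the same $2^{-k}$ appears on both sides, so there is no slack to absorb.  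The paper instead uses $q_n$ in $s(n)$ as well, so that the threshold in $s(n)$ matches the one in the definition of $w$ exactly; totality then follows immediately once $g(\cdot,s)$ and $A_s \upharpoonright n$ have stabilized.
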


\begin{proof} The proof is similar to that of Theorem \ref{delta2w}.
  Let $\{q_n\}$ be a computable sequence of rational numbers
  converging to $\underline{\rho}(A)$.  Define
$$w(k) = (\mu y) (\forall n \geq y) [\rho_n(A) \geq q_n - 2^{-k}]$$
Observe that $w$ is a total function since for each $k$, whenever $n$
is sufficiently large we have $\rho_n(A) \geq q_n - 2^{-k}$, because
$\{q_n\}$ converges to $\liminf_n \rho_n(A)$.  Note also that
$\rho_n(A)$ is a rational number which can be computed from $n$ and an
oracle for $A$.  Hence $w \leq_T A' \leq_T 0'$, so there is a
computable function $g$ such that, for all $k$, $w(k) = \lim_s
g(k,s)$.  Now define:
$$s(n) = (\mu s \geq n)(\forall k \leq n) [ g(k,s) \leq n \implies 
\rho_n(A_s) \geq q_n - 2^{-k}]$$
$$t(k) = \max \{s(n) : n \leq k^2\}$$
$$B = \{k : k \in A_{t(k)} \}$$
The function $s$ is total because for each $n$ and $k \leq n$ all
sufficiently large numbers $s$ satisfy the matrix of the definition of
$s(n)$.  It follows that the functions $s$ and $t$ and the set $B$ are
computable, and obviously $B \subseteq A$.  It follows from the latter
that $\underline{\rho}(B) \leq \underline{\rho}(A)$, so it remains
only to verify that $\underline{\rho}(B) \geq \underline{\rho}(A)$.
For this, note that, just as in the proof of Theorem \ref{delta2w},
for all $n > 0$
$$\{k \geq \sqrt{n} : k \in A_{s(n)}\} \subseteq B \mbox{ and hence } 
\rho_n(B) \geq \rho_n(A_{s(n)}) - 1/\sqrt{n}$$ 
Now let $b > 0$ be given.  Let $n$ be
sufficiently large that $n > b, n \geq w(b), (\forall s \geq n)[g(b,s)
= w(b)]$, and $|\underline{\rho}(A) - q_n| < 2^{-b}$.  It then follows
that
$$\rho_n (A_{s(n)}) \geq q_n - 2^{-b}$$
by using the above conditions on $n$ and the definition of $w(n)$ with
$k = b$.  We now have:
$$\rho_n(B) \geq \rho_n(A_{s(n)}) - 1/\sqrt{n} \geq q_n - 2^{-b} - 1/\sqrt{n}
\geq \underline{\rho}(A) - 2^{-b} - 1/\sqrt{b}$$
 Hence $\underline{\rho}(B) \geq \underline{\rho}(A) - 2^{-b} - 1/\sqrt{b}$.
Since $b > 0$ was arbitrary and $\lim_b (2^{-b} + 1/\sqrt{b}) = 0$, we have
$\underline{\rho}(B) \geq \underline{\rho}(A)$.
\end{proof}

It was shown in \cite{JS}, Theorem 2.21, that if a computable set $A$
has a density $d$, then $d$ is a $\Delta^0_2$ real.  (Actually, this
part of the theorem is an immediate consequence of the Limit Lemma.)
It follows by relativizing the proof that if a low set $A$ has density
$d$, then $d$ is a $\Delta^0_2$ real.  This gives the following
corollary.

\begin{cor} \label{low1} If $A$ is a low c.e.\ set and $\rho(A)$
  exists, then $A$ has a computable subset $B$ with $\rho(B) =
  \rho(A)$ and hence, by Lemma \ref{diff}, $D(A,B) = 0$.  (Recall that
  $D(A,B)$ is the upper density of the symmetric difference of $A$ and $B$.)
\end{cor}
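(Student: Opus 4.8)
The plan is to obtain this as an immediate consequence of Theorem~\ref{delta2r}, combined with the trivial monotonicity of densities under inclusion. The only hypothesis of Theorem~\ref{delta2r} that is not literally assumed here is that $\underline{\rho}(A)$ be a $\Delta^0_2$ real, so the first step is to check that. Write $r = \rho(A)$, which exists by assumption. Since $A$ is low, the relativized Limit Lemma remark made just before the statement (a density of an $X$-computable set is an $X'$-computable real; take $X = A$, so $X' \equiv_T 0'$) shows that $r$ is a $\Delta^0_2$ real. In particular $\underline{\rho}(A) = r$ is $\Delta^0_2$, so Theorem~\ref{delta2r} applies to the low c.e.\ set $A$.

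Applying Theorem~\ref{delta2r} to $A$ therefore produces a computable subset $B \subseteq A$ with $\underline{\rho}(B) = \underline{\rho}(A) = r$. Now I would run the ``squeeze'': since $B \subseteq A$ we have $\overline{\rho}(B) \leq \overline{\rho}(A) = r$, while always $\overline{\rho}(B) \geq \underline{\rho}(B) = r$. Hence $\overline{\rho}(B) = \underline{\rho}(B) = r$, so $\rho(B)$ exists and $\rho(B) = r = \rho(A)$.

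Finally, for the statement about $D$, I would invoke Lemma~\ref{diff}(iii), which gives
$$D(A,B) \leq \overline{\rho}(A) - \underline{\rho}(B) = r - r = 0,$$
so $D(A,B) = 0$ as claimed.

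As for the main obstacle: there really is none of substance here — all the work lies in Theorem~\ref{delta2r}, and this corollary is just the observation that the lower-density construction there automatically controls the upper density as well once $B$ is a subset of $A$ with an honest density. The only point requiring a word of care is the passage ``$A$ low and $\rho(A)$ exists $\Rightarrow$ $\rho(A)$ is $\Delta^0_2$,'' which is the routine relativization of the Limit Lemma fact recalled in the text.
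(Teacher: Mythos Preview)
Your proof is correct and essentially identical to the paper's: both observe that lowness makes $\rho(A)$ a $\Delta^0_2$ real, apply Theorem~\ref{delta2r} to get $B \subseteq A$ with $\underline{\rho}(B) = \rho(A)$, and then squeeze via $\underline{\rho}(B) \leq \overline{\rho}(B) \leq \overline{\rho}(A) = \rho(A)$. The appeal to Lemma~\ref{diff}(iii) for $D(A,B) = 0$ is exactly what the statement itself indicates.
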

\begin{proof} As noted just above, $\rho(A)$ is a $\Delta^0_2$ real,
  so by Theorem \ref{delta2r}, $A$ has a computable subset $B$ with
  $\underline{\rho}(B) = \underline{\rho}(A) = \rho(A)$.  Further,
  $\overline{\rho}(B) \leq{\rho(A)}$ since $B \subseteq A$.  Finally,
  $\overline{\rho}(B) \geq \underline{\rho}(B) = \rho(A)$, so
  $\overline{\rho}(B) = \rho(A)$.  As $\underline{\rho}(B) =
  \overline{\rho}(B) = \rho(A)$, we have $\rho(B) = \rho(A)$.
\end{proof}

The next result uses our previous work to characterize the densities
of computable subsets of those low c.e.\ sets $A$ which have a density
$d$.  For $d_0$ to be the density of a computable subset of $A$ it is
clearly necessary that $0 \leq d_0 \leq d$ and (by Theorem 2.21 of
\cite{JS}) that $d_0$ be a $\Delta^0_2$ real.  We now show that these
conditions are also sufficient.

\begin{cor}\label{subset} Let $A$ be a low c.e.\ set of density $d$ and let $d_0$ be
  a $\Delta^0_2$ real such that $0 \leq d_0 \leq d$.  Then $A$ has a
  computable subset $B$ of density $d_0$.
\end{cor}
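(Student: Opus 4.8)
The plan is to reduce to the case where $A$ is replaced by a computable set, and then to rescale densities by reindexing. First dispose of the degenerate case: if $d = 0$, then $0 \le d_0 \le d$ forces $d_0 = 0$, and $B = \emptyset$ works; so assume $d > 0$.

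Apply Corollary \ref{low1} to obtain a computable set $B_0 \subseteq A$ with $\rho(B_0) = \rho(A) = d$; in particular $B_0$ is infinite, so list it in increasing order as $b_0 < b_1 < b_2 < \cdots$. Put $\delta = d_0/d$. Since $d$ is the density of a low c.e.\ set it is a $\Delta^0_2$ real (the relativized form of Theorem 2.21 of \cite{JS} noted just before Corollary \ref{low1}), $d_0$ is $\Delta^0_2$ by hypothesis, and the $\Delta^0_2$ reals are closed under division by a positive $\Delta^0_2$ real (via the Limit Lemma characterization of $\Delta^0_2$ reals as limits of computable rational sequences), so $\delta$ is a $\Delta^0_2$ real in $[0,1]$. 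By Theorem 2.21 of \cite{JS}, fix a computable set $D$ with $\rho(D) = \delta$, and define $B = \{b_i : i \in D\}$.

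Then $B \subseteq B_0 \subseteq A$ and $B$ is computable, since to decide $x \in B$ one checks $x \in B_0$, computes the index $i = |B_0 \upharpoonright x|$, and checks $i \in D$. The one computation to carry out is that, writing $k(n) = |B_0 \upharpoonright n|$, the set $\{i : b_i < n\}$ is exactly $\{0, 1, \dots, k(n) - 1\}$, so $|B \upharpoonright n| = |D \upharpoonright k(n)|$ and hence $\rho_n(B) = \rho_n(B_0) \cdot \rho_{k(n)}(D)$. As $n \to \infty$ we have $k(n) \to \infty$ (because $B_0$ is infinite), so $\rho_{k(n)}(D) \to \delta$ while $\rho_n(B_0) \to d$; therefore $\rho_n(B) \to d\delta = d_0$, i.e.\ $\rho(B) = d_0$. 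There is no serious obstacle here; the only care needed is the $d = 0$ degeneracy and the verification that $\delta$ is $\Delta^0_2$.
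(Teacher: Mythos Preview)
Your proof is correct and follows essentially the same approach as the paper: both reduce to a computable subset of density $d$ via Corollary \ref{low1}, form the ratio $\delta = d_0/d$ (checking it is $\Delta^0_2$), invoke Theorem 2.21 of \cite{JS} to realize $\delta$, and then pull back inside the computable subset. Your use of the increasing enumeration $i \mapsto b_i$ and the identity $\rho_n(B) = \rho_n(B_0)\cdot\rho_{k(n)}(D)$ is just an explicit version of what the paper phrases as ``working within $A$'' via relative density $\rho(B \mid A) = \rho(B)/\rho(A)$.
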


\begin{proof}  By Corollary \ref{low1}, $A$ has a computable subset
  $A_0$ of density $d$.  Thus, we may assume without loss of
  generality that $A$ is computable, since we can simply replace $A$
  by $A_0$.

  By \cite{JS}, Theorem 2.21, every $\Delta^0_2$ real in $[0,1]$ is
  the density of a computable set.  Using the same proof but working
  within A we get that every $\Delta^0_2$ real $s \in [0,1]$ is the
  relative density within $A$ of a computable subset $B$ of $A$, i.e.
  $\rho(B | A) = s$. The result to be proved is immediate if $d = 0$,
  so assume $d > 0$ and hence $s = \rho(B \mid A) = \rho(B) /
  \rho(A)$.  We now choose $s = d_0 / d$. ($s$ is a $\Delta^0_2$ real
  since the $\Delta^0_2$ reals form a field by relativizing to $0'$
  the result that the computable reals form a field.  Also $0 \leq s_0
  \leq 1$ since $0 \leq d_0 \leq d$.)  Let $B$ be a computable subset
  of $A$ such that $\rho(B \mid A) = \rho(B) / \rho(A) = s = d_0 / d$.
  Multiply both sides by $d = \rho(A)$, to obtain $\rho(B) = d_0$ as
  needed.
\end{proof}

The following theorem greatly strengthens Theorem 2.22 of \cite{JS},
which asserts that there is a c.e.\ set of density $1$ which has no
computable subset of density $1$.  It contrasts strongly with
Corollary \ref{subset}.

\begin{thm}\label{nononzero} There is a c.e.\ set $A$ of density $1$ 
such that no computable subset of $A$ has nonzero density.
\end{thm}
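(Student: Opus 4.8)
The plan is to build $A$ by a finite‑injury priority construction, enumerating $A$ so that $\rho(A) = 1$ while, for every index $e$, we diagonalize against the possibility that $\Phi_e$ (thought of as the characteristic function of a candidate computable subset $B_e \subseteq A$) has nonzero lower density. The key reconciliation is that "$\rho(A) = 1$'' and "no computable subset of nonzero density'' are compatible precisely because a computable $B \subseteq A$ cannot wait for $A$ to fill in: if $B$ must decide membership of $x$ at stage $s$ but commits to $x \in B$ before $A$ has committed to $x$, then $A$ can simply keep $x$ out, forcing $B \not\subseteq A$; and if $B$ is cautious and only puts in $x$ after seeing $x$ enter $A$, then on the regions where $A$ is slow (which we control) $B$ stays sparse. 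So the construction exploits the same look‑ahead asymmetry that made Theorem~\ref{approx} and Theorem~\ref{eff} work, but now used \emph{against} the would‑be subset.

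Concretely, I would reuse the intervals $R_e$ (or a coarser partition into consecutive blocks) and devote block $e$ to requirement $N_e$: "if $B = \{x : \Phi_e(x)\downarrow = 1\}$ is total and $B \subseteq A$, then $\underline{\rho}(B) = 0$.'' On interval $R_e$ the strategy is: enumerate almost all of $R_e$ into $A$, but do so \emph{very slowly} — at a rate we control — and watch $\Phi_e$. Partition $R_e$ further into sub‑blocks $I_{e,0}, I_{e,1}, \dots$. On sub‑block $I_{e,j}$ we hold off enumerating elements into $A$ until we have run $\Phi_e$ long enough to see whether it commits a large fraction of $I_{e,j}$ to $B$ before we act. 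If $\Phi_e$ does commit a large fraction of some $I_{e,j}$ to $B$ \emph{before} those elements are in $A_s$, we refuse to put those particular elements into $A$, killing $B \subseteq A$ on that sub‑block (one such success suffices to defeat $N_e$ permanently). If $\Phi_e$ is cautious and never commits elements to $B$ ahead of $A$, then on the sub‑blocks where we are slowest $B$ has few elements, and by choosing the slow sub‑blocks to have vanishingly large relative size inside $[0,n)$ along a subsequence of $n$, we force $\underline{\rho}(B) = 0$. Meanwhile, since on each $R_e$ we eventually enumerate all but finitely much of $R_e$ into $A$, and the $R_e$ partition $\omega \setminus \{0\}$, a restricted‑countable‑additivity argument (Lemma 2.6 of \cite{JS}, as used in Theorem~\ref{nonapprox}) gives $\rho(A) = \sum_e \rho(A \cap R_e) = \sum_e 2^{-(e+1)} = 1$.

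The delicate point is arranging the two horns simultaneously and uniformly: the "slow'' sub‑blocks on which we stall must be chosen so that (a) along infinitely many $n$ the union of stalled regions below $n$ has relative size bounded below — so a cautious $B$ is forced to be sparse there — yet (b) we never stall so long on any fixed sub‑block that $A$ fails to achieve density $1$ (each sub‑block is eventually released into $A$). The standard device is to make the stalling lengths grow, but to make the sub‑block sizes grow much faster, so that at the moment we finally release $I_{e,j}$ the cumulative density of $A$ on $[0, \max I_{e,j})$ is already close to $1$ because of the previously released blocks; the "currently stalled'' tail is a small fraction. This is exactly the look‑ahead phenomenon, run in reverse. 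The main obstacle I expect is the bookkeeping that keeps these two competing rate requirements consistent across all $e$ at once — i.e., choosing the sub‑block sizes and stalling schedules so that the $N_e$ strategies do not interfere: a later requirement's stalling must not be able to depress $\rho_n(A)$ below $1 - o(1)$, which is handled by making requirement $e$ act only inside $R_e$ (density $2^{-(e+1)}$) and by shrinking its stalling footprint geometrically. Once that is set up, verifying $\underline{\rho}(B) = 0$ for a cautious total $B$, and verifying $B \not\subseteq A$ for an aggressive $B$, are both routine counting arguments, and the density computation for $A$ is immediate from restricted countable additivity.
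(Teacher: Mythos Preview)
Your formulation of the requirement $N_e$ --- force $\underline{\rho}(B_e)=0$ for every total $\Phi_e$ with $B_e\subseteq A$ --- is too strong, and in fact impossible for any c.e.\ $A$ of density~$1$: by Theorem~\ref{approx}, such an $A$ has, for every $\epsilon>0$, a computable subset $B$ with $\underline{\rho}(B)>1-\epsilon$. The phrase ``no computable subset has nonzero density'' means no computable subset has an \emph{existing} density which is positive; the correct requirement is: if $\varphi_e$ is total, $S_e=\{n:\varphi_e(n)=1\}\subseteq A$, and $\rho(S_e)$ exists, then $\rho(S_e)=0$. The diagonalization must therefore force the upper and lower densities of $S_e$ apart (or both to~$0$), not drive the lower density itself to~$0$.

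A second, related problem is your localization of $N_e$ to $R_e$: even if you rendered $S_e\cap R_e$ empty, $S_e$ could still have density $1-2^{-(e+1)}$ by living in $\bigcup_{j\neq e}R_j$, where the other strategies have long since filled $A$. The paper does not use the $R_e$ partition for this theorem. Instead, each $N_e$ repeatedly claims a fresh \emph{consecutive} interval $I_{e,j}=[a,c]$ of natural numbers, immediately enumerates a long initial segment $J_{e,j}=[a,b]$ into $A$ (with $b/c=1-2^{-(e+1)}$), and withholds the tail $(b,c]$ until $\varphi_e$ converges on all of $I_{e,j}$. If $\varphi_e$ commits some $x\in(b,c]$ to $S_e$, then $x\notin A$, so $S_e\not\subseteq A$ and $N_e$ is finished. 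Otherwise $S_e$ misses $(b,c]$, giving $\rho_c(S_e)=(b/c)\,\rho_b(S_e)$; we then dump the tail into $A$ and repeat with a new interval. If this continues forever and $\rho(S_e)$ exists, taking limits along the $b$'s and $c$'s yields $\rho(S_e)=(1-2^{-(e+1)})\rho(S_e)$, hence $\rho(S_e)=0$. The intervals for different $e$ are interleaved (each new interval begins at the least unclaimed number), and the density-$1$ verification for $A$ is a direct argument that the withheld tails form a vanishing fraction. Your aggressive/cautious dichotomy is the right intuition; it is the target of the diagonalization and the arena on which it acts that need to be changed.
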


\begin{proof}
  For each $e$, let $S_e = \{n : \varphi_e(n) = 1\}$, so that the
  computable sets are exactly the sets $S_e$ with $\varphi_e$ total.
  Let $N_e$ be the requirement: 
$$N_e : (\varphi_e \mbox{ total } \ \&
\ S_e \subseteq A \ \& \ \rho(S_e) \downarrow) \ \Longrightarrow \
\rho(S_e) = 0$$ To prove the theorem, it suffices to construct a c.e.\
set $A$ of density $1$ which meets all the requirements $N_e$.  The
strategy for meeting $N_e$ is as follows.  We define a sequence of
finite intervals $I_{e,0}, I_{e,1}, \dots$, and this sequence may or
may not terminate, and the strategy affects $A$ only on these
intervals.  These intervals are pairwise disjoint and also disjoint
from all intervals used for other requirements, so distinct
requirements $N_e$ don't interact.  The intervals are defined in the
order listed above.  When $I_{e,j}$ is chosen, its least element
$a_{e,j}$ should be the least number not in any interval already
chosen for any requirement.  (The purpose of this is to ensure that
every number belongs to some interval for some requirement.)  Further,
we will carefully choose a certain large initial segment $J_{e,j}$ of
$I_{e,j}$, but we defer the definition of $J_{e,j}$ for the moment.
As soon as $I_{e,j}$ (and hence $J_{e,j}$) are chosen, put all elements
of $J_{e,j}$ into $A$.  (This is done to help ensure that $A$ has
density $1$.)  Then wait for a stage $s_{e,j}$ at which $\varphi_e$ is
defined on all elements of $I_{e,j}$.  (If this never occurs, it
follows that $\varphi_e$ is not total and hence $N_e$ is met
vacuously.)  If $\varphi_e(x) = 1$ for some $x \in I_{e,j} \setminus
J_{e,j}$, we let $I_{e,j}$ be the final interval for $N_e$ and take no
further action for $N_e$.  In this case, $N_e$ is met because $S_e$ is
not a subset of $A$, as $x \in S_e \setminus A$, for the $x$ just
mentioned.  If there is no such $x$, put all elements of $I_{e,j}
\setminus J_{e,j}$ into $A$ at stage $s_{e,j} + 1$, thus ensuring
$I_{e,j} \subseteq A$.  Then define $I_{e, j+1}$ as above at the next stage
devoted to $N_e$.

The idea of the above strategy is that if we define $I_{e,j+1}$ and
$S_e$ has density $d$, then the density of $S_e$ up to $\max J_{e,j}$
should be approximately $d$, while the density of $S_e$ on the
interval $(\max J_{e,j}, \max I_{e,j}]$ is surely $0$, as $S_e$ does
not intersect this interval.  If the latter interval is large, this
suggests that $d$ is close to $0$, and in fact we get $d=0$ by taking
a limit.  Of course, we also must make $|J_{e,j}|$ a large fraction of
$|I_{e,j}|$ to ensure that $A$ has density $1$.  Although these two
largeness requirements go in opposite directions, it is easy to meet
both of them, as the following calculations show.

Holding $e,j$ fixed for now, let $a = \min I_{e,j}$, $b = \max
J_{e,j}$, and $c = \max I_{e,j}$.  Note that $a \leq b \leq c$ because
$J_{e,j}$ is an initial segment of $I_{e,j}$.  We have already
determined $a$ as the least number not in any previously defined
interval.  In order to meet $N_e$, we make the ratio $b/c$ strictly
less than $1$ and independent of $k$.  Specifically, we require that
$b/c = 1 - 2^{-(e+1)}$.  In order to ensure that $A$ has density $1$
we also wish $|J_{e,j}| / |I_{e,j}| = \frac{b-a+1}{c-a+1}$ to have a
lower bound which depends only on $e$ and approaches $1$ as $e$
approaches infinity.  But, for fixed $a$, if $b$ approaches infinity and
$b$ and $c$ are large and
related as above, then $\frac{b-a+1}{c-a+1}$ approaches
$b/c$, which equals $1 - 2^{-(e+1)}$.    Thus, we may choose $b$ sufficiently
large that $\frac{b-a+1}{c-a+1} \geq 1 - 2^{-e}$, and of course this determines
$c$, so the intervals $I_{e,j}, J_{e,j}$ are determined.

We claim that the above strategy suffices to satisfy $N_e$.  This is
obvious if there are only finitely many intervals $I_{e,j}$, since in
this case either $\varphi_e$ is not total or $S_e \nsubseteq A$, and
$N_e$ is satisfied vacuously.  Suppose now there are infinitely many
such intervals, so that $I_{e,j}$ is defined for every $j$.  Note that
$S_e \cap I_{e,j} \subseteq J_{e,j}$ for all $j$.  For the moment, let
$e$, $j$ be fixed and drop the subscript $(e,j)$ from $a, b$, and $c$.
We now calculate the decrease in density of $S_e$ as we go from $b$ to
$c$ without seeing any elements of $S_e$.  Let $r = |S_e \cap [0,b]$,
so $\rho_b (S_e) = r / b$.  Then:
$$\rho_b(S_e) - \rho_c(S_e) = \frac{r}{b} - \frac{r}{c} = \frac{r}{b}(1 - 
\frac{b}{c}) = \rho_b (S_e)  2^{-(e+1)}$$
Assume now that $\rho(S_e)$ exists, since otherwise $N_e$ is vacuously met.
Letting the (unwritten) $j$ in the above equation tend to infinity yields:
$$\rho(S_e) - \rho(S_e) = \rho(S_e)2^{-(e+1)}$$
It follows that $\rho(S_e) = 0$, and so $N_e$ is met.

It remains to show that $A$ has density $1$.  Let $E$ be the set of
all points of the form $\max I + 1$, where $I$ is any interval used in the
construction.  We first show that $\lim_{c \in E} \rho_c(A) = 1$.
Since every element of $\omega$ belongs to one and only one interval
used in the construction, we see that, for $c \in E$, $\rho_c(A)$ is
the weighted average of the density of $A$ for each interval $I$ used
in the construction with $\max I <  c$, where $I$ has weight $|I|$.
(Here the density of $A$ on $I$ is $|A \cap I| / |I|$.)  If $I$ is
used for the sake of $N_e$ (i.e. $I = I_{e,j}$ for some $j$), by
construction the density of $A$ on $I$ is either equal to $1$ or is at
least $1 - 2^{-e}$, where for each $e$, there is a most one $j$ with
this density not equal to $1$ (i.e. the greatest $j$ such that
$I_{e,j}$ exists).  Thus, for each real $q < 1$, $A$ has density at
least $q$ on all but finitely many intervals used in the construction.
Given $q < 1$, let $b \in E$ be sufficiently large that $A$ has
density at least $q$ on every interval $I$ used in the construction
with $\min I \geq b$.  If $c \in E$ and $c > b$, then $\rho_c(A)$ is
the weighted average of the density of $A$ on $[0,b)$ and the density
of $A$ on $[b,c)$, where the weight of each interval is its size.  The
latter density is at least $q$, and its weight approaches infinity as
$c$ goes to infinity, while the weight of the former density stays
fixed.  It follows that $\liminf_{c \in E} \rho_c(A) \geq q$.  As $q < 1$ was
arbitrary, it follows that $\lim_{c \in E} \rho_c(A) = 1$.

We now complete the proof that $A$ has density $1$. Let $I$ be any
interval used in the construction, and let $J = I \cap A$.  Let $I =
[a,c]$.  By construction, $J$ is an initial segment of $I$, so as we
examine $\rho_b(A)$ for $b - 1 \in I$, we note that this density
increases until we reach $\max J + 1$ and then decreases until we
reach $c+1$.  It follows that for every $b$ with $b-1 \in I$, either
$\rho_b(A) \geq \rho_{a+1}(A)$ or $\rho_b(A) \geq \rho_{c+1}(A)$.
Furthermore, $a \in A$, so $\rho_{a+1}(A) \geq \rho_{a}(A)$ and $a, c+1
\in E$.  As $b$ goes to infinity, the points $a, c+1$ also go to
infinity, and so $\rho_{a}(A), \rho_{c+1}(A)$ each approach $1$, since
$\lim_{c \in E} \rho_c(A) = 1$.  Since $\rho_b(A) \geq \min
\{\rho_a(A), \rho_{c+1}(A)\}$, it follows that $\rho(A) = \lim_b
\rho_b(A) = 1$. 
\end{proof}

\section{Turing degrees, density, and the outer splitting property}

It was shown in \cite{JS}, Theorem 2.22, that there is a c.e.\ set of
density $1$ which has no computable subset of density $1$.   In this
section we study the degrees of such sets and of their subsets of
density $1$. We also apply the techniques developed for this problem to
study the degrees of sets with properties arising in the study of the
lattice of c.e.\ sets.

\begin{thm} \label{high}   There is a c.e.\ set $A$ such that $A$ has density $1$
and every set $B \subseteq A$ of density $1$ is high, i.e. $B' \geq_T 0''$.
\end{thm}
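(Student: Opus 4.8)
The plan is to build $A$ by a priority construction that combines the "interval-chunk" machinery of Theorem \ref{nononzero} with a coding requirement forcing $0''$ into the jump of any density-$1$ subset. The key reversal to exploit is: if $B$ has density $1$, then the minimal witness function $w_B$ satisfies $w_B \leq_T B'$, so if we can arrange that every density-$1$ subset $B \subseteq A$ has $w_B \geq_T 0''$ (or more precisely computes a $0''$-approximation of something), we are done. Concretely, I would use the standard characterization that $C \geq_T 0''$ iff $C$ computes a function dominating every function $\leq_T 0'$ (or, via the Martin high domination theorem, iff $C$ is high). So the goal becomes: construct a c.e.\ set $A$ of density $1$ such that for every $B \subseteq A$ with $\rho(B) = 1$, the minimal witness function $w_B$ dominates all $0'$-computable functions. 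Then $B' \geq_T w_B \geq_T 0''$, as desired.

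To do this, I would fix a uniformly $\Delta^0_2$ list of (partial) functions $\{f_i\}$ that is cofinal under domination among the $0'$-computable functions — e.g.\ take $f_i$ to be the function whose value $f_i(n)$ is $\lim_s g_i(n,s)$ for a universal $0'$-computable $g_i$, using the settling-time trick to make everything work when the limit exists. The requirement for each pair $(e,i)$ says: if $\varphi_e$ is total and $S_e := \{n : \varphi_e(n)=1\} \subseteq A$ and $\rho(S_e) = 1$, then $w_{S_e}$ dominates $f_i$. To meet this, I would reserve a block of large intervals $I_{e,i,0}, I_{e,i,1}, \dots$ (pairwise disjoint, disjoint from all other requirements' intervals, exhausting $\omega$ as in Theorem \ref{nononzero}), put most of each interval into $A$ immediately, and watch $\varphi_e$ converge on the interval. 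On interval $I_{e,i,j}$, having waited until $\varphi_e$ is defined on all of it, I look at whether $\varphi_e$ forbids enough elements (i.e.\ $S_e$ is forced to miss a prescribed sub-block): if so I use this to push a "low-density window" far out past the current approximation to $f_i(j)$; if $\varphi_e$ instead insists on keeping those elements, then either $S_e \not\subseteq A$ (if I kept those elements out of $A$) or else $S_e$ is full on that window, which is fine because I only need the domination when $\rho(S_e)=1$, and a density-$1$ $S_e$ must "fill in" the interval, so its witness function is large there. The point is the dichotomy: a density-$1$ subset of $A$ cannot afford to miss the large tail-segment $I_{e,i,j} \setminus J_{e,i,j}$ on infinitely many $j$, so for cofinitely many $j$ it does fill it, and on those intervals the fact that we placed the fill-in elements late (at a stage past $g_i(j,s)$ for the current $s$, and we keep pushing that stage up as the approximation changes) forces $w_{S_e}(j) > f_i(j)$.

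The construction of the intervals must satisfy the competing size constraints exactly as in Theorem \ref{nononzero}: the "junk" sub-interval $I_{e,i,j}\setminus J_{e,i,j}$ must be a bounded-below fraction of $I_{e,i,j}$ (bounded by $2^{-\langle e,i\rangle}$-type quantities) so that $A$ still has density $1$ by the same weighted-average argument as before, while simultaneously $J_{e,i,j}$ must be large enough, and placed late enough, that filling its complement after stage $g_i(j,s)$ makes the density on $[0, \max I_{e,i,j}]$ jump from below $1-2^{-k}$ to above it only after a stage witnessing $f_i(j)$. I would make the intervals grow fast enough (say $|I_{e,i,j}|$ growing superpolynomially in $j$ and in the index) that these constraints are simultaneously satisfiable, and arrange the "wait for the approximation to settle" loop so that the fill-in stage is automatically $\geq f_i(j)$ whenever $f_i(j)$ is genuinely defined. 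The density-$1$ verification for $A$ is then verbatim the argument at the end of Theorem \ref{nononzero}'s proof: every interval has $A$-density either $1$ or at least $1-2^{-\langle e,i\rangle}$, with at most one exceptional interval per requirement, so the weighted averages converge to $1$, and the "local max/min" argument handles the intermediate initial segments.

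The main obstacle I expect is the verification that domination is actually forced: I need to be careful that when $S_e$ is density-$1$ and hence equals $A$ on cofinitely many of the intervals $I_{e,i,j}$, the stage at which $S_e$ attains the density threshold $1-2^{-k}$ on $[0,\max I_{e,i,j}]$ really is bounded below by $f_i(j)$ and not by some earlier stage — this requires that no other requirement's action, and no early enumeration into $A$, can prematurely push $\rho_n(A)$ (and hence $\rho_n(S_e)$, once $S_e=A$ there) above the threshold before stage $f_i(j)$. This is why the "look-ahead" and the deliberate withholding of the fill-in elements of $J_{e,i,j}$ (not just its complement) until a late stage is essential, and why the intervals for distinct requirements must be kept genuinely disjoint and ordered so that the relevant initial segment is controlled entirely by $N_{e,i}$'s own intervals up to that point. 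Handling the interaction of the $0'$-approximation's instability with the enumeration clock — the familiar "recompute and wait" device already used in Theorem \ref{delta2w} — is the technical heart, but it is a standard move once the interval sizes are set up correctly.
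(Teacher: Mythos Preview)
There is a genuine gap, and it stems from a confusion between \emph{positions} and \emph{stages}. The minimal witness function $w_B(k)$ is by definition the least $y$ such that $\rho_n(B) \geq 1 - 2^{-k}$ for all $n \geq y$; it depends only on the final set $B$, not on any enumeration. Your key mechanism---``we placed the fill-in elements late (at a stage past $g_i(j,s)$ \dots) forces $w_{S_e}(j) > f_i(j)$'' and ``the stage at which $S_e$ attains the density threshold \dots\ is bounded below by $f_i(j)$''---does nothing. Once the elements of $I_{e,i,j} \setminus J_{e,i,j}$ are in $A$ (and hence in the density-$1$ subset $S_e$, on the cofinitely many $j$ where $S_e$ fills them), the final density $\rho_n(S_e)$ at $n = \max I_{e,i,j}$ is high regardless of when those elements arrived, so $w_{S_e}(j)$ is not forced to be large. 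To make $w_{S_e}(k)$ large you must leave a genuine low-density window at a large \emph{position} in the final set, not delay enumeration to a large stage.

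A second, related problem: your requirements quantify over $S_e = \{n : \varphi_e(n) = 1\}$, i.e.\ only over \emph{computable} subsets, while the theorem speaks of arbitrary $B \subseteq A$. The paper's route eliminates both problems at once with the simple observation that $w_B \geq w_A$ pointwise whenever $B \subseteq A$. Hence it suffices to make $w_A$ itself grow fast, and no enumeration of candidate subsets is needed at all. Concretely, the paper works on the rows $R_n$ (so there is no interaction) and meets, for each $n$, the requirement
\[
N_n : |W_n| < \infty \ \Longrightarrow \ w_A(n+2) \geq \max(W_n \cup \{0\}),
\]
by maintaining an ``$n$-large'' finite set $I \subseteq R_n$ (one with $\rho_{\max I}(I) > 2^{-(n+2)}$) disjoint from $A$ and with $\max I$ exceeding the current maximum of $W_n$; whenever $W_n$ enumerates past $\max I$, dump $I$ into $A$ and pick a new, farther-out $I$. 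If $W_n$ is finite, the final such $I$ is a permanent gap in $A$ witnessing $w_A(n+2) > \max W_n$. Then for any density-$1$ subset $B \subseteq A$ one has $w_B(n+2) \geq w_A(n+2)$, so $n \in \mathrm{Inf}$ iff $W_n$ contains an element exceeding $w_B(n+2)$, giving $0'' \equiv_T \mathrm{Inf} \leq_T w_B \oplus 0' \leq_T B'$. This also bypasses Martin's domination theorem entirely.
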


\begin{proof}

 Recall that $R_e = \{x : 2^e \mid x \ \& \ 2^{e+1} \nmid x\}$.
 As shown in the proof of Theorem 2.22 of \cite{JS},
to ensure that $A$ has density $1$, it suffices to meet the following
positive requirements:
$$P_n : R_n \subseteq^* A$$

To ensure that every subset of $A$ of density $1$ is high, we make
the minimal witness function $w_A$ for $A$ grow very fast.  Specifically,
define 
$$w_A(n) = (\mu b) (\forall k \geq b) [\rho_k(A) \geq 1 - 2^{-n}]$$
In order to ensure that every subset $B$ of $A$ of density $1$ is 
high, it suffices to meet the following negative requirements:
$$N_{n} : |W_n| < \infty \quad  \Longrightarrow \quad w_A(n+2) \geq \max (W_n \cup \{0\})$$
To see that it suffices to meet the given requirements, assume that
$A$ satisfies all the positive and negative requirements.  Let $B$ be
a subset of $A$ of density $1$, and let $w_B$ be the corresponding
minimal witness function for $B$, defined as above with $A$ replaced by $B$.
Clearly, $w_B(n) \geq w_A(n)$ for all $n$, since $B \subseteq A$, and
so each requirement $N_n$ holds with $A$ replaced by $B$.  Also, $w_B$
is total because $B$ has density $1$, and $w_B \leq_T B'$.   Let
Inf $= \{n : |W_n| = \infty\}$.  Then for all $n$,
$$n \in \mbox{Inf }  \Longleftrightarrow (\exists x \in W_n)[ x > w_B(n+2)]$$
It follows that $$0'' \leq_T \mbox{Inf} \leq_T w_B \oplus 0' \leq_T
B'$$ since $B'$ can calculate $w_B(n+2)$ and then $0'$ can determine
whether $W_n$ has an element exceeding $w_B(n+2)$.  It follows that
$B$ is high, as needed.

The strategy for meeting $P_n$ is, at each stage $s$, to enumerate
each $x \in R_n$ with $x \leq s$ into $A$ unless $x$ is restrained by
$N_n$ at the end of stage $s$, as described below.  This will succeed
in meeting $P_n$ because there will be only finitely many numbers
permanently restrained by $N_n$.

We now give the strategy for meeting the requirement $N_n$, where this
strategy is similar to that used in Theorem 2.22 of \cite{JS}.  This
strategy restrains $A$ only on $R_n$ and so interacts only with the
requirement $P_n$.  Say that a finite nonempty set $I \subseteq R_n$
is $n$-\emph{large} if $\rho_m(I) > 2^{-(n+2)}$, where $m = \max I$.
Since $\rho(R_n) > 2^{-(n+2)}$, for each $a$, the set $[a,b] \cap R_n$
is $n$-large for all sufficiently large $b$.  Also, if $I \subseteq
R_n$ is $n$-large and disjoint from $A$, we have $\rho_m (A) \leq 1 -
\rho_m(I) < 1 - 2^{-(n+2)}$, where $m = \max I$.  It follows in this
case that $w_A(n+2) \geq m$.  Thus to meet $N_n$, it suffices to
ensure that, if $W_n$ is finite, there is an $n$-large set $I$ which
is disjoint from $A$ with $\max I > \max(W_n \cup \{0\})$.  To achieve
this, start with any $n$-large set $I_0 \subseteq R_n$ currently
disjoint from $A$ and with $\max I_0$ exceeding all elements currently
in $W_n \cup \{0\}$.  Restrain all elements of $I_0$ from entering $A$
until, if ever, a stage $s_0$ is reached at which a number exceeding
$\max I_0$ is enumerated in $W_n$.  At stage $s_0$, enumerate all
elements of $I_0$ into $A$ (for the sake of $P_n$), and start over
with a new interval $I_1$ which is $n$-large and currently disjoint
from $A$ and satisfies $\max(I_1) > \max(I_0)$.  Proceed in the same
way, restraining all elements of $I_1$ from $A$ until, if ever, $W_e$
enumerates an element greater than $\max(I_1)$, in which case you
proceed to $I_2$, etc.  Now if $I_k$ exists for every $k$, then $W_n$
is infinite, since $\max(I_0) < \max(I_1) < \dots$ and, for each $k$,
$W_n$ contains an element exceeding $\max(I_k)$.  Thus $N_n$ is met
vacuously in this case.  Also, $P_n$ is met because $R_n \subseteq A$,
as infinitely often all restraints are dropped.  Otherwise, there is a
largest $k$ such that $I_k$ exists.  Then, for this $k$, $I_k$ is the
desired $n$-large set disjoint from $A$ with $\max(I_k) > \max(W_n
\cup \{0\}$, so $N_n$ is met.   The requirement $P_n$ is met because
$R_n \setminus I_k \subseteq A$.
\end{proof}

Eric Astor (private communication) has observed that every c.e.\ set
of density $1$ has subsets of density $1$ in every high degree.   This
allows us to strengthen the theorem as follows:

\begin{cor} (with Astor) \label{exhigh} There is a c.e.\ set $A$ of
  density $1$ such that the degrees of the subsets of $A$ which have
  density $1$ are precisely the high degrees.
\end{cor}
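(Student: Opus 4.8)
The plan is to derive the corollary from Theorem~\ref{high} together with Astor's observation, so that the only substantive work is to prove Astor's observation in the following form: \emph{for every c.e.\ set $A$ of density~$1$ and every high degree $\mathbf{h}$, there is a set $B\subseteq A$ with $\rho(B)=1$ and $\deg(B)=\mathbf{h}$.} Granting this and applying it to the particular set $A$ produced by Theorem~\ref{high} — whose density-$1$ subsets are \emph{all} of high degree — one concludes at once that the degrees of the density-$1$ subsets of that $A$ are exactly the high degrees.

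To prove Astor's observation I would fix $H\in\mathbf{h}$ and, by Martin's characterization of the high degrees (a degree is high iff it contains a function dominating every total computable function), fix $g\le_T H$ dominating every total computable function. Fix a computable enumeration $\{A_s\}$ of $A$ and set $I_i=[2^i,2^{i+1})$ and $M_i=A_{g(i)}\cap I_i$; the set $B$ will be built block by block as a $g$-delayed enumeration of $A$, perturbed near the bottom of each block so as to encode one bit of $H$. Precisely: for each $i$ let $y_i$ be the least $y\in M_i$ with $y+1\in M_i$ and $y+1<2^{i+1}$, if such a $y$ exists (call $i$ \emph{good} in that case); for good $i$ let $B\cap I_i$ consist of all $m\in M_i$ with $m\ge y_i$, except that $y_i+1$ is omitted when $H(i)=0$, and for the other $i$ set $B\cap I_i=M_i$. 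Then $B\subseteq A$ is immediate, and $B\le_T H$ since the construction is computable from $g$ and $H$.

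The heart of the matter — and the step I expect to be the main obstacle — is verifying $\rho(B)=1$, which rests on the claim $|M_i|/2^i\to 1$. Since $\rho_n(A)\to 1$ and consecutive blocks have comparable size, $|A\cap I_i|/2^i\to 1$. Then, for each rational $\epsilon>0$, the function $\tau_\epsilon(i)=\mu s\,[\,|A_s\cap I_i|\ge\lceil(1-\epsilon)2^i\rceil\,]$ is partial computable with cofinite domain. A function dominating all total computable functions also dominates, on its domain, every partial computable function with cofinite domain: the complement of the domain is finite, hence decidable, so the function extends to a total computable function that agrees with it on its domain, and $g$ must dominate that extension. Hence $g(i)\ge\tau_\epsilon(i)$, so $|M_i|\ge\lceil(1-\epsilon)2^i\rceil$, for all large $i$; as $\epsilon$ was arbitrary, $|M_i|/2^i\to 1$. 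Writing $\epsilon_i=1-|M_i|/2^i\to 0$, routine counting gives $y_i-2^i\le 2\epsilon_i 2^i$ and $|I_i\setminus B|\le 3\epsilon_i 2^i+1$, both $o(2^i)$; since the $I_i$ grow geometrically, the weighted-average argument at the end of the proof of Theorem~\ref{nononzero} then yields $\rho(B)=1$.

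Finally I would verify $H\le_T B$: every large $i$ is good, and for good $i$ the construction makes $\min(B\cap I_i)=y_i$ while $y_i+1\in B$ exactly when $H(i)=1$, so with oracle $B$ one computes $y_i=\min(B\cap I_i)$ and then reads off $H(i)$ — correctly for all $i$ beyond some constant $i_0$. Hardwiring $i_0$ and the finitely many values $H\upharpoonright i_0$ into the reduction gives a Turing functional computing $H$ from $B$, so $B\equiv_T H$ and $\deg(B)=\mathbf{h}$. In summary, the tension that must be resolved is that $B$ has to agree with $A$ on all but an $o(2^i)$ fraction of each block (to get density~$1$) and yet a $B$-oracle must recover $H$ using nothing about $A$ beyond $B$ itself; the device is to encode each bit as membership of the single point $y_i+1$, where $y_i=\min(B\cap I_i)$ is a feature of $B$ visible both to the construction and to the decoder, and to keep $y_i$ — and hence the entire perturbation — within the first $o(2^i)$ elements of the block.
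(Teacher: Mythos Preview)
Your proof is correct, but it takes a genuinely different route from the paper's. The paper's argument is modular: it notes that the minimal witness function $w_A$ satisfies $w_A\le_T A'\le_T 0''\le_T B'$ whenever $B$ is high, and then simply \emph{relativizes Theorem~\ref{delta2w} to $B$} to obtain a $B$-computable subset $C_0\subseteq A$ of density~$1$; the coding of $B$ is then handled in a second, independent step by taking an infinite computable subset $R\subseteq A$ of density~$0$, planting a copy $C_1\subseteq R$ of $B$, and setting $C=(C_0\setminus R)\cup C_1$. No new construction is needed---everything is assembled from results already proved in the paper plus a one-line coding trick.

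Your approach instead invokes Martin's domination characterization of the high degrees and builds the subset from scratch: a $g$-delayed enumeration of $A$ on dyadic blocks (with the domination of the partial computable threshold functions $\tau_\epsilon$ forcing $|M_i|/2^i\to 1$), together with an in-line one-bit-per-block encoding of $H$ via the element $y_i+1$ just above $\min(B\cap I_i)$. This is more hands-on but entirely self-contained: it does not appeal to Theorem~\ref{delta2w} or its relativization, and the coding is woven into the same construction rather than bolted on afterwards. The paper's version is shorter because it reuses prior machinery; yours makes the role of highness (via domination) more explicit and avoids the need to check that the proof of Theorem~\ref{delta2w} relativizes cleanly.
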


\begin{proof}
  Let $A$ be any c.e.\ set of density $1$ such that every subset of $A$
  of density $1$ is high.  To complete the proof, it suffices to show
  that, for every set $B$ of high degree, $A$ has a subset $C$ of
  density $1$ which is Turing equivalent to $B$.  Let $w_A$ be the
  minimal witness function for $A$ as defined just before the
  statement of Theorem \ref{delta2w}, and suppose that $B$ has high
  degree.  Note that $w_A \leq_T A' \leq_T 0'' \leq_T B'$.  By
  relativizing the proof of Theorem \ref{delta2w} to $B$, we see that $A$
  has a subset $C_0 \leq_T B$ such that $\rho(C_0) = 1$.  We now use a
  simple coding argument so obtain a set $C \subseteq A$ which has
  density $1$ and is Turing equivalent to $B$.  Let $R$ be an infinite
  computable subset of $A$ which has density $0$.  (To obtain $R$,
  first choose an infinite computable subset $R_0$ of $A$, and then
  show that $R_0$ has an infinite computable subset of density $0$.)
  Then let $C_1$ be a subset of $R$ which is Turing equivalent to
  $B$. Finally, let $C = (C_0 \setminus R) \cup C_1$.  Then $C_1
  \subseteq R \subseteq A$, so $C \subseteq C_0 \cup C_1 \subseteq A$.
  Also, $C$ has density $1$ because $C_0 \setminus R$ has density $1$.
  Further, $C \leq_T B$, because $C_0 \leq_T B$ and $C_1 \leq_T B$.
  Finally, $B \leq_T C_1 \leq_T C$, where $C_1 \leq_T C = (C_0
  \setminus R) \cup C_1$ because $C_0 \setminus R$ and $C_1$ are
  separated by the computable set $R$.  Thus, $C$ is the desired
  subset of $A$ which has density $1$ and is Turing equivalent to $B$.
\end{proof}

Recall that it was shown in Corollary \ref{low} that every low c.e.\ set
of density $1$ has a computable subset of density $1$.  We now show
that, conversely, every nonlow c.e.\ degree computes a c.e.\ set of
density $1$ with no computable subset of density $1$.  This result
extends Theorem 2.22 of \cite{JS}, which asserts the existence of a
c.e.\ set of density $1$ with no computable subset of density $1$, and
gives an example (the first?) of a simple, natural property $P$ of
c.e.\ sets such that the degrees containing c.e.\ sets with the property
$P$ are exactly the nonlow c.e.\ degrees.  We use a similar technique to
show that every nonlow c.e.\ degree contains a c.e.\ set which is not
semilow$_{1.5}$, and use this to show that every such degree contains
a set without the outer splitting property, answering a question raised
by Peter Cholak.

\begin{thm} \label{nonlow} If ${\bf a}$ is any non-low c.e.\ degree
  then it contains a c.e.\ set A of density 1 with no computable
  subset of density 1.
\end{thm}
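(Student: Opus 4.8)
The plan is to combine the permitting/non-lowness machinery with the interval construction used to prove Theorem~\ref{nononzero} (and Theorem 2.22 of \cite{JS}). Since $\mathbf{a}$ is a non-low c.e.\ degree, fix a c.e.\ set $D \in \mathbf{a}$ together with a c.e.\ approximation $\{D_s\}$; non-lowness gives us a way to defeat, for each $e$, the $e$th potential computable approximation to $D'$. We want to build a c.e.\ set $A \leq_T D$ (in fact $A \equiv_T D$, or at least $A$ of degree $\mathbf{a}$, which we arrange by a simple coding of $D$ into $A$ on a sparse computable set of density $0$ as in the proof of Corollary~\ref{exhigh}). The set $A$ will be built by the $R_e$-strategy skeleton: to force density $1$ we meet the positive requirements $P_n : R_n \subseteq^* A$, and to kill computable subsets of density $1$ we meet, for each $e$, the negative requirement
$$N_e : \varphi_e \text{ total} \;\&\; S_e \subseteq A \;\Longrightarrow\; \rho(S_e) \neq 1,$$
where $S_e = \{n : \varphi_e(n) = 1\}$, working on the column $R_e$ only, so that $N_e$ interacts only with $P_e$.

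The strategy for $N_e$ is the familiar one: we look for a large block $I \subseteq R_e$ (an ``$e$-large'' block in the sense of Theorem~\ref{high}, so that keeping $I$ out of $A$ pushes $\rho_{\max I}(A)$ below $1 - 2^{-(e+2)}$, hence would force $S_e$ to omit a point of $I$ if $S_e \subseteq A$ and $\varphi_e$ has declared all of $I$ to be in $S_e$). The new ingredient over \cite{JS} is that we are not permitted to restrain $I$ out of $A$ forever for free: we must get permission from $D$, and the \emph{non-lowness} of $D$ is what supplies infinitely many permissions at the right moments. Concretely, this is the ``new nonuniform technique'' the introduction advertises: attach to the $N_e$-strategy a follower/threshold parameter and let it wait for a $D$-change below a marker before dropping the restraint; non-lowness of $\mathbf{a}$ is exactly the statement that some such permitting strategy cannot be defeated, and a (nonuniform) choice of which strategy survives lets us conclude that for the surviving index the block $I$ is held out of $A$ permanently whenever $\varphi_e$ tries to swallow it, which yields $\rho(S_e) \le 1 - 2^{-(e+2)} < 1$. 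When permission never comes, the restraint is eventually dropped, $I$ flows into $A$ (helping $P_e$), and we try a new, larger $e$-large block; if this happens infinitely often then $R_e \subseteq A$, $\varphi_e$ was not total or $S_e \not\subseteq A$ infinitely often, and $N_e$ holds vacuously. Meanwhile, exactly as in Theorem~\ref{nononzero} and Theorem~\ref{high}, the only blocks on which $A$ has density noticeably below $1$ are, for each $e$, the finitely many ``currently restrained'' blocks, so a weighted-average / tail argument over the column structure shows $\rho_k(A) \to 1$, i.e.\ $A$ has density $1$.

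The main obstacle is making the permitting argument genuinely use non-lowness rather than just c.e.-ness, and making it compatible with the density-$1$ bookkeeping. The subtlety is that a naive ``wait for a $D$-permission'' strategy would only need $D$ to be noncomputable; to get the exact characterization we must instead arrange that \emph{failing} $N_e$ for every $e$ would let us compute (a limit approximation to) $D'$, contradicting non-lowness. This is where the construction is nonuniform: we set up, for each $e$, a module that either meets $N_e$ outright or else produces a computable function that would be a correct limit-approximation to $D'$ on a coded column; non-lowness guarantees at least one $e$ for which the latter alternative is impossible, and for that $e$ (chosen nonuniformly) the restraint is permanent. I would present the permitting module carefully — defining the marker it watches in $D$, the condition under which it ``acts'' (a $D$-change) versus ``waits'', and the resulting $\Delta^0_2$ guess at $D'$ — and then verify (i) each $P_n$ is met (restraints on $R_n$ are dropped infinitely often or there is a last block), (ii) each $N_e$ is met (either vacuously, via the permission-never-comes branch, or by a permanently restrained $e$-large block on the surviving index), and (iii) $\rho(A)=1$ via the same weighted-average estimate as in Theorems~\ref{nononzero} and~\ref{high}. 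Finally $A \equiv_T D$ follows since $A \leq_T D$ by construction (the restraint decisions are $D$-computable) and $D \leq_T A$ by the sparse coding, so $A$ has degree $\mathbf{a}$.
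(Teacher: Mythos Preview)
Your proposal identifies the right ingredients---permitting below $D$, interval restraints on the $R$-columns, and a non-lowness contradiction---but the crucial mechanism is missing and some of the mechanics are described backward.

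The central gap: you say the $N_e$-module ``either meets $N_e$ outright or else produces a computable function that would be a correct limit-approximation to $D'$,'' but you never explain \emph{how} a single $N_e$-strategy, watching one ``follower/threshold parameter,'' could approximate \emph{all} of $D'$. One marker can at best certify one bit $D'(i_e)$, and knowing one bit of $D'$ does not contradict non-lowness. The paper's technique is precisely to split each $N_e$ into infinitely many sub-requirements $N_{e,i}$ (one for every $i \in \omega$), each acting on its own column $R_{\langle e,i\rangle}$ and each maintaining its own value $g(e,i,s)$; the requirement $N_{e,i}$ asserts ``$N_e$ holds, or $\lim_s g(e,i,s) = D'(i)$.'' If $N_e$ fails, then \emph{every} $N_{e,i}$ must be met via its second disjunct, so $i \mapsto \lim_s g(e,i,s)$ is a computable approximation to $D'$ and $D$ is low. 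This splitting is exactly the ``new nonuniform technique'' advertised; your single-parameter description does not capture it. Relatedly, your sentence ``non-lowness guarantees at least one $e$ for which the latter alternative is impossible, and for that $e$ (chosen nonuniformly) the restraint is permanent'' has the quantifier wrong: non-lowness rules out \emph{any} computable approximation to $D'$, so the argument shows \emph{every} $N_e$ is met, not just one. The nonuniformity is in the verification (the approximation's index depends on which $e$ hypothetically fails), not in singling out one $e$ during the construction.

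You also have the permitting direction reversed. For $A \leq_T D$ by permitting, enumerating into $A$ requires a $D$-change below the element; restraining is free. So when ``permission never comes'' the interval $I$ stays \emph{out} of $A$ (good for $N_{e,i}$, potentially bad for $P$), not the other way around as you wrote. In the paper's strategy one waits for $i \in D'[s]$ with use $u$, picks a large interval $I$ with $\min I > u$ and restrains it, sets $g(e,i,s)=1$ once $W_e$ covers $I$, and only when $D$ changes below $u$ (destroying the $D'$-computation and granting permission) dumps $I$ into $A$, resets $g$ to $0$, and restarts. A four-case analysis (no interval eventually; a permanent interval; infinitely many ``successful'' intervals covered by $W_e$; infinitely many cancelled but unsuccessful intervals) then verifies $N_{e,i}$, and $P_{\langle e,i\rangle}$ is met because either only finitely many intervals are ever restrained or every interval is eventually dumped.
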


\begin{proof}
 The existence of a c.e.\ set $A$ of density $1$ with no
  computable subset of density $1$ was proved in \cite{JS}, Theorem
  2.22, and our proof here uses a similar strategy, but with
  permitting added in.  Familiarity with the proof of \cite{JS},
  Theorem 2.22, would be helpful to the reader.

  Given a c.e.\ set $C$ of nonlow degree ${\bf a}$, we construct a
  c.e.\ set $A \leq_T C$ which has density $1$ but has no computable
  (or even co-c.e.) subset of density $1$.  This suffices to prove the
  theorem, since we can then define
$$\hat A = (A \setminus \{2^n : n \in \omega\}) \cup \{2^n : n \in C\}$$
and show that $\hat A$ is a c.e.\ set of degree $\bf a$ which has density
$1$ but has no computable subset of density $1$.

Recall that
$$R_k = \{m : 2^k \mid m \ \& \ 2^{k+1} \nmid m\}$$
  As shown in the proof of Theorem 2.22 of \cite{JS},
to ensure that $A$ has density $1$, it suffices to meet the following
positive requirements:
$$P_n : R_n \subseteq^* A$$

To help us meet these positive requirements, as stage $s$ we put $s$
into $A$ unless it is restrained for the sake of some negative
requirement as described below.  Thus, it is clear that $P_n$ will be
met if the restraint associated with $R_n$ comes to a limit.  As in
the proof of Theorem 2.22 of \cite{JS},  we will show
that $R_n \subseteq A$ if the restraint associated with $R_n$ does not
come to a limit, so that $P_n$ is met in either case.

We make $A \le_T C$ by a slight modification of simple permitting.
Namely, if $x$ enters $A$ at stage $s$, we require that either some
number $y \leq x$ enters $C$ at $s$, or $x = s$.  This obviously
implies that $A \leq_T C$.

As before, let $N_e$ be the statement:
$$N_e :  W_e \cup A = \omega \Rightarrow \overline{\rho}(W_e) > 0$$

The conjunction of the $N_e$'s asserts that $A$ has no co-c.e.\ subset
of density $1$.  Rather than meet the $N_e$'s directly, we split up
each $N_e$ into weaker statements $N_{e,i}$ which will be our actual
requirements.

To do this we will define a computable function $g(e,i,s)$ which ``threatens''
to be a computable approximation to $C'$.  Let $L_{e,i}$ be the
statement:
$$\lim_s g(e,i,s) = C'(i)$$
Then define the {\bf requirement} $N_{e,i}$ as follows:
$$N_{e,i} : N_e \mbox{ or } L_{e,i}$$

Suppose all requirements $N_{e,i}$ are met.   If $N_e$ is not met, then
all $L_{e,i}$ hold and $C$ is low, a contradiction.
Hence, to meet $N_e$ it suffices to meet $N_{e,i}$ for all $i$.

We will meet $N_{e,i}$ by restraining certain elements of $R_{e,i}$
from entering $A$.  We do this in such a way that either the restraint
comes to a limit, or infinitely often all restraint is dropped.

The strategy to meet $N_{e,i}$ is as follows.  We fix $e,i$ and refer
to sets $I$ of the form $[a,b] \cap R_{e,i}$ as \emph{intervals}.  An
interval $I$ is called \emph{large} if at least half of the elements
of $R_{e,i}$ less than $\max I$ are in $I$.  Since $R_{e,i}$ has
positive density, any set which contains infinitely many large
intervals has positive upper density.  At the beginning of each stage
$s$, we have at most one interval, denoted $I[s]$, which is active for
the strategy.  The idea of the strategy is that we set $g(e,i,s) = 0$
while $i \notin C'[s]$, thus threatening to satisfy $L_{e,i}$ via
$C'(i) = 0 = \lim_sg(e,i,s)$ unless $i$ enters $C'$.  If $i$ enters
$C'$, we choose our first interval $I$.  We require that $\min I$
exceed the use of the computation showing $i \in C'$, so that if $i$
leaves $C'$, the elements of $I$ are permitted to enter $A$.  We
choose $I$ so that it does not contain elements already in $A$, and we
restrain elements of $I$ from entering $A$. Thus, if $W_e \cup A =
\omega$, $W_e$ must eventually cover $I$.  While we are waiting for
$W_e$ to cover $I$, we keep $g(e,i,s) = 0$, but when $W_e$ covers $I$,
we change $g(e,i,s)$ to $1$, thus threatening to meet $L_{e,i}$ via
$C'(i) = 1 = \lim_sg(e,i,s)$ unless $i$ leaves $C'$.  If $i$ leaves
$C'$, we dump all elements of $I$ into $A$ (which is permitted because
$C$ changed below $\min I$) and start over, again setting $g(e,i,s) =
0$ and waiting for $i$ to re-enter $C'$, so we can choose a new
interval, etc.  Note that we start over in this fashion whenever $i$
leaves $C'$, whether or not $W_e$ has covered our interval.  If $W_e$
has not covered the interval when we cancel it, we have made progress
on satisfying $L_{e,i}$ via $C'(i) = 0 = \lim_sg(e,i,s)$ because
$C'(i)$ has changed and we have kept $g(e,i,s) = 0$.  If $W_e$ has
covered our interval when we cancel it, then we have made progress on
showing that $W_e$ has positive lower density because $W_e$ contains a
new large interval.  The formal construction and verification are
given below.

Stage $s$.  If $I[s]$ is not defined and $i \in C'[s]$, choose a large
interval $I \subseteq R_{e,i}$ with $I \cap A_s = \emptyset$ and
$\min(I)$ larger than the use of the computation showing $i \in
C'[s]$. Let $I[s+1] = I$.  Let $u_I$ be the use of the computation
showing $i \in C'[s]$ and associate it with $I$ until, if ever it is
cancelled.  Restrain all elements of $I$ from entering $A$ until, if
ever, the interval $I$ is cancelled.

If $I[s]$ is defined and $C_{s+1} - C_s$ contains an element $y \leq
u_I$, then cancel $I[s]$, and enumerate all elements of $I[s]$ into
$A$.  Note that we do this whether or not $W_{e,s} \supseteq I[s]$,
but if $W_{e,s} \supseteq I[s]$, we designate $I[s]$ as a
\emph{successful} interval.  Of course, this enumeration is consistent
with our permitting condition since $u_I \leq \min(I)$.

If neither of the above cases apply, we maintain the current interval and
restraints, if any.   

Finally, in any case define $g(e,i,s)$ to be $1$ if $I[s]$ is defined
and $I[s] \subseteq W_{e,s}$, and otherwise let $g(e,i,s) = 0$.
Furthermore, if $s \in R_{e,i}$ and $s$ is not restrained at the end
of stage $s$, (i.e. $I[s+1]$ is undefined or $s \notin I[s+1]$),
enumerate $s$ into $A$, in addition to any enumeration required above.
This is done to help meet the positive requirement $P_{e,i}$ and is
allowed by our modified permitting condition.  This completes the
description of the construction.

To verify that the construction succeeds in meeting $N_{e,i}$, we consider four cases.

{\bf Case 1}.   For all sufficiently large $s$,  $I[s]$ is undefined.   Then, for all sufficiently
large $s$,  $i \notin C'[s]$ and $g(e,i,s) = 0$.   It follows that $\lim_s g(e,i,s) = 0 = C'(i)$
so that $L_{e,i}$ holds and hence $N_{e,i}$ is met. 

{\bf Case 2}.   There is an interval $I$  with $I[s] = I$ for all sufficiently large $s$.    Then
$i \in C'$ via the same computation as when $I$ was first chosen, since otherwise $I$ would
have been cancelled.    If $W_e \supseteq I$, we have $g(e,i,s) = 1$ for all sufficiently
large $s$.   In this case,  $\lim_s g(e,i,s) = 1 = C'(i)$ and hence $L_{e,i}$ holds.  If
$W_e \not \supseteq I$, then $W_e \cup A \not \supseteq I$, since $I$ is disjoint from $A$, by the
way it was chosen and the restraint imposed.   It follows that $W_e \cup A \neq \omega$, and thus
$N_e$ is met. 

{\bf Case 3}.   There are infinitely many successful intervals $I$.   Then $W_e$ contains all of them and
so has positive upper density.   It follows that $N_e$ is met.  

{\bf Case 4}.   None of Cases 1-3 apply.   In other words, there are infinitely many intervals, but only 
finitely many of them are successful.   Then $i \notin  C'$, since infinitely often the computation
showing $i \in C'$ is destroyed.   Note that $g(e,i,s) = 1$ only if the interval $I[s]$ is successful
or is the final interval.   By the failure of Cases 1-3, there are only finitely many such $s$.
Hence $\lim_s g(e,i,s) = 0 = C'(i)$, and $L_{e,i}$ holds.

We now show that the construction also meets $P_{e,i}$.  Let $U$ be
the union of all intervals ever chosen for $R_{e,i}$.  If $s \in
R_{e,i} - U$, then $s \in A$ by construction, so $R_{e,i} - U
\subseteq A$.  Thus, if $U$ is finite, then $P_{e,i}$ is met.  If $U$ is
infinite, then every interval every chosen is cancelled, at which time
all of its elements enter $A$, so $U \subseteq A$.   In this case,
$R_{e,i} \subseteq (R_{e,i} - U) \cup U  \subseteq A$, so again $P_{e,i}$
is met.

Note that this construction affects $A$ only on $R_{e,i}$, so the
constructions for the various requirements operate independently, and
there is no injury.  Thus the full construction is simply a
combination of the above, over all pairs $(e,i)$.  To ensure that only
finitely many actions are taken at each stage over all $(e,i)$, one
could require that the $(e,i)$-construction act at $s$ only for
$\langle e, i \rangle \leq s$, and this would clearly not affect the
success of the individual constructions.

\end{proof}

\begin{cor}  Let $\bf a$ be a c.e.\ degree.  Then the following are equivalent:
     \begin{itemize}
          \item[(i)] $\bf a$ is not low
          \item[(ii)] There is a c.e.\ set $A$ of degree $\bf a$ such that
            $A$ has density $1$ but no computable subset of $A$ has
            density $1$.
           \item[(iii)] There is a c.e.\ set $A$ of degree $\bf a$ such that
            $A$ has density $1$ but no computable subset of $A$ has
            nonzero density.
    \end{itemize}
\end{cor}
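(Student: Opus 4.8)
The implications $(iii)\Rightarrow(ii)\Rightarrow(i)$ are immediate. A computable subset of density $1$ has nonzero density, so any set witnessing $(iii)$ also witnesses $(ii)$; and if $\mathbf a$ were low, Corollary \ref{low} would equip every c.e.\ set of degree $\mathbf a$ of density $1$ with a computable subset of density $1$, so $(ii)$ gives $(i)$ by contraposition.

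For $(i)\Rightarrow(iii)$ the plan is to fuse the interval construction of Theorem \ref{nononzero} with the permitting device of Theorem \ref{nonlow}. Fix a c.e.\ set $C$ of nonlow degree $\mathbf a$. As in Theorem \ref{nonlow}, it is enough to build a c.e.\ set $A\leq_T C$ of density $1$ with no computable subset of nonzero density, since then $\hat A=(A\setminus\{2^n:n\in\omega\})\cup\{2^n:n\in C\}$ is c.e., has degree exactly $\mathbf a$, and inherits the density and subset properties of $A$. We make $A\leq_T C$ by the modified simple permitting of Theorem \ref{nonlow}: a number $x$ may enter $A$ at a stage $s$ only if either $x=s$ or some $y\leq x$ enters $C$ at $s$. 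We keep the requirements of Theorem \ref{nononzero}, namely $N_e:(\varphi_e\ \mbox{total}\ \&\ S_e\subseteq A\ \&\ \rho(S_e)\!\downarrow)\Rightarrow\rho(S_e)=0$ with $S_e=\{n:\varphi_e(n)=1\}$, but, following Theorem \ref{nonlow}, we split $N_e$ into subrequirements $N_{e,i}$: ``$N_e$ or $L_{e,i}$'', where $L_{e,i}$ asserts $\lim_s g(e,i,s)=C'(i)$ for a computable function $g$ defined during the construction. If some $N_e$ fails then, as checked below, every $L_{e,i}$ holds, so $C'\leq_T\emptyset'$, contradicting nonlowness; hence meeting all $N_{e,i}$ suffices.

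The strategy for $N_{e,i}$ runs in cycles on intervals of $\omega$, chosen so that the intervals of all requirements partition $\omega$ as in Theorem \ref{nononzero}. By default $x$ enters $A$ at stage $x$ unless currently restrained, so an interval $I=[a,c]$ created at a stage $s$ with $b:=\max J<s$ has its initial segment $J=[a,b]$ already in $A$. A cycle: hold $g(e,i,s)=0$ and wait for $i\in C'[s]$ via a computation of use $u$; then wait further until, at the current stage $s$, the least point $a$ in no interval yet used has $s-1>u$ and the interval $I=[a,c]$ with $b=s-1$ and $\max J/\max I=1-2^{-(\langle e,i\rangle+2)}$ also satisfies $|J|/|I|\geq 1-2^{-\langle e,i\rangle}$ --- both hold once $s-1$ is large, since $|J|/|I|\to\max J/\max I$ as $b\to\infty$ with $a$ fixed. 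Restrain $I\setminus J$ from $A$. While $I$ is active: if $C$ changes below $u$, dump $I\setminus J$ into $A$ (permitted, since $I\setminus J$ lies above $u$) and begin a new cycle; if $\varphi_e$ converges on all of $I\setminus J$ and outputs $1$ somewhere there, keep the restraint and keep $g(e,i,s)=0$; if $\varphi_e$ converges on all of $I\setminus J$ with no value $1$ (call $I$ \emph{clear}), then from now on set $g(e,i,s)=1$ and keep the restraint until, if ever, $C$ changes below $u$. The verification is the four-case analysis of Theorem \ref{nonlow}: if there is a final (never-dumped) interval, it is either clear --- then $i\in C'$ and $g\to1=C'(i)$, so $L_{e,i}$ holds --- or it has a $1$ in $I\setminus J$ --- then that point witnesses $S_e\not\subseteq A$, so $N_e$ holds; if there are infinitely many intervals but only finitely many clear ones, then $i\notin C'$ and $g\to0=C'(i)$; and if infinitely many clear intervals occur, then each gives $S_e\cap I\subseteq J$, and the density-drop computation of Theorem \ref{nononzero} with $2^{-(\langle e,i\rangle+2)}$ in place of $2^{-(e+1)}$ forces $\rho(S_e)=0$, so $N_e$ holds. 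Thus every $N_{e,i}$ is met; and if $N_e$ fails then $\varphi_e$ is total, $S_e\subseteq A$, and $\rho(S_e)>0$, which rules out the two situations above that establish $N_e$, so in each remaining situation $L_{e,i}$ holds. Hence every $L_{e,i}$ holds, and by the Limit Lemma $C$ is low, a contradiction; so every $N_e$ holds.

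The main obstacle is the interplay between permitting and the ``thin $A$'' structure forced by the nonzero-density requirements: we cannot keep any $R_n\subseteq^* A$, since $R_n$ would then be a computable subset of $A$ of positive density, so $A$ must be thin on every interval as in Theorem \ref{nononzero}, while the dumping of $I\setminus J$ must still wait for a $C$-permission. I expect the delicate points to be (a) arranging that $J$ enters $A$ ``for free'' via the clause permitting $x$ to enter $A$ at stage $x$, which forces $\max J$ to lie just below the current stage and above the permitting use $u$ at once, and (b) checking that every interval is eventually dumped except at most one per pair $(e,i)$, with $A$-density at least $1-2^{-\langle e,i\rangle}$ on that exceptional interval, so that the weighted-average argument of Theorem \ref{nononzero} still yields $\rho(A)=1$. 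Granting these, the rest is a routine merge of the two earlier proofs.
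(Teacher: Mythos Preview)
Your proposal is correct and follows exactly the route the paper indicates: the paper's own proof simply states that $(i)\Rightarrow(iii)$ ``is proved by combining the methods of Theorem~\ref{nonlow} and Theorem~\ref{nononzero}'' and omits the details, while $(i)\Leftrightarrow(ii)$ and $(iii)\Rightarrow(ii)$ are handled just as you do. Your sketch of the merged construction---splitting each $N_e$ into subrequirements $N_{e,i}$ via the low-threatening function $g$, running the $I,J$ interval machinery of Theorem~\ref{nononzero} on each $(e,i)$, and letting $J$ enter $A$ for free through the ``$x=s$'' permitting clause so that only $I\setminus J$ needs a genuine $C$-permission to be dumped---is precisely the intended combination, and your four-case verification mirrors that of Theorem~\ref{nonlow}.
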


\begin{proof} The implication (i) $\Rightarrow$ (ii) is Theorem
  \ref{nonlow}, and the implication (ii) $\Rightarrow$ (i) is Corollary
  \ref{low}.  The implication (i) $\Rightarrow$ (iii) is proved by
  combining the methods of Theorem \ref{nonlow} and Theorem
  \ref{nononzero}.  We omit the details.  The implication (iii)
  $\Rightarrow$ (ii) is immediate.
\end{proof}

In \cite{JS}, a set $A$ was defined to be \emph{coarsely computable}
if there is a computable set $B$ such that $A \triangle B$ (the
symmetric difference of $A$ and $B$) has density $0$.  It was shown in
Proposition 2.15 of \cite{JS} that there is a c.e.\ set which is
coarsely computable but not generically computable and in Theorem 2.26
of \cite{JS} that there is a c.e.\ set which is generically computable
but not coarsely computable.  The proof of the latter result is
similar to the proof that there is a c.e.\ set of density $1$ with no
computable subset of density 1 (Theorem 2.22 of \cite{JS}).  Further
the existence of a c.e.\ set which is generically computable but not
coarsely computable immediately implies the existence of a c.e.\ set of
density $1$ with no computable subset of density $1$.  Since sets of
the latter sort exist only in nonlow degrees, one might conjecture
that c.e.\ sets which are generically computable but not coarsely
computable exist only in nonlow c.e.\ degrees.  The next result refutes
this conjecture.

\begin{thm}   Every nonzero c.e.\ degree contains a c.e.\ set which is
generically computable but not coarsely computable.
\end{thm}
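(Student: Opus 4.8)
The plan is to take a c.e.\ set $C$ of the given nonzero c.e.\ degree $\mathbf a$ and build a c.e.\ set $A$ that is generically computable (in fact, generically computable via a uniformly given partial computable function whose domain we control) but not coarsely computable, using permitting below $C$ to keep $A \leq_T C$. The overall architecture mimics the proof of Theorem~2.26 of \cite{JS} that there is a c.e.\ set which is generically computable but not coarsely computable, but with simple permitting inserted, exactly as permitting was inserted into the proof of Theorem~\ref{nonlow} above. So I would first set up the positive side: $A$ will be partitioned along the sets $R_{e}$ (or a double-indexed version $R_{\langle e,i\rangle}$), and on the portion of $R_k$ not restrained, elements go into $A$ at their own stage, which guarantees that the complement of $A$ (the ``gap'' witnessing generic computability) has density $0$ provided each restraint comes to a limit or is dropped infinitely often. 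The partial computable function witnessing generic computability will output $0$ on all restrained-and-never-enumerated elements and will be defined (equal to $A(n)$) everywhere its value has stabilized; the point is that the set where it is undefined has density $0$.

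Next I would address the negative requirements. To make $A$ not coarsely computable we must defeat every total computable $f$: we need infinitely many $n$ (in fact a set of positive upper density, concentrated in long intervals) on which $f(n) \neq A(n)$. For a single total computable $f$, the strategy is to pick a large interval $I \subseteq R_{\langle e,i\rangle}$ disjoint from the current $A$, wait to see the finitely many values of $f$ on $I$; on the portion of $I$ where $f$ says $0$ we can simply leave those elements out of $A$ (so $A$ agrees with $0$ there — not helpful), but on the portion where $f$ says $1$, $A$ currently says $0$, giving disagreement there; and if $f$ says $0$ on too much of $I$ we instead want to dump those elements into $A$ to create disagreement. The familiar resolution is: having seen $f\upharpoonright I$, either at least half of $I$ has $f$-value $1$, in which case leaving $I$ out of $A$ already yields disagreement on a large subinterval, or at least half has $f$-value $0$, in which case we enumerate all of $I$ into $A$ (permitted!), yielding disagreement on that half. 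Either way a constant fraction of $I$ is a disagreement set, and letting $\max I \to \infty$ along infinitely many such intervals forces $\overline\rho(A \triangle \{n : f(n)=1\}) > 0$, i.e.\ $f$ does not witness coarse computability. Since every computable set arises as $\{n : \varphi_e(n)=1\}$ for some total $\varphi_e$, quantifying over $e$ (and over the auxiliary index $i$ used for permitting, with the $L_{e,i}$ lowness-threat bookkeeping exactly as in Theorem~\ref{nonlow}) defeats all of them.

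The coupling with permitting is the delicate part. As in Theorem~\ref{nonlow}, I would split each negative requirement $N_e$ (``$\varphi_e$ total $\Rightarrow \varphi_e$ does not coarsely compute $A$'') into subrequirements $N_{e,i}$, define a computable $g(e,i,s)$ threatening to be a computable approximation to $C'(i)$, and use the value of $i$ in $C'$ to license the choice of a fresh interval whose minimum exceeds the use $u_I$ of the computation $i \in C'$; when $C$ changes below $u_I$ we dump $I$ into $A$ (permitted) and start over. The four-case verification is the same as in Theorem~\ref{nonlow}: either $I[s]$ eventually undefined, or eventually constant (then either $\varphi_e$ never defined on $I$, or $\varphi_e\upharpoonright I$ seen and we have a permanent disagreement block of density $\geq \frac14 \rho(R_{\langle e,i\rangle})$, whence $N_e$ holds), or infinitely many intervals with the ``disagreement is seen'' flag (then $A\triangle\{n:\varphi_e(n)=1\}$ meets infinitely many large intervals and has positive upper density), or else infinitely many intervals but only finitely many flagged, forcing $i \notin C'$ and $\lim_s g(e,i,s)=0=C'(i)$ — and if $N_e$ fails for all $e$ then all $L_{e,i}$ hold and $C'$ is computable, contradicting $\mathbf a \neq \mathbf 0$. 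I expect the main obstacle to be purely expository: checking that the interval-choice mechanism can always find a \emph{large} interval in $R_{\langle e,i\rangle}$ that is disjoint from the current $A$ \emph{and} has minimum above $u_I$ — this works because the strategy restrains only within $R_{\langle e,i\rangle}$ and enumerates everything else, so only finitely much of $R_{\langle e,i\rangle}$ is in $A$ at any stage except for previously dumped intervals, and large disjoint intervals arbitrarily far out always remain available — and in verifying that the generic-computability witness really has domain of density $1$, which follows because each $R_k$ is entirely in $A$ except possibly for one final restrained interval of controlled relative size, exactly as in the density-$1$ verifications of Theorems~\ref{nononzero} and~\ref{nonlow}. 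Since this construction affects $A$ only on $R_{\langle e,i\rangle}$ for the $(e,i)$-module, the modules do not injure one another and the full construction is their union, with the usual $\langle e,i\rangle \leq s$ restriction to keep each stage finite.
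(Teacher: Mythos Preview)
Your proposal contains a genuine gap: you have reached for the \emph{nonlow} permitting machinery of Theorem~\ref{nonlow} when the theorem only assumes $\mathbf a \neq \mathbf 0$. The $L_{e,i}$ clauses, if they all hold for a fixed $e$, yield $C'(i) = \lim_s g(e,i,s)$ for every $i$, which makes $C'$ merely $\Delta^0_2$---i.e.\ $C$ is low---not ``$C'$ computable'' as you wrote. So your four-case analysis only rules out nonlow $\mathbf a$; for a low nonzero $\mathbf a$ your argument gives no contradiction at all. The paper instead uses \emph{ordinary} permitting below a noncomputable c.e.\ $B \in \mathbf a$: if only finitely many realized intervals are ever permitted while $\varphi_e$ is total, then for every interval $I_k$ appointed after some fixed stage, $B$ never changes below $\min I_k$ once $I_k$ is realized, and since $\min I_k \to \infty$ this computes $B$---the needed contradiction with $\mathbf a \neq \mathbf 0$.

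There is a second, related weakness. With a single set $A$, your generic-computability witness ``outputs $0$ on restrained-and-never-enumerated elements,'' but that is not a c.e.\ event, so the witness as described is not partial computable. The paper avoids this by building \emph{two} disjoint c.e.\ sets $A_0, A_1$ with $A_0 \cup A_1$ of density $1$; the partial computable witness for $A_1$ is simply $0$ on $A_0$ and $1$ on $A_1$. This two-set device also strengthens the diagonalization: when a realized interval $I$ is permitted, one puts $I \cap S_e$ into $A_0$ and $I \setminus S_e$ into $A_1$, forcing the \emph{entire} interval into $S_e \triangle A_1$ rather than only half of it, and---crucially---the enumeration into $A_0$ needs no permission while the enumeration into $A_1$ is exactly what the $B$-change licenses. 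Your half-interval scheme could be repaired, but only after you switch to simple permitting and make explicit a c.e.\ ``committed-out'' set playing the role of $A_0$.
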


\begin{proof}
The proof is similar to that of Theorem 2.26 of \cite{JS}, but with
permitting added in.   Let $B$ be a noncomputable c.e.\ set.  We must
construct a c.e.\ set $A_1 \equiv_T B$ such that $A_1$ is generically
computable but not coarsely computable.   To make $A_1 \leq_T B$, we
require that if if $x$ is enumerated in $A_1$ at stage $s$, then some
$y \leq x$ is enumerated in $B$ at stage $s$.  We can then make $A_1 \equiv_T B$
by coding $B$ into $A_1$ on a computable set of density $0$, as in the proof of
Corollary \ref{exhigh}, and clearly this operation affects neither the generic
nor the coarse computability of $A_1$.  To ensure that
$A_1$ is generically computable it suffices to construct a c.e.\ set
$A_0$ such that $A_0 \cap A_1 = \emptyset$ and $A_0 \cup A_1$ has density
$1$, since the partial computable function which takes the value $0$ on
$A_0$ and $1$ on $A_1$ would then witness that $A_1$ is generically computable.
As in Corollary \ref{exhigh}, to ensure that $A_0 \cup A_1$ has density $1$, it
suffices to meet the following positive requirements:
$$P_e :   R_e \subseteq^* A_0 \cup A_1$$
Let $S_e = \{n : \varphi_e(n) = 1\}$, so that the sets $S_e$ for
$\varphi_e$ total are precisely the computable sets.  To ensure that
$A_1$ is not coarsely computable, it suffices to meet the following
negative requirements:
$$N_e : \mbox{ If $\varphi_e$ is total, then $S_e \triangle A_1$
  is not of density $0$ }$$ As in the proof of Theorem \ref{high}, the
requirements $P_e$ and $N_e$ act only on the set $R_e$.  We describe
the strategy for meeting those two requirements.  As in the proof of
that theorem, call a set $I \subseteq R_e$ \emph{large} if at least
half of the elements of $R_e$ less than $\max I$ are in $I$ .
Choose a large interval $I_0$ not containing any element already in
$A_0 \cup A_1$.  Restrain elements of $I_0$ from entering $A_0 \cup
A_1$.  Wait until the set $I_0$ becomes \emph{realized}, meaning that
$\varphi_e$ becomes defined on all of its elements.  If this never
occurs, we meet $N_e$ because $\varphi_e$ is not total.  We now appoint a
new large set $I_1$ with $\min I_1 > \min I_0$ and continue in this
fashion.  Whenever a realized set $I_j$ which has not yet intersected
$A_0 \cup A_1$ is \emph{permitted} in the sense that some number $\leq
\min I_j$ enters $B$, we force $I_j \subseteq S_e \triangle
A_1$ by enumerating all elements of $I_j \cap S_e$ into
$A_0$ and all other elements of $I_j$ into $A_1$.  Also, for all $k <
j$, if $I_k$ has not yet intersected $A_0 \cup A_1$, we enumerate all
elements of $I_k$ into $A_1$.   (Note that no permission is needed for
the latter enumerations.)   Further, for the sake of $P_e$, at each stage
$s \in R_e$, if $s$ is not restrained by $N_e$, we enumerate $s$ into $A_1$.
If infinitely many intervals are permitted as above, then we ensure
that $S_e \triangle A$ contains infinitely many large sets
and so does not have density $0$.   Suppose now that only finitely many
intervals are permitted and $\varphi_e$ is total.  Let $s_0$ be a stage
such that no interval appointed after $s_0$ is permitted after it is
realized.  Note that infinitely many intervals are appointed, and
all are realized.   Hence, we can show that $B$ is computable, since
if $I_k$ is any interval appointed after $s_0$, $B$ never changes
below $\min I_k$ after $I_k$ is realized.

Finally, $P_e$ is met because only finitely many elements of $R_e$ are
permanently restrained.  This is clear if only finitely many intervals
are ever appointed.  If infinitely many intervals are appointed then
all intervals must be realized.  Furthermore, infinitely many
intervals must be permitted after they are realized, by the above
argument.  Whenever an interval $I_k$ is permitted, we ensure that all
elements of $R_e$ less than or equal to $\max I_k$ belong to
$A_0 \cup A_1$.   Thus, if infinitely many intervals are appointed,
we have $R_e \subseteq A_0 \cup A_1$ and again $P_e$ is met.

\end{proof}

The technique we have introduced for meeting infinitary requirements
via permitting with a nonlow c.e.\ oracle has applications beyond 
the study of asymptotic density.   We illustrate this point by 
proving two theorems.

Let $A$ be a c.e.\ set.  Recall that its complement $\overline{A}$ is
called \emph{semilow} if $\{e:W_e\cap \overline{A}\ne \emptyset\}\le_T
\emptyset'$, and is called \emph{semilow$_{1.5}$} if $\{e:|W_e \cap
\overline{A}|=\infty\} \le_m \{e : |W_e| = \infty\}$. The notions of semilow and
semilow$_{1.5}$ first arose in the study of the automorphisms of the
lattice $\mathcal{E}^*$ of computably enumerable sets modulo finite sets.  Let
$\mathcal{L}^*(A)$ be the lattice of c.e.\ supersets of $A$, modulo finite sets.
Maass \cite{M} showed that, if $A$ is coinfinite, $\mathcal{L}^*(A)$ is
effectively isomorphic to $\mathcal{E}^*$ if
and only if $\overline{A}$ is semilow$_{1.5}$. Clearly the
implications low implies semilow implies semilow$_{1.5}$ hold, and it
can be shown that they cannot be reversed in general.

We prove the following.   An elegant proof not using permitting is given
in Soare's forthcoming book \cite{S1}.

\begin{theorem} Let $\bf a$ be a c.e.\ degree.   Then the following
are equivalent:
\begin{itemize}
     \item[(i)]   There is a c.e.\ set $A$ of degree $\bf a$ such that $\overline{A}$
is not semilow$_{1.5}$.
      \item[(ii)]  {\bf a} is not low.
\end{itemize}
\end{theorem}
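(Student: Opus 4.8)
The plan is to prove the two implications separately, with the substantive direction being (ii) $\Rightarrow$ (i).

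For the easy direction, (i) $\Rightarrow$ (ii): I would argue the contrapositive. If $\mathbf{a}$ is low and $A$ is a c.e.\ set of degree $\mathbf{a}$, then $\overline{A}$ is semilow, since $\{e : W_e \cap \overline{A} \neq \emptyset\}$ is $\Sigma^0_1$ relative to $A$ (ask whether $W_e$ enumerates an element not yet in $A$), hence $\le_T A' \equiv_T \emptyset'$. Semilow implies semilow$_{1.5}$ as noted in the excerpt, so $\overline{A}$ is semilow$_{1.5}$. Thus no c.e.\ set of low degree has a non-semilow$_{1.5}$ complement.

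For the main direction, (ii) $\Rightarrow$ (i): given a c.e.\ set $C$ of nonlow degree $\mathbf{a}$, I would build a c.e.\ set $A \le_T C$ whose complement is not semilow$_{1.5}$, then recover degree exactly $\mathbf{a}$ by coding $C$ into $A$ on a sparse (say, powers of $2$) computable set, exactly as in the proof of Theorem \ref{nonlow}. To make $\overline{A}$ fail semilow$_{1.5}$ we must defeat every potential $m$-reduction: for each $e$ we have a requirement $N_e$ asserting that $\varphi_e$ does not witness $\{i : |W_i \cap \overline{A}| = \infty\} \le_m \mathrm{Inf}$, i.e.\ there is some $i$ with $|W_{\varphi_e(i)}| = \infty$ but $|W_{\varphi_e(i)} \cap \overline{A}| < \infty$, or with $|W_{\varphi_e(i)}| < \infty$ but $|W_{\varphi_e(i)} \cap \overline{A}| = \infty$. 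The cleanest way is to reserve for each $e$ a computable ``arena'' (a column $R_e$, or a disjoint infinite computable set), pick a fresh index $i = i_e$ controlling a c.e.\ set $V_{i_e} \subseteq R_e$ that we enumerate, and play the two columns $W_{\varphi_e(i_e)}$ and $V_{i_e}$ against each other: we keep $V_{i_e}$ small while threatening to make it infinite, restrain elements of $R_e$ out of $A$ so that $V_{i_e} \cap \overline{A}$ stays large, and react to growth of $W_{\varphi_e(i_e)}$ by dumping restrained elements into $A$. As in Theorem \ref{nonlow}, the infinitary bookkeeping is driven by a function $g(e,i,s)$ that ``threatens to be a computable approximation to $C'$'': the permitting-with-a-nonlow-oracle machinery converts the requirement into finitely many subrequirements $N_{e,i}$ of the form ``$N_e$ or $\lim_s g(e,i,s) = C'(i)$,'' and nonlowness of $C$ guarantees that not all the $L_{e,i}$ can hold, so $N_e$ is forced.

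The main obstacle will be getting the combinatorics of the two-column game right so that the four-case analysis mirrors that of Theorem \ref{nonlow}: I expect Cases 1 and 2 (restraint reaches a limit) to yield directly that $\varphi_e$ fails to be an $m$-reduction (the finite/infinite pattern on the chosen index comes out wrong), Case 3 (infinitely many ``successful'' stages) to force $W_{\varphi_e(i_e)}$ to be infinite while only finitely much of it lands in $\overline{A}$ — or the dual — again killing the reduction, and Case 4 to make $g(e,i,\cdot)$ genuinely compute $C'(i)$, contradicting nonlowness in aggregate. The positive requirements $P_n : R_n \subseteq^* A$ ensuring $A$ has density $1$ — or just that $A$ is the right kind of set; density is not actually needed here, only that $A$ is coinfinite so that semilow$_{1.5}$ is meaningful — are handled exactly as before by enumerating unrestrained elements and observing that restraints either stabilize or are infinitely often dropped. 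Since the skeleton is identical to Theorem \ref{nonlow}, I would present the proof by describing the arena, the index $i_e$, the strategy for $N_{e,i}$, and the definition of $g(e,i,s)$, then simply reference the four-case verification of Theorem \ref{nonlow} for the parts that are unchanged, filling in only the step where the finite/infinite behaviour of the columns defeats the purported $m$-reduction.
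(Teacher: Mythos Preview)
Your proposal is correct and follows essentially the same route as the paper, which also builds witness sets $V_{e,i} = W_{h(e,i)}$ via the Recursion Theorem, splits each $Q_e$ into (infinitely many, not finitely many) subrequirements $Q_{e,i} : Q_e \vee L_{e,i}$ driven by a function $g(e,i,s)$ threatening to approximate $C'$, and plays $V_{e,i}$ against $Y_{e,i} = W_{\varphi_e(h(e,i))}$ in a dedicated column $R_{\langle e,i\rangle}$. Two small corrections to your sketch: you need one arena and one witness index per pair $(e,i)$, not per $e$, so that the subrequirements do not interfere; and in your Case~3 it is $V_{e,i} \cap \overline{A}$ (the set you control) that you force to be finite when $Y_{e,i}$ is infinite, not the other way around---the paper's strategy is ``enumerate into $V_{e,i}$, set $g=1$ once $Y_{e,i}$ has grown past a threshold, and on $C$-permission dump all of $V_{e,i}$ into $A$ and restart.''
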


\begin{proof} 
For the nontrivial direction, it is enough to show that 
a nonlow c.e.\ degree ${\bf a}$ bounds a non-semilow$_{1.5}$ c.e.\ set $A$ 
since then we can consider $A\oplus C$ for any c.e.\ set $C \in {\bf a}.$
Fix a c.e.\ set $C$ of degree $\bf a$.

The construction is analogous to the proof of Theorem \ref{nonlow}.
We make $A \leq_T C$ by ordinary permitting.

As before, we use the sets $R_n$, although any infinite uniformly
computable family of pairwise disjoint sets would do just as well.
We must satisfy the following conditions: 
$$Q_e:\varphi_e \mbox{ does not witness that $A$ is semilow$_{1.5}$}.$$

As before, we define a computable function $g(e,i,s)$ which threatens
to witness that $C$ is low.   Let $L_{e,i}$ be the assertion that
$C'(i) = \lim_s g(e,i,s)$.   Finally, define the \emph{requirement} 
$Q_{e,i}$ as follows:
$$Q_{e,i} : Q_e \mbox{ or } L_{e,i}$$

The requirement $Q_{e,i}$ will affect the construction only on the
set $R_{\langle e, i \rangle}$, which we denote $R_{e,i}$ for short.

For the sake of $Q_{e,i}$,  we will build sets 
$V_{e,i}=W_{h(e,i)}$,  where $h$ is computable and the index $h(e,i)$ is available during
the construction by the Recursion Theorem.  
Let $Y_{e,i} = W_{\varphi_e(h(e,i))}$ if $\varphi_e(h(e,i)) \downarrow$, and
otherwise, let $Y_{e,i} = \emptyset$.   Of course, $Y_{e,i}$ is c.e.,
uniformly in $e$ and $i$.
The construction will ensure that,
if $L_{e,i}$ fails, the following both hold:
\begin{itemize}
\item[(i)] If $Y_{e,i}$ is finite, then $V_{e,i} \cap
  \overline{A}$ is infinite.
     \item[(ii)] If $Y_{e,i}$ is infinite, then $V_{e,i} \subseteq A$.
\end{itemize}

       This suffices, since each of the above conditions implies that
$N_e$ is met.

We now describe the strategy for $Q_{e,i}$.   Initially, we define
 $g(e,i,0) = 0$ and in the construction change this from $0$ to $1$ or $1$ to $0$
only when explicitly told to, else $g(e,i,s+1) = g(e,i,s)$.  Next, we await the first
stage $s_0 > 0$ with $i \in C'[s_0]$, say with use $u_0 < s_0$.   If there is no such stage $s_0$, we satisfy
$L_{e,i}$ and hence $Q_{e,i}$ via $\lim_s g(e,i,s) = 0 = C'(i)$.  For stages $s > s_0$
we enumerate $s$ into $V_{e,i}$ if $s \in R_{e,i}$ until, if ever, we reach a stage $s_1$ such that either

(i)   some $y < u_0$ enters $C$ at $s_1$, or

(ii)  $|Y_{e,s_1}| > s_0$.

If no such stage $s_1$ exists, we satisfy $N_e$ because $Y_{e,i}$ is
finite and yet $V_{e,i} \cap \overline{A}$ is infinite.  Suppose now
that $s_1$ exists.  

If (i) occurs, we enumerate an element of $R_{e,i}$ greater than
$\max (A[s_1] \cup \{s_1\})$ into $V_{e,i}$ and restart the strategy.  

If (ii) occurs, we set $g(e,i,s_1) = 1$.  We then await a stage $s_2
> s_1$ such that some $y < u_0$ enters $C$ at $s_2$.  If no such stage
occurs, we meet $L_{e,i}$ via $\lim_s g(e,i,s) = 1 = C'(i)$.  If such
a stage occurs, we set $g(e,i,s_2) = 0$, enumerate all of
$V_{e,i}[s_2]$ into $A$,  and
restart the strategy.

We now show that this strategy succeeds in meeting $Q_{e,i}$.  If we
wait forever for some stage $s_i$ as above (in some cycle) to occur,
then $Q_{e,i}$ is met by remarks in the description of the strategy.
Suppose that we never wait in vain and so go through infinitely many
cycles.  If $Y_{e,i}$ is infinite, then (ii) occurs in infinitely many
cycles, and we ensure that $V_e \subset A$ by the action at $s_2$.  If
$Y_{e,i}$ is finite, then (ii) occurs in only finitely many cycles.
It follows that $\lim_s g(e,i,s) = 0$ because $g(e,i,s)$ changes from
$0$ to $1$ only finitely often, and after each such change it is reset
to $0$.  Also, $i \notin C'$ because in each cycle there is a stage at
which a number below the use of the computation showing $i \in C'$
enters $C$.  Hence and we meet $L_{e,i}$ via $\lim_s g(e,i,s) = 0 =
C'(i)$.  It follows that $Q_{e,i}$ is met in all cases.   As before,
the requirements don't interact, and we omit further details.

\end{proof}

The proof above can be modified for another similar property.  Cholak
\cite{CMemoir} proved a result related to Maass's by showing that if
$A$ is semilow$_2$ (a generalization of being semilow$_{1.5}$) and has
the \emph{outer splitting property} then ${\mathcal L}^*(A)$ is
isomorphic to ${\mathcal E}^*$.  $A$ has the outer splitting property
if there are two total computable functions $f$ and $g$ such that for
all $e$,
\begin{itemize}
     \item[(i)] $W_e=W_{f(e)}\sqcup W_{g(e)}$ (that is, they split $W_e$.)
     \item[(ii)] $|W_{f(e)}\cap \overline{A}|<\infty.$
     \item[(iii)] $|W_e\cap \overline{A}|=\infty$ implies $W_{f(e)}\cap 
          \overline{A}\ne \emptyset.$
\end{itemize}
Cholak and Shore showed that there is a low$_2$ c.e.\ set without the
outer splitting property \cite{CMemoir}. 
The following classifies the degrees, and
extends the result above since if $A$ is c.e.\ and $\overline{A}$ is
semilow$_{1.5}$, then $A$ has the outer splitting property \cite{CMemoir}.

\begin{thm} A c.e.\ degree ${\bf a}$ contains a c.e.\ set $A$ without
  the outer splitting property if and only if ${\bf a}$ is
  non-low. Hence having the outer splitting property is not definable
  in the lattice of c.e.\ sets.
\end{thm}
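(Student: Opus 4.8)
For the forward direction I would argue contrapositively using the material already in the introduction: if $\mathbf a$ is low and $A$ is any c.e.\ set of degree $\mathbf a$, then $\overline A$ is $A$-computable (as $A$ is c.e.), so $\{e:W_e\cap\overline A\neq\emptyset\}$ is $\Sigma^0_1(A)$ and hence $\le_T A'\equiv_T\emptyset'$; thus $\overline A$ is semilow, a fortiori semilow$_{1.5}$, and so $A$ has the outer splitting property by \cite{CMemoir}. Hence a c.e.\ set without the outer splitting property has nonlow degree.

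For the converse I would fix a c.e.\ set $C$ of nonlow degree $\mathbf a$ and, exactly as in Theorem~\ref{nonlow}, build a c.e.\ set $A_0\le_T C$ without the outer splitting property (using ordinary permitting); then $A=A_0\oplus C$ has degree $\mathbf a$, and one checks that $A$ also lacks the outer splitting property by transporting any proposed witness $(f,g)$ for $A$ back to a witness for $A_0$ via the ``even copy'' $\{2x:x\in W_n\}$ of the counterexample sets built for $A_0$ (the two sets of conditions (i)--(iii) correspond under this decoding, so defeating all pairs for $A_0$ defeats all pairs for $A$). To build $A_0$, code the pairs of total computable functions by $e$ (so $f=\varphi_{(e)_0}$, $g=\varphi_{(e)_1}$) and let $N_e$ be the requirement ``there is an index $n$ witnessing the failure of one of (i)--(iii) for $A_0$.'' As in the semilow$_{1.5}$ theorem, I would not meet $N_e$ directly but split it using nonlowness of $C$: build a computable ``threat function'' $g(e,i,s)$, let $L_{e,i}$ say $\lim_s g(e,i,s)=C'(i)$, and set $N_{e,i}:=N_e\vee L_{e,i}$; if all $N_{e,i}$ hold but some $N_e$ fails, all $L_{e,i}$ hold for that $e$ and the limit lemma makes $C$ low, a contradiction. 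So it suffices to meet each $N_{e,i}$, and these act on pairwise disjoint $R_{\langle e,i\rangle}$, so there is no injury.

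The strategy for $N_{e,i}$ acts only on $R:=R_{\langle e,i\rangle}$, builds (by the Recursion Theorem) a c.e.\ set $V=W_{h(e,i)}\subseteq R$ all of whose elements are restrained from $A_0$ unless explicitly dumped, and monitors $F:=W_{f(h(e,i))}$ (note $F\subseteq V$ whenever (i) holds, and $G=V\setminus F$). The goal is: whenever $L_{e,i}$ fails, arrange $|V\cap\overline A_0|=\infty$ and $F\cap\overline A_0$ either empty (so (iii) fails) or infinite (so (ii) fails); this makes $(f,g)$ fail to witness the property. The threat is driven as in Theorem~\ref{nonlow}: $g$ is kept $0$ while $i\notin C'$; when $i$ enters $C'$ with use $u$ we open a cycle and feed a growing supply of fresh elements of $R$ above $u$ into $V$, restrained; whenever an undumped restrained element lies in $F$ we keep $g=1$; and when $i$ leaves $C'$ (so $C$ permits the current restrained elements) we close the cycle, decide what to dump, reset $g$ to $0$, and reopen.

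The heart of the argument—and the step I expect to be the main obstacle—is deciding exactly which restrained elements to dump when a cycle closes. Dumping the elements of $F$ too readily can empty $V\cap\overline A_0$ in the case where $f$ routes (co)finitely many restrained elements into $F$; dumping too little leaves $F\cap\overline A_0$ finite and nonempty in the case where $f$ routes only finitely many restrained elements into $F$ and then stops, which is precisely the configuration witnessing the outer splitting property. This dichotomy (finitely vs.\ infinitely many restrained elements into $F$) should be resolved by a growing threshold $k$, in the spirit of the ``$|Y_{e,s_1}|>s_0$'' device of the semilow$_{1.5}$ proof: dump the restrained elements currently in $F$ only when there are fewer than $k$ of them, and increment $k$ each time this bound is surpassed. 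One then checks that if $f$ sends infinitely many restrained elements into $F$ the bound is either surpassed cofinitely often—so $|F\cap\overline A_0|=\infty$ and (ii) fails—or all of the relevant elements are eventually dumped while infinitely many restrained elements survive in $G$—so (iii) fails; that if $f$ sends only finitely many into $F$ they are all eventually dumped, again forcing (iii) to fail; and that in every remaining case the threat stabilizes at $C'(i)$ (at $1$ if $i\in C'$, since an undumped restrained element of $F$ or the ``waiting'' configuration pins $g=1$; at $0$ if $i\notin C'$), so that the cases where $g$ oscillates are exactly those in which (ii) or (iii) has already been defeated. Carrying out this verification cleanly is the technical core. Finally, the last sentence of the theorem follows since the degree spectrum just computed—the nonlow c.e.\ degrees—is not closed under passage to degrees of automorphic images in $\mathcal E$, whereas the spectrum of any lattice-definable property would be.
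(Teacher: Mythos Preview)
Your forward direction, your use of Epstein's theorem for non-definability, and your overall nonlow-permitting framework (the $L_{e,i}$ threat, the Recursion-Theorem witness $V=W_{h(e,i)}$, action confined to $R_{\langle e,i\rangle}$) all match the paper exactly.

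The divergence is in which side of the split you monitor. You watch $F=W_{f(h(e,i))}$ and introduce a threshold $k$ to decide when to dump $F$-elements; the paper instead watches $Z=W_{g(h(e,i))}$ and \emph{always} dumps $Y=W_{f(h(e,i))}$ when a cycle closes. The paper's trigger for setting $g(e,i,s)=1$ is simply ``$|Z|>s_0$'' (the analogue of the $|Y_{e,s_1}|>s_0$ device from the semilow$_{1.5}$ proof, but applied to the $g$-side rather than the $f$-side). This makes the verification almost trivial: $Z$ is never dumped, so $Z\subseteq\overline A$ automatically, and the three outcomes line up with the cases ``$s_1$ never appears'' ($V$ infinite, $Z$ finite, so either the split fails or $Y\cap\overline A$ is infinite), ``$g=1$ infinitely often'' ($Z$ infinite, $Y\subseteq A$, so $|V\cap\overline A|=\infty$ and (iii) fails), and ``$g=1$ only finitely often'' ($L_{e,i}$ holds). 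No threshold is needed.

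Your threshold mechanism, by contrast, has a real gap as written. Consider the case $i\notin C'$ with infinitely many cycles, and suppose the opponent routes cofinitely many $V$-elements into $F$ (so $G$ is finite). If the number of new $F$-elements per cycle stays below the eventual value of $k$---which can happen if cycle lengths are bounded and you add only one element per stage---then you dump at every late cycle: $F\subseteq A$, $|V\cap\overline A|\le|G|<\infty$, and neither (ii) nor (iii) of the outer splitting definition is violated by $h(e,i)$. Meanwhile your $g(e,i,s)$ oscillates (it is $1$ whenever a fresh element sits in $F$ before the dump and $0$ just after), so $L_{e,i}$ fails too, and $N_{e,i}$ is not met. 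You could patch this by forcing $a_j\to\infty$ (e.g.\ enumerate $2^j$ fresh elements at the start of cycle $j$), but you have not said this, and even then the case analysis you sketch is more delicate than the paper's. The one-line fix is to swap which half of the split drives the threat: monitor $G$, always dump $F$.
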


\begin{proof}  The second part of the
  statement follows from the first part and Rachel Epstein's recent result
  \cite{Ep} that there is a nonlow c.e.\ degree ${\bf c}$
  such that every c.e.\ set in that degree can be sent to a low degree
by an automorphism.

The proof of the first part is completely analogous to the proof of
the previous theorem.  We modify $Q_e$ so that it now requires us to
kill the $e$-th pair of candidates for $f$ and $g$ (say $\psi_e $ and
$\xi_e$) in the definition of the outer splitting property.  We define
a computable function $g$ as before and use it to determine the statements $L_{e,i}$
and $Q_{e,i}$ as before.  We also define sets $V_{e,i} = W_{h(e,i)}$
as witnesses for $Q_{e,i}$.  Let $Y_{e,i} = W_{\psi_e(h(e,i))}$ and
$Z_{e,i} = W_{\xi_e(h(e,i))}$. We ensure that, if $L_{e,i}$ fails to
hold, then either
\begin{itemize} 
     \item[(i)]  $Y_{e,i}, Z_{e,i}$ fail to split $W_{h(e,i)}$, or
     \item[(ii)] $Y_{e,i} \cap \overline{A}$ is infinite, or
     \item[(iii)]  $W_{h(e,i)}$ is infinite and $Y_{e,i} \subseteq A$.
\end{itemize}
This clearly suffices to meet $Q_{e,i}$.  Our strategy to achieve the
above (acting only on $R_{e,i}$) is as follows: Set $g(e,i,0) = 0$ and
wait for $s_0$ with $i \in C'[s_0]$.  At stage $s_0$, we start putting
elements of $R_{e,i}$ into $W_{h(e,i)}$.  We continue until we reach a stage
$s_1$ such that either the computation $i \in C'$ is destroyed or
$|Z_{e,i}| > s_0$.  In the former case, we dump $Y_{e,i}$ into $A$ and
restart the strategy.  In the latter case, we set $g(e,i,s_1) = 1$ and
wait for a stage $s_2$ at which the computation $i \in C'$ is injured.
At stage $s_2$, we dump $Y_{e,i}$ into $A$ and restart the strategy,
in particular setting $g(e,i,s_2) = 0$.

As before, it is easy to see that $Q_{e,i}$ is met if there are only
finitely many cycles.   In particular, if $s_1$ fails to exist, then
$W_{h(e,i)}$ is infinite and $Z_{e,i}$ is finite, so either (i) or (ii)
above holds.   If we set $g(e,i,s_1) = 1$ in infinitely many cycles,
then (iii) holds.   In the remaining case, we have $\lim_s g(e,i,s) =
0 = C'(i)$, so $L_{e,i}$ holds.

\end{proof}

\section{Arithmetical complexity of densities}

In \cite{JS}, Theorem 2.21  shows that the densities of the computable
sets are exactly the $\Delta^0_2$ reals in the interval $[0,1]$.   In this
section we obtain analogous results for c.e.\ sets in place of computable
sets, and we also study upper and lower densities as well as densities.
Throughout, we assume we have fixed a computable bijection between the
natural numbers and the rational numbers.   We say that a set of rational
numbers is $\Sigma^0_n$ if the corresponding set of natural numbers is
$\Sigma^0_n$, and similarly for other notions.  This allows us to define
what it means for a real number to be \emph{left}-$\Sigma^0_n$ as in Definition
\ref{left}.
  
We first characterize the lower densities of the computable sets.

\begin{thm} \label{s2}
  Let $r$ be a real number in the interval $[0,1]$.  Then the
  following are equivalent:
\begin{itemize}
     \item[(i)] $r = \underline{\rho}(A)$ for some computable set $A$.
     \item[(ii)]   $r$ is the lim inf of a computable sequence of rational numbers.
     \item[(iii)]   $r$ is left-$\Sigma^0_2$.
\end{itemize}
\end{thm}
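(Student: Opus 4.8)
The plan is to establish the four implications (i)$\Rightarrow$(ii), (ii)$\Rightarrow$(i), (ii)$\Rightarrow$(iii) and (iii)$\Rightarrow$(ii); two are bookkeeping and two require a construction. Implication (i)$\Rightarrow$(ii) is immediate, since for computable $A$ the map $n \mapsto \rho_n(A)$ is a computable sequence of rationals whose lim inf is $\underline{\rho}(A)$ by definition. For (ii)$\Rightarrow$(iii): if $r = \liminf_s q_s$ with $\{q_s\}$ a computable sequence of rationals, then for $q \in \mathbb{Q}$ we have $q < r$ if and only if $q < \sup_N \inf_{s \ge N} q_s$, which holds if and only if there are $N \in \omega$ and $q' \in \mathbb{Q}$ with $q < q'$ and $(\forall s \ge N)\, q_s \ge q'$; the matrix of this condition is $\Pi^0_1$, so the lower cut of $r$ is $\Sigma^0_2$, i.e. $r$ is left-$\Sigma^0_2$.

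For (ii)$\Rightarrow$(i) I would construct a computable $B$ with $\underline{\rho}(B) = r$ by adapting the density construction in the proof of Theorem 2.21 of \cite{JS} from limits to lim infs. We may assume each $q_s \in [0,1] \cap \mathbb{Q}$ (clamp the sequence; the cases $r = 0$ and $r = 1$ are trivial, via $B = \emptyset$ and $B = \omega$). Partition $\{1, 2, 3, \dots\}$ into consecutive blocks $I_m = [\ell_m, \ell_{m+1})$ long enough that $\sum_{n \in I_m} 1/n \ge 1$ (e.g. $\ell_{m+1} = 3\ell_m$). On $I_m$ use $q_m$ as a target density: put $n \in I_m$ into $B$ if and only if doing so would keep $\rho_{n+1}(B) \le q_m$. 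Since adding one element changes the density by at most $1/(n+1)$, the value $\rho_n(B)$ moves monotonically from within $o(1)$ of $q_{m-1}$ toward $q_m$ across $I_m$, so that $\inf_{n \in I_m} \rho_n(B)$ and $\sup_{n \in I_m}\rho_n(B)$ are within $o(1)$ of $\min(q_{m-1},q_m)$ and $\max(q_{m-1},q_m)$ respectively (as $m \to \infty$). Hence $\underline{\rho}(B) = \liminf_m \min(q_{m-1},q_m) = \liminf_m q_m = r$; the last equality holds because $\min(q_{m-1},q_m) \ge r - \epsilon$ for all large $m$ exactly when $q_m \ge r - \epsilon$ for all large $m$, while $\min(q_{m-1},q_m) < r + \epsilon$ infinitely often since $q_m < r + \epsilon$ infinitely often.

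The implication (iii)$\Rightarrow$(ii) is the substantive one (it is also a standard fact about the arithmetical hierarchy of real numbers). Fix a $\Sigma^0_2$ description $q < r \iff (\exists n)(\forall t)\, R(q,n,t)$ with $R$ computable. The first attempt is to let $g_s$ be the largest $q \in \mathbb{Q}\cap[0,1]$ of index $\le s$ for which $(\exists n \le s)(\forall t \le s)\, R(q,n,t)$ (and $g_s = 0$ if there is no such $q$): then $\{g_s\}$ is computable and $\liminf_s g_s \ge r$ automatically, since each rational $q < r$ is eventually and permanently ``confirmed'' so that $g_s \ge q$ for all large $s$. The obstruction is that $\liminf_s g_s$ can exceed $r$, because some rational $q^\ast > r$ may be spuriously confirmed at infinitely many stages (which happens whenever the least refutation $\mu t\,\neg R(q^\ast,n,t)$ grows fast enough in $n$). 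To repair this I would process the candidate rationals with a finite-injury priority discipline: retire a high candidate once the evidence that it has converged is strong, and reactivate it whenever that evidence is later contradicted. Each candidate is injured only finitely often, so the retirements stabilize, and one can arrange the stabilized retirement values to tend to $r$ from above; emitting a sequence that tracks the least still-active candidate and also outputs these retirement values produces a computable rational sequence with lim inf exactly $r$.

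The step I expect to be the main obstacle is precisely this: forcing $\liminf \le r$ rather than merely $\liminf \ge r$. The crude approximation to a left-$\Sigma^0_2$ real gives only the lower bound for free, and pinning the lim inf down to $r$ is what necessitates the injury-and-reactivation bookkeeping in (iii)$\Rightarrow$(ii). The symmetric concern in (ii)$\Rightarrow$(i) — securing $\underline{\rho}(B) \le r$ and not just $\ge r$ — is what dictates the block lengths and the per-block targets, but is routine by comparison.
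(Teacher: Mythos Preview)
Your treatments of (i)$\Rightarrow$(ii) and (ii)$\Rightarrow$(iii) are fine; the latter, via an intermediate rational $q'$, is in fact cleaner than the paper's case split on whether $r$ is rational. Your (ii)$\Rightarrow$(i) has a small technical slip: a fixed block ratio such as $\ell_{m+1}=3\ell_m$ need not allow the density to reach an arbitrary target. If block $m$ starts at density $d_0$ and you include every element, the density at $\ell_{m+1}$ is only $(2+d_0)/3$; so if $q_{m-1}\approx 0$ and $q_m\approx 1$ (or vice versa) the target is missed and your inductive hypothesis that block $m+1$ begins within $o(1)$ of $q_m$ fails. The harmonic-sum condition $\sum_{n\in I_m}1/n\ge 1$ does not control this. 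The paper's construction (Lemma~\ref{infsup}) avoids the issue by making the block endpoints adaptive: $s_{n+1}$ is defined to be the least $t>s_n$ at which the target $q_{n+1}$ is reached, which guarantees $|\rho_{s_{n+1}}(A)-q_{n+1}|\le 1/(n+1)$ directly.

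The real gap is (iii)$\Rightarrow$(ii). You correctly identify the obstruction---a rational $q^\ast>r$ can be spuriously confirmed at infinitely many stages---but your repair is not a construction. ``Retire a high candidate once the evidence is strong and reactivate when contradicted'' does not say what value to emit at stage $s$, nor what ``injury'' means precisely enough to verify that each candidate is injured only finitely often; for $q^\ast>r$ every putative witness $n$ is eventually refuted, so any scheme that simply advances the witness on refutation cycles forever on $q^\ast$, and it is unclear how your retirement bookkeeping prevents such $q^\ast$ from contaminating the $\liminf$. The paper sidesteps this entirely via the Lachlan--Soare hat trick (Lemma~\ref{s2approx}): for any $\Sigma^0_2$ set $S$ there are uniformly computable finite sets $S_s$ with (a) each $k\in S$ in $S_s$ for all large $s$, and (b) $S_s\subseteq S$ for infinitely many $s$ (the true stages). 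Applying this to $S=\{q\in\mathbb{Q}:q<r\}$ and setting $q_s=\max S_s$ (or $0$ if $S_s=\emptyset$), clause (a) gives $\liminf_s q_s\ge r$ while clause (b) gives $q_s\le r$ for infinitely many $s$, so $\liminf_s q_s=r$ with no priority argument at all.
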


\begin{proof}
  It is obvious that (i) implies (ii), since $\underline{\rho}(A) =
  \liminf_n \rho_n(A)$ by definition.

  To see that (ii) implies (iii), suppose that $r = \liminf_n q_n$,
  where $\{q_n\}$ is a computable sequence of rational numbers.  If
  $r$ is a rational number, then it is clear that (iii) holds.  Thus, we
  assume that $r$ is irrational.  Then, for all rational numbers $q$,
$$q < r \Longleftrightarrow (\forall^\infty n)[q < q_n]$$
where $(\forall^\infty n)$ means ``for all but finitely many $n$.''
This follows easily from the definition of the lim inf, using the
assumption that $r$ is irrational to prove the implication from right
to left.  Expanding the right-side of the above equivalence shows that
$r$ is left-$\Sigma^0_2$.

We now show that (iii) implies (ii).  We begin with a lemma which
characterizes $\Sigma^0_2$ sets of natural numbers in terms of
computable approximations.  The following lemma improves the standard
result that for every $\Sigma^0_2$ set $A$ there are uniformly
computable sets $A_s$ such that, for all $k$, $k \in A$ if and only if
$(\forall^\infty s)[k \in A_s]$.

\begin{lem} \label{s2approx}
Let $A$ be a $\Sigma^0_2$ set.  Then there is a uniformly
  computable sequence of sets $\{A_s\}$ such that
     \begin{itemize}
     \item[(i)] For all $k \in A$, we have $k \in A_s$ for all sufficiently
       large $s$ 
           \item[(ii)]  There exist infinitely many $s$ such that $A_s \subseteq A$
     \end{itemize}
\end{lem}

\begin{proof} (of lemma)
     To prove this lemma, we use the Lachlan-Soare ``hat trick'' (\cite{S},
     page 131), with which we assume the reader is familiar.  Since
     $A$ is c.e.\ in $\bf 0'$ there exists an $e$ such that $A$ is the
     domain of $\Phi_e^K$.  Now let $A_s = \{k :
     \hat{\Phi}_{e,s}(K_s,k) \downarrow\}$.  Then if $k \in A$, then
     $\Phi_e^K(k) \downarrow$, and so $\hat{\Phi}_{e,s}(K_s, k)
     \downarrow$ for all sufficiently large $s$.  Now let $T$ be the
     set of true stages.  $T$ is infinite.  If $s \in T$ and
     $\hat{\Phi}_{e,s}(K_s,k) \downarrow$, then $\Phi_e^K(k)
     \downarrow$.  It follows that $A_s \subseteq A$ for all $s$ in
     $T$.
\end{proof}

We now show that (iii) implies (ii) in the theorem.  Let $r$ be a real
number which is left-$\Sigma^0_2$.  We must produce a computable
sequence $\{q_s\}$ of rational numbers such that $\liminf_s q_s = r$.
Let $A = \{q \in \mathbb{Q} : q < r\}$, so that $A$ is $\Sigma^0_2$ by
hypothesis.  Let the uniformly computable sets $A_s$ be related to $A$
as in the lemma.  By truncating the sets if necessary, we may assume
that every rational in $A_s$ corresponds to a natural number $<s$
under our coding of rationals.  Thus $A_s$ is finite, and we may
effectively compute a canonical index for the finite set of natural
numbers corresponding to it.  Let $q_s$ be the greatest rational
number in $A_s$ if $A_s$ is nonempty, and otherwise let $q_s = 0$.
Then $\{q_s\}$ is a computable sequence of rational numbers.  By
hypothesis, there are infinitely many $s$ such that $A_s \subseteq A$
and thus every element of $A_s$ is less than $r$.  Using the
definition of $q_s$ and the hypothesis that $r \geq 0$, it follows
that there are infinitely many $s$ such that $q_s \leq r$, and thus
$\liminf_s q_s \leq r$.  To show that $r \leq \liminf_s q_s$, note that if $q <
r$, then $q \in A$, so $q \in A_s$ for all sufficiently large $s$, so
$q \leq q_s$ for all sufficiently large $s$.  It follows that every
rational number $q < r$ is $\leq \liminf_s q_s$, so $r \leq \liminf_s
q_s$.  This completes the proof that (iii) implies (ii).

We complete the proof of the Theorem by showing that (ii) implies (i).
Thus we must show that if $r \in [0,1]$ and $r = \liminf_s q_s$ where
$\{q_s\}$ is a computable sequence of rationals, then there is a
computable set $A$ such that $\underline{\rho}(A) = r$.  Essentially,
this follows from the proof of Theorem 2.21 of \cite{JS}, where it was
shown that every $\Delta^0_2$ real in $[0,1]$ is the density of a
computable set.  For the convenience of the reader and for use in
Corollary \ref{lsp}, we put in the
details in the following lemma.

\begin{lem} \label{infsup}
  Let $\{q_s\}$ be a computable sequence of rational numbers such that
  $0 \leq \liminf_s q_s$ and $\limsup_s q_s \leq 1$.  Then there is a
  computable set $A$ such that $\underline{\rho}(A) = \liminf_s q_s$
  and $\overline{\rho}(A) = \limsup_s q_s$.
\end{lem}

\begin{proof}
  First, we may assume that each $q_s$ lies in the
interval $(0,1)$, by replacing $q_s$ by $1/(s+1)$ if $q_s \leq 0$ and
by $1 - 1/(s+1)$ if $q_s \geq 1$ and otherwise leaving $q_s$
unaltered.  Since $r \in [0,1]$, the resulting sequence of rationals
still has $r$ as its lim inf.  We define a computable set $A$ and an
increasing sequence $\{s_n\}$ of natural numbers such that, for all $n
$:
\begin{itemize}
      \item[(i)] $|\rho_{s_n}(A) - q_n| \leq 1/(n+1)$ 
      \item[(ii)] For all natural numbers $k$ in the interval $(s_n,
        s_{n+1})$, $\rho_k(A)$ is between $\rho_{s_n}(A)$ and
        $\rho_{s_{n+1}}(A)$.
\end{itemize}

Let $s_0 = 1$ and put $0$ into $A$.  Now assume inductively that $s_n$
and $A \upharpoonright s_n$ are defined, so that $\rho_{s(n)}(A)$ is
defined.  There are now two cases.    

If $\rho_{s_n}(A) > q_{n+1}$, let $s_{n+1}$ be the least number $t >
s_n$ such that $\rho_t (A \upharpoonright s_n) \leq q_{n+1}$.  (Such a
$t$ exists because $q_{n+1} > 0$ and $\rho_t(A \upharpoonright s_n)$
approaches $0$ as $t$ approaches infinity.)   Let $A \upharpoonright
s_{n+1} = A \upharpoonright  s_n$.

If $\rho_{s_n}(A) \leq q_{n+1}$, let $s_{n+1}$ be the least number $t > s_n$
such that $\rho_t ((A \upharpoonright s_n) \cup [s_n, t)) \geq q_{n+1}$, and
let $A \upharpoonright s_{n+1} = A \upharpoonright s_n \cup [s_n, s_{n+1})$.

To verify (i), use that $s_n \geq n$ and the minimality of $t$ in each case.
To verify (ii), use that the interval $[s_n, s_{n+1})$ is either contained in
or disjoint from $A$, so that $\rho_t(A)$ is either increasing or decreasing
in $t$ on this interval.  We omit the details.

It remains to show that $\underline{\rho}(A) = \liminf_s \rho_s(A) =
\liminf_n q_n$.  Since $\lim_{n}(q_n - \rho_{s(n)}) = 0$ and
$\{\rho_{s(n)}(A)\}$ is a subsequence of $\{\rho_s(A)\}$, we have
$\liminf_s \rho_s(A) \leq \liminf_n \rho_{s(n)} = \liminf_n q_n$.  To
show that $\liminf_n \rho_{s(n)}(A) \leq \liminf_s \rho_s(A)$, note
that for every $k > 0$ there is a number $t(k)$ such that
$\rho_{s(t(k))}(A) \leq \rho_k(A)$, namely if $s(n) \leq k < s(n+1)$, let
$t(k)$ be $n$ if $\rho_{s(n)} \leq \rho_k(A)$, and otherwise let
$t(k)$ be $n + 1$.  Further, by this definition, the function $t$ is
finite-one, and hence $t(k)$ approaches infinity as $k$ approaches
infinity.  We thus have $\liminf_k \rho_k(A) \leq \liminf_n
\rho_{s(n)}(A)$, which completes the proof of the Lemma.
\end{proof}

The theorem follows.
\end{proof}

\begin{cor} \label{pi2}
  Let $r$ be a real number in the interval $[0,1]$.  Then the
  following are equivalent:
\begin{itemize}
     \item[(i)]  $r$ is the upper density of some computable set.
     \item[(ii)]  $r$ is left-$\Pi^0_2$
\end{itemize}
\end{cor}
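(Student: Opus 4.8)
The plan is to obtain Corollary \ref{pi2} from Theorem \ref{s2} by the standard duality between a set and its complement, which interchanges upper and lower density, interchanges $r$ with $1-r$, and interchanges $\Sigma^0_2$ with $\Pi^0_2$. Since all of the substantive work is already contained in Theorem \ref{s2}, this is a routine dualization.

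First I would record the elementary identity $\rho_n(\overline{A}) = 1 - \rho_n(A)$ for every $A \subseteq \omega$ and every $n > 0$, which is immediate from $|\overline{A} \upharpoonright n| = n - |A \upharpoonright n|$. Using $\limsup_n (1 - x_n) = 1 - \liminf_n x_n$ this gives $\overline{\rho}(A) = 1 - \underline{\rho}(\overline{A})$. Since $A$ is computable if and only if $\overline{A}$ is computable, it follows at once that $r$ is the upper density of some computable set if and only if $1-r$ is the lower density of some computable set.

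Next I would check that $1-r$ is left-$\Sigma^0_2$ if and only if $r$ is left-$\Pi^0_2$. The computable involution $q \mapsto 1-q$ of $\mathbb{Q}$ carries the lower cut $\{q \in \mathbb{Q} : q < 1-r\}$ of $1-r$ onto the upper cut $\{q \in \mathbb{Q} : q > r\}$ of $r$, so $1-r$ is left-$\Sigma^0_2$ exactly when the upper cut of $r$ is $\Sigma^0_2$. If $r$ is irrational, its upper cut is the complement in $\mathbb{Q}$ of its lower cut, so the upper cut is $\Sigma^0_2$ if and only if the lower cut is $\Pi^0_2$, i.e.\ if and only if $r$ is left-$\Pi^0_2$. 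If $r$ is rational, both of its cuts are computable, hence lie in $\Sigma^0_2$ and in $\Pi^0_2$, and likewise $1-r$ is rational; so in this case both sides of the desired equivalence hold trivially.

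Chaining these facts with Theorem \ref{s2} yields the corollary: $r$ is the upper density of a computable set precisely when $1-r$ is the lower density of a computable set, precisely when (by Theorem \ref{s2}) $1-r$ is left-$\Sigma^0_2$, precisely when $r$ is left-$\Pi^0_2$. I do not expect any genuine obstacle; the only point that needs explicit mention is the rational case in the middle step, where the complementation trick relating $\Sigma^0_2$ and $\Pi^0_2$ does not apply verbatim because the two cuts of a rational differ by the single point $r$ — but rationals belong to every arithmetical class involved, so that case is immediate. (If one wants an explicit witness when $r$ is rational rather than reading it off the chain, applying Lemma \ref{infsup} to the constant sequence $q_s = r$ produces a computable set of upper density $r$.)
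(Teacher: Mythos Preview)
Your proposal is correct and follows essentially the same dualization argument as the paper: the paper's proof simply notes that $\overline{\rho}(A) = 1 - \underline{\rho}(\overline{A})$, that $1-r$ is left-$\Sigma^0_2$ iff $r$ is left-$\Pi^0_2$, and that computable sets are closed under complementation. You have supplied more detail (in particular the rational-case check and an explicit witness via Lemma~\ref{infsup}) than the paper does, but the underlying route is identical.
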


\begin{proof} Note that $\overline{\rho}(A) = 1 -
  \underline{\rho}(\overline{A})$ for every set $A$, and that for
  every real number $r$, $1-r$ is left-$\Sigma^0_2$ if and only if $r$
  is left-$\Pi^0_2$.  Since the computable sets are closed under
  complementation, the corollary follows.
\end{proof}

The following corollary sums up our results on upper and lower densities
of computable sets.

\begin{cor} \label{lsp}
      Let $a$ and $b$ be real numbers such that $0 \leq a \leq b \leq 1$.
Then the following are equivalent:
\begin{itemize}
      \item[(i)]  There is a computable set $R$ with lower density $a$ and upper
density $b$
        \item[(ii)]   $a$ is left-$\Sigma^0_2$ and $b$ is left-$\Pi^0_2$.
\end{itemize}
\end{cor}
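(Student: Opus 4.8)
The plan is to prove the two implications separately. The direction $(i)\Rightarrow(ii)$ is immediate: if $R$ is a computable set with $\underline{\rho}(R)=a$ and $\overline{\rho}(R)=b$, then $a$ is left-$\Sigma^0_2$ by Theorem~\ref{s2} and $b$ is left-$\Pi^0_2$ by Corollary~\ref{pi2}.

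For $(ii)\Rightarrow(i)$ it suffices, by Lemma~\ref{infsup}, to produce a single computable sequence of rationals $\{q_s\}$ with $\liminf_s q_s = a$ and $\limsup_s q_s = b$ (note $0\le a$ and $b\le 1$, so the hypotheses of Lemma~\ref{infsup} are met). First I would set up two $\Sigma^0_2$ cuts: $L:=\{r\in\mathbb{Q}\cap[0,1]:r<a\}$ is $\Sigma^0_2$ since $a$ is left-$\Sigma^0_2$, and $M:=\{r\in\mathbb{Q}\cap[0,1]:r\ge b\}$ is $\Sigma^0_2$ since $b$ is left-$\Pi^0_2$ (it is the complement, inside $[0,1]$, of the lower cut of $b$). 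Applying Lemma~\ref{s2approx} and truncating exactly as in the proof of Theorem~\ref{s2} (so the approximating sets are finite, with denominators bounded by the stage), I obtain uniformly computable finite sets $L_s,M_s\subseteq\mathbb{Q}\cap[0,1]$ approximating $L$ and $M$ in the sense of that lemma. Put $l_s:=\max(L_s\cup\{0\})$ and $m_s:=\min(M_s\cup\{1\})$. As in the proof of Theorem~\ref{s2}, $\liminf_s l_s = a$ and $l_s\le a$ at the infinitely many stages with $L_s\subseteq L$; dually $\limsup_s m_s = b$ and $m_s\ge b$ at the infinitely many stages with $M_s\subseteq M$.

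The heart of the argument is to combine $l_s$ (correct $\liminf$, but $\limsup$ possibly as large as $1$) with $m_s$ (correct $\limsup$, but $\liminf$ possibly as small as $0$) into sequences controlled on both ends, and I would do this with a median trick. Writing $\operatorname{med}(x,y,z)$ for the middle of the three values, define, with arbitrary starting values in $[0,1]$,
\[
q^{\mathrm{lo}}_s:=\operatorname{med}\bigl(l_s,\ m_s,\ \min(q^{\mathrm{lo}}_{s-1},l_s)\bigr),\qquad
q^{\mathrm{hi}}_s:=\operatorname{med}\bigl(l_s,\ m_s,\ \max(q^{\mathrm{hi}}_{s-1},m_s)\bigr).
\]
Two of the three arguments defining $q^{\mathrm{lo}}_s$ are $\le l_s$, so $q^{\mathrm{lo}}_s\le l_s$ always; hence $q^{\mathrm{lo}}_s\le a$ whenever $L_s\subseteq L$, giving $\liminf_s q^{\mathrm{lo}}_s\le a$. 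Conversely, fix $\varepsilon>0$; once $l_s>a-\varepsilon$ (all large $s$) and $q^{\mathrm{lo}}_{s-1}>a-\varepsilon$, two of the three arguments remain $>a-\varepsilon$ and so does the median, and $q^{\mathrm{lo}}$ does eventually rise above $a-\varepsilon$ because $\limsup_s m_s=b\ge a>a-\varepsilon$ forces $m_s>a-\varepsilon$ infinitely often; thus $\liminf_s q^{\mathrm{lo}}_s=a$. The same ``two of three on the correct side'' bookkeeping, using $\liminf_s l_s=a\le b$, shows $\limsup_s q^{\mathrm{lo}}_s\le b$. By the mirror‑image reasoning—equivalently, by running the $q^{\mathrm{lo}}$-construction on the pair $1-b\le 1-a$ (which are left-$\Sigma^0_2$ and left-$\Pi^0_2$ respectively) and subtracting from $1$—one gets $\limsup_s q^{\mathrm{hi}}_s=b$ and $\liminf_s q^{\mathrm{hi}}_s\ge a$. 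Finally, interleaving $q_{2s}:=q^{\mathrm{lo}}_s$, $q_{2s+1}:=q^{\mathrm{hi}}_s$ gives a computable rational sequence with $\liminf_s q_s=\min(a,\ \ge a)=a$ and $\limsup_s q_s=\max(\le b,\ b)=b$, and Lemma~\ref{infsup} delivers the desired computable set $R$.

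The step I expect to be the main obstacle is the verification of the median construction: one must check that the self-referential sequence $q^{\mathrm{lo}}$ genuinely dips down near $a$ infinitely often—rather than getting stuck hovering strictly above $a$—while never being pulled appreciably below $a-\varepsilon$ by the erratic behaviour of $m_s$, and symmetrically that it is never dragged above $b$ by the occasional large values of $l_s$. Once that is in hand, everything else—$(i)\Rightarrow(ii)$, the reduction of $(ii)\Rightarrow(i)$ to constructing $\{q_s\}$, the properties of $l_s$ and $m_s$, and the final appeal to Lemma~\ref{infsup}—is routine given the results already established.
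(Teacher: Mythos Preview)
Your argument is correct, but it is considerably more elaborate than the paper's. Both proofs handle $(i)\Rightarrow(ii)$ identically and both reduce $(ii)\Rightarrow(i)$ to building a computable rational sequence with the prescribed $\liminf$ and $\limsup$, finishing with Lemma~\ref{infsup}. The difference lies in how the sequence is built. The paper simply treats $a=b$ separately (then $a$ is $\Delta^0_2$ and Theorem~2.21 of \cite{JS} applies), and for $a<b$ picks a rational $q^*\in[a,b]$, truncates the $\Sigma^0_2$-side sequence $\{q_n\}$ from above at $q^*$ and the $\Pi^0_2$-side sequence $\{r_n\}$ from below at $q^*$, and interleaves; this one-line truncation immediately forces $\limsup q_n\le q^*\le b$ and $\liminf r_n\ge q^*\ge a$ without disturbing the correct extremes.

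Your median construction achieves the same control uniformly, without the case split, by letting the two raw sequences discipline each other through a self-referential median. This is more robust in principle (no separating rational is needed), but the verification is more delicate: one has to argue inductively that once two of the three median inputs lie on the correct side of $a-\varepsilon$ (respectively $b+\varepsilon$) the sequence is trapped there, and separately that it actually reaches that region using the ``good'' stages of the other sequence. You have the right inequalities ($q^{\mathrm{lo}}_s\le l_s$, $q^{\mathrm{lo}}_s\le\max(m_s,q^{\mathrm{lo}}_{s-1})$, and their duals for $q^{\mathrm{hi}}$), and the argument goes through, but it buys generality you do not need here at the cost of a page of bookkeeping that the paper's truncation-at-$q^*$ avoids entirely.
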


\begin{proof}
    It follows at once from Theorem \ref{s2} and Corollary \ref{pi2} that
(i) implies (ii).  For the converse, assume that (ii) holds of $a$ and $b$.
Since $a$ is left-$\Sigma^0_2$, by Theorem \ref{s2}, there is a computable
sequence of rationals $\{q_n\}$ with $\liminf_n q_n = a$.   Since $b$ is
left-$\Pi^0_2$, by the proof of Corollary \ref{pi2}, there is a computable
sequence of rationals $\{r_n\}$ with $\limsup_n r_n = b$.

If $a = b$, then $a$ is a $\Delta^0_2$ real and hence is the density
of a computable set by Theorem 2.21 of \cite{JS}.  Thus, we may assume
that $a < b$.  Fix a rational number $q^*$ such that $a \leq q* \leq
b$.  By replacing $q_n$ by $\max\{q_n, q^*\}$, we may assume that $q_n
\leq q^*$ for all $n$, and hence $\limsup_n q_n \leq q^* \leq b$.
Similarly, we may assume that $\liminf_n r_n \geq a$.  Now define a
computable sequence of rationals $\{s_n\}$ by $s_{2n} = q_n$ and
$s_{2n+1} = r_n$.  Then $\liminf_n s_n = \min \{\liminf_n q_n ,
\liminf_n r_n\} = a$ and $\limsup_n s_n = \max \{\limsup_n q_n ,
\limsup_n r_n\} = b$.  The corollary now follows by applying Lemma
\ref{infsup} to the sequence $\{s_n\}$.
\end{proof}

We now consider the complexity of the various kinds of density
associated with c.e.\ sets.   The first result follows easily from what we have
done for computable sets.

\begin{thm} \label{p2ce} Let $r$ be a real number in the interval $[0,1]$.  Then the following are
equivalent:
\begin{itemize}
      \item[(i)]   $r$ is the upper density of a c.e.\ set.
      \item[(ii)]   $r$ is left-$\Pi^0_2$.
\end{itemize}
\end{thm}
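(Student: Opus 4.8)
The plan is to prove the two directions separately, exploiting that upper density is ``dual'' to lower density under complementation only for computable sets, not for c.e.\ sets, so one direction will piggyback on Theorem~\ref{s2} while the other requires a genuine construction.

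For the direction (i) $\Rightarrow$ (ii), suppose $r = \overline{\rho}(A)$ for a c.e.\ set $A$. I would unwind the definition: for a rational $q$, we have $q < r = \limsup_n \rho_n(A)$ if and only if there are infinitely many $n$ with $\rho_n(A) > q$, i.e.\ $(\forall m)(\exists n \geq m)[\rho_n(A) > q]$. The predicate $\rho_n(A) > q$ is $\Sigma^0_1$ in $n$ and $q$ (wait for enough elements of $A$ to appear below $n$), so the whole condition is $\Pi^0_1(\Sigma^0_1) = \Pi^0_2$ after the usual quantifier contraction. Hence the lower cut $\{q : q < r\}$ is $\Pi^0_2$, so $r$ is left-$\Pi^0_2$. (A small subtlety: one must check the boundary behavior when $r$ is rational, but then $r$ is certainly left-$\Pi^0_2$, indeed computable, so this case is trivial.)

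For the direction (ii) $\Rightarrow$ (i), given a left-$\Pi^0_2$ real $r \in [0,1]$, I want a c.e.\ set $A$ with $\overline{\rho}(A) = r$. The cleanest route is to reduce to the computable-set machinery already in hand: since $r$ is left-$\Pi^0_2$, Corollary~\ref{pi2} already gives a \emph{computable} set $B$ with $\overline{\rho}(B) = r$, and a computable set is in particular c.e., so we are immediately done. In fact this makes the theorem almost a corollary: every computable set is c.e., so ``upper density of a computable set'' trivially implies ``upper density of a c.e.\ set,'' and conversely the (i) $\Rightarrow$ (ii) argument above shows upper densities of c.e.\ sets are still only left-$\Pi^0_2$ --- so c.e.\ sets give nothing new for upper density, in contrast to what will happen for lower density (which jumps to left-$\Sigma^0_3$).

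The main obstacle here is essentially nonexistent for the existence direction because of the reduction to Corollary~\ref{pi2}; the only real content is the complexity \emph{upper bound} in (i) $\Rightarrow$ (ii), and there the one thing to be careful about is the quantifier counting --- specifically verifying that ``$\rho_n(A_s) > q$ for some $s$'' genuinely lands in $\Sigma^0_1$ uniformly, and that the outer $(\forall m)(\exists n \geq m)$ wrapper contracts to a single $\Pi^0_2$ formula rather than accidentally costing an extra quantifier. I would also remark, as the paper's surrounding text suggests, that this theorem together with Corollary~\ref{pi2} yields parts (ii), (iv), and (v) of Theorem~\ref{i3} once the density-exists case is handled (for (v), one notes that if $\rho(A)$ exists then $\overline{\rho}(A) = \rho(A)$, so densities of c.e.\ sets are also exactly the left-$\Pi^0_2$ reals, using an explicit construction of a c.e.\ set whose density equals a prescribed left-$\Pi^0_2$ real).
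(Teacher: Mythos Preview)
Your proposal is correct and matches the paper's own proof essentially line for line: for (ii) $\Rightarrow$ (i) you invoke Corollary~\ref{pi2} exactly as the paper does, and for (i) $\Rightarrow$ (ii) you unwind $\limsup$ as $(\exists^\infty n)[q < \rho_n(A)]$, note this inner predicate is $\Sigma^0_1$ for c.e.\ $A$, and handle the rational-$r$ boundary case separately, just as the paper does. The extra commentary you add about Theorem~\ref{i3}(v) goes beyond what this particular proof requires, but it is accurate and anticipates the paper's subsequent theorem.
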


\begin{proof}
  It follows immediately from Corollary \ref{pi2} that (ii) implies
  (i).  To show that (i) implies (ii), let $r$ be the upper density of
  a c.e.\ set $A$.  We may assume without loss of generality that $r$
  is irrational.  Then for $q$ rational, we have
$$q < r  \Longleftrightarrow (\exists^\infty n) [q < \rho_n(A)]$$
Since the predicate $q < \rho_n(A)$ is $\Sigma^0_1$, (ii) follows.
\end{proof}

\begin{thm}  Let $r$ be a real number in the interval $[0,1]$.  Then the following are
equivalent:
\begin{itemize}
      \item[(i)]   $r$ is the lower density of a c.e.\ set.
      \item[(ii)]   $r$ is left-$\Sigma^0_3$.
\end{itemize}
\end{thm}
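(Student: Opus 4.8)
The plan is to prove the two implications separately; the reverse implication is the substantive one.

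For $(i)\Rightarrow(ii)$ the argument is a quantifier-counting one. If $r$ is rational its lower cut $\{q\in\mathbb{Q}:q<r\}$ is decidable, so assume $r$ is irrational and $r=\underline{\rho}(A)$ with $A$ c.e. Then for rational $q$ one has $q<r$ if and only if $\rho_n(A)>q$ for all sufficiently large $n$ (the forward direction because $\liminf_n\rho_n(A)=r>q$, the converse because $q\neq r$). Since $A$ is c.e., the predicate ``$\rho_n(A)>q$'', i.e.\ ``$|A\cap[0,n)|>qn$'', is $\Sigma^0_1$ uniformly in $n$ and in the code for $q$, so ``$(\exists m)(\forall n\geq m)(\rho_n(A)>q)$'' is $\Sigma^0_3$; hence $r$ is left-$\Sigma^0_3$.

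For $(ii)\Rightarrow(i)$ I would first pass to a normal form. If $r$ is rational it is a $\Delta^0_2$ real, hence the density, and so a fortiori the lower density, of a computable set by Theorem~2.21 of \cite{JS}; so assume $r$ is irrational. Since $\{q<r\}$ is $\Sigma^0_3$, the implication (iii)$\Rightarrow$(ii) of Theorem~\ref{s2}, relativized to $\emptyset'$, gives $r=\liminf_n p_n$ for a sequence $\{p_n\}$ of rationals computable from $\emptyset'$; by the Limit Lemma there is then a computable rational-valued $q(n,s)$ with $p_n=\lim_s q(n,s)$, the inner limit existing for each $n$ (so $q(n,\cdot)$ changes value only finitely often, though with no computable bound on the number of changes). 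It remains to build a c.e.\ set $A$ with $\underline{\rho}(A)=\liminf_n p_n$, a c.e.\ analogue of Lemma~\ref{infsup}. I would lay out $\omega$ as a sequence of consecutive blocks, one block for each pair (checkpoint $n$, version $v$), blocks appearing in order of creation: checkpoint $n$ is activated at stage $n$ with version $0$, and whenever $q(n,\cdot)$ changes value a fresh block carrying the next version number for $n$ is appended. At every stage $s$ we enumerate elements --- only adding, so as to be consistent with a c.e.\ enumeration --- into each block currently belonging to checkpoint $n$ so as to raise its internal density to the current value $q(n,s)$ (doing nothing to a block already denser than $q(n,s)$), and between consecutive blocks we interpolate the density monotonically exactly as in Lemma~\ref{infsup}, the downward interpolations being realized simply by leaving a stretch empty, which is harmless for a c.e.\ set. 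Two devices control the bookkeeping: first, each new block is made enormously larger than the sum of the sizes of all previously created blocks, so that all later additions to earlier blocks perturb the density at the right end of any fixed block by an amount that tends to $0$ as the block's creation index grows --- a ``look-ahead'' in the spirit of the proof of Theorem~\ref{approx}; second, the superseded blocks of a checkpoint keep being pushed toward that checkpoint's latest target, so that the internal density of every block eventually reaches a value $\geq p_n$ for its checkpoint $n$. Since $p_n\geq r-\epsilon$ for all but finitely many $n$, only finitely many blocks can remain below $r-\epsilon$, and those contribute $o(1)$ to $\rho_x(A)$ as $x\to\infty$; from this one gets $\underline{\rho}(A)\geq r$, while $\underline{\rho}(A)\leq r$ follows because the final-version block of checkpoint $n$ has right-end density $p_n+o(1)\leq r+\epsilon$ for infinitely many $n$.

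The step I expect to be the main obstacle is precisely where the monotone (c.e.) construction meets the non-monotone approximations $q(n,s)$: in the computable case Lemma~\ref{infsup} builds the set directly from a computable sequence, whereas here the checkpoint targets move, and the delicate point is securing the lower bound $\underline{\rho}(A)\geq r$ --- ensuring that transient low values of $q(n,\cdot)$, and the small elements that later stages force into $A$, do not pull the $\liminf$ below $r$. Making the two devices above coexist --- enough room to push superseded blocks up to the latest target while the perturbations of later checkpoint densities stay $o(1)$ --- is where the care is needed.
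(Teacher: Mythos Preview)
Your proposal is correct and close in spirit to the paper, but the constructions for (ii)$\Rightarrow$(i) are organized differently. The paper first obtains, by relativizing Theorem~\ref{s2} to $\emptyset'$, a $\Delta^0_2$ \emph{set} $B$ with $\underline{\rho}(B)=r$, and then constructs a c.e.\ set $A$ with the same lower density as $B$: it defines a strictly increasing $\Delta^0_2$ function $t$ with $\rho_{t(n)}(A)=\rho_n(B)$ for all $n$, keeping $A\cap[t(n),t(n+1))$ an initial segment of that interval so that the minimum of $\rho_x(A)$ over each such interval occurs at an endpoint; whenever the approximation to $\rho_n(B)$ changes, the approximation $t(n,s)$ is pushed further out (via a short technical lemma producing, for any finite $F$, rational target, and threshold, a finite $G\supseteq F$ and a point $c$ with $\rho_c(G)$ equal to the target) so that the new density can be realized without removing elements from $A$. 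You instead work directly from a computable double sequence $q(n,s)$ and assemble $A$ from ever-larger blocks, one per (checkpoint, version). Both proofs handle the same obstacle---the $\Delta^0_2$ targets can drop while a c.e.\ set can only grow---by restarting in a much larger interval, and both secure the lower bound $\underline{\rho}(A)\ge r$ via the ``$A\cap I$ is an initial segment of $I$'' device that forces local minima of $\rho_x(A)$ to occur at interval endpoints. The paper's route buys cleaner bookkeeping (a single checkpoint sequence $t(n)$ tracking a single $\Delta^0_2$ set, with no explicit versioning), while your block-and-version scheme is more self-contained and makes the perturbation analysis explicit; either works, and your identification of the lower-bound step as the delicate one is exactly right.
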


\begin{proof}
  By relativizing Theorem \ref{s2} to $0'$ and applying Post's
  Theorem, we see that, for $r \in [0,1]$, $r$ is the lower density of
  a $\Delta^0_2$ set if and only if $r$ is left-$\Sigma^0_3$.   It follows
immediately that (i) implies (ii) above.    To prove the converse, it suffices
to show that for every $\Delta^0_2$ set $B$ there is a c.e.\ set $A$ such that
$A$ and $B$ have the same lower density.   Let the $\Delta^0_2$ set $B$ be
given.   We will give a strictly increasing $\Delta^0_2$ function $t$ and a
c.e.\ set $A$ such that, for all $n$, $\rho_{t(n)}(A) = \rho_n(B)$.    This
implies that $\underline{\rho}(B) \geq \underline{\rho}(A)$.   To obtain the
opposite inequality (and hence $\underline{\rho}(A) = \underline{\rho}(B)$),
we require further that, for each $n$,  $A \cap [t(n), t(n+1))$ be an initial
segment of the interval $[t(n), t(n+1))$, so that the least value of
$\rho_k(A)$ for $k \in [t(n), t(n+1))$ occurs when $k = t(n)$ or $k = t(n+1)$. 
It then follows that    $ \liminf_k  \rho_k(A) \geq \liminf_n \rho_{t(n)}(A)$.
Hence, $\underline{\rho}(B) = \liminf_n \rho_n(B) 
= \liminf_n \rho_{t(n)}(A)   \leq \liminf_k  \rho_k(A) = \underline{\rho}(A)$.

The following straightforward lemma will be useful in defining $t$ as described
above.

\begin{lem} \label{rat}
  Let $F \subseteq \omega$ be a finite set, $a,d \in \omega$, and $r$ be
a rational number in the interval $(0,1)$.   Then there is a finite set $G \supseteq F$
and $c \in \omega$ such that:
     \begin{itemize}
         \item[(i)] $G \upharpoonright a = F \upharpoonright a$
         \item[(ii)] $c > d$
         \item[(iii)] $\rho_c(G) = r$
         \item[(iv)] $G \cap [a, \infty)$ is an initial segment of $[a, \infty)$.
     \end{itemize}
\end{lem}

\begin{proof}
  For any $b$, let $G_b = F \cup [a,b)$.  Then for every sufficiently
  large $b$, we have $\rho_b(G_b) > r$, since $\lim_b \rho_b(G_b) = 1
  > r$.  We will set $G = G_b$ for a suitable choice of $b$.  Then (i)
  above holds with $G = G_b$ for all $b$, and (iv) above holds with $G
  = G_b$ for all $b > \max(F)$.  To make (ii) and (iii) hold, we set $c =
  |G_b|/r$, where $b$ is chosen so that $|G_b|/r$ is an integer
  greater than $d$, $b > a$ and $b > \max(F)$.  To see that such a $b$
  exists (and in fact infinitely many such $b$ exist), note that there
  is a constant $k$ such that $|G_b| = b - k$ for all sufficiently
  large $b$.  For any sufficiently large such $b$, we have $c = |G_b|/r > b > \max F$, so
  $c > \max(G_b)$.  Hence $\rho_c(G_b) = |G_b| / c = r$, and therefore
  (i)-(iv) all hold with $G = G_b$ and $c = |G_b|/r$.
\end{proof}

We now define a c.e.\ set $A$ and a strictly increasing $\Delta^0_2$
function $t$ as described above.  We enumerate $A$ effectively and
obtain $t$ as $\lim_s t(n,s)$, where $t(.,.)$ is computable.  Let
$\{B_s\}$ be a computable approximation to $B$.  At stage $0$, let
$A_0 = \emptyset$, and $t(n,0) = n+1$ for all $n$.  We also make the
convention that $t(-1,s) = 0$ for all $s$.  At stage $s+1$, suppose
inductively that $A_s$ and all values of $t(n,s)$ have been defined.
Let $n_s$ be the least $n \leq s$ such that $\rho_{t(n,s)}(B_s) \neq
\rho_n(A_s)$, provided such an $n$ exists.  (If no such $n$ exists,
let $A_{s+1} = A_s$ and $t(n,s+1) = t(n,s)$ for all $n$.)  Assuming
$n_s$ exists, apply Lemma \ref{rat} with $F = A_s$, $a = t(n_s-1,s)$
and $d = \max (A_s \cup \{t(n_s,s)\})$ to obtain a finite set $G
\supseteq A_s$ and a number $c > \max (A_s \cup \{t(n_s,s)\})$ such that
$\rho_c(G) = \rho_n(A_s)$, $G \upharpoonright t(n_s-1,s) = 
A_s \upharpoonright t(n_s-1,s)$, and $G \cap [t(n_s-1,s) , \infty)$ is an
initial segment of $[t(n_s-1,s) , \infty)$.  Let $A_{s+1} = G$ and
$t(n_s,s+1) = c$.  (To apply Lemma \ref{rat} we need $0 < \rho_n(A_s)
< 1$, so if $\rho_n(A_s) = 0$, replace it by $1/(n+1)$, and if
$\rho_n(A_s) = 1$, replace it by $1 - 1/(n+1)$.  In the limit, these
replacements have no effect.)  For $m < n_s$, define $t(m,s+1) =
t(m,s)$, and for $m > n_s$ define $t(m,s+1) = c + m - n_s$.

We now show that, for each $k > 0$ that there are only finitely many
$s$ with $n_s = k$, $\lim_s t(k, s)$ exists, and, if $t(k)$ is this
limit, $\rho_{t(k)}(A) = \rho_k(B)$.  This result is proved by
induction on $k$, so assume it holds for all $m < k$.  Let $b$ be a
stage $\geq k$ such that, for all $m < k$ and all $s \geq b$,
$t(m,s) = t(m)$, $n_s \neq m$, $\rho_{t(m,s)}(A_s) = \rho_m(B_s)$, and
$B_s \upharpoonright k = B \upharpoonright k$.  If
$\rho_{t(k,b)}(A_b) \neq \rho_k(B_{b})$, we set $n_{b} = k$, and
the construction ensures that $\rho_{t(k,b+1)}(A_b) = \rho_k(B_b)$.
Then, by construction, there is no $s > b$ with $n_s = k$.  It
follows that there are only finitely many $s$ with $n_s = k$, and that
$\lim_s t(k,s) = t(k)$ exists.  It also follows that $\rho_{t(k)}(A) =
\rho_k(B)$, since this holds at stage $b + 1$ (whether or not
$n_{b} = k$) and is preserved forever thereafter.   Finally, we show
by induction on $s$ that, for all $k$, $A_s \cap [t(k,s), t(k+1,s))$ is an initial segment
of the interval $[t(k,s), t(k+1,s)$.  This obviously holds for $s = 0$.
Now assume it holds for $s$ and that $n_s$ exists.   For $k < n_s - 1$, it holds
for $s+1$ by the inductive hypothesis and the choice of $a$ as
$t(n_s-1, s)$.   For $k = n_s  - 1$, it holds for $s+1$ by the choice of $G$ in the construction.
For $k \geq n_s$, it holds vacuously at stage $s+1$ because $A \cap [t(k,s), t(k+1,s))$
is empty.  By taking the limit as $s$ approaches infinity, it follows that, for all $k$,
$A \cap [t(k), t(k+1))$ is an initial segment of $[t(k), t(k+1))$.
This completes the proof of the theorem.
\end{proof}

\begin{defn}   Let $A \subseteq \omega$ be a set and let $r$ be a
real number in the unit interval.    Then $A$ is \emph{computable at density $r$} if there is
a partial computable function $\varphi$ such that $\varphi(n) = A(n)$ for
all $n$ in the domain of $\varphi$ and the domain of $\varphi$ has lower
density greater than or equal to  $r$. 
\end{defn}

Note that  a set $A$ is generically computable if and only if $A$ is computable at density $1$.
So a first  natural question is to find sets which are computable at all densities less than $1$
but which are not generically computable.   We thank Asher Kach for greatly simplifying our 
original proof of this result.

\begin{obs} Every nonzero Turing degree contains a set $A$ which is computable at all densities $r < 1$
but which is not generically computable.
\end{obs}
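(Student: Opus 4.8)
The plan is to spread a set $D$ of the given degree across the blocks $R_k$ so that each individual block $A \cap R_k$ is trivially computable, while reconstructing all of $A$ requires all of $D$. Throwing away a tail $\bigcup_{k \geq K} R_k$, which has small density, then leaves a computable set and so yields computability at densities approaching $1$; on the other hand a density-$1$ domain is forced to meet every block and would therefore let us compute $D$.

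Concretely, I would fix a noncomputable set $D$ of the given nonzero degree $\mathbf d$ and put
\[
A \;=\; \bigcup_{k \in D} R_k ,
\]
so that $0 \notin A$ and, for $n \in R_k$, we have $n \in A$ iff $k \in D$. Since the $R_k$ are uniformly computable and $2^k \in R_k$, one gets $A \leq_T D$ at once, and $D \leq_T A$ via $k \in D \iff 2^k \in A$; hence $A$ has degree $\mathbf d$.

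For the positive side I would use that the $R_k$ are pairwise disjoint with $\rho(R_k) = 2^{-(k+1)}$, so $\bigcup_{k < K} R_k$ is a computable set of density $1 - 2^{-K}$. Given $r < 1$, pick $K$ with $1 - 2^{-K} \geq r$. On the computable set $\bigcup_{k<K} R_k$ the set $A$ agrees with $\bigcup\{R_k : k<K \text{ and } k \in D\}$, and \emph{this} set is computable because $D \upharpoonright K$ is a finite amount of information that may simply be hard-wired. The partial computable function with domain $\bigcup_{k<K} R_k$ taking those values then witnesses that $A$ is computable at density $1-2^{-K} \geq r$. Note that the witness is allowed to depend non-uniformly on $r$, which is precisely what lets the argument succeed even though $A$ is noncomputable.

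For the negative side, suppose toward a contradiction that a partial computable $\varphi_e$ agrees with $A$ on its domain $W_e$ and $\rho(W_e) = 1$. Then for every $k$ the c.e.\ set $W_e \cap R_k$ is nonempty, since otherwise $W_e \subseteq \omega \setminus R_k$, a set of upper density $1 - 2^{-(k+1)} < 1$. Hence $D$ is computable via: on input $k$, enumerate $W_e \cap R_k$ until an element $m$ appears, compute $\varphi_e(m)$ (which converges, as $m \in W_e$), and output that $k \in D$ iff $\varphi_e(m) = 1$; this is correct because $m \in R_k$ forces $\varphi_e(m) = A(m)$ and $A(m) = 1$ iff $k \in D$. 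This contradicts $\mathbf d \neq \mathbf 0$, so $A$ is not generically computable. The one point that needs care is the observation in the positive step that the witnessing function may be chosen separately for each $r$ and so may encode finitely many bits of $D$; everything else is routine.
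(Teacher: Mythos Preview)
Your proof is correct and takes essentially the same approach as the paper: the set you build is exactly the set $\mathcal{R}(D)$ from \cite{JS}, and both you and the paper obtain computability at each density $r<1$ by the same nonuniform hard-wiring of finitely many bits of $D$ on an initial union of the $R_k$'s. The paper simply quotes the earlier result that $\mathcal{R}(D)$ is generically computable iff $D$ is computable, whereas you spell out that argument (and the degree verification) directly; but the underlying construction and reasoning coincide.
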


\begin{proof}  We have seen that the set $\mathcal{R}(A)$ is generically computable if and only if $A$ is computable.
Every set of the form $\mathcal{R}(A)$ is computable at all densities $r < 1$:   For a given $t \in \mathbb{N}$, 
take the  finite list of which $m \le t$ are in $A$.  Given this list we can answer 
correctly on $\bigcup_{m \le t} R_m$, which has density $1 - 2^{-(t+1)}$ and 
not answer on any $\mathcal{R}_k$ with $k > t$.  This argument is, of course, completely nonuniform.
\end{proof}

A second nautral  question is: For what $r \in [0,1]$ is there a set which is
computable at density $r$ but not at any higher density?  The above theorem
answers the question.

\begin{cor} \label{nohigherdensity} Let $r \in [0,1]$.  There is a set
  $A$ which is computable at density $r$ but not at any higher density
  if and only if $r$ is left-$\Sigma^0_3$.
\end{cor}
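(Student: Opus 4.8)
The plan is to obtain both directions from the theorem proved just above, namely that a real $r\in[0,1]$ is the lower density of a computably enumerable set if and only if $r$ is left-$\Sigma^0_3$.

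For the forward direction, suppose $A$ is computable at density $r$ but at no higher density, and let $\varphi$ be a partial computable function witnessing computability at density $r$, so that $\varphi(n)=A(n)$ for all $n$ in its domain $W$, the set $W$ is c.e., and $\underline{\rho}(W)\ge r$. The one observation needed is that this inequality is in fact an equality: if $\underline{\rho}(W)>r$, pick a rational $q$ with $r<q<\underline{\rho}(W)$; then the very same $\varphi$ witnesses that $A$ is computable at density $q$, contradicting that $A$ is computable at no density above $r$. Hence $r=\underline{\rho}(W)$ is the lower density of a c.e.\ set, so $r$ is left-$\Sigma^0_3$ by the theorem above. This half requires no construction.

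For the converse, given a left-$\Sigma^0_3$ real $r$, I would first use the theorem to fix a c.e.\ set $D$ with $\underline{\rho}(D)=r$, and then build $A$ so that $A(n)=0$ for every $n\in D$, together with a sparse set of extra ``diagonalization'' points lying outside $D$ at which $A$ is defined to disagree with a partial computable function. With $A$ built this way, the constant-$0$ partial computable function with domain $D$ witnesses that $A$ is computable at density $\underline{\rho}(D)=r$, because the extra points avoid $D$. To kill the higher densities, the key point is that if $\underline{\rho}(W_e)>r$ then $W_e$ cannot be almost contained in $D$ (otherwise $\rho_n(W_e)\le\rho_n(D)+O(1/n)$ for all large $n$ would force $\underline{\rho}(W_e)\le\underline{\rho}(D)=r$), so $W_e\cap\overline{D}$ is infinite; hence one may choose a point $n_e\in W_e\cap\overline{D}$ distinct from all points chosen for earlier requirements, and set $A(n_e)=1$ if $\varphi_e(n_e)=0$ and $A(n_e)=0$ otherwise, so that $A(n_e)\ne\varphi_e(n_e)$. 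Declaring $A(m)=0$ at every remaining $m$ completes the definition. Then any partial computable $\psi$ with lower density of its domain exceeding $r$ equals $\varphi_e$ for some $e$ with $\underline{\rho}(W_e)>r$, and since $n_e$ belongs to $W_e$ with $A(n_e)\ne\varphi_e(n_e)$, $\psi$ does not agree with $A$ throughout its domain; so $A$ is computable at no density above $r$.

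The step to take care over is the verification in the converse that the diagonalization points can always be located inside $W_e\cap\overline{D}$: this is exactly where $\underline{\rho}(D)=r$ is used, rather than merely $\underline{\rho}(D)\ge r$. Since $A$ is allowed to be an arbitrary set, the decision ``is $\underline{\rho}(W_e)>r$?'' that governs whether to act for the $e$-th requirement need not be effective, which removes the usual technical difficulties, and the remainder is bookkeeping: keeping the $n_e$ pairwise distinct, and using that values of partial computable functions are natural numbers so that the inequality $A(n_e)\ne\varphi_e(n_e)$ is meaningful. The degenerate cases need nothing special: for $r=1$ there is no higher density at all, so any computable set will do; and $r=0$ or $r$ irrational present no extra difficulty.
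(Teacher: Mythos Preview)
Your proof is correct. The forward direction is essentially identical to the paper's (which phrases it contrapositively: if $r$ is not left-$\Sigma^0_3$ then the c.e.\ domain of any witnessing $\varphi$ has lower density strictly exceeding $r$), so there is nothing to add there.

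For the converse you take a genuinely different route. The paper fixes a c.e.\ set $C$ with $\underline{\rho}(C)=r$ and a \emph{simple} set $S$ of density $0$, and sets $A=C\cup S$. The partial function constantly $1$ on $C$ witnesses computability at density $r$, and any correct $\varphi$ with $\underline{\rho}(\mathrm{dom}\,\varphi)>r$ must output $0$ on an infinite c.e.\ set disjoint from $S$, contradicting simplicity. Your construction instead diagonalizes explicitly: $A$ is $0$ on $D$ and disagrees with $\varphi_e$ at one point of $W_e\setminus D$ whenever $\underline{\rho}(W_e)>r$. The verification that $W_e\setminus D$ is infinite in that case is exactly right. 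The trade-off is that the paper's $A$ is c.e., whereas your $A$ need not even be arithmetical in any uniform sense, since the decision ``$\underline{\rho}(W_e)>r$'' may be as complicated as $r$; you correctly note this is permitted by the statement. On the other hand your argument avoids invoking the existence of a simple set of density $0$, so it is slightly more self-contained. One small simplification available to you: you could diagonalize against every $e$ with $W_e\setminus D$ infinite, rather than only those with $\underline{\rho}(W_e)>r$; since $W_e\subseteq^* D$ already forces $\underline{\rho}(W_e)\le r$, this handles exactly the requirements that matter and makes the case split crisper.
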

 
\begin{proof}    
  If  $r \in [0,1]$ is left-$\Sigma^0_3$ there is a c.e.\ set $C$ with lower density $r$.
Let $S$ be a simple set of density $0$, which exists by the proof of Proposition 2.15
of \cite{JS}.   Define $A = C \cup S$.
Then $A$ is computable at density $r$ but  $A$ cannot be computable
at a density $r' > r$ since if there were a partial algorithm $\varphi$ for $A$ 
whose domain $D$ had lower density $r' > r$ then  $D \cap \{n : \varphi(n) = 0\}$ would  
be an infinite  c.e.\ set  disjoint from  $S$, contradicting that $S$ is simple.

  If $r$ is not  left-$\Sigma^0_3$ and $A$ is computable at density $r$, then
there is a partial algorithm for $A$ whose domain has lower density $r' \ge r$.
Since $r'$ is  left-$\Sigma^0_3$ but $r$ is not,  we have $r' > r$.
\end{proof}
  
It remains to consider the densities of c.e.\ sets.  Note that if $A$
is c.e., then $\rho_n(A) = \lim_s g(n,s)$ where $g$ is a computable
function taking rational values, $g(n,s) \leq g(n,s+1)$ for all $n$
and $s$, and for each $n$ there are only finitely many $s$ such that
$g(n,s) \neq g(n,s+1)$, namely $g(n,s) = \rho_n(A_s)$, where $\{A_s\}$
is a computable enumeration of $A$.  Hence, $\underline{\rho}(A) =
\liminf_n \lim_s g(n,s)$ and $\overline{\rho}(A) = \limsup_n \lim_s
g(n,s)$.  The next result turns this around to show how computable
functions $g$ with these stability and monotonicity properties can be
used to produce c.e.\ sets with corresponding upper and lower
densities.

\begin{thm} \label{double}
Let $g : \omega^2 \to \mathbb{Q} \cap (0,1)$  be a computable function
such that:
\begin{itemize}
      \item[(i)]  $g(n,s) \leq g(n,s+1)$ for all $n$ and $s$, and
      \item[(ii)] $\{s : g(n,s) \neq g(n,s+1)\}$ is finite for all $n$.
\end{itemize}
      Let $h(n) = \lim_s g(n,s)$, so $h : \omega \to \mathbb{Q}$ is
total by (ii).   Then there is a c.e.\ set $A$ such that:
\begin{itemize}
        \item[(iii)]  $\underline{\rho}(A) = \liminf_n h(n)$, and
        \item[(iv)]  $\overline{\rho}(A) = \limsup_n h(n)$.
\end{itemize}
\end{thm}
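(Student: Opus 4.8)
The plan is to build $A$ as a c.e.\ set together with a $\Delta^0_2$ ``checkpoint'' function $t\colon\omega\to\omega$, realized as $t(n)=\lim_s t(n,s)$ for a computable $t(\cdot,\cdot)$, refining the construction used to prove the theorem characterizing the lower densities of c.e.\ sets as the left-$\Sigma^0_3$ reals. Writing $a=\liminf_n h(n)$ and $b=\limsup_n h(n)$, the goal is to maintain three invariants in the limit: (I) $\rho_{t(n)}(A)=h(n)$ for every $n$; (II) $A$ meets each interval $[t(n),t(n+1))$ in an initial segment $[t(n),m_n)$ of it; and (III) $\rho_{m_n}(A)\le \max(h(n-1),h(n))+2^{-n}$ (vacuous when the right side is $\ge 1$). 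Invariant (II) forces $\rho_k(A)$ to be nondecreasing on $[t(n),m_n]$ and nonincreasing on $[m_n,t(n+1)]$, so that on $[t(n),t(n+1))$ the function $\rho_k(A)$ has its minimum at an endpoint and its only local maximum at the ``peak'' $\rho_{m_n}(A)$. Together with (I) this gives $\underline\rho(A)=\liminf_n\rho_{t(n)}(A)=a$, exactly as in that earlier theorem, and also $\overline\rho(A)\ge\limsup_n\rho_{t(n)}(A)=b$. The genuinely new point is the reverse inequality $\overline\rho(A)\le b$, which is why invariant (III) is needed: since $\limsup_n\bigl(\max(h(n-1),h(n))+2^{-n}\bigr)=b$, invariants (II) and (III) give $\sup_{k\in[t(n),t(n+1))}\rho_k(A)\le b+o(1)$, hence $\overline\rho(A)\le b$.

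To maintain (I)--(III), at each stage I would locate the least $n$ at which the current approximation is ``bad'' (because $g(n,\cdot)$ has just increased, $g$ being nondecreasing in $s$) and revise the interval ending at $t(n)$: adjoin an initial segment $[t(n-1),m_n)$ to $A$, then choose a new value $c$ for $t(n)$, larger than the old one and than $\max A$ so far, with nothing enumerated in $A$ on $(m_n,c)$ and $\rho_c(A)=g(n,s)$, exactly as in Lemma \ref{rat}. The point is that (III) can be arranged simultaneously. Putting $p=t(n-1)$, so that $|A\cap[0,p)|=p\,h(n-1)$ by (I), one has $\rho_{m_n}(A)=1-p(1-h(n-1))/m_n$ and $c=(m_n-p(1-h(n-1)))/g(n,s)$; the condition $c\ge m_n$ (so $t(n)$ lies past the peak) asks $m_n\ge p(1-h(n-1))/(1-g(n,s))$, while (III) asks $m_n\le p(1-h(n-1))/(1-\max(h(n-1),g(n,s))-2^{-n})$, and these are compatible simply because $g(n,s)\le\max(h(n-1),g(n,s))+2^{-n}$. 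The resulting window for $m_n$ is a genuine interval of length proportional to $p$, so it also contains values making $c$ an integer and exceeding any prescribed bound; and because $g(n,\cdot)$ is eventually constant by hypothesis (ii), each interval is revised only finitely often, so the $t(n,s)$ and the enumeration of $A$ stabilize interval by interval.

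The verification is then a finite-injury induction on $n$: once $g(0,\cdot),\dots,g(n,\cdot)$ have stabilized and $t(0),\dots,t(n-1)$ have reached their final values, the interval ending at $t(n)$ is revised at most once more and is fixed thereafter, so (I)--(III) hold in the limit and the density computations above yield $\underline\rho(A)=a$ and $\overline\rho(A)=b$. I expect the main obstacle to be the bookkeeping needed to keep the window for $m_n$ not merely nonempty but \emph{wide enough} throughout: when a low-index checkpoint must be revised after later intervals have already been populated, the elements already enumerated to the right of $t(n-1)$ must be swallowed by the new initial segment $[t(n-1),m_n)$, which pushes $m_n$ up and threatens (III). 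I expect this to be handled by two remarks. First, (III) only matters for large $n$: an interval ending at $t(n)$ with $n$ in the finite set where $h(n)>b$, or one revised before the relevant $g$'s have settled, sits at an eventually fixed finite location and so contributes nothing to $\overline\rho(A)$ no matter how large its peak. Second, for the large indices that do matter one first waits (rebuilding if necessary, which is permissible since $g(n,\cdot)$ changes only finitely often) until the material to the right of $t(n-1)$ that might have to be swallowed is only a bounded multiple of $t(n-1)$ in size, keeping $\rho_{m_n}(A)$ bounded away from $1$ by a margin that, for $n$ large, lies below $\max(h(n-1),h(n))+2^{-n}$. Routine edge cases ($b=1$, where (III) is automatic; degenerate values of $\rho_n(A_s)$ equal to $0$ or $1$, handled by the usual substitution of $1/(n+1)$ or $1-1/(n+1)$) are treated exactly as in Lemma \ref{rat} and the earlier theorem.
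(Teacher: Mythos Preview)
Your approach is substantially different from the paper's, and the ``swallowing'' difficulty you identify is, I believe, a genuine gap rather than just bookkeeping.

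The paper avoids movable checkpoints entirely. It partitions $[n!,(n+1)!)$ into $n!$ subintervals of length $n$, and from each such subinterval enumerates exactly $n\cdot\max_s g(n,s)$ elements; monotonicity of $g(n,\cdot)$ makes this c.e. The density of $A$ on every length-$n$ block inside $[n!,(n+1)!)$ is then $h(n)$, so $\rho_{(n+1)!}(A)\approx h(n)$, and for $k\in[n!,(n+1)!)$ the value $\rho_k(A)$ lies between $h(n-1)$ and $h(n)$ up to vanishing error terms (the factorial growth makes each block dominate its predecessors). There is no peak to control: the density is essentially flat across each factorial block, so both (iii) and (iv) fall out of the same estimate.

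In your construction, by contrast, invariant (II) forces the final revision of interval $n$ to swallow into $[t(n-1),m_n)$ \emph{everything} already enumerated beyond $t(n-1)$. Your second remark asserts that for large $n$ this material is only a bounded multiple of $t(n-1)$, but consider $g(n,s)=\tfrac14$ for $s<s_n$ and $g(n,s)=\tfrac12$ for $s\ge s_n$, with $s_0<s_1<\cdots$ growing very rapidly. All $h(n)=\tfrac12$, so $b=\tfrac12$. Between stages $s_{n-1}$ and $s_n$ every $g(m,\cdot)$ with $m<n$ has already reached its final value, so your finite-injury construction sees no reason not to build intervals $n,n+1,\dots$ freely; after $K=s_n-s_{n-1}$ stages the enumerated material beyond $t(n-1)$ has size roughly proportional to $K$, while $t(n-1)$ is fixed. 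When $g(n,\cdot)$ finally jumps you must take $m_n$ past all of it, giving $\rho_{m_n}(A)=1-t(n-1)(1-\tfrac12)/m_n\to 1$ as $K\to\infty$. This is the \emph{final} peak for interval $n$, and it can be made close to $1$ for every $n$, forcing $\overline\rho(A)=1$. ``Waiting'' cannot help, since enumerated elements cannot be withdrawn, and no finite-stage test distinguishes a stable $g(n,\cdot)$ from one that will jump later; ``rebuilding'' only pushes the problem to a different index.

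Abandoning (II) and spreading the enumeration evenly so that the density stays near the target throughout each interval would fix this---but that is essentially the paper's idea.
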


\begin{proof}
  By changing $g(n,s)$ by at most $1/n$ we may assume without loss of
  generality that $g(n,s)$ is an integer multiple of $1/n$ for all $s$
  and all $n > 0$.  Of course, this changes $h(n)$ by at most $1/n$
  and has no effect on $\liminf_n h(n)$ or $\limsup_n h(n)$.
  Partition each interval $[n!, (n+1)!) $ into disjoint subintervals
  $I_{n,1}, I_{n,2}, \dots, I_{n, r_n}$ of size $n$, where $r_n =
  ((n+1)! - n!)/n = n!$.  From each interval $I_{n,i}$ enumerate exactly $n
  h(n) = n \max_s g(n,s)$ numbers into the c.e.\ set $A$.  Note that
  this can be done effectively since $g$ is computable, and $nh(n)$ is
  an integer not exceeding $n$.  Define the density of a set $A$ on a
  nonempty finite interval $I$ to be $|A \cap I|/|I|$.  Thus we have
  ensured that the density of $A$ on each interval $I_{n,i}$ for $1
  \leq i \leq r_n$ is $h(n)$.  From this, it is easily seen that, modulo
  error terms which approach $0$ as $n$ approaches infinity,
  $\rho_{(n+1)!}(A) = h(n)$ and if $k \in [n!, (n+1)!)$, then
  $\rho_k(A)$ is between $h(n-1)$ and $h(n)$.  We then get that
  $\underline{\rho}(A) = \liminf_k \rho_k(A) = \liminf_n
  \rho_{(n+1)!}(A) = \liminf_n h(n)$, and similarly for
  $\overline{\rho}(A)$.

  We now spell out the details of the above approximations.  It is
  easy to see that if $I_1, I_2, \dots, I_t$ are disjoint intervals,
  then the density of $A$ on $I_1 \cup I_2 \cup \dots \cup I_t$ is at least
  the minimum of the density of $A$ on the intervals $I_1, \dots ,
  I_t$, and at most the maximum of the density of $A$ on these
  intervals.  Hence, the density of $A$ on the interval $[n!, (n+1)!)$
  is $h(n)$, since the density of $A$ on each subinterval $I_{n,i}$ is
  $h(n)$.  Using this to calculate the cardinality of $A \cap [n!,
  (n+1)!)$ and noting that $0 \leq |A \cap [0,n!)| \leq n!$, it
  follows that
$$h(n)((n+1)!-n!) \leq |A \cap [0,(n+1)!)| \leq n! + h(n)((n+1)!-n!)$$
Dividing through by $(n+1)!$ yields that
$$h(n) - \frac{h(n)}{n+1} \leq \rho_{(n+1)!}(A) \leq \frac{1}{n+1} + h(n) - \frac{h(n)}{n+1}$$
and hence
$$-\frac{h(n)}{n+1} \leq \rho_{(n+1)!}(A) - h(n) \leq \frac{1}{n+1} -
\frac{h(n)}{n+1}$$
It follows that $\lim_n (\rho_{(n+1)!}(A) - h(n)) = 0$.

We now show that if $k \in (n!, (n+1)!]$, then $\rho_k(A)$ is between
$h(n-1)$ and $h(n)$ with an error term which approaches $0$ as $n$
approaches infinity.  Consider first the case where $k$ has the form
$\max(I_{n,i}) + 1$ for some $i$.  Then $[0,k)$ is the disjoint union
of $[0, n!)$ and the intervals $I_{n,j}$ for $j \leq i$.  Hence $\rho_k(A)$
is between $\rho_{n!}(A)$ and $h(n)$, and, as we have noted, $\lim_n
(\rho_{n!}(A) - h(n-1)) = 0$.  Since the intervals of $I_{n,k}$ have
size $n$, every $c \in (n!, (n+1)!]$ differs from at most $n$ by a
number of the form $\max(I_{n,i}) +1$.  Finally if $a,b \geq n!$ and
$|a-b| \leq n$, we have that $|\rho_a(A) - \rho_b(A)| \leq
(n+1)/(n-1)!$.  To see this, let $u = |A \upharpoonright a|$ and $v =
|A \upharpoonright b|$.  Since $|a - b| \leq n$, we also have $|u - v|
\leq n$.  Note that $\rho_a(A) - \rho_b(A) = u/a - v/b$.  Thus, it
suffices to show that both  $v/b - u/a$ and $u/a - v/b$ are less than
or equal to $(n+1)/(n-1)!$.  We may assume without loss of generality
that $a \leq b$ and hence $u \leq v$.  We have
$$\frac{v}{b} - \frac{u}{a} \leq \frac{v}{a} - \frac{u}{a} = \frac{v-u}{a} 
\leq \frac{n}{n!} \leq \frac{n+1}{(n-1)!}$$
since $0 < a \leq b$ and $v - u \leq n$.
Also,
$$\frac{u}{a} - \frac{v}{b} \leq \frac{u}{a} - \frac{u}{b} = \frac{u}{ab}(b-a)
\leq \frac{(n+1)!}{(n!)^2}n = \frac{n+1}{(n-1)!}$$
since $b > 0$, $u \leq v$, $a, b \geq n!$, and $b - a \leq n$.

Hence $\underline{\rho}(A) = \liminf_a \rho_a(A) = \liminf_n h(n)$
and similarly for $\overline{\rho}(A)$.
\end{proof}

\begin{thm} 
Let $r$ be a real number in the interval $[0,1]$.   Then the following
are equivalent:
\begin{itemize}
     \item[(i)]  $r$ is the density of some c.e.\ set.
     \item[(ii)]   $r$ is left-$\Pi^0_2$.
\end{itemize}
\end{thm}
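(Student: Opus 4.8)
The plan is to prove the two implications separately, using the machinery already built in this section together with Theorem~\ref{double}. For the direction (i)~$\Rightarrow$~(ii), suppose $r = \rho(A)$ for some c.e.\ set $A$. Since $\rho(A)$ exists, $r = \overline{\rho}(A)$, so by Theorem~\ref{p2ce} $r$ is left-$\Pi^0_2$. That is the entire argument for this direction; the point is simply that a c.e.\ set whose density exists is in particular a c.e.\ set with that upper density, and Theorem~\ref{p2ce} already characterizes the upper densities of c.e.\ sets.

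For the direction (ii)~$\Rightarrow$~(i), let $r \in [0,1]$ be left-$\Pi^0_2$. By (the proof of) Corollary~\ref{pi2}, there is a computable sequence of rationals $\{r_n\}$ with $\limsup_n r_n = r$; equivalently, $1-r$ is left-$\Sigma^0_2$, so by Theorem~\ref{s2} there is a computable sequence of rationals witnessing this as a lim inf, and dualizing gives the $\limsup$ presentation of $r$ itself. The idea is then to feed a function into Theorem~\ref{double} whose associated limit function $h$ satisfies $\liminf_n h(n) = \limsup_n h(n) = r$, so that the resulting c.e.\ set $A$ has both upper and lower density equal to $r$, hence $\rho(A) = r$. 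The obstacle is that Theorem~\ref{double} requires its input $g(n,s)$ to be \emph{nondecreasing in $s$} with only finitely many changes, i.e.\ $h(n) = \lim_s g(n,s)$ must be a genuine monotone limit; but a left-$\Pi^0_2$ real is, roughly, a $\liminf$ from above rather than a monotone limit, so we cannot simply take $h(n) = r$ for all $n$ unless $r$ is already $\Delta^0_2$.

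The way around this is to let $h$ take \emph{rational} values that oscillate but have lim inf and lim sup both equal to $r$. Concretely: since $r$ is left-$\Pi^0_2$, the set $\{q \in \mathbb{Q} : q < r\}$ is $\Pi^0_2$, hence $\Sigma^0_1$-approximable from below in the sense that there is a computable $g(n,s)$, nondecreasing in $s$ with finitely many changes for each $n$, such that $h(n) := \lim_s g(n,s)$ enumerates (with repetitions) a computable sequence of rationals whose lim sup is $r$ and whose lim inf is also $r$ --- one arranges this by interleaving, for each $n$, an approximation that climbs up toward $r$ from below (so contributes values approaching $r$) together with a cofinal subsequence of small rationals tending to $r$ from below as well; the key is that $r$ being left-$\Pi^0_2$ exactly says ``$q<r$'' is the $\liminf$ of a computable rational sequence after the dualization in Corollary~\ref{pi2}, and Theorem~\ref{s2}(ii) then supplies the needed $g$. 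Since each $h(n)$ is rational and $g$ meets hypotheses (i) and (ii) of Theorem~\ref{double}, that theorem yields a c.e.\ set $A$ with $\underline{\rho}(A) = \liminf_n h(n) = r$ and $\overline{\rho}(A) = \limsup_n h(n) = r$, so $\rho(A) = r$, as required. The main thing to get right is the bookkeeping that makes both the lim inf and the lim sup of $h$ land exactly on $r$ while keeping $g$ monotone in $s$; once Theorem~\ref{s2} and Corollary~\ref{pi2} are invoked this is routine, so the proof is short.
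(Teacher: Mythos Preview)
Your direction (i)$\Rightarrow$(ii) is exactly the paper's argument, and your overall plan for (ii)$\Rightarrow$(i) is also the paper's: obtain from the dual of Theorem~\ref{s2} a computable sequence $\{q_n\}$ in $[0,1]$ with $\limsup_n q_n = r$, then feed a suitable $g$ into Theorem~\ref{double} so that the resulting $h$ satisfies $\lim_n h(n) = r$.

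However, you have not actually constructed $g$, and that is where the content of the proof lies. Your description (``$\Sigma^0_1$-approximable from below,'' ``interleaving \dots\ an approximation that climbs up toward $r$ \dots\ together with a cofinal subsequence of small rationals'') is not a construction and does not make sense as stated: for each fixed $n$, the function $g(n,\cdot)$ must be nondecreasing and eventually constant, so $h(n)$ is a single rational and there is nothing to interleave within a single $n$. What is needed is a definition of $g(n,s)$ that is monotone in $s$, changes only finitely often, and nevertheless makes $h(n)$ land within a vanishing distance of $\sup_{s \geq n} q_s$; since that supremum is not computable, this last requirement is not automatic and is precisely the point you wave away as ``routine.'' The paper's solution is to set $g(n,0)=0$ and put $g(n,s+1)=q_s$ only when $q_s \geq g(n,s) + \frac{1}{n+1}$ and $s \geq n$, and $g(n,s+1)=g(n,s)$ otherwise. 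The threshold $\frac{1}{n+1}$ forces at most $n+1$ updates, so hypothesis~(ii) of Theorem~\ref{double} holds; and one checks that $b(n) - \frac{1}{n+1} \leq h(n) \leq b(n)$ where $b(n) = \sup_{s \geq n} q_s$, whence $\lim_n h(n) = \lim_n b(n) = \limsup_n q_n = r$. Once you supply this step your proof coincides with the paper's.
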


\begin{proof}
      The implication (i) implies (ii) is immediate from Theorem \ref{p2ce},
which implies that the upper density of every c.e.\ set is a left-$\Pi^0_2$
real.

For the implication (ii) implies (i), assume that $r$ is 
left-$\Pi^0_2$.  Then by the dual of Theorem \ref{s2}, there is a
computable sequence of rationals $\{q_n\}$ such that $r = \limsup_n
q_n$ and $0 \leq q_n \leq 1$ for all $n$.  We now define a computable
function $g: \omega^2 \to \mathbb{Q} \cap [0,1]$ to which to apply
Theorem \ref{double}.  We define $g(n,s)$ by recursion on $s$.  Let
$g(n,0) = 0$.  For the inductive step, define

\begin{equation*}
g(n,s+1) = 
\begin{cases} q_s  &  \text{if $q_s \geq g(n,s) + \frac{1}{n+1}$ and 
$s \geq n$}
  \\
  g(n,s) & \text{otherwise}
\end{cases}
\end{equation*}

Clearly, $g$ satisfies the hypotheses of Theorem \ref{double}, so by that
result there is a c.e.\ set $A$ such that $\underline{\rho}(A) =
\liminf_n h(n)$ and $\overline{\rho} (A) = \limsup_n h(n)$, where
$h(n) = \lim_s g(n,s)$.  Thus, it suffices to show that $\lim_n h(n) =
\limsup_n q_n$. To this end, define $b(n) = \sup_{s \geq n} q_s$ and note
that $\limsup_n q_n = \lim_n b(n)$.   By the definition of $g$ and the fact
that $h(n) = \lim_s g(n,s)$, we have
$$b(n) - \frac{1}{n+1} \leq h(n) \leq b(n)$$
for all $n$.   It follows that $\lim_n h(n) = \lim_n b(n) = \limsup_n q_n$. 

\end{proof}

\section{Density and immunity properties}

In computability theory, a whole spectrum of immunity type properties
has been studied, with the weakest being immunity itself and the
strongest one commonly studied being cohesiveness.  In this section,
we study results relating immunity properties and asymptotic density.
It was already shown in the proof of Proposition 2.l5 of \cite{JS}
that there is a simple set of density $0$, and hence an immune set of
density 1.  We observe in this section, for example, that every
hyperimmune set has lower density $0$, every strongly hyperhyperimmune
(shhi) set has upper density less than $1$, and that every cohesive
set has density $0$.  We also prove contrasting results -- for example
shhi sets can have upper density arbitrarily close to $1$.

We begin by reviewing the definitions of these properties, which are
standard.

\begin{defn}   Let $A \subseteq \omega$ be an infinite set.
     \begin{itemize}
          \item[(i)]   The set $A$ is \emph{immune} if $A$ has no infinite
c.e.\ subset.
           \item[(ii)]   The set $A$ is \emph{hyperimmune}  if there
is no computable function $f$ such that the sets $D_{f(0)}, D_{f(1)},
\dots$ are pairwise disjoint and all intersect $A$. (Here $D_n$  is the finite set with canonical
index $n$.)
\item[(iii)] The set $A$ is \emph{hyperhyperimmune}, or \emph{hhi}, if there
  is no computable function $f$ such that the sets $W_{f(0)},
    W_{f(1)}, \dots$ are pairwise disjoint, finite, and all intersect
    $A$.
          \item[(iv)]  The set $A$ is \emph{strongly hyperhyperimmune}, or \emph{shhi}, if there is no
computable function $f$ such that the sets $W_{f(0)}, W_{f(1)}, \dots$ are pairwise disjoint
and all intersect $A$.   (Thus the finiteness requirement on $W_{f(n)}$ is dropped here.)
         \item[(v)]   The set $A$ is \emph{r-cohesive} (respectively \emph{cohesive}) if there
is no computable (respectively c.e.) set $S$ such that $A \cap S$ and $A \cap \overline{S}$ are
both infinite.
      \end{itemize}

It is well known that each property above (except immunity) implies the one before it, and that these
implications are proper.   For more information, see, for example, Chapter XI.1 of \cite{S}.

\end{defn}
\begin{thm}
     \begin{itemize}
         \item[(i)]  Every hyperimmune set has lower density $0$.
         \item[(ii)]  There is a co-c.e.\ hyperimmune set with upper density $1$.
      \end{itemize}
\end{thm}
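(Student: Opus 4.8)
These two parts are independent and quite different in character, so I would treat them in turn. For part (i), I would prove the contrapositive: if $\underline{\rho}(A) > 0$, then $A$ is not hyperimmune. Fix a rational $q$ with $0 < q < \underline{\rho}(A)$; then there is an $n_0$ with $\rho_n(A) \geq q$ for all $n \geq n_0$. Define an increasing computable sequence by $b_0 = n_0$ and $b_{k+1} = \lceil (b_k + 1)/q \rceil$. Since $\rho_{b_{k+1}}(A) \geq q$ we get $|A \cap [0,b_{k+1})| \geq q\,b_{k+1} \geq b_k + 1$, while $|A \cap [0,b_k)| \leq b_k$, so each interval $[b_k,b_{k+1})$ meets $A$. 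The map $f$ sending $k$ to the canonical index of $[b_k,b_{k+1})$ is total computable ($q$ and $n_0$ enter only as parameters, and all we use is that such an $n_0$ exists), and $D_{f(0)}, D_{f(1)}, \dots$ are then pairwise disjoint finite sets each intersecting $A$, contradicting hyperimmunity. (Equivalently, this exhibits a computable function dominating the principal function of $A$.) I foresee no difficulty here.

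For part (ii), the plan is to build the complement $B = \overline{A}$ as a c.e.\ set. Since $\overline{\rho}(A) = 1 - \underline{\rho}(\overline{A})$, it suffices to make $B$ c.e.\ with $\underline{\rho}(B) = 0$ and with $\overline{B}$ hyperimmune. For hyperimmunity I use the definition in the paper: it is enough to meet, for every $e$, the requirement $P_e$ that if $\varphi_e$ is total and the $D_{\varphi_e(n)}$ are pairwise disjoint and nonempty, then $D_{\varphi_e(n)} \subseteq B$ for some $n$ (if some $D_{\varphi_e(n)}$ is empty, $P_e$ is vacuous). To force $\underline{\rho}(B) = 0$ I maintain an increasing sequence of \emph{checkpoints} $n_0 < n_1 < \cdots$ with $\rho_{n_k}(B) < 2^{-k}$, and I add padding requirements forcing $|B| \geq k$ for each $k$ (needed because a hyperimmune set is coinfinite, so $B$ must be infinite). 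The combinatorial fact reconciling ``meet $P_e$'' with ``keep the checkpoints sparse'' is a pigeonhole observation: among pairwise disjoint nonempty $D_0,\dots,D_m$ with $m \geq N+1$, some $D_i$ is contained in $(N,\infty)$; hence a requirement that has seen enough of $\varphi_e$ can always choose a set to enumerate into $B$ lying entirely to the right of every checkpoint currently in place, so that enumerating it disturbs no $\rho_{n_k}(B)$.

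I would organize this as a finite-injury priority construction with list $P_0, C_0, P_1, C_1,\dots$ (the $C_k$ placing the checkpoints, the padding requirements interleaved at low priority). When $P_e$ becomes ready it dumps its chosen set into $B$ once and for all and cancels any lower-priority checkpoints it overran (these are re-placed further out); when $C_k$ acts it places $n_k$ beyond the current content of $B$, so that $\rho_{n_k}(B_s) < 2^{-k}$ at that stage, and beyond all higher-priority checkpoints. The verification then runs as follows: each genuine $\varphi_e$-witness is defeated, since the finitely many higher-priority checkpoints settle and, once they do, totality of $\varphi_e$ lets the pigeonhole fact supply a legal set to dump; each $n_k$ is injured only by the finitely many higher-priority $P_e$ (each acting once), so it settles, and thereafter nothing is enumerated below $n_k$, so $\rho_{n_k}(B) < 2^{-k}$ permanently and $\underline{\rho}(B) = 0$; and the checkpoints also give $|\overline{B} \cap [0,n_k)| > (1-2^{-k})n_k$, so $\overline{B}$ is infinite. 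Hence $A = \overline{B}$ is a coinfinite co-c.e.\ hyperimmune set with $\overline{\rho}(A) = 1$.

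The main obstacle is the \emph{race} between the checkpoints sliding rightward and a requirement $P_e$ accumulating enough of $\varphi_e$ to apply the pigeonhole fact: placing checkpoints too eagerly could let $P_e$ never catch up. Giving $P_e$ strictly higher priority than all checkpoints $C_k$ with $k \geq e$ — so that $P_e$ may override and relocate those checkpoints while having to respect only the finitely many higher-priority ones — is exactly what resolves this, after which the remaining bookkeeping (bounding the number of injuries, checking that each requirement eventually acts, and checking that each settled checkpoint is never crossed afterwards) is routine.
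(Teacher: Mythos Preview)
Your proposal is correct and follows essentially the same strategy as the paper, though with different implementation choices. For (i), the paper uses the fixed factorial intervals $I_n=[n!,(n+1)!)$: since these form a strong array, a hyperimmune $A$ must miss infinitely many $I_n$, and along that subsequence $\rho_{(n+1)!}(A)\le n!/(n+1)!=1/(n+1)\to 0$; your contrapositive, manufacturing intervals from a witnessed positive lower density, is an equally standard route to the same conclusion. For (ii), both you and the paper run a finite-injury construction of a c.e.\ set $B$ with $\overline{B}$ hyperimmune and $\underline\rho(B)=0$, and the pigeonhole observation you isolate (that among enough pairwise disjoint nonempty finite sets one lies entirely beyond any given bound) is exactly what reconciles the hyperimmunity requirements with thinness. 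The only real difference is how thinness is organized: you maintain movable checkpoints $n_k$ with $\rho_{n_k}(B)<2^{-k}$, whereas the paper reuses the factorial intervals and simply arranges $B\cap I_n=\emptyset$ for infinitely many $n$. The paper's version is marginally cleaner---the intervals are fixed rather than floating, the injury bookkeeping is a line shorter, and the same intervals do double duty in both parts of the theorem---but your argument goes through without difficulty.
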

\begin{proof} (Sketch) Let $I_n$ be the interval $[n!, (n+1)!)$.  If
  $A$ is hyperimmune, then $A \cap I_n = \emptyset$ for infinitely
  many $n$, from which it follows that $\underline{\rho}(A) = 0$.  For
  the second part, it is a straightforward finite injury argument to
  construct a c.e.\ co-hyperimmune set $B$ such that $B \cap I_n = \emptyset$
  for infinitely many $n$, so that $\overline{B}$ is the desired
  co-c.e.\ hyperimmune set with upper density $1$.
\end{proof}
 
\begin{thm}
      \begin{itemize}
          \item[(i)] Every co-c.e.\ hhi set has density $0$.
          \item[(ii)]  Every $\Delta^0_2$ hhi set has upper density less than $1$.
          \item[(iii)] There is a hhi set with upper density $1$.
       \end{itemize}  
\end{thm}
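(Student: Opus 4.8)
The plan is to treat the three items separately. Items (i) and (ii) I would prove by contraposition: from a failure of the stated density bound I build a computable function $f$ showing $A$ is \emph{not} hhi, i.e.\ making $W_{f(0)},W_{f(1)},\dots$ into pairwise disjoint finite sets each meeting $A$. The engine in both cases is that a weak complexity hypothesis on $A$ lets one \emph{effectively capture} elements of $A$. In case (i), $\overline{A}$ is c.e.\ with enumeration $\{\overline{A}_s\}$, and for any finite interval $I$ the finite sets $I\setminus\overline{A}_s$ decrease monotonically to $I\cap A$; so the rule ``hold the least current survivor of $I$, and move up to the next survivor of $I$ whenever your element enters $\overline{A}$'' yields a nondecreasing sequence of guesses whose limit, if it exists, lies in $A$. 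To build an \emph{infinite} disjoint array I would give each array member a private block (a piece of a fixed computable partition of $\omega$ into infinitely many infinite computable sets), run the capture inside that block on successive subintervals, and retarget the member to a fresh subinterval of its block whenever $\overline{A}$ absorbs the current one. In case (ii), $A$ is $\Delta^0_2$ with approximation $g(x,s)\to A(x)$, and $\overline{\rho}(A)=1$ forces (by pigeonhole on the initial segments $n$ with $|\overline{A}\cap[0,n)|<\delta n$) that $A$ has runs of consecutive integers of every length past every bound; a member then searches, within its block, for a long run on which the current approximation reads $1$, and retargets when the approximation turns against it. In both cases the design target is that a member fails to settle at a finite set meeting $A$ only in an impossible situation: $A$ avoiding an infinite computable set (impossible since $A$, being hhi, is infinite), respectively $A$ having no long runs past some point (impossible since $\overline{\rho}(A)=1$).

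The step I expect to be genuinely hard is precisely keeping all infinitely many captures simultaneously finite, pairwise disjoint, and $A$-meeting: capturing a \emph{single} finite set meeting a co-c.e.\ or $\Delta^0_2$ set is routine, but the honest regions of $A$ cannot be located effectively, so a naive retargeting loop can run to infinity — for instance when the c.e.\ set $\overline{A}$ eventually swallows an entire infinite computable subset of a member's block. Equivalently: from hhi one gets only the ``infinitely often'' fact that in any computable partition of $\omega$ into intervals, infinitely many intervals miss $A$ (and, when $\overline{A}$ is c.e., the indices of the missed intervals form a c.e.\ set, hence contain an infinite computable set, giving a computable sequence of $A$-free intervals), whereas an \emph{upper}-density bound needs \emph{cofinitary} control; the lower-density bound $\underline{\rho}(A)=0$ is in any case automatic from hyperimmunity. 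Converting ``infinitely often'' into ``cofinitely'' forces the density hypothesis to be used in a non-local way, and that interplay is the real content. For (i) the statement can also be extracted from the classical structure theory, since a co-c.e.\ set $A$ is hhi exactly when $\overline{A}$ is hyperhypersimple.

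For (iii), items (i) and (ii) already show that the witnessing set cannot be $\Delta^0_2$, so I would build $A$ by a construction relative to $\emptyset''$. Positive requirements $P_k$ secure $\overline{\rho}(A)=1$ by, for infinitely many large $c$ chosen afresh each time, putting essentially all of $[c,2c)$ into $A$. Negative requirements $R_e$, one per potential array index $\varphi_e$, secure that $A$ is hhi: once $\varphi_e$ looks total with $\{W_{\varphi_e(n)}\}$ pairwise disjoint, pick an index $n_0$ and restrain $A$ off $W_{\varphi_e(n_0)}$ forever; then either $W_{\varphi_e(n_0)}$ stays finite, and that member misses $A$, or it is infinite, and the family is not a weak array — so $R_e$ is met either way. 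The conflict is that $W_{\varphi_e(n_0)}$ keeps growing while $A$ must avoid it, possibly into an interval some $P_k$ wants. I would resolve this with a standard priority interleaving together with the observation that pairwise disjointness gives $\sum_n\underline{\rho}(W_{\varphi_e(n)})\le 1$, so $n_0$ can be chosen with $W_{\varphi_e(n_0)}$ of arbitrarily small lower density and currently disjoint from every committed $P_k$-interval, while a $P_k$ retargets to a later interval whenever a higher-priority restrained set invades it. The main obstacle — and where I expect most of the work to go — is verifying that along a suitable subsequence of the $P_k$-intervals the combined mass of the finitely many relevant restrained sets is $o(c)$, so that $\overline{\rho}(A)=1$ genuinely survives.
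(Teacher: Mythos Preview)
Your plan is far more elaborate than the paper's, which dispatches all three parts by quoting known classical facts: for (i), Martin's theorem that every co-c.e.\ hhi set is dense immune (its principal function dominates every computable function), from which density $0$ is immediate; for (ii), Cooper's lemma that every $\Delta^0_2$ hhi set is already shhi, combined with the next theorem's observation that an shhi set must miss some $R_k$ and hence has upper density strictly below $1$; for (iii), the one-line remark that every $2$-generic set is hhi and has upper density $1$. Your aside that (i) ``can also be extracted from the classical structure theory'' is exactly the paper's route.

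Your direct capture scheme for (i) has a genuine gap. You say a member's capture fails to settle only when $A$ avoids that member's entire infinite computable block, and you call this ``impossible since $A$, being hhi, is infinite.'' That inference is wrong: an hhi set can certainly be disjoint from an infinite computable set---indeed if $A$ is co-c.e.\ and hhi then $\overline{A}$ is c.e.\ and infinite, hence contains an infinite computable set disjoint from $A$. Worse, in a contrapositive argument (assume positive upper density, deduce non-hhi) you cannot invoke ``$A$ is hhi'' at all. The bare density hypothesis does not force $A$ to meet every block of a fixed computable partition, so some members of your array may enumerate infinitely and never settle. Making a direct array construction work here amounts to reproving Martin's theorem, which is not routine; the paper simply cites it.

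Your approach to (ii) via long runs is sound at the level of the key observation (upper density $1$ does force arbitrarily long consecutive runs in $A$ past every bound), but the block framework is underspecified: a member must locate its run inside its private block, and a generic infinite computable block need not contain any consecutive integers at all. This is repairable with a finite-injury arrangement in which members are linearly prioritized and each searches past the previous ones, but the bookkeeping to keep the resulting $W_{f(n)}$ finite and pairwise disjoint is more work than simply invoking Cooper's lemma, whose short proof the paper includes.

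For (iii) your $\emptyset''$-priority construction is heavy machinery for something that falls out of genericity: a $2$-generic set is hhi (for any computable $f$ with the $W_{f(n)}$ pairwise disjoint and finite, the $\Sigma^0_2$ set of strings avoiding some $W_{f(n)}$ is dense, so genericity gives $W_{f(n)}\cap A=\emptyset$ for some $n$), and already $1$-genericity gives upper density $1$. No restraint accounting or density-of-restrained-sets estimate is needed.
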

\begin{proof} For the first part, recall that by \cite{M} every
  co-c.e.\ hhi set $A$ is dense immune, i.e. the principal function of
  $A$ dominates every computable function.  From this it easily
  follows that $A$ has density $0$.  For the second part, use the
  known fact (see the lemma below) that every $\Delta^0_2$ hhi set is shhi, and apply the
  first part of the next theorem.  For the third part, note that every
  $2$-generic set is hhi and has upper density $1$.

       The lemma below is due to S. B. Cooper \cite{C}, and we include a proof for the convenience of
the reader.

\begin{lem} (\cite{C}) If $A$ is both $\Delta^0_2$ and hhi, then $A$ is shhi.
\end{lem}

\begin{proof}  Let $\{A_s\}$ be a computable approximation to $A$,
  and suppose that $A$ is infinite and not shhi.  We prove that $A$ is
  not hhi.  Let $\{U_n\}$ be a weak array witnessing that $A$ is not
  shhi, so the sets $U_n$ are uniformly c.e., pairwise disjoint, and
  all intersect $A$.  To show that $A$ is not hhi, it suffices to
  produce uniformly c.e.\ sets $\{V_n\}$ with each $V_n$ a finite
  subset of $U_n$ so that each $V_n$ intersects $A$.  Let $V_{n,s}$ be
  the set of numbers enumerated in $U_n$ before stage $s$, and define
  $U_{n,s}$ analogously.  At each stage $s$, if $V_{n,s} \cap A_s =
  \emptyset$, let $V_{n,s+1} = V_{n,s} \cup U_{n,s}$, and otherwise
  let $V_{n,s+1} = V_{n,s}$.

Clearly, $V_n \subseteq U_n$.  Assume for a contradiction that $V_n$ 
is infinite.   Then $V_n = U_n$ so $V_n \cap A \neq \emptyset$.   It
follows that $V_{n,s} \cap A_s \neq \emptyset$ for all sufficiently large
$s$, so $V_n$ is finite, which is the desired contradiction.   Hence
$V_n$ is finite.   Now assume for a contradiction that $V_n \cap A = \emptyset$.
Then $V_n \cap A_s = \emptyset$ for all sufficiently large $s$, and hence
$V_n = U_n$.   It follows that $V_n \cap A \neq \emptyset$, which is the
desired contradiction.
\end{proof}

\end{proof}

\begin{thm} \label{shhi}
     \begin{itemize}
          \item[(i)] No shhi set has upper density $1$.
          \item[(ii)] For every $\epsilon > 0$ there is a shhi set with
            upper density at least $1 - \epsilon$.
      \end{itemize}
\end{thm}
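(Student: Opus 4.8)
I would prove this using the uniformly computable disjoint family $R_k=\{m:2^k\mid m\ \&\ 2^{k+1}\nmid m\}$. Suppose $A$ is shhi. Then $A$ meets only finitely many of the sets $R_k$: otherwise, enumerating in increasing order the indices $k$ with $R_k\cap A\neq\emptyset$ gives a computable function $f$ for which $W_{f(0)},W_{f(1)},\dots$ are pairwise disjoint and all meet $A$, contradicting strong hyperhyperimmunity. Let $F$ be the (finite, proper) set of $k$ with $R_k\cap A\neq\emptyset$. Since the $R_k$ partition $\omega\setminus\{0\}$ we get $A\setminus\{0\}\subseteq\bigcup_{k\in F}R_k$, hence
$$\overline{\rho}(A)\leq\overline{\rho}\Bigl(\{0\}\cup\bigcup_{k\in F}R_k\Bigr)=\sum_{k\in F}2^{-(k+1)}<1.$$

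\textbf{Part (ii), setup.} Fix $\epsilon>0$ and choose $N$ with $2^{-N}<\epsilon$. The plan is a priority construction of $A$ meeting, for every $e$, the requirement
$$S_e:\quad \{W_{\varphi_e(n)}\}_n\text{ pairwise disjoint}\ \Longrightarrow\ W_{\varphi_e(n)}\cap A=\emptyset\text{ for some }n.$$
Meeting all $S_e$ makes $A$ strongly hyperhyperimmune and in particular immune, so $A$ is infinite as the definition requires. Simultaneously I would force $\overline{\rho}(A)\geq 1-\epsilon$ by building into the construction a computable sequence of pairwise disjoint finite intervals ("dense blocks") $B_0,B_1,B_2,\dots$, with $|B_t|$ growing so fast that $\max B_{t-1}$ is negligible beside $|B_t|$, and with $A$ empty between consecutive blocks; on each $B_t$ we put into $A$ all but a reserved subset of size $<\epsilon|B_t|/2$, so $\rho_{\max B_t+1}(A)\to 1-\epsilon/2$ and hence $\overline{\rho}(A)\geq 1-\epsilon$, while $\underline{\rho}(A)=0$ is harmless.

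\textbf{Part (ii), strategies and easy verification.} Assign $S_e$ a reservation budget $\beta_e>0$ with $\sum_e\beta_e<\epsilon/2$. Requirement $S_e$ maintains a marker $n_e$ (its current guess for the member to be kept out of $A$) and, in each dense block $B_t$ processed while its marker equals $n_e$, reserves from $A$ at most $\beta_e|B_t|$ elements of $B_t$, chosen so as to include whatever of $W_{\varphi_e(n_e)}$ currently lies in $B_t$. When $S_e$ first adopts a value $n_e$ it takes the least index whose member currently misses the already fixed $A$-parts of all earlier dense blocks (possible since, by disjointness, only finitely many members meet a fixed finite set). If at a later stage $W_{\varphi_e(n_e)}$ spills outside $S_e$'s reserved set in some already-processed block (equivalently, if an element of $W_{\varphi_e(n_e)}$ enters $A$), then $S_e$ abandons that member, advances its marker, and repeats. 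The routine parts of the verification are: $A$ is well defined; every reserved element is permanently excluded from $A$; the reservations in each $B_t$ total less than $\epsilon|B_t|/2$, so the density of $A$ on $B_t$ exceeds $1-\epsilon/2$, giving the required upper density; and if the marker of $S_e$ reaches a limit $n_e^{*}$ then $W_{\varphi_e(n_e^{*})}$ is reserved in every sufficiently late block and misses the $A$-part of every earlier block, hence is disjoint from $A$ and $S_e$ is met.

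\textbf{The main obstacle.} The hard part will be proving that each marker $n_e$ actually reaches a limit; a naive count does not suffice, since a pairwise disjoint array can have members that are ``large'' on many different blocks. The stabilization argument must use disjointness quantitatively --- in any one block $B_t$ at most $\beta_e^{-1}$ members can overflow the budget $\beta_e|B_t|$ --- together with careful scheduling: each new dense block must be placed far to the right of the current approximation to the committed member, the relative sizes of the blocks and the order in which requirements receive attention must be tuned, and when the accounting leaves no surviving candidate one must be prepared to demote a would-be dense block to a gap, so that a member provoking infinitely many marker changes would have to be large on infinitely many disjoint blocks in a manner the construction rules out. Carrying out this bookkeeping precisely, rather than any single clever device, is where the real work of the proof lies.
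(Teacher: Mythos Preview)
Your argument contains an error: the function $f$ you define by ``enumerating in increasing order the indices $k$ with $R_k\cap A\neq\emptyset$'' is not computable, since deciding whether $R_k\cap A\neq\emptyset$ requires knowledge of $A$. The fix is simpler than what you attempt: the sets $R_0,R_1,\dots$ themselves already form a uniformly c.e.\ pairwise disjoint array, so by the definition of shhi there exists some $n$ with $R_n\cap A=\emptyset$, and hence $\overline{\rho}(A)\leq 1-\rho(R_n)=1-2^{-(n+1)}<1$. There is no need to argue that $A$ meets only finitely many $R_k$, and your argument does not establish that stronger claim.

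\textbf{Part (ii).} Your approach differs from the paper's, and the difficulty you flag as ``the main obstacle'' is real and left unresolved. The paper sidesteps it entirely by using Mathias forcing with conditions $(F,I)$ where $F$ is finite, $I$ is infinite, $F\cap I=\emptyset$, and $\overline{\rho}(I)>q_0$ for a fixed rational $q_0\in(1-\epsilon,1)$. Meeting the density requirement is routine (move the elements of $I$ below a suitable point into $F$). Meeting the shhi requirement for a pairwise disjoint array $\{S_j\}$ reduces to a clean combinatorial lemma: if the $S_j$ are pairwise disjoint and $\overline{\rho}(I)>q_0$, then $\overline{\rho}(I\setminus S_j)>q_0$ for all but finitely many $j$ (otherwise one finds a single $m$ at which several of the disjoint $S_j$ each occupy a fixed positive fraction of $I\upharpoonright m$, and these fractions sum past $1$). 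One then picks such a $j$ with $F\cap S_j=\emptyset$ and passes to the extension $(F,I\setminus S_j)$. This lemma is precisely the ``disjointness used quantitatively'' insight you allude to, but isolated from any stage-by-stage bookkeeping: each requirement is dispatched once and for all by a single extension, so no markers, reservations, or stabilization arguments are needed. Your priority scheme may be salvageable, but as written it is a sketch whose central difficulty you have named without resolving.
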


\begin{proof}
  For (i), let $A$ be shhi, and consider the sets $\{R_n\}$ where, as
  usual, $R_n = \{k : 2^n \mid k \ \& \  2^{n+1} \nmid k \}$.  Since these
  sets are pairwise disjoint and uniformly computable, there exists $n$ such
  that $R_n \cap A = \emptyset$.  Since $\rho(R_n) > 0$, it follows
  that $\overline{\rho}(A) < 1$.

  For (ii) we use a special kind of Mathias forcing.  Let $q_0$ be a
  rational number such that $1 - \epsilon < q_0 < 1$.  Let $P$ be the
  set of pairs $(F,I)$ where $F, I$ are subsets of $\omega$, $F$ is
  finite, $I$ is infinite, $F \cap I = \emptyset$, and
  $\overline{\rho}(I) > q_0$.  Thus, we are using Mathias forcing with
  conditions of upper density strictly greater than $q_0$.  If $(F,I) \in
  P$, say that $A$ \emph{satisfies} $(F,I)$ if $F \subseteq A
  \subseteq F \cup I$.  If $p, q \in P$ say that $q$ \emph{extends}
  $p$ if every set which satisfies $q$ also satisfies $p$.  We must
  construct an shhi set $A$ with upper density at least $1 -
  \epsilon$, and for this it suffices to meet the following
  requirements:
$$N_{2e}: (\exists s \geq e)[\rho_s(A) \geq q_0]$$
$$N_{2e+1} : \mbox {If } \varphi_e \mbox{ is total } \ \& \ 
(\forall a)(\forall b)[a \neq b \rightarrow W_{\varphi_e(a)} \cap
W_{\varphi_e(b)} = \emptyset] \mbox{ then } (\exists a)[W_{f(a)} \cap
A = \emptyset]$$ The result to be proved is an easy consequence of the
following lemma.
\begin{lem} \label{dense}
  For any $p \in P$ and $n \in \omega$ there exists $q \in P$ such
  that $q$ extends $P$ and every set which satisfies $q$ also satisfies
  the requirement $N_n$.
\end{lem}
\begin{proof}
  To prove the lemma, let $p = (F,I)$.  Consider first the case where
  $n = 2e$.  Since $\overline{\rho}(I) > q_0$, there exists $s > e$
  with $\rho_s(I) \geq q_0$.  Let $q = (\hat{F}, \hat{I})$, where
  $\hat{F} = F \cup \{i \in I : i < s\}$, and $\hat{I} = \{i \in I : i
  \geq s\}$.  Then $q \in P$, $q$ extends $p$, and $\rho_s(\hat{F})
  \geq q_0$.  Furthermore, if $A$ satisfies $q$, then $\rho_s(A) \geq
  q_0$ because $A \supseteq \hat{F}$, so $A$ meets $N_n$.

For the case where $n = 2e+1$, we prove the following combinatorial
lemma.  \begin{lem} \label{comb} Suppose the sets $S_0, S_1, \dots$
are pairwise disjoint and $I$ is a set such that $\overline{\rho}(I) >
q_0$.  Then, for all sufficiently large $j$, $\overline{\rho}(I \setminus S_j) > q_0$.
\end{lem}

\begin{proof}
  Assume the result fails, so for infinitely many $j$, we have
  $\overline{\rho} (I \setminus S_j) \leq q_0$.  In fact, we may
  assume without loss of generality that this inequality holds for all
  $j$, since we may replace the sequence of all $S_j$'s by the
  sequence of those $S_j$'s for which it holds.
  Choose rational numbers $q_1, q_2$ such that $q_0 < q_1 < q_2 <
  \overline{\rho}(I)$.  Since $q_2 < \overline{\rho}(I)$, we may
  choose numbers $n_0 < n_1 < \dots$ such that $\rho_{n_i}(I) \geq
  q_2$ for all $i$.  Then we have $$\rho_{n_i}(I) = \rho_{n_i}(I \cap
  S_j) + \rho_{n_i}(I \setminus S_j)$$ for all $i, j$.  Since, for all
  $j$, $\overline{\rho}(I \setminus S_j) \leq q_0 < q_1$, we have that
  $$\rho_{n_i}(I \setminus S_j) \leq q_1$$ for all $j$ and all
  sufficiently large $i$ (dependent on $j$).  It follows that for all
  $j$, if $i$ is sufficiently large, $$\rho_{n_i}(I \cap S_j) =
  \rho_{n_i}(I) - \rho_{n_i}(I \setminus S_j) \geq q_2 - q_1 > 0$$
  Choose $n > (q_2 - q_1)^{-1}$, and then choose $i$ sufficiently
  large that the above inequalities hold for all $j < n$.  Then
  $$\rho_{n_i}(I \cap \cup_{j<n} S_j) = \sum_{j < n} \rho_{n_i}(I \cap
  S_j) \geq n(q_2-q_1) > 1$$ which is absurd because densities can
  never exceed $1$.  This contradiction proves the lemma.  \end{proof}

Now return to the case where $n = 2e+1$ in the proof of Lemma
\ref{dense}, and assume that the hypotheses of $N_{2e+1}$ are
satisfied.  Let $S_k = W_{\varphi_e(k)}$.  Let $p \in P$ be given, and
suppose that $p = (F,I)$.  Since $S_0, S_1, \dots$ are pairwise
disjoint, there are only finitely many $k$ such that $S_k \cap F \neq
\emptyset$.  Hence, by Lemma \ref{comb}, there exists $k$ such that
$S_k \cap F = \emptyset$ and $\overline{\rho}(I \setminus S_k) > q_0$.
Define $q = (F, I \setminus S_k)$.  Then $q \in P$, and $q$ extends $p$.
If $A$ satisfies $q$, then $A \cap S_k = \emptyset$, so $A$ satisfies $R_n$
\end{proof}

The proof of part (2) of Theorem \ref{shhi} is now standard.  Namely,
we inductively choose $p_0, p_1, \dots$ such that each $p_n$ is in
$P$, $p_{n+1}$ extends $p_n$ for all $n$, and every set which
satisfies $p_{n+1}$ meets the requirement $N_n$.  This is possible by
letting $p_0 = (\emptyset, \omega)$ and applying Lemma \ref{dense}.
If $p_n = (F_n, I_n)$, let $A = \cup_n F_n$.  Then $A$ satisfies every
$p_n$ and hence meets every requirement.

\end{proof}

\begin{obs} \label{coh}

  If $A$ is r-cohesive, then $\rho(A) = 0$ and $A$ is
             computable at every density $r < 1$.
\end{obs}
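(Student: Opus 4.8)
The plan is to deduce both halves of the statement from a single structural lemma: \emph{if $A$ is r-cohesive, then for every $m$ there is a residue class $C_m = \{n : n \equiv c_m \pmod{2^m}\}$ such that $A \setminus C_m$ is finite.} I would prove this by induction on $m$. The case $m=0$ is trivial, since the only residue class mod $2^0$ is $\omega$ itself. For the inductive step, suppose $A \setminus C_m$ is finite, where $C_m$ is a residue class mod $2^m$; since $A$ is infinite (r-cohesive sets are infinite by definition), $A \cap C_m$ is infinite. Now $C_m$ is the disjoint union of two residue classes $S'$ and $S''$ mod $2^{m+1}$. Apply r-cohesiveness to the computable set $S'$: either $A \cap \overline{S'}$ is finite, in which case $A \setminus S'$ is finite and we take $C_{m+1} = S'$; or $A \cap S'$ is finite, in which case $A \setminus S'' \subseteq (A \setminus C_m) \cup (A \cap S')$ is finite and we take $C_{m+1} = S''$.

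Granting the lemma, the assertion $\rho(A) = 0$ is immediate: a residue class mod $2^m$ has density exactly $2^{-m}$, and adjoining or deleting finitely many points does not affect upper density, so $\overline{\rho}(A) \le \overline{\rho}(C_m) = 2^{-m}$ for every $m$; hence $\overline{\rho}(A) = 0$, and therefore $\rho(A)$ exists and equals $0$.

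For the second assertion, fix a real $r < 1$ and choose $m$ with $2^{-m} < 1 - r$. Let $C_m$ be the residue class given by the lemma, put $F = A \setminus C_m$ (a finite set), and fix $b$ with $F \subseteq [0,b)$. Let $D = \overline{C_m} \setminus [0,b)$; this is a computable set whose density is $1 - 2^{-m} > r$. Let $\varphi$ be the partial computable function that outputs $0$ on $D$ and diverges elsewhere. If $n \in D$ then $n \notin C_m$ and $n \ge b$, so $n \notin F$, and since $A \subseteq C_m \cup F$ we get $n \notin A$; thus $\varphi(n) = 0 = A(n)$. Since the domain $D$ of $\varphi$ has lower density $1 - 2^{-m} > r$, this shows $A$ is computable at density $r$, and as $r<1$ was arbitrary, $A$ is computable at every density $r < 1$.

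The substantive step is the structural lemma; everything after it is bookkeeping. The one point to get right in the inductive step is that r-cohesiveness only delivers the \emph{finiteness} of $A \cap S'$ (or of $A \cap \overline{S'}$), and one must combine this with the inductive hypothesis and the decomposition $C_m = S' \sqcup S''$ to land back at ``$A \setminus C_{m+1}$ finite.'' The second obstacle, such as it is, is conceptual rather than technical: since we only obtain $A \setminus C_m$ finite (not $A \cap \overline{C_m} = \emptyset$), the algorithm $\varphi$ above depends on the non-uniform choice of a bound $b$ past the finite exceptional set — perfectly fine for an existence statement, but worth noting explicitly. (Equivalently, the lemma says that $A$ is, up to a finite set, contained in a computable set of arbitrarily small density, which is precisely what drives both $\overline{\rho}(A) = 0$ and the density-$r$ algorithm; the residue-class phrasing just makes the induction transparent.)
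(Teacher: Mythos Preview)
Your proof is correct and follows essentially the same approach as the paper: both arguments use r-cohesiveness to show that $A$ is, up to a finite set, contained in a single residue class of arbitrarily small density, which immediately yields both $\rho(A)=0$ and the partial algorithm answering $0$ on the complement. The only organizational difference is that the paper works directly with congruence classes modulo an arbitrary $n$ (noting that one class must meet $A$ infinitely, hence by r-cohesiveness all others meet $A$ finitely), while you restrict to moduli $2^m$ and reach the same conclusion by an explicit induction splitting each class in two; the paper's route is marginally shorter, but yours is equally valid.
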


\begin{proof}
   Suppose that $A$ is r-cohesive, and $n > 1$ is given. Since
  the union of all the congruence classes modulo $n$ is $\omega$ and
  $A$ is infinite, some congruence class, say $[i]$, must have
  infinite intersection with $A$.  Then all but finitely many elements
  of $A$ must belong to $[i]$ and all congruence classes $[j]$ with $
  0 \le j < n, j \ne i$ must have finite intersection with $A$ since
  $A$ is r-cohesive.  Let $S_n$ be the union of all the classes $[j]$
  with $j \ne i$.  Then $S_n$ is a computable set of density $1 -
  1/n$, and $S_n \cap A$ is finite.  It follows that $A$ has upper
  density at most $1/n$, and that $A$ is computable at density $1 -
  1/n$.  Since $n$ is arbitrary, the result follows.

\end{proof}

  This   ``approachability'' phenomenon holds  very generally.

\begin{defn} If  $A \subseteq \omega$, the \emph{asymptotic computability bound} of $A$
is 
\newline $ \alpha(A) := sup\{r : A \mbox{ is computable at density } r \}$ .
\end{defn}

\begin{thm}  If $r \in (0,1)$, 
then there is a set $A$ of density $r$ with $\alpha(A) = r$. 
If $r$ is not left-$\Sigma^0_3$ then $A$  is not computable at density $r$.
\end{thm}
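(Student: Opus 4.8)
The plan is to adapt the construction behind Corollary~\ref{nohigherdensity}, replacing the c.e.\ set $C$ used there by a carefully structured (non-c.e.) set of density \emph{exactly} $r$, and to use the immunity of $\overline{A}$ together with the characterization of lower densities of c.e.\ sets to settle the density-$r$ case.

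First I would fix a simple set $S$ of density $0$ (as in the proof of Proposition~2.15 of \cite{JS}) and build a set $A_0$ of density $r$ using the block structure from the proof of Theorem~\ref{double}: partition $\omega$ into the intervals $I_n=[n!,(n+1)!)$, partition each $I_n$ into consecutive blocks of length $n$, and put into $A_0$ the $\lfloor r\,|J|\rfloor$ least elements of each such block $J$. Using \emph{short} blocks here is essential: a single long initial segment of each $I_n$ would drive the running density up to $1$ in the middle of $I_n$. The density estimates are then verbatim those in the proof of Theorem~\ref{double}: the density of $A_0$ on a block of length $n$ is within $1/n$ of $r$, so $\rho_k(A_0)\to r$ and $\rho(A_0)=r$. (The set $A_0$ is in general not computable — it cannot be, since $r$ need not be $\Delta^0_2$ — but it is a perfectly well-defined set.) Put $A=A_0\cup S$. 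Then $\rho(A)=\rho(A_0)=r$ since $\rho(S)=0$, and $S\subseteq A$, so $\overline{A}\subseteq\overline{S}$; hence $\overline{A}$ is infinite (as $\rho(\overline{A})=1-r>0$) and has no infinite c.e.\ subset, i.e.\ $\overline{A}$ is immune.

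Next I would bound $\alpha(A)$ from above and obtain the final clause simultaneously. Suppose $A$ is computable at density $r'$, via a partial computable $\varphi$ with domain $D$, $\underline{\rho}(D)\ge r'$, and $\varphi(n)=A(n)$ for all $n\in D$ (note $\varphi$ is necessarily $\{0,1\}$-valued on $D$, as it agrees there with $A$). Write $D_0=\{n:\varphi(n)\downarrow=0\}$ and $D_1=\{n:\varphi(n)\downarrow=1\}$, both c.e., with $D=D_0\sqcup D_1$. Since $D_0\subseteq\overline{A}$ and $\overline{A}$ is immune, $D_0$ is finite, so $\underline{\rho}(D_1)=\underline{\rho}(D)\ge r'$; and $D_1\subseteq A$ gives $\underline{\rho}(D_1)\le\underline{\rho}(A)=\rho(A)=r$. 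Hence $r'\le r$, so $A$ is computable at no density exceeding $r$ and $\alpha(A)\le r$. Furthermore, if $r'=r$ then $\underline{\rho}(D_1)=r$, so $r$ is the lower density of the c.e.\ set $D_1$, and therefore $r$ is left-$\Sigma^0_3$ by the characterization of lower densities of c.e.\ sets. Contrapositively, if $r$ is not left-$\Sigma^0_3$, then $A$ is not computable at density $r$.

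Finally I would show $\alpha(A)\ge r$ by a nonuniform argument in the spirit of the Observation preceding Corollary~\ref{nohigherdensity}. Given a rational $r'$ with $0<r'<r$, choose $N$ with $\lfloor r'n\rfloor\le\lfloor rn\rfloor$ for all $n\ge N$ (possible since $r'<r$ and the block lengths grow). Let $\varphi_{r'}$ output $1$ on every $k$ which is among the $\lfloor r'|J|\rfloor$ least elements of a block $J$ contained in some $I_n$ with $n\ge N$, and be undefined elsewhere. Then $\varphi_{r'}$ is partial computable, its domain $D_{r'}$ is a computable set contained in $A_0\subseteq A$, so $\varphi_{r'}$ agrees with $A$ on $D_{r'}$; and the density of $D_{r'}$ on each block of length $n\ge N$ is $\lfloor r'n\rfloor/n\to r'$ (and is always $\le r'$), so the same bookkeeping gives $\underline{\rho}(D_{r'})=r'$. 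Thus $A$ is computable at density $r'$ for every rational $r'<r$, whence $\alpha(A)\ge r$; combined with the previous step, $\alpha(A)=r$. I expect no serious obstacle: the only real work is the density bookkeeping, which is identical to that in the proof of Theorem~\ref{double}, and the one thing to get right is to use short blocks rather than long ones. The conceptual heart is the observation that immunity of $\overline{A}$ forces any algorithm witnessing computability at density $r$ to consist, up to a finite set, of correct ``$1$''-answers, hence to yield a c.e.\ \emph{subset} of $A$ of lower density $r$, which by the earlier characterization forces $r$ to be left-$\Sigma^0_3$.
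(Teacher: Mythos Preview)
Your argument is correct and follows the same overall template as the paper: take $A=(\text{a set of density }r)\cup S$ with $S$ simple of density $0$, use immunity of $\overline{A}$ to bound $\alpha(A)$ from above and to force any density-$r$ witness to yield a c.e.\ set of lower density $r$ (hence $r$ left-$\Sigma^0_3$), and exhibit computable subsets of density arbitrarily close to $r$ for the lower bound.

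The only substantive difference is your choice of the density-$r$ set. The paper takes the binary expansion $r=.b_0b_1\cdots$ and sets $D=\bigcup_{b_i=1}R_i$, which has density $r$ by \cite{JS}, Corollary~2.9; then for any $s<r$ a finite partial sum $t=.b_0\cdots b_n$ with $s<t<r$ gives the \emph{computable} subset $\bigcup_{j\le n,\,b_j=1}R_j$ of density exactly $t$, and the lower bound on $\alpha(A)$ is immediate. Your block construction via Theorem~\ref{double} is perfectly correct but does extra work: you have to redo the density bookkeeping, and you must argue $\lfloor r'n\rfloor\le\lfloor rn\rfloor$ to get the inclusion $D_{r'}\subseteq A_0$. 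The $R_i$-based construction buys you all of this for free from results already in hand, so you might prefer it; either way the proof goes through.
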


\begin{proof}  Let $.b_0 b_1 ...$ be the binary expansion of $r$.
(If $r$ is a sum of finitely many powers of $2$, take the expansion
with infinitely many $1$s.)
In \cite{JS}, Corollary 2.9, it was shown  that the set $D = \bigcup_{ b_i = 1} R_i$
has density $r$. We again take $A = D \cup S$ where $S$ is a simple set
of density $0$.  If $s < r$ we can take enough digits of the expansion
of $r$ so that if  $t = .b_1 ...b_n$  then $s < t < r$. The set $C$
which is the union of the $R_j$ where  $j \le n, b_j \ne 0$
is a computable subset of $A$ of density $t$ so $A$ is computable at density $t$.
Since we can take $t$ arbitrarily close to $r$, it follows that $\alpha(A) \geq r$.
As in Corollary \ref{nohigherdensity}, $A$ is not computable at any density
greater than $r$, so $\alpha(A) \leq r$.  Further, if $r$ is not left-$\Sigma^0_3$ then $A$ is
not computable at density $r$.

\end{proof}.

\section{The minimal pair problem and relative generic computability}

The notion of generic computability can be relativized in the obvious
way.  Specifically, if $A, C \subseteq \omega$, we define $C$ to be
generically $A$-computable if there is a partial $A$-computable
function $\psi$ such that $\psi(n) = C(n)$ for all $n$ in the domain
$D$ of $\psi$ and, further, the domain $D$ has density $1$.  In this
section, we show that if $A,B$ are noncomputable $\Delta^0_2$ sets,
there is a set $C$ such that $C$ is generically $A$-computable and
generically $B$-computable and yet $C$ is not generically computable.  After we
obtained this result, Gregory Igusa \cite{I} greatly strengthened it by showing
that it holds even without the assumption that $A$ and $B$ are
$\Delta^0_2$ sets.  Thus, there are no minimal pairs for relative
generic computability, even though minimal pairs exist in abundance
for relative Turing computability, i.e.  Turing reducibility.  Even
though our result has been superseded by Igusa's subsequent work, we
include it here because the case where $A$ and $B$ are $\Delta^0_2$ is
a major stepping stone towards his remarkable result.

Note that relative generic computability is not transitive, as shown
in \cite{JS}, Section 3.  A stronger transitive notion called ``generic
reducibility'' is defined in Section 4 of \cite{JS}, and studied
further in \cite{I}.  The existence of minimal pairs for generic
reducibility remains open.

The following result is fundamental to our approach.  Recall that
$D_n$ is the finite set with canonical index $n$.  Here in fact it is
important that we use the standard canonical indexing, i.e. $D_0 =
\emptyset$ and, and if $n_1, n_2, \dots, n_k$ are distinct nonnegative
integers and $n = \sum_{i= 1}^k 2^{n_i}$, then $D_n = \{n_1, n_2,
\dots, n_k\}$.

\begin{thm} Suppose that $A$, $B$ are infinite sets such that $A \cup
  B$ is hyperimmune, and let 
$$C = \{n : D_n \cap (A \cup B) \neq \emptyset\}.$$  
Then $C$ is generically $A$-computable and
  generically $B$-computable but not generically computable.
\end{thm}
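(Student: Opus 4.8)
The plan is to establish the three assertions — generic $A$-computability, generic $B$-computability, and failure of generic computability — separately, with the first two nearly immediate and the third carrying all of the content. The engine for all three is a counting lemma which I would isolate and prove first: \emph{if $S \subseteq \omega$ is infinite, then $\{n : D_n \cap S \neq \emptyset\}$ has density $1$}, equivalently $\{n : D_n \cap S = \emptyset\}$ has density $0$. For this, observe that for $n$ with $2^\ell \le n < 2^{\ell+1}$ the bit in position $\ell$ of $n$ is set, so (using the standard canonical indexing) if $\ell \in S$ there is no such $n$ with $D_n \cap S = \emptyset$, while if $\ell \notin S$ the number of such $n$ is $2^{|\overline{S} \cap [0,\ell)|}$; in either case the proportion of such $n$ within the dyadic block $[2^\ell, 2^{\ell+1})$ is at most $2^{-|S \cap [0,\ell)|}$, which tends to $0$ because $S$ is infinite. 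Summing over blocks and comparing $\rho_N(X)$ for $N \in [2^m, 2^{m+1})$ with $\rho_{2^{m+1}}(X)$, which differ by at most a factor of $2$, then yields density $0$.

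For generic $A$-computability, let $\psi$ be the partial $A$-computable function that on input $n$ searches the finite set $D_n$ for a member of $A$ and outputs $1$ if it finds one (diverging otherwise). Its domain is $\{n : D_n \cap A \neq \emptyset\}$, which has density $1$ by the lemma applied to $S = A$; and whenever $\psi(n)\downarrow$ we have $D_n \cap (A \cup B) \neq \emptyset$, so $n \in C$ and $\psi(n) = 1 = C(n)$. Hence $C$ is generically $A$-computable, and by symmetry generically $B$-computable.

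The substance is showing $C$ is not generically computable. Suppose $\varphi$ were a partial computable function with domain $D$ of density $1$ such that $\varphi(n) = C(n)$ for all $n \in D$. I would use $\varphi$ to produce a total computable function $g$ witnessing that $A \cup B$ is \emph{not} hyperimmune, contradicting the hypothesis. Define $D_{g(0)}, D_{g(1)}, \dots$ recursively: having chosen $D_{g(0)}, \dots, D_{g(j-1)}$, set $M_j = 1 + \max(\{0\} \cup D_{g(0)} \cup \cdots \cup D_{g(j-1)})$ and search, dovetailing over $n$ and enumeration stages, for some $n > 0$ with $2^{M_j} \mid n$, $\varphi(n)\downarrow$, and $\varphi(n) = 1$; then put $D_{g(j)} = D_n$. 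Since $2^{M_j} \mid n$ forces $D_n \subseteq [M_j, \infty)$, the sets $D_{g(j)}$ are automatically pairwise disjoint, and each meets $A \cup B$: from $n \in D$ and $\varphi(n) = 1$ we get $n \in C$, i.e.\ $D_n \cap (A \cup B) \neq \emptyset$.

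The one step requiring an argument — and the one I expect to be the main obstacle — is that each search terminates, which is again pure density bookkeeping. The admissible $n$ at stage $j$ form $\{n > 0 : 2^{M_j} \mid n\} \cap D \cap \{n : D_n \cap (A \cup B) \neq \emptyset\}$. The multiples of $2^{M_j}$ have density $2^{-M_j} > 0$; the complement of $D$ has density $0$; and $\{n : D_n \cap (A \cup B) = \emptyset\}$ has density $0$ by the counting lemma, using that $A \cup B$ is infinite (which follows already from $A$ being infinite). By subadditivity of upper density, the admissible set has lower density at least $2^{-M_j} > 0$, so it is infinite and in particular nonempty; hence the search succeeds and $g$ is total and computable. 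The existence of such a $g$ contradicts the hyperimmunity of $A \cup B$, completing the proof. It is worth noting that hyperimmunity enters only at this final step, to derive the contradiction, and plays no role in running the construction — so once the counting lemma is in hand the remaining obstacle largely dissolves.
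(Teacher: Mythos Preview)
Your proposal is correct and follows essentially the same approach as the paper: both isolate the same counting lemma (the paper's Lemma~\ref{infd1}), use it identically for generic $A$- and $B$-computability, and for non-generic-computability build a strong array by searching within the c.e.\ density-$1$ set of ``yes'' answers for an $n$ with $D_n$ disjoint from the previous sets, using that $\{n : D_n \cap [0,m) = \emptyset\}$ (equivalently, your $\{n : 2^{M_j} \mid n\}$) has positive density. The only differences are cosmetic: your proof of the counting lemma counts within dyadic blocks rather than via residue classes modulo $2^{m+1}$, and your termination argument phrases the density bookkeeping as a three-way intersection rather than the paper's observation that $T = \{n : \varphi(n) = 1\}$ already has density $1$.
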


\begin{proof}
To prove this result, we need the following lemma.

\begin{lem} \label{infd1}  Let $I$ be an infinite set.   Then $\{n: D_n \cap I
\neq \emptyset\}$ has density $1$.
\end{lem}

\begin{proof}   Note first that, for any $m > 0$ and any $b$, the set
$S$ of numbers congruent to $b$ modulo  $m$ has density $1/m$, as can
be shown by an elementary calculation.

Now let $D$ be a nonempty finite set, and let $T = \{n : D_n \cap D =
\emptyset\}$.  We now claim that $\rho(T) = 2^{-|D|}$.   Let $m = \max D$.
By our choice of indexing of finite sets, the elements of $T$ are
exactly the numbers which have a $0$ in places of their binary
expansion corresponding to elements of $D$, so for all $a$, $a \in T$
iff $(a + 2^{m+1}) \in T$.  Hence $T$ is a finite union of residue
classes modulo $2^{m+1}$.  Since each of these residue classes has a
density, $T$ has a density.  To calculate this density, note that if
$k$ is a multiple of $2^{m+1}$, then each of the $2^k$ ways of filling
in the places of the binary expansion corresponding to elements of $D$
occurs equally often in numbers below $k$, so $\rho_k(T) = 2^{-k}$.
Since $T$ has a density and $\rho_k(T) = 2^{-k}$ for infinitely many $k$,
we have that $\rho(T) = 2^{-k}$.

It now follows that, for every $k$, $\{n : D_n \cap I = \emptyset\}$
has upper density at most $2^{-k}$, so this set has density $0$ and its
complement has density $1$.
\end{proof}

We now complete the proof of the theorem.  Recall that $A$ and $B$ are
infinite, $A \cup B$ is hyperimmune, and $C = \{n : D_n \cap (A \cup
B) \neq \emptyset\}$.  To show that $C$ is generically $A$-computable,
define $\psi(n) = 1$ if $D_n \cap A \neq \emptyset$.  Obviously, if
$\psi(n) \downarrow$, then $\psi(n) = 1 = C(n)$, since $D_n \cap A
\neq \emptyset$. Also, the domain of $\psi$ has density $1$ by the
lemma and the assumption that $A$ is infinite.  The proof that $C$ is
generically $B$-computable is the same.  

It remains to show that $C$ is not generically computable.  Suppose
for a contradiction that $C$ were generically computable.  Note that
$\rho(C) = 1$ by the lemma.  If $\psi$ is a computable partial
function which witnesses that $C$ is generically computable, then $T =
\{n : \psi(n) = 1\}$ is a c.e.\ set of density $1$ which is a subset of
$C$.  We now obtain the desired contradiction.   Namely, we show that $A \cup B$ is not
hyperimmune by constructing a strong array $F_0, F_1, \dots$ of pairwise
disjoint finite sets intersecting $A \cup B$.  Suppose that $F_i$ has
been defined for all $i < j$.  Let $m$ exceed all elements of $\cup_{i
  < j} F_i$.  Let $U = \{n : D_n \cap [0, m) = \emptyset\}$.  As shown
in the proof of the lemma, $\rho(U) = 2^{-m}$.  Since $\rho(T) = 1$,
it follows that $T \cap U \neq \emptyset$.  By effective search, one
can find $n \in T \cap U$.  Let $F_j = D_n$.
\end{proof}

\begin{thm} Let $A_0$ and $B_0$ be noncomputable $\Delta^0_2$ sets.  Then
  there is a set $C$ which is both generically $A_0$-computable and
  generically $B_0$-computable but is not generically computable.
\end{thm}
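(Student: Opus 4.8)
The plan is to deduce this from the preceding theorem. It is enough to produce infinite sets $A\le_T A_0$ and $B\le_T B_0$ with $A\cup B$ hyperimmune: setting $C=\{n:D_n\cap(A\cup B)\ne\emptyset\}$, the preceding theorem makes $C$ generically $A$-computable, generically $B$-computable, and not generically computable, and since $A\le_T A_0$ every partial $A$-computable function is partial $A_0$-computable (compute $A$ from $A_0$), so $C$ is generically $A_0$-computable; likewise it is generically $B_0$-computable.

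So fix computable approximations $A_0=\lim_s A_{0,s}$ and $B_0=\lim_s B_{0,s}$, and build $A$ and $B$ by a finite-injury construction meeting, for each $e$,
\[ N_e:\ \varphi_e\text{ total and }D_{\varphi_e(0)},D_{\varphi_e(1)},\dots\text{ pairwise disjoint}\ \Longrightarrow\ (\exists k)\,[\,D_{\varphi_e(k)}\cap(A\cup B)=\emptyset\,], \]
while keeping $A$ and $B$ infinite with $A\le_T A_0$ and $B\le_T B_0$. If all the $N_e$ hold and $A,B$ are infinite, then $A\cup B$ is infinite and, directly from the definition of hyperimmunity, hyperimmune, so the previous paragraph applies.

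In the construction each $N_e$ maintains at most one \emph{target}, a set of the form $D_{\varphi_e(k)}$. When $N_e$ has no target, at a stage $s$ it adopts as target the first $D_{\varphi_e(k)}$ (among $k\le s$ with $\varphi_{e,s}(k)\downarrow$) that is disjoint from $A_s\cup B_s$, from the followers currently held by the positive requirements, and from all higher-priority targets, and it restrains that finite set from $A\cup B$ thereafter; a target is dropped only if a higher-priority $N_{e'}$ later adopts an overlapping target. Each $N_e$ therefore changes target only finitely often, and an induction on priority shows that if $\varphi_e$ is total with the $D_{\varphi_e(k)}$ pairwise disjoint then---once the higher-priority targets have stabilized, and because pairwise disjoint finite sets eventually avoid any fixed finite set---$N_e$ eventually adopts a target that is never afterwards dropped, hence met. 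To keep $A$ infinite one uses the standard moving-follower form of $A_0$-permitting: appoint as follower the least number not currently restrained and not in $A_s\cup B_s$, enumerate it into $A$ the next time $A_{0,s}$ changes below it, and pass to a larger follower whenever the approximation below the current one appears to have settled; since $A_0$ is noncomputable this succeeds infinitely often, and since a follower $y$ can enter $A$ only before the stage at which $A_{0,s}\upharpoonright y$ reaches its limit, one recovers $y\in A$ from $A_0$ by running the construction that many stages, so $A\le_T A_0$. The set $B$ is built symmetrically from $B_0$. Crucially, numbers already in $A$ or $B$, and the followers currently held, are never restrained, which reconciles the positive and negative demands; in particular, since $A$ is infinite and disjoint from every adopted target, the union of all targets is never cofinite.

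The delicate point is consistency. Since $A\cup B$ is to be hyperimmune it must be immune, so $A$ and $B$ are each immune, hence noncomputable---this is why a computable strategy for the positive requirements cannot work and permitting is needed. It also forces $A\cup B$ not to have its elements at computably predictable locations, and indeed it does not, because the position at which $N_e$ finally locks a target depends on which higher-priority $\varphi_{e'}$ turn out to be total, which is not computable. Note also that $A_0$ and $B_0$ may form a minimal pair of Turing degrees, so the half of the construction producing $B$ can learn nothing about $A_0$, and conversely; this is no obstacle, because the hyperimmunity of $A\cup B$ is created by the requirements $N_e$ together with this non-computable stretching, not by combining the two oracles, which are used only---and separately---to keep $A$ and $B$ infinite within $\le_T A_0$ and $\le_T B_0$. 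The technical heart of the argument is checking that the finite-injury apparatus and the two permitting constructions can be interleaved so that all these demands are met at once.
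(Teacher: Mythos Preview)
Your reduction to the preceding theorem is correct, and the goal---produce infinite $A\le_T A_0$ and $B\le_T B_0$ with $A\cup B$ hyperimmune---is exactly right. But the construction you outline cannot achieve it. Every decision in your construction (appointing followers, adopting targets, enumerating a follower when $A_{0,s}\upharpoonright y$ changes) is made from computable data: the computable approximations $A_{0,s},B_{0,s}$ and the partial computable $\varphi_e$. Elements, once enumerated into $A$, are never removed. Hence the $A$ you build is a c.e.\ set. You yourself note that $A$ must be immune (since $A\cup B$ is to be hyperimmune and $A$ infinite), and an infinite c.e.\ set is never immune. So the construction defeats itself; this is not a matter of bookkeeping but a structural obstruction to any ``enumerate via permitting'' approach here.

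There is a second gap in the claim $A\le_T A_0$: you assert that from $A_0$ one can recover ``the stage at which $A_{0,s}\upharpoonright y$ reaches its limit,'' but that is the modulus of convergence of the $\Delta^0_2$ approximation, and for a general $\Delta^0_2$ approximation it is not computable from $A_0$ (it always computes $A_0$, but the converse can fail). So the permitting does not deliver $A\le_T A_0$ as written either.

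The paper avoids any direct construction. It proves a short lemma---if $A$ is hyperimmune and $B$ is $A$-hyperimmune then $A\cup B$ is hyperimmune---and then appeals to the Miller--Martin theorem that every nonzero degree below $\mathbf{0}'$ is hyperimmune (and to its proof, which shows that if $\mathbf{b}\not\le\mathbf{a}$ then $\mathbf{b}$ is $\mathbf{a}$-hyperimmune). Choosing $A$ hyperimmune of degree $\deg(A_0)$ and $B$ $A$-hyperimmune of degree $\deg(B_0)$ gives the desired pair. This sidesteps both obstacles: the sets come from a classical degree-theoretic theorem rather than from a c.e.\ enumeration, and the reductions are part of the statement rather than argued via a modulus.
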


\begin{proof} Note that the family of generically $A_0$-computable sets
  depends only on the degree of $A_0$.  Hence, by the previous
  theorem, it suffices to show that any two nonzero degrees ${\bf a, b
    \leq 0'}$ are \emph{jointly hyperimmune}, meaning that there are sets
$A, B$ of degree ${\bf a, b}$ respectively with $A \cup B$ hyperimmune.
The following lemma is helpful for this.

\begin{lem}   If $A$ is hyperimmune and $B$  is $A$-hyperimmune, then
$A \cup B$ is hyperimmune.
\end{lem}

\begin{proof}   Suppose that $A \cup  B$ is infinite and not hyperimmune.
Then there is a strong array $\{F_n\}$ which witnesses this.   If
$F_n \cap A \neq \emptyset$ for all but finitely many $n$, then we can
conclude that $A$ is not hyperimmune.   Otherwise, there are infinitely
many $n$ such that $F_n \cap A = \emptyset$.   Then the family of such sets
$F_n$ can be made into an $A$-computable array, and this array witnesses
that $B$ is not $A$-hyperimmune.
\end{proof}

If ${\bf a = b}$, then ${\bf a, b}$ are jointly hyperimmune by the
theorem of Miller and Martin \cite{MM} that every nonzero degree below
$\bf 0'$ is hyperimmune.  Otherwise, we may assume without loss of
generality that ${\bf b \not \leq a}$.  In this case we can argue that
$\bf b$ is $\bf a$-hyperimmune.  By the proof of the Miller-Martin
result \cite{MM} there is a function $f$ of degree $\bf b$ such that
every function which $g$ which majorizes $f$ can compute $f$.  Since
${\bf b \not \leq a}$, no $\bf a$-computable function can compute $f$.
By the standard majorization characterization of hyperimmunity,
relativized to ${\bf a}$, it follows that $\bf b$ is ${\bf a}$-hyperimmune.
Hence by the above lemma ${\bf a, b}$ are jointly hyperimmune.
As remarked above, this suffices to prove the theorem.

\end{proof}

\section{Absolute undecidability}

In this section we mention some results on a very strong form of generic
noncomputability introduced by Myasnikov and Rybikov \cite{MR}.
For comparison, recall that a set $A$ is \emph{generically computable} if 
there is a partial computable function which agrees with the characteristic
function of $A$ on its domain and has a domain of density $1$.

\begin{defn}\cite{MR} A set $A \subseteq \omega$ is \emph{absolutely
    undecidable} if every partial computable function agreeing with
  the characteristic function of $A$ on its domain has a domain of
  density $0$.
\end{defn}

If $A$ is absolutely undecidable, then $A$ is not computable at any
$r > 0$.    The next observation shows that the converse fails.

\begin{obs} There are c.e.\ sets which are not computable
at any positive density $r > 0$ but which are not absolutely undecidable.
\end{obs}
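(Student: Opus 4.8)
The plan is to build the required c.e.\ set $A$ by a direct, injury-free construction on the intervals $I_n = [n!, (n+1)!)$. The idea is to keep $A$ \emph{empty} on every even interval $I_{2m}$ and use the odd intervals for diagonalization. Keeping the even intervals empty immediately shows that $A$ will not be absolutely undecidable: the partial computable function $\varphi$ given by $\varphi(x)=0$ for $x\in\bigcup_m I_{2m}$ and $\varphi(x)\!\uparrow$ otherwise agrees with the characteristic function of $A$ on its domain, and since $\rho_{(2m+1)!}\bigl(\bigcup_j I_{2j}\bigr)\ge \rho_{(2m+1)!}(I_{2m}) = 1 - 1/(2m+1)$, the domain of $\varphi$ has upper density $1$, so in particular it does not have density $0$.

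For the odd intervals, I would fix a bijection $\langle\cdot,\cdot\rangle\colon\omega^2\to\omega$ and assign the interval $I_{2\langle e,k\rangle+1}$ to the partial computable function $\varphi_e$, so that each $\varphi_e$ is assigned infinitely many of the odd intervals, with indices unbounded. On each assigned interval $J=I_{2\langle e,k\rangle+1}$ I would run a one-shot strategy: effectively search for a stage at which $\varphi_e$ converges on some element of $J$; if such a stage is found, let $x_0$ be the least element of $J$ on which $\varphi_e$ has converged by that stage, put $x_0$ into $A$ if $\varphi_e(x_0)=0$, and otherwise put no element of $J$ into $A$; in either case take no further action on $J$. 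Since distinct strategies act on pairwise disjoint intervals and each enumerates at most one element of its own interval, there is no interaction, $A$ is c.e., and $A\cap I_{2m}=\emptyset$ for all $m$.

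The verification that $A$ is not computable at any density $r>0$ amounts to showing $\underline{\rho}(\mathrm{dom}(\varphi_e))=0$ whenever $\varphi_e$ agrees with the characteristic function of $A$ on its domain. Fix such an $e$, and suppose toward a contradiction that $\varphi_e$ converges on some element of an assigned interval $J=I_{2\langle e,k\rangle+1}$; let $x_0$ be as in the strategy for $J$. If $\varphi_e(x_0)=0$, then $x_0$ was enumerated into $A$, so $A(x_0)=1\ne 0=\varphi_e(x_0)$; if $\varphi_e(x_0)\ne 0$, then no element of $J$ ever enters $A$ (only $J$'s strategy could enumerate an element of $J$, and it chose not to), so $A(x_0)=0\ne\varphi_e(x_0)$ since $A(x_0)\in\{0,1\}$. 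Either way $\varphi_e$ disagrees with $A$ at a point of $\mathrm{dom}(\varphi_e)$, contradiction. Hence $\mathrm{dom}(\varphi_e)$ is disjoint from $I_{2\langle e,k\rangle+1}=[(2\langle e,k\rangle+1)!,(2\langle e,k\rangle+2)!)$ for every $k$, so $\mathrm{dom}(\varphi_e)\cap[0,(2\langle e,k\rangle+2)!)\subseteq[0,(2\langle e,k\rangle+1)!)$ and thus $\rho_{(2\langle e,k\rangle+2)!}(\mathrm{dom}(\varphi_e))\le 1/(2\langle e,k\rangle+2)$; since $\{\langle e,k\rangle:k\in\omega\}$ is infinite, taking the infimum over $k$ gives $\underline{\rho}(\mathrm{dom}(\varphi_e))=0$. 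As $\varphi_e$ was an arbitrary partial computable function agreeing with $A$, the set $A$ is not computable at density $r$ for any $r>0$, while the previous paragraph shows $A$ is not absolutely undecidable.

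The only thing needing any care is the elementary bookkeeping: making sure every $\varphi_e$ is assigned infinitely many intervals (so that the density of its domain dips toward $0$ infinitely often rather than just once) and that the one-shot strategies never collide. Both are automatic here, since the strategies act on disjoint intervals and each acts at most once, so no priority argument and no injury recovery is required.
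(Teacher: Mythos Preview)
Your proof is correct, but it takes a genuinely different route from the paper's argument.

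The paper's proof is a two-line combination of earlier results: take any c.e.\ set $C$ with lower density $0$ and positive upper density, take a simple set $S$ of density $0$, and let $A = C \cup S$. Then $A$ is not computable at any density $r>0$ because any partial computable $\varphi$ agreeing with $A$ whose domain has positive lower density must output $0$ infinitely often (since $\underline{\rho}(A)=0$), and $\varphi^{-1}(0)$ would then be an infinite c.e.\ set disjoint from the simple set $S$. The ``not absolutely undecidable'' part is implicit: the partial computable function sending each $n\in C$ to $1$ agrees with $A$ and has domain $C$ of positive upper density.

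Your approach is instead a direct, self-contained construction: you reserve the even factorial intervals as a large computable region where $A$ is identically $0$ (this immediately gives a partial computable function agreeing with $A$ whose domain has upper density $1$), and you diagonalize on the odd intervals to kill any candidate $\varphi_e$ on infinitely many intervals of relative length tending to $1$, forcing $\underline{\rho}(\mathrm{dom}(\varphi_e))=0$.

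The trade-off: the paper's argument is much shorter and reuses the ``simple set of density $0$'' machinery already deployed elsewhere in the paper (Proposition~2.15 of \cite{JS} and Corollary~\ref{nohigherdensity}), while yours avoids simple sets entirely and gives an explicit injury-free example. Both are valid; yours is more elementary but longer, the paper's is slicker but relies on prior constructions.
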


\begin{proof}  Let $C$ be any c.e.\ set with lower density $0$ but with
positive upper density.  Let $A = C \cup S$ where $S$ is a simple
set of density $0$. Then $A$ cannot be computable at any density $r > 0$
since a partial algorithm for $A$ whose domain had positive lower density
would have to  answer $n \notin A$ on an infinite set 
which would thus be an infinite c.e.\ set not intersecting $S$.
\end{proof}

As pointed out in \cite{JS}, Observation 2.11, every nonzero Turing
degree contains a set which is not generically computable.  Bienvenu,
Day, and H\"ozl \cite{BDH} have obtained a remarkable generalization of this result.

\begin{theorem} \cite{BDH}  \label{BDH} There exists a Turing functional $\Phi$ such that for every
noncomputable set $A$, $\Phi^A$ is absolutely undecidable and truth-table equivalent
to $A$.   Hence, every nonzero Turing degree contains an absolutely undecidable set.
\end{theorem}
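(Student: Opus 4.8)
The plan is to build a single functional $\Phi$ that ``spreads out'' the bits of its oracle via Hadamard codes on a geometrically growing family of blocks, and then to show that a positive-density partial computable approximation to $\Phi^A$ would confine $A$ to a \emph{finite} $\Pi^0_1$ class, forcing $A$ to be computable. Concretely, partition $\omega$ into consecutive blocks $B_0,B_1,\dots$ with $|B_n| = 2^n$, and fix for each $n$ a bijection between the positions of $B_n$ and the vector space $\mathbb{F}_2^n$. Define $\Phi^A$ at the position of $B_n$ labelled by $j\in\mathbb{F}_2^n$ to be $\sum_{i<n,\, j_i=1} A(i) \bmod 2$; thus $\Phi^A\upharpoonright B_n$ is the Hadamard encoding of $A\upharpoonright n$. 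This $\Phi$ is a total functional, and each value $\Phi^A(p)$ is a fixed parity of finitely many bits of $A$, so $\Phi^A\le_{tt}A$; conversely $A(m)$ appears verbatim in $\Phi^A$ as the bit of block $B_{m+1}$ labelled by the unit vector $e_m$, so $A\le_{tt}\Phi^A$. Hence $\Phi^A\equiv_{tt}A$ for every $A$, which (together with absolute undecidability, below) yields the final sentence of the theorem.

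The heart of the matter is to show $\Phi^A$ is absolutely undecidable whenever $A$ is noncomputable. Suppose toward a contradiction that $\varphi$ is a partial computable function agreeing with $\Phi^A$ on its domain $D$, and that $D$ has positive upper density $c>0$. First, a routine averaging argument using the geometric growth of the blocks produces some $c'>0$ with $|D\cap B_n|\ge c' 2^n$ for infinitely many $n$: any mass of $D$ witnessing density $\ge c/2$ at a large scale $m$, apart from a negligible amount on early blocks, is spread over blocks $B_k$ with $k\to\infty$, so some such block carries a $\ge c'$-fraction of $D$. Call these $n$ \emph{good}. For a good $n$, view $S_n := D\cap B_n$ as a subset of $\mathbb{F}_2^n$ of size $\ge c'2^n$; then $\operatorname{span}(S_n)$ has size $\ge c'2^n$, hence dimension $\ge n-\lceil\log_2(1/c')\rceil$. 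Since $\varphi$ agrees with $\Phi^A$, the vector $A\upharpoonright n$ solves the linear system $\langle\mu,j\rangle = \varphi(\text{position of }j)$ over $j\in S_n$, whose solution set is therefore a nonempty affine subspace of $\mathbb{F}_2^n$ of dimension $\le\lceil\log_2(1/c')\rceil$, i.e. of size at most $K := 2^{\lceil\log_2(1/c')\rceil}$.

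The final step converts this into computability of $A$. Let $\mathcal{C} = \{X\in 2^\omega : \varphi \text{ agrees with } \Phi^X \text{ on } D\}$. Since $\Phi$ is total, for each $p$ with $\varphi(p)\downarrow$ the set $\{X:\Phi^X(p)\ne\varphi(p)\}$ is clopen, so $\mathcal{C}$ is a $\Pi^0_1$ class, and $A\in\mathcal{C}$. For each good $n$ every $X\in\mathcal{C}$ has $X\upharpoonright n$ in the affine solution set above, so $\mathcal{C}$ has at most $K$ distinct length-$n$ prefixes; as good $n$ are arbitrarily large and any $K+1$ distinct elements of $\mathcal{C}$ would already differ below some fixed level, $\mathcal{C}$ has at most $K$ elements. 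A finite $\Pi^0_1$ class consists entirely of computable reals (each element is isolated, hence is the unique member of a $\Pi^0_1$ subclass, and a $\Pi^0_1$ singleton is computable), so $A$ is computable --- a contradiction. The main obstacle is precisely this step: one cannot decode $A\upharpoonright n$ from a single block, since a binary code cannot have relative distance bounded away from $1/2$ with many codewords, so a $c'<\tfrac12$ fraction of a block never determines the message; the resolution is to abandon per-block unique decoding and instead observe that, across infinitely many blocks simultaneously, $A$ is pinned into a finite effectively closed class. Calibrating the block growth so that positive upper density genuinely forces $\ge c'$-coverage of infinitely many blocks is the other point needing care, but it is elementary.
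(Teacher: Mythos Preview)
Your argument is correct and follows the Hadamard-code approach that the paper itself only sketches (the paper does not prove this theorem but merely cites \cite{BDH} and describes the idea in one sentence). Your implementation is essentially the standard one: geometric blocks $B_n$ of size $2^n$, Hadamard encoding of $A\upharpoonright n$ on $B_n$, and the observation that a positive-upper-density domain forces a fixed positive fraction of infinitely many blocks, so that linear algebra bounds the number of consistent length-$n$ prefixes by a constant $K$.

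One small point worth noting: the paper's sketch says one can ``effectively recover $A$'' from such a $\varphi$, whereas your conclusion is slightly different---you place $A$ in a finite $\Pi^0_1$ class and invoke the fact that members of finite $\Pi^0_1$ classes are computable. This is entirely adequate for the theorem as stated (you only need $A$ computable, not uniformly decoded from $\varphi$), and is arguably cleaner than attempting a genuine list-decoding of the Hadamard code. Your averaging step (positive upper density implies $|D\cap B_n|\ge c'2^n$ infinitely often) and the dimension count ($\operatorname{span}(S_n)$ has codimension $\le \lceil\log_2(1/c')\rceil$, hence the affine solution space has size $\le K$) are both correct as written.
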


The idea of the proof of \cite{BDH} is to code $A$ into $\Phi^A$ using
an error correcting code (the Hadamard code) with sufficient
redundancy so that, given any partial description of $\Phi^A$ defined
on a set of positive upper density, it is possible to effectively
recover $A$.

In the other direction, we have the following result, which shows that it is impossible to strengthen Theorem \ref{BDH}
by requiring $\Phi^A$ or its complement to be immune.

\begin{theorem} \label{nonimmune} There is a noncomputable set $A$
  such that for every absolutely undecidable set $B \leq_T A$, neither
  $B$ nor $\overline{B}$ is immune.
\end{theorem}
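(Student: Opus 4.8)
The plan is to build $A$ by a forcing (finite-extension) argument, decided by an oracle of sufficiently high arithmetical complexity, so that $A$ is noncomputable and, crucially, $A$ computes no set that is at once immune and absolutely undecidable. This reduction suffices. The collection of sets Turing below $A$ is closed under complementation, and for an \emph{immune} set $C$ one checks directly that $C$ is absolutely undecidable if and only if $\overline C$ has no c.e.\ subset of positive upper density: any partial computable $\varphi$ agreeing with $\chi_C$ takes value $1$ only on a c.e.\ subset of $C$, hence (by immunity) only finitely often, so $\mathrm{dom}\,\varphi$ is, up to a finite set, a c.e.\ subset of $\overline C$, whose upper density must therefore be $0$. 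Thus if $B \le_T A$ is absolutely undecidable and $B$ (or, symmetrically, $\overline B \le_T A$) were immune, that set would be an immune, absolutely undecidable set below $A$, which the construction forbids, so neither $B$ nor $\overline B$ can be immune. As an aside, this already requires $A$ itself not to be such a set (take the identity reduction), which is secured by putting some infinite computable set into $A$.

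For each $e$ the construction meets the requirement $Q_e$: either $\Phi_e^A$ is partial, or $\Phi_e^A$ has an infinite c.e.\ subset, or $\overline{\Phi_e^A}$ has a c.e.\ subset of positive upper density --- any one of which prevents $\Phi_e^A$ from being an immune absolutely undecidable set (using the equivalence of the previous paragraph). Given a condition $\sigma$: if $\Phi_e$ is not ``densely total'' below $\sigma$ (some $\tau \succeq \sigma$ and $n$ admit no converging extension on $n$) we extend to force $\Phi_e^A$ partial and are done. Otherwise $\Phi_e^A$ is total along every completion of $\sigma$, and the task is to find a finite $\tau \succeq \sigma$ such that $\{n : \Phi_e^{\tau}(n)\!\downarrow = 1 \text{ with use} \le |\tau|\}$ is infinite, or $\{n : \Phi_e^{\tau}(n)\!\downarrow = 0 \text{ with use} \le |\tau|\}$ has positive upper density (or the variants with $0$ and $1$ exchanged); committing to $A \succeq \tau$ then realizes one disjunct of $Q_e$. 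Noncomputability of $A$ is handled by the routine requirement that $A$ differ from each potential computable characteristic function, and the requirements are interleaved in the usual way.

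The main obstacle is the density bookkeeping in the densely-total case. The obvious c.e.\ set witnessing that many values of $\Phi_e^A$ can be pushed to $0$, namely $\{n : (\exists \tau \succeq \sigma)\,\Phi_e^{\tau}(n)\!\downarrow = 0\}$, is a union over pairwise incompatible extensions and need not be contained in $\overline{\Phi_e^A}$ for \emph{any} single completion $A$; only a realizable c.e.\ subset of it is usable, and getting such a subset to have positive \emph{upper} density --- mere infinitude does not suffice, since the requirement may force us onto whichever side of $\Phi_e^A$ would otherwise be immune --- is the delicate point. I expect this to be handled by a dichotomy: if some finite extension of $\sigma$ already forces a positive-upper-density set of $0$'s (or of $1$'s), use it directly; otherwise argue that along a sufficiently generic completion $\Phi_e^A$ is forced to be lopsided in a way whose ``thick'' side can be realized as containing an infinite c.e.\ set (forcing a single value densely always being realizable), so that disjunct of $Q_e$ is met. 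Pinning down this dichotomy and the underlying finitary combinatorics, and checking that the noncomputability requirements never interfere, is where the real work lies.
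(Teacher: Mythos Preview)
Your overall plan---a finite-extension forcing argument adapting the bi-immune-free degree construction---is exactly what the paper does; the paper states only that the result is proved ``by analyzing a version of the construction of a non-computable set of bi-immune free degree'' and omits all details.  Your reduction (using that absolute undecidability is symmetric in $B$ and $\overline{B}$) and your characterization of absolute undecidability for immune sets are correct, and the requirements $Q_e$ are the right ones.

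One point to tighten in your sketch of the densely-total case: the hope of meeting $Q_e$ by committing to a \emph{single} finite $\tau$ cannot succeed in general.  For instance, if $\Phi_e^X(n)=X(2^n)$ then any finite $\tau$ decides only about $\log_2|\tau|$ values, so neither $\{n:\Phi_e^\tau(n)=1\}$ is infinite nor does $\{n:\Phi_e^\tau(n)=0\}$ have positive upper density.  Your fallback (``along a sufficiently generic completion $\Phi_e^A$ is forced to be lopsided'') will not rescue this either: for this functional a $1$-generic $A$ makes $\Phi_e^A$ itself $1$-generic, hence bi-immune, so $Q_e$ would fail outright.  As in the original bi-immune-free argument, such a requirement has to be revisited at infinitely many stages, with the witnessing c.e.\ set enumerated alongside $A$; the extra wrinkle here, which you correctly flag, is that when the construction is driven to the $\overline{\Phi_e^A}$ side, the elements accumulated must be placed so as to realize positive upper density rather than mere infinitude.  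That is indeed where the real work lies, and your outline has not yet supplied it---but neither has the paper.
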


This result is proved by analyzing a version of the construction of a non-computable set of bi-immune free
degree \cite{J69a}.   We omit the details.   Note that Theorem \ref{nonimmune} immediately implies the existence
of a non-zero bi-immune free degree since every bi-immune set is absolutely undecidable.

\end{document}